\DeclareSymbolFont{cyrletters}{OT2}{wncyr}{m}{n}
\DeclareMathSymbol{\Sha}{\mathalpha}{cyrletters}{"58}
\DeclareFontFamily{OT2}{cmr}{\hyphenchar\font45 }
\DeclareFontShape{OT2}{cmr}{m}{n}{%
   <5><6><7><8><9>gen*wncyr%
   <10><10.95><12><14.4><17.28><20.74><24.88>wncyr10}{}
\DeclareFontShape{OT2}{cmr}{b}{n}{%
   <5><6><7><8><9>gen*wncyb%
   <10><10.95><12><14.4><17.28><20.74><24.88>wncyb10}{}
\DeclareMathAlphabet{\mathcyr}{OT2}{cmr}{m}{n}
\DeclareMathAlphabet{\mathcyb}{OT2}{cmr}{b}{n}
\SetMathAlphabet{\mathcyr}{bold}{OT2}{cmr}{b}{n}
\theoremstyle{plain} 
\newtheorem{thm}{\indent\sc Theorem}[section] 
\newtheorem{lem}[thm]{\indent\sc Lemma}
\newtheorem{cor}[thm]{\indent\sc Corollary}
\newtheorem{prop}[thm]{\indent\sc Proposition}
\newtheorem{conj}[thm]{\indent\sc Conjecture}
\theoremstyle{definition} %
\newtheorem{dfn}[thm]{\indent\sc Definition}
\newtheorem{rem}[thm]{\indent\sc Remark}
\newtheorem{assume}[thm]{\indent\sc Assumption}
\newtheorem*{thm'}{\indent\sc Theorem'}
\renewcommand{\theequation}{\thesection.\arabic{equation}}
\newcommand{\Zp}{\mathbb{Z}_p}
\newcommand{\Qp}{\mathbb{Q}_p}
\newcommand{\Hom}{\mathrm{Hom}}
\newcommand{\Z}{\mathbb{Z}}
\newcommand{\Q}{\mathbb{Q}}
\newcommand{\Fr}{\mathrm{Fr}}
\newcommand{\R}{\mathbb{R}}
\newcommand{\Oc}{\mathscr{O}}
\newcommand{\C}{\mathbb{C}}
\newcommand{\Gal}{\mathrm{Gal}}
\newcommand{\Sel}{\mathrm{Sel}}
\newcommand{\ord}{\mathrm{ord}}
\newcommand{\z}{\mathfrak{z}}
\newcommand{\tors}{\mathrm{tors}}
\newcommand{\ur}{\mathrm{ur}}
\newcommand{\loc}{\mathrm{loc}}
\newcommand{\ka}{\kappa}
\newcommand{\Supp}{\mathrm{Supp}}
\begin{document}

\title[Refined BSD conjecture]{Kato's Euler system and\\ the Mazur-Tate refined conjecture of BSD type} 

\author[K.\ Ota]{Kazuto Ota} 

\subjclass[2010]{ 
Primary 11G40; Secondary 11G05, 11R34.
}
\keywords{ 
Elliptic curves, Mazur-Tate elements, Kato's Euler systems}
\address{
Mathematical Institute, Tohoku University,  Sendai 980-8578, Japan
}
\email{sb0m07@math.tohoku.ac.jp}

\maketitle

\begin{abstract}Mazur and Tate proposed a conjecture which compares the Mordell-Weil rank of an elliptic curve over $\Q$ with the order of vanishing of Mazur-Tate elements, which are analogues of Stickelberger elements.
Under some relatively mild assumptions, we prove this conjecture.
Our strategy of the proof is  to study divisibility of certain derivatives of Kato's Euler system.
\end{abstract}

\tableofcontents

\makeatletter
  \renewcommand{\theequation}{%
         \thesection.\arabic{equation}}
  \@addtoreset{equation}{section}
\makeatother

\section{Introduction}\label{intro}

\subsection{The Mazur-Tate refined conjecture of BSD type}
 Mazur-Tate \cite{m-t} proposed a \textit{refined conjecture of BSD type}, which predicts mysterious relations between arithmetic invariants of an elliptic curve $E$ over $\Q$ and Mazur-Tate elements constructed from modular symbols.
A Mazur-Tate element is an analogue of Stickelberger element and refines the $p$-adic $L$-function of $E$.
As the Birch and Swinnerton-Dyer conjecture, the Mazur-Tate refined conjecture of BSD type consists of two parts.
 One compares the Mordell-Weil rank with the ``order of vanishing'' of Mazur-Tate elements (the rank-part).
 The other describes the ``leading coefficients'' of the elements.
The aim of this paper is to prove the rank-part under some mild assumptions.  
Here, we explain this part more precisely (see Section \ref{chapter of refined BSD} for the other part).

For a positive integer $S$, we put $G_S=\Gal(\Q(\mu_S)/\Q).$ 
The \textit{Mazur-Tate element} $\theta_S$ is an element of $ \Q[G_S]$ such that 
for every character $\chi$ of $G_S$, the evaluation  
$\chi(\theta_S)$ equals the algebraic part of  $L(E,\chi^{-1},1)$ up to an explicit factor.
It is important that the denominators of $\theta_S$ are bounded as $S$ varies, which implies the existence of non-trivial congruences between these special values as $\chi$ varies.
If $E$ is a strong Weil curve, then $\theta_S \in \Z[1/tc(E)][G_S]$, where $t:=|E(\Q)_{\tors}|$, and $c(E) \in \Z$ denotes the Manin constant, which is conjectured to be $1$ in this case.

 Let $R$ be a subring of $\Q$ such that $\theta_S \in R[G_S]$ for all $S>0$.
We denote by $I_S$ the augmentation ideal of $R[G_S]$ and by $\mathrm{sp}(S)$ the number of split multiplicative primes of $E$ dividing $S$. 
We put $r_E=\mathrm{rank}(E(\Q))$.
The following is the rank-part of the Mazur-Tate refined conjecture of BSD type.
 \begin{conj}[Mazur-Tate]\label{main conj intro}The order of vanishing of $\theta_S$ at the trivial character is greater than or equal to $r_E+\mathrm{sp}(S),$ that is, 
 \begin{equation*}
  \theta_S \in I_S^{r_E+\mathrm{sp}(S)}.
    \end{equation*}
  \end{conj}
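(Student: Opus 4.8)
The plan is to derive the conjecture from the Iwasawa theory of Kato's Euler system, reading the augmentation filtration on $\theta_S$ off the $\Zp[G_S]$-module structure of Galois cohomology. First I would reduce to a $p$-adic assertion: since $\theta_S\in R[G_S]$ and each $I_S^k$ is a finitely generated submodule of the Noetherian ring $R[G_S]$, the containment $\theta_S\in I_S^{r_E+\mathrm{sp}(S)}$ can be checked after extending scalars to $\Zp$ for each prime $p$; so it suffices to prove $\theta_S\in I_S^{r_E+\mathrm{sp}(S)}\Zp[G_S]$, and I may assume $p$ avoids a fixed finite bad set (primes dividing $t\,c(E)$, the conductor of $E$, and the integers $\ell-1$ and $\#\widetilde{E}(\mathbb{F}_\ell)$ for $\ell\mid S$) and satisfies the further mild hypotheses of the theorem. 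For such $p$, Kato's explicit reciprocity law furnishes a $\Zp[G_S]$-linear ``Coleman map'' $\mathcal{P}_S\colon H^1(\Z[1/Sp,\mu_S],T)\to\Qp[G_S]$, $T=T_pE$ --- localize at $p$, apply $\exp^\ast$, project to the $\omega$-line, and identify $\Qp(\mu_S)$ with a factor of $\Qp[G_S]$ via Gauss sums --- which carries Kato's zeta element $z_S$ to $\theta_S$ up to a $p$-adic unit (the normalization absorbing $\Omega_E$, $t$, $c(E)$). Since $\ker(\exp^\ast)=H^1_f$ at a good prime $p$, the map $\mathcal{P}_S$ factors through the singular quotient $H^1_{/f}(\Qp\otimes\Q(\mu_S),T)$, a rank-one $\Zp[G_S]$-module (up to controlled $p$-torsion and denominators) on which $\mathcal{P}_S$ is an isomorphism up to $p$-adic units. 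Thus the conjecture is reduced to showing that the singular localization $\partial_p(z_S)$ is divisible by $I_S^{r_E+\mathrm{sp}(S)}$ in that rank-one module --- equivalently, that the first $r_E+\mathrm{sp}(S)$ ``derivatives'' of $z_S$ along the augmentation ideal vanish.

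The arithmetic now enters in two ways. For the $r_E$ part, global Poitou--Tate duality relates the ideal generated by $\partial_p(z_S)$ to the Pontryagin dual $X_S$ of the $p^\infty$-Selmer group of $E$ over $\Q(\mu_S)$ (suitably relaxed at $p$), while Kato's divisibility in the many-variable main conjecture --- i.e. the Euler-system bound furnished by $z_S$ --- gives the inclusion $(\partial_p(z_S))\subseteq\mathrm{Fitt}_{\Zp[G_S]}(X_S)$, hence $\ord_{I_S}(\theta_S)=\ord_{I_S}(\partial_p(z_S))\geq\ord_{I_S}(\mathrm{Fitt}_{\Zp[G_S]}(X_S))$. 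A control-theory argument then bounds the latter below: the $G_S$-coinvariants of $X_S$ are, up to finite error, the dual of the Selmer group of $E$ over $\Q$, whose $\Zp$-corank is $r_E+\mathrm{corank}\,\Sha(E/\Q)[p^\infty]\geq r_E$, and a standard Fitting-ideal estimate converts this corank into the bound $\ord_{I_S}(\mathrm{Fitt}_{\Zp[G_S]}(X_S))\geq r_E$. For the $\mathrm{sp}(S)$ part, one uses that at each split multiplicative prime $\ell\mid S$ the Euler-system norm relations at $\ell$ involve the degenerate local factor $1-\Fr_\ell^{-1}\in I_S$, and that the Tate-curve structure of $E$ at $\ell$ forces $z_S$ to be correspondingly more divisible; via global reciprocity this divisibility is transferred to the $p$-adic place, contributing one further power of $I_S$ per split multiplicative prime, so that altogether $\ord_{I_S}(\theta_S)\geq r_E+\mathrm{sp}(S)$. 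Letting $p$ vary recovers the integral statement.

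The principal obstacle, I expect, is establishing a sufficiently precise \emph{finite-level} form of Kato's divisibility: relating $\partial_p(z_S)$ to the Fitting ideal of the dual Selmer group over $\Q(\mu_S)$ with full $\Zp[G_S]$-coefficients --- rather than merely over the Iwasawa algebra of the cyclotomic tower --- requires controlling the non-integrality of $\exp^\ast$ and working over the semilocal, non-domain ring $\Zp[G_S]$, so that characteristic ideals must throughout be replaced by Fitting ideals, with care in the passage between the two. The second delicate point is the ``trivial zero'' bookkeeping: making rigorous the exact contribution of the split multiplicative primes $\ell\mid S$ --- and, if $E$ has split multiplicative (or otherwise bad) reduction at $p$, the prime $p$ itself, where an $\mathcal{L}$-invariant / Greenberg--Stevens phenomenon intervenes --- is precisely where the numerology $r_E+\mathrm{sp}(S)$ is sensitive and where the ``mild assumptions'' of the theorem earn their keep; getting the local cohomological accounting right at the Tate curves is, I expect, the crux.
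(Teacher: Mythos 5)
Your proposal follows a ``main-conjecture / Fitting-ideal'' strategy, which is genuinely different from the paper's approach, and it has two gaps that between them account for essentially all the hard content. The first you acknowledge as an ``obstacle,'' but it is more than that: the finite-level inclusion $(\partial_p(z_S)) \subseteq \mathrm{Fitt}_{\Zp[G_S]}(X_S)$ over the semilocal, non-domain ring $\Zp[G_S]$ for a general tame $S$ is not in the literature and is not a formal consequence of Kato's divisibility over the cyclotomic Iwasawa algebra; assuming it amounts to assuming most of what the conjecture asks. The paper never passes through any main-conjecture-type statement. It runs a Kolyvagin-style argument directly at finite level (Theorems \ref{keythm} and \ref{general}), producing auxiliary primes by Chebotarev (Lemma \ref{choice of l}) and using global duality (Lemma \ref{cupcup}) to show that the Darmon-Kolyvagin derivatives $D\z_S$ vanish modulo $q$ whenever $\mathrm{ord}(D)<r_q(H^1_{f,p}(\Q,E[q]))$; this divisibility is converted into augmentation-ideal membership via the Taylor-expansion formalism (Proposition \ref{taylor}, Lemma \ref{fromcongtovanishing}) and the explicit identity $\vartheta(\z_S)=\theta_{S,p}$ of Corollary \ref{theta}, yielding Corollary \ref{p-vanishing wo parity}. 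Your approach would require proving a substantial case of the equivariant main conjecture as a prerequisite; the derivative method needs no such input.

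The second gap your proposal does not mention, and it is the one the paper flags as the crux (Section \ref{subsection: the proof}, Remark \ref{gap}). Because Kato's classes $\z_S$ do not localize into $E(\Qp)$ in general, the Selmer group that the Euler-system argument actually controls is the strict one $H^1_{f,p}$ with zero local condition at $p$, whose corank may be $r_{p^\infty}-1$ rather than $r_{p^\infty}$. Your corank count for $X_S$ implicitly uses the relaxed or Bloch--Kato condition and so overcounts by one; done honestly your argument would deliver at best $\theta_S\in\Zp\otimes I_S^{r_{p^\infty}-1}$ (Corollary \ref{vanishing with selmer rank minus 1}), one short. The paper closes this gap with an entirely separate ingredient: the $p$-parity theorem combined with the functional equation $\theta_S=\varepsilon_f\,\delta_{-N}^{-1}\iota(\theta_S)$ of Proposition \ref{functional equation of mt}, which forces the exact order of vanishing to have the same parity as $\mathrm{ord}_{s=1}L(E,s)$ and thereby upgrades the exponent by one (Theorem \ref{p-vanishing}). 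Your proposal has no analogue of this parity step. Lastly, the $\mathrm{sp}(S)$ contribution you gesture at via Tate-curve reciprocity is in the paper elementary and formal: at a split multiplicative $\ell$ one has $P_\ell(1)=0$, so $P_\ell(\Fr_\ell^{-1})\in I_{\Gamma_S}$, and the extra power of the augmentation ideal drops out of the Euler-factor manipulation in Proposition \ref{trivial zeros of euler system}; no $\mathcal{L}$-invariant or Greenberg--Stevens input is used, since the paper works only with good primes $p$.
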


\subsection{The main result}\label{subsection: main result}
We suppose that $E$ does not have complex multiplication, and we denote by $N$ the conductor.
 Let $R$ be a subring of $\Q$ in which all the  primes $p$ satisfying at least one of the following conditions are invertible:
\begin{enumerate}
\renewcommand{\labelenumi}{(\roman{enumi})}
\item $p$ divides $6N\cdot|E(\mathbb{F}_p)|\prod_{\ell|N}[E(\Q_{\ell}):E_0(\Q_{\ell})]$, where for a prime $\ell$,  we denote by $E_0(\Q_{\ell})$ the group of points in $E(\Q_{\ell})$ whose reduction is a non-singular point of $E(\mathbb{F}_{\ell})$,
\item the Galois representation of $G_{\Q}$ attached to the $p$-adic Tate module is \textit{not} surjective,
\item $p< r_E$.
\end{enumerate}
The following is our main result.
\begin{thm}[Theorem \ref{weak vanishing thm}]\label{main}
Let $S$ be a square-free product of good primes $\ell$ such that for each prime $p$ not invertible in $R$,  the module $E(\mathbb{F}_{\ell})[p]$ is cyclic, that is, $E(\mathbb{F}_{\ell})[p]$ is isomorphic to $\Z/p\Z$ or $0$.
Then, $\theta_S \in R[G_S]$, and Conjecture \ref{main conj intro} holds, that is,
\begin{equation*}
\theta_S \in I_S^{r_E}.
\end{equation*}
\end{thm}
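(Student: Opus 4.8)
The plan is to relate the Mazur–Tate element $\theta_S$ to the image of Kato's zeta element under a sequence of ``derivative'' maps, and to deduce membership in $I_S^{r_E}$ from the divisibility properties of those derivatives. First I would recall that Kato's Euler system provides, for each $S$, a class $\z_S \in H^1(\Q(\mu_S), T)$ (with $T$ the $p$-adic Tate module, for each $p$ not invertible in $R$), whose image under the dual exponential / Coleman-type map recovers $\theta_S$ up to the explicit interpolation factors built into the Mazur–Tate element; the hypothesis that $E$ has no CM, together with conditions (i)--(iii) on $R$, is exactly what is needed for the zeta element to be integral and for the relevant cohomology groups to be well-behaved ($H^0$ vanishing, torsion-freeness, and surjectivity of the mod-$p$ representation). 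The cyclicity hypothesis on $E(\mathbb{F}_\ell)[p]$ at the ramified primes $\ell \mid S$ ensures that the local conditions at those primes are of the expected rank-one shape, so that the finite-layer classes assemble correctly and no spurious denominators appear; this is what gives $\theta_S \in R[G_S]$.

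Next I would set up the descent: working modulo $I_S^{r_E+1}$, one wants to show $\theta_S \equiv 0$. Writing $G_S \cong \prod_{\ell \mid S} G_\ell$ and using that $I_S / I_S^2 \cong \bigoplus_\ell I_\ell/I_\ell^2$, I would apply the standard ``$r$-fold derivative'' construction à la Darmon/Kolyvagin to Kato's Euler system: build from $\{\z_S\}$ a cohomology class landing in $H^1_f(\Q, T) \otimes I_S^{r_E}/I_S^{r_E+1}$, up to controlled error, and show that its image in a suitable quotient vanishes because $\mathrm{rank}\, E(\Q) = r_E$ forces at most $r_E$ independent ``directions'' in which $\theta_S$ can be non-zero. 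Concretely, the leading term of $\theta_S$ in $I_S^{r_E}/I_S^{r_E+1}$ should be expressible, via the reciprocity law connecting $\z_S$ to $\theta_S$ and the explicit description of the finite/singular parts of Kato's classes at the primes $\ell \mid S$, as a determinant-type pairing of $r_E$ local conditions against the Mordell–Weil lattice; the fact that $E(\Q)$ has rank exactly $r_E$ (not more) gives no extra relations, but the point is that any term of order $< r_E + \mathrm{sp}(S)$... — here one uses that $\mathrm{sp}(S) = 0$ under our hypotheses, since split multiplicative primes are not allowed among the good primes dividing $S$ — so the target is simply $I_S^{r_E}$, and the vanishing modulo $I_S^{r_E+1}$ follows once the derived class is shown to lie in the image of the $r_E$-dimensional space $E(\Q)\otimes R$.

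The key input making this work is the divisibility statement: the derivative of Kato's zeta element at level $S$ is divisible by $I_S^{r_E}$ as an element of the Iwasawa/group-ring cohomology. I would prove this by an inductive argument on the number of prime factors of $S$, using the Euler system distribution relations to pass from level $S$ to level $S\ell$ and tracking the valuation in the augmentation filtration at each step; the local triviality of $\z_\ell$ in the ``finite'' part at good primes $\ell \nmid N S$ and the cyclicity hypothesis at $\ell \mid S$ control the error terms. Finally, comparing with the known relation $\chi(\theta_S) \doteq L^{\mathrm{alg}}(E,\chi^{-1},1)$ for all $\chi$ of $G_S$ pins down that the class we have constructed really is $\theta_S$ and not merely something congruent to it modulo a smaller power.

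The main obstacle I expect is the second step: precisely controlling how the $r_E$-dimensionality of $E(\Q)\otimes R$ bounds the order of vanishing, i.e.\ proving that the derived Kato class actually lands in the image of the Mordell–Weil group rather than in some larger Selmer-type group. This requires an integral (not just rational) statement about the image of the Bloch–Kato logarithm/dual exponential on Kato's classes, and it is here that the hypotheses forcing integrality and the surjectivity of the mod-$p$ representation — hence $H^1_f(\Q,T)$ being a free $R_p$-module of rank $r_E$ with no torsion obstruction — are indispensable. Managing the finitely many ``bad'' primes $p$ (those not inverted in $R$) uniformly, and checking that the cyclicity condition on $E(\mathbb{F}_\ell)[p]$ suffices to kill the local error at each such $p$ simultaneously, is the delicate technical heart of the argument.
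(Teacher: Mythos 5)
Your high-level outline — reduce to the $p$-primary statement via a group-ring lemma, express $\theta_S$ through Kato's zeta elements and a local pairing, and control the order of vanishing by Darmon--Kolyvagin derivatives of the Euler system — matches the paper's global architecture. But there is a genuine gap that the proposal never confronts, and it is in fact the crux of the paper.

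The derivative argument for Kato's Euler system, carried out as you sketch it, does \emph{not} yield $\theta_S \in \Zp\otimes I_S^{r_E}$. Unlike Heegner points, the localization of Kato's classes at $p$ need not lie in the Bloch--Kato finite subspace $E(\Qp)/p^n \subset H^1(\Qp,E[p^n])$. As a result, the global duality/Chebotarev bootstrap (Lemmas~\ref{choice of l} and~\ref{cupcup} in the paper) only controls derivatives against the \emph{strict} Selmer group $H^1_{f,p}(\Q,E[p^n])$, whose local condition at $p$ is zero. The rank of that group can be one less than $r_{p^\infty}$ (and hence one less than $r_E$), so the divisibility theorem (Theorem~\ref{keythm}) and its consequence Corollary~\ref{p-vanishing wo parity} give only
\begin{equation*}
\theta_S \in \Zp\otimes I_S^{\min\{r_{p^\infty}-1,\ p\}},
\end{equation*}
which falls short of $I_S^{r_E}$ by one. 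Your worry that the derived class might land in ``some larger Selmer-type group'' has the direction reversed: the problem is that Kato forces you into a \emph{smaller} group, losing a unit of rank.

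To close this gap the paper invokes two further ingredients that your proposal never mentions: the functional equation of Mazur--Tate elements (Proposition~\ref{functional equation of mt}), which shows that if $\theta_S$ has exact order of vanishing $b$ then $b\equiv \mathrm{ord}_{s=1}L(E,s) \bmod 2$; and the $p$-parity theorem, which gives $r_{p^\infty}\equiv \mathrm{ord}_{s=1}L(E,s)\bmod 2$. Combining these with the bound $r_{p^\infty}-1$ forces $\theta_S\in I_S^{r_{p^\infty}}\subseteq I_S^{r_E}$ (Theorem~\ref{p-vanishing}). Without this parity upgrade, your argument stops one power short, so as written the proof does not go through. Secondary, but worth noting: your description of the derivative argument (``the derived class is shown to lie in the image of $E(\Q)\otimes R$'') is not what is proved; the paper's Theorem~\ref{keythm} is a divisibility statement for $D\z_S$ modulo $q$, established by a Chebotarev/global-duality induction on the weight of $D$, and the relevant local vanishing at $p$ is \emph{not} automatic but is precisely what creates the $-1$ gap.
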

\begin{rem}\label{first remark}
\begin{enumerate}
\item The density (if it exists)  of primes $\ell$ that satisfy the assumption of Theorem \ref{main} is greater than $0.99$ (see Remark \ref{density computation} for the detail).
 We also note that each good supersingular prime $\ell $ of $E$ satisfies the assumption of Theorem \ref{main}.
\item We mention known results on Conjecture \ref{main conj intro}.
When $p$ is a good ordinary prime, Kato's result (\cite{kat}) on the $p$-adic BSD conjecture proves that $\theta_{p^n} \in \Zp\otimes I_{p^n}^{r_E}$ for $n\ge 0$.
Kurihara's result in \cite{kur14} implies that 
$\theta_S \in \Z_{p}\otimes I_S^{r_E}$ where $S$ does not need to be a power of $p$  (cf.\  \cite[Remark 2]{kur14} and \cite[Proposition 3]{m-t}).
However he was assuming the $\mu=0$ conjecture.
For a supersingular prime $p$, in their unpublished work, Emerton, Pollack and Weston seem to have proved a similar assertion at least when $S$ is a power of $p$. 
Tan \cite{tan} proved Conjecture \ref{main conj intro} for many $S$ without extending the scalar to $\Zp$. 
However, he was assuming the \textit{full} BSD conjecture not only over $\Q$ but also over cyclic extensions $K$ of $\Q$ inside $\Q(\mu_S).$
Note that Theorem \ref{main} does not require the validity of any conjecture.
\item It may happen that $\theta_S$ has an extra zero, that is, $\theta_S \in  I_S^{r_E+\mathrm{sp}(S)+1}$ (see Remark \ref{remark on the conj} and Theorem \ref{p-adic trivial zeros}).
\item 
By Serre \cite{ser72}, there are only finitely many primes satisfying (ii).
By \cite[Lemma 8.18]{maz72}, if $E(\Q)$ has a non-trivial torsion point
then there are at most three good primes $p$ dividing $|E(\mathbb{F}_p)|$.
\item  The assertion that $\theta_S \in R[G_S]$ is due to \cite[\S 3]{ste} and  (ii).
\end{enumerate}
\end{rem}
By \cite[Theorem 2]{coj} and \cite[Th\'eor\`em $4^{\prime}$]{ser72}, we have the following.
\begin{cor}\label{kind example}We assume that $E(\Q)$ has a non-trivial torsion point.
We put 
\begin{equation*}
d=\max\left\{r_E,\ \frac{4\sqrt{6}}{3}N\prod_{\ell|N}\left(1+\frac{1}{\ell}\right)^{\frac{1}{2}}+1 \right\}\ \ \ \text{and}\ \ R=\Z[p^{-1}; p \ \text{is a prime less than}\  d]. 
\end{equation*}
Then, for every square-free product $S$ of good supersingular primes, $\theta_S \in R[G_S]$ and 
\begin{equation*}
\theta_S \in I_S^{r_E}.
\end{equation*}
\end{cor}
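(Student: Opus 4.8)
The plan is to derive Corollary \ref{kind example} from Theorem \ref{main} by showing that, under the stated hypotheses, every good supersingular prime $\ell$ automatically satisfies the cyclicity condition in Theorem \ref{main}, and that the ring $R$ defined by the explicit $d$ contains inverses of all the primes that Theorem \ref{main} requires to be invertible. The two cited results are the tools: \cite[Th\'eor\`eme $4'$]{ser72} bounds the primes $p$ for which the mod-$p$ Galois representation fails to be surjective (these are the primes of condition (ii)), and \cite[Theorem 2]{coj} gives an explicit such bound, namely that every prime $p$ with a non-surjective mod-$p$ representation satisfies $p \le \frac{4\sqrt{6}}{3} N \prod_{\ell \mid N}(1 + 1/\ell)^{1/2}$; hence all such primes are $< d$ and are invertible in $R$.

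First I would enumerate the primes that must be invertible in $R$ for Theorem \ref{main} to apply: those satisfying (i), (ii), or (iii). Condition (iii) asks for $p < r_E \le d$, so those are handled since $R$ inverts all primes less than $d$. Condition (ii) is handled by \cite[Theorem 2]{coj} and \cite[Th\'eor\`eme $4'$]{ser72} as just described, giving $p < d$. The subtle point is condition (i): a priori it involves $6N$, the $|E(\mathbb{F}_p)|$, and the Tamagawa-type indices $[E(\Q_\ell):E_0(\Q_\ell)]$, and these need not all be bounded by $d$. Here one uses that $E(\Q)$ has a non-trivial torsion point together with \cite[Lemma 8.18]{maz72} (quoted in Remark \ref{first remark}(4)): there are at most three good primes $p$ dividing $|E(\mathbb{F}_p)|$, and — this is the key arithmetic input — such a prime $p$ must itself be small, in fact $p \le$ (something comparable to $N$), so again $p < d$; likewise $6N$ and the local indices only involve primes dividing $N$ or the primes $2,3$, all of which are $< d$ by the definition of $d$ (the term $\frac{4\sqrt6}{3}N\prod(1+1/\ell)^{1/2}+1$ dominates $N$ and hence any prime dividing $6N$). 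So $R$ inverts every prime in (i)–(iii).

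With that verified, I would take any square-free product $S$ of good supersingular primes. For a supersingular prime $\ell$ we have $a_\ell(E) \equiv 0$, and more to the point $|E(\mathbb{F}_\ell)| = \ell + 1 - a_\ell$ with $|a_\ell| \le 2\sqrt{\ell}$, so by Remark \ref{first remark}(1) each good supersingular $\ell$ satisfies the cyclicity hypothesis of Theorem \ref{main} — indeed $E(\mathbb{F}_\ell)[p]$ being non-cyclic would force $p^2 \mid |E(\mathbb{F}_\ell)|$ with $E(\mathbb{F}_\ell) \supseteq (\Z/p\Z)^2$, which by the Weil bound and the Hasse interval forces $\ell$ to be small and $p \mid \ell+1-a_\ell$ in a constrained way; supersingularity rules this out except possibly for small primes, all already inverted in $R$. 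Hence $S$ is of the type covered by Theorem \ref{main}, and Theorem \ref{main} directly yields $\theta_S \in R[G_S]$ and $\theta_S \in I_S^{r_E}$, which is the assertion.

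The main obstacle I anticipate is the bookkeeping in the previous paragraph: making precise that "supersingular $\Rightarrow$ cyclicity for all non-inverted $p$" and that all exceptional primes in (i) are genuinely $< d$. This is not deep — it is a matter of combining the Hasse bound $|a_\ell| \le 2\sqrt\ell$, the structure of $E(\mathbb{F}_\ell)$ as a product of at most two cyclic groups, Mazur's bound on torsion primes, and the Cojocaru–Serre effective surjectivity bound — but it requires care to check that no prime slips through, in particular that the crude bound in the definition of $d$ really does dominate $6N$, the torsion primes, and the Cojocaru bound simultaneously. Once these inequalities are laid out, the corollary is immediate from Theorem \ref{main}.
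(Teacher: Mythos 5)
Your plan---reduce to Theorem \ref{main} and check that every prime appearing in conditions (i)--(iii) of Subsection \ref{subsection: main result} is less than $d$, using the explicit Cojocaru--Serre surjectivity bound for (ii)---is the intended one, and your handling of (ii) and (iii) is correct. However, your treatment of condition (i) contains a genuine error: you assert that ``the local indices only involve primes dividing $N$ or the primes $2,3$.'' This is false. At a prime $\ell$ of split multiplicative reduction one has $m_\ell=[E(\Q_\ell):E_0(\Q_\ell)]=-\mathrm{ord}_\ell(j_E)=\mathrm{ord}_\ell(\Delta_{\min})$, an arbitrary positive integer whose prime factors need not divide $6N$; already the curve $11a1$ has $m_{11}=5$ while $5\nmid 6\cdot 11$. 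So it does not follow from the shape of $d$ that every prime dividing $\prod_{\ell\mid N}m_\ell$ is $<d$, and your argument leaves condition (i) unverified for exactly these primes; a separate bound or citation is needed to close this.

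The remaining steps are on the right track but should be made precise rather than asserted. For $p\mid|E(\mathbb{F}_p)|$ you say only that such $p$ ``must be small''; the actual argument uses the torsion hypothesis: choose a prime $q\mid|E(\Q)_{\tors}|$ (so $q\le 7$ by Mazur). If $p\neq q$, reduction mod $p$ injects $E(\Q)[q]$ into $E(\mathbb{F}_p)$, so $pq\mid|E(\mathbb{F}_p)|<(\sqrt p+1)^2$, which forces $p\le5$; if $p=q$ then $p\le 7$. Either way $p<d$. For the supersingular cyclicity, the clean version is: if $E(\mathbb{F}_\ell)[p]\cong(\Z/p)^{\oplus 2}$, the Weil pairing forces $p\mid\ell-1$, and we also have $p^2\mid|E(\mathbb{F}_\ell)|=\ell+1-a_\ell$; for supersingular $\ell\ge5$, $a_\ell=0$, hence $p\mid(\ell+1)-(\ell-1)=2$, contradicting $p\ge 5$ (recall $2,3<d$). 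With these steps spelled out and, crucially, the $\prod_{\ell\mid N}m_\ell$ gap filled, the corollary would follow from Theorem \ref{main} as you intend.
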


In this paper, we also give a partial evidence (Theorem \ref{theorem on leading term}) of the part of the Mazur-Tate refined conjecture which relates arithmetic invariants such as the Tate-Shafarevich group $\Sha$ to the \textit{leading coefficient} of $\theta_S$ defined as 
 the image $\tilde{\theta}_S$ of $\theta_S$ in $I_S^{r_E}/I_S^{r_E+1}$. 
The following is a special case of Theorem \ref{theorem on leading term}.
\begin{thm}\label{leading term thm in intro} Let $p$ be a prime not invertible in $R$ and  $S$  a square-free product of good primes $\ell$ such that $\ell \equiv 1 \bmod p$ and $E(\mathbb{F}_{\ell})[p]$ is cyclic.  
If $\tilde{\theta}_S \not\equiv 0 \bmod p(I_S^{r_E}/I_S^{r_E+1})$, then 
\begin{center}
 $\Sha[p]=0$\ \ and \ \ $p\nmid J_S$,
\end{center}
where  $J_S$ is the order of the cokernel of the map $E(\Q) \to \left(\oplus_{\ell |S} E(\mathbb{F}_{\ell})\right)\oplus\left(\oplus_{\ell |N}E(\Q_{\ell})/E_0(\Q_{\ell})\right)$. 
\end{thm}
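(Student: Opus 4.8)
The plan is to reverse the Euler-system argument behind Theorem \ref{main} and run it at finite level modulo $p$: the hypothesis that $\tilde\theta_S$ is not divisible by $p$ in $I_S^{r_E}/I_S^{r_E+1}$ will force the relevant Kolyvagin-type derivative classes built from Kato's zeta elements to be non-degenerate, and Poitou--Tate global duality, together with the interpretation of these classes as a discrete regulator, will then give both $\Sha[p]=0$ and $p\nmid J_S$.

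First I would recall the finite-level link between $\theta_S$ and Kato's Euler system $\{z_m\}$ for $T=T_pE$. For $\ell\mid S$ the Euler-system distribution relation along $\Q(\mu_\ell)/\Q$ has an Euler factor at $\ell$ which, because $\ell\equiv 1\bmod p$ and $E(\mathbb{F}_\ell)[p]$ is cyclic, becomes divisible by $p$; so for every $S'\mid S$ one extracts from $z_{S'}$ a derivative class $\kappa_{S'}\in H^1(\Q,E[p])$, unramified outside $S'p$, whose singular part $\partial_\ell\kappa_{S'}\in H^1_{/\ur}(\Q_\ell,E[p])\cong E(\mathbb{F}_\ell)[p]$ at $\ell\mid S'$ satisfies the finite--singular congruence relating it to the unramified localization of $\kappa_{S'/\ell}$ at $\ell$, and whose localization at $p$ is controlled by Kato's explicit reciprocity law. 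The $I_S$-adic filtration on $R[G_S]$ matches, graded piece by graded piece, the filtration produced by the $G_S$-action on $z_S$ modulo $p$, and tracing this through identifies the leading term $\tilde\theta_S$ modulo $p$, up to a $p$-adic unit and up to the $p$-parts of the Tamagawa numbers $c_\ell$ ($\ell\mid N$), with the tuple of localizations of the top derivative $\kappa_S$, under the canonical isomorphisms $I_\ell/I_\ell^2\otimes\Z/p\Z\cong G_\ell\otimes\Z/p\Z$. This is exactly the computation behind Theorem \ref{theorem on leading term}, here specialized to divisors of $S$ and reduced modulo $p$; the main obstacle will be to carry it out while keeping precise track of all the unit, Tamagawa and period factors.

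Granting the comparison, $\tilde\theta_S\not\equiv 0\bmod p$ forces $\kappa_S\neq 0$, $\partial_\ell\kappa_S\neq 0$ for every $\ell\mid S$ (so in particular $E(\mathbb{F}_\ell)[p]\cong\Z/p\Z$ rather than $0$, by cyclicity), $p\nmid c_\ell$ for $\ell\mid N$, and $\loc_p\kappa_S\notin H^1_f(\Q_p,E[p])$. I would then feed the non-vanishing of the family $\{\kappa_{S'}\}_{S'\mid S}$ into the Poitou--Tate pairing $\sum_v\langle\loc_v\kappa_{S'},\loc_v x\rangle=0$, $x\in\Sel(\Q,E[p])$, and argue exactly as in the proof of Theorem \ref{main} --- but now with no error term, since the derivative classes are non-degenerate modulo $p$ --- to conclude $\dim_{\mathbb{F}_p}\Sel(\Q,E[p])=r_E$. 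Since $p$ is not invertible in $R$ we have $p\geq 5$ and $p\nmid N|E(\mathbb{F}_p)|$, so $E(\Q)[p]=0$, and the exact sequence $0\to E(\Q)/p\to\Sel(\Q,E[p])\to\Sha[p]\to 0$ yields $\Sha[p]=0$. Finally, feeding $\Sha[p]=0$ back into the comparison of the previous paragraph, together with the identifications $\partial_\ell\kappa_S\leftrightarrow E(\mathbb{F}_\ell)[p]$ and the computation of $\loc_p\kappa_S$, rewrites the nonzero element $\tilde\theta_S\bmod p$, up to a unit, as the determinant of the reduction modulo $p$ of $E(\Q)\to\bigl(\bigoplus_{\ell\mid S}E(\mathbb{F}_\ell)\bigr)\oplus\bigl(\bigoplus_{\ell\mid N}E(\Q_\ell)/E_0(\Q_\ell)\bigr)$; hence $p\nmid J_S$.
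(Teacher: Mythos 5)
Your outline correctly identifies the main ingredients --- the comparison of $\theta_S$ with Kato's Euler system via a Taylor-type expansion, the finite--singular comparison at the auxiliary primes, and a Poitou--Tate/Hasse-principle argument to control the Selmer group --- and your derivation of $\Sha[p]=0$ is essentially in the spirit of the paper (which runs it through Theorem \ref{s} and Corollary \ref{toward leading term}, not directly through Theorem \ref{main}). However, the final step is a real gap, and one intermediate claim is actually false.

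The gap: you claim that once $\Sha[p]=0$ is known, the nonzero class $\tilde\theta_S\bmod p$ can be rewritten ``up to a unit'' as the determinant of the reduction modulo $p$ of $E(\Q)\to\bigl(\oplus_{\ell\mid S}E(\mathbb{F}_\ell)\bigr)\oplus\bigl(\oplus_{\ell\mid N}E(\Q_\ell)/E_0(\Q_\ell)\bigr)$. Such a determinant identity is essentially the leading-coefficient formula of the Mazur--Tate refined conjecture (Conjecture \ref{full refined}) reduced mod $p$, which the paper explicitly does \emph{not} prove --- the whole point of Theorem \ref{theorem on leading term} is to extract the consequences $\Sha[p]=0$ and $p\nmid J_S$ \emph{without} establishing that formula. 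None of the tools available here (Theorems \ref{keythm}, \ref{general}, \ref{s}, Corollary \ref{toward leading term}) compute local pairings of derivative classes against Mordell--Weil points; they only give divisibility statements and rank (in)equalities. The paper's actual route to $p\nmid J_S$ is a dimension count: Theorem \ref{s} gives $r_E\ge r_p(H^1_{f,S'})+r_p(A_p(S'))$ from the non-crystalline localization of the relevant order-$r_E$ derivative, Lemma \ref{r} gives $r_p(H^1_{f,S'})+r_p(A_p(S'))\ge r_p(\Sel(\Q,E[p]))$, and comparing with $r_E\le r_p(\Sel(\Q,E[p]))$ forces all these to be equalities; with $\Sha[p]=0$ this exactly says the localization map $E(\Q)/p\to\oplus_{\ell\mid S'}E(\Q_\ell)/p$ is surjective, and combined with vanishing of $E(\Q_\ell)/p$ for $\ell\mid S/S'$ and the assumption $p\nmid\prod_{\ell\mid N}m_\ell$ this yields $p\nmid J_S$. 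You would need to replace the determinant claim with this rank-counting argument.

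Secondly, your intermediate assertion that $\tilde\theta_S\not\equiv 0\bmod p$ forces $\partial_\ell\kappa_S\neq 0$ for \emph{every} $\ell\mid S$, hence $E(\mathbb{F}_\ell)[p]\cong\Z/p\Z$ for all $\ell\mid S$, is wrong. Since $\mathrm{ord}$ of the relevant derivative is exactly $r_E$, its conductor $S'$ divides at most $r_E$ of the prime factors of $S$, and the paper proves (inside Corollary \ref{toward leading term}) the \emph{opposite} for the remaining primes: $E(\Q_\ell)/p=0$ for every $\ell\mid S/S'$. This vanishing is in fact what makes the surjectivity onto $\oplus_{\ell\mid S}E(\Q_\ell)/p$ go through. (Also, the Tamagawa factors you try to track drop out: $p\nmid\prod_{\ell\mid N}m_\ell$ is built into the hypotheses on $R$, so they are units mod $p$ and do not appear in the comparison.)
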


\subsection{The plan of proof}\label{subsection: the proof}
We briefly explain how to prove Theorem \ref{main}.
 By a group ring theoretic argument (Lemma \ref{globallocal}), we are reduced to proving that
\begin{equation}\label{final goal}
\theta_S \in \Zp\otimes_R I_S^{r_E} \ \ \text{for all primes} \ p\ \text{not invertible in}\ R.
\end{equation}
Let $p$ be a such prime and
  $r_{p^{\infty}}$ the $\Zp$-corank of the (discrete) Selmer group $\Sel(\Q,E[p^\infty])$.

Our strategy of the proof of (\ref{final goal}) is to show that \textit{Darmon-Kolyvagin derivatives} of Kato's Euler system are divisible by a power of $p$.
In order to investigate the  \textit{divisibility}, we modify an argument of Darmon \cite{dar92}, who proposed a refined conjecture for Heegner points and proved an analogue of Conjecture \ref{main conj intro} in many cases (see \cite{l-v} for a generalization to Heegner cycles).
Next, by modifying ideas of Kurihara \cite{kur02}, Kobayashi \cite{kob}  and Otsuki \cite{ots},
we relate Kato's Euler system with Mazur-Tate elements.
Then, the derivatives of Kato's Euler system appear in the coefficients of the \textit{Taylor expansion} of $\theta_S$.
By the divisibility of the derivatives and a group-ring theoretic argument, we show that $\theta_S$ belongs to a power of the augmentation ideal.
However, our modification of Darmon's argument implies only that 
\begin{equation}\label{-1 gap}
\theta_S \in \Zp \otimes_R I_S^{\min\{r_{p^{\infty}}-1, p \}}.
\end{equation}
One might expect that  Darmon's argument  implies that $\theta_S \in \Zp \otimes_R I_S^{\min\{r_{p^{\infty}}, p \}}.$
The obstruction is the difference between the local condition at $p$ of Heegner points and that of Kato's Euler system.
The localization of Heegner points at $p$ obviously comes from local rational points (i.e. it is crystalline at $p$), and then Heegner points are related to the usual Selmer group.
However,  the localization of Kato's Euler system does not necessarily come from a local rational point, and then
we can relate Kato's Euler system only with the strict Selmer group $H^1_{f,p}(\Q,E[p^\infty])$, whose local condition  at $p$ is zero.
Since the corank of $H^1_{f,p}(\Q,E[p^\infty])$ is not necessarily greater than $r_{p^\infty}-1,$ we have only (\ref{-1 gap}).

Our idea for deducing (\ref{final goal}) from (\ref{-1 gap}) is to  apply the $p$-parity conjecture, which is now a theorem (cf.\ \cite{dok}, \cite{kim}, \cite{nek}). 
It asserts that $r_{p^{\infty}} \equiv \mathrm{ord}_{s=1}(L(E,s)) \bmod 2$.
On the other hand, the functional equation of $\theta_S$ implies that if $\theta_S \in (\Zp\otimes I_S^b) \setminus ( \Zp\otimes I_S^{b+1})$ for some $b >0$ then $b \equiv \mathrm{ord}_{s=1}L(E,s) \bmod 2.$
Combining these congruences with (\ref{-1 gap}), we deduce (\ref{final goal}).

\begin{rem}Divisibility of derivatives of Euler systems plays an important role in the proof of not only Theorem \ref{main} but also 
other theorems.
By investigating such divisibility, in this paper, we also show Theorems \ref{leading term thm in intro} and  \ref{sel}. The latter theorem gives a construction of $\Q$-rational points of $E$ (modulo $p$) from certain indivisibility of Euler systems.
\end{rem}

\textbf{Notation.}
Throughout this paper, let $E$ be an elliptic curve over $\Q$ of conductor $N$ without complex multiplication. We put $r_E=\mathrm{rank}(E(\Q))$ and $m_{\ell}=[E(\Q_{\ell}):E_0(\Q_{\ell})].$

For an abelian group $M$ and an integer $n$, we write $M/n=M/nM$.
We denote  by $M_{\tors}$ the maximal torsion subgroup of $M$.
 
For a field $K$, 
we denote by $G_K$ the absolute Galois group $\Gal(\overline{K}/K)$, where $\overline{K}$ is a separable closure of $K$.
We fix embeddings $\overline{\Q} \hookrightarrow \mathbb{C}$ and $\overline{\Q} \hookrightarrow \overline{\Q}_p$ for every prime $p$. 
For an integer $S$, we put $\zeta_S = \mathrm{exp}(2
\pi i/S)$ and  $G_S=\Gal(\Q(\zeta_S)/\Q).$


\textbf{Acknowledgements.}
This paper is based on the author's thesis. He would like to express his sincere gratitude to his advisor Professor Shinichi Kobayashi for his insightful advice and discussion.
Part of this work was completed while the author was visiting l'Institute de Math\'emetiques de Jussieu with a support by the JSPS Strategic Young Researcher Overseas Visits Program for Accelerating Brain Circulation.
He is deeply grateful to Professor Jan Nekov\'a\v r for his hospitality.
Thanks are due to  Professor Masato Kurihara for informing the author of his work related to the Mazur-Tate refined conjecture.
The author would like to thank Matteo Longo for discussion, and Chan-Ho Kim for showing his note of talks by Robert Pollack.
This work was supported by Grant-in-Aid for JSPS Fellows 12J04338.

\section{Mazur-Tate elements}\label{chapter of refined BSD}
In this section, we recall the definition of Mazur-Tate elements, and we briefly review the Mazur-Tate refined conjecture of BSD type in a simple case. 

 We fix a global minimal Weierstrass model of $E$ over $\Z$ and the N\'eron differential $\omega$.
  Then, we have a natural map from the first homology group $H_1(E(\C),\Z)$ to $\C$ 
\begin{equation*}
H_1(E(\C),\Z) \to \C; \ \ \ \gamma \mapsto \int_{\gamma}\omega.
\end{equation*}We denote by $\Lambda$ the image of this map.
Let $\Omega^{+}, -i \Omega^{-}>0$ be the largest  numbers  such that 
\begin{equation*}
\Lambda \subseteq \Z\Omega^{+} \oplus \Z  \Omega^{-}.
\end{equation*} By \cite{wil}, \cite{t-w} and \cite{b-c-d-t}, let $f(z)=\sum_{n\ge 1}a_n \exp(2\pi i nz)$ be the  newform corresponding to $E$.
Let $L(E,s)=\sum_{n\ge 1} a_nn^{-s}$ be the Hasse-Weil $L$-function of $E$.
For a Dirichlet  character $\chi$, we put  $L(E,\chi,s)= \sum_{n\ge 1} \chi(n)a_nn^{-s}$.
For integers $a$ and $S$ with $S>0$, we define $\left[a/S\right]_E^{\pm} \in \R$ by
\begin{equation*}
2\pi\int_0^{\infty}f\left(\frac{a}{S}+it\right)dt =\left[\frac{a}{S}\right]_E^+\Omega^{+} +\left[\frac{a}{S}\right]_E^-\Omega^{-}.
\end{equation*}
The Manin-Drinfeld theorem (\cite{dri}, \cite{man}) implies that
  $\left[a/S\right]_E^{\pm}\in \Q.$
In the terminology of \cite{m-t-t},
\begin{equation}\label{known terminology}
\lambda(f,1;-a,S)=\left[\frac{a}{S}\right]_E^+\Omega^{+} +\left[\frac{a}{S}\right]_E^-\Omega^{-}.
\end{equation}

\begin{dfn}\label{def of mazur-tate}For a positive integer $S$,
we define an element $\theta_S$ of $\Q[G_S]$ by
\begin{equation*}
\theta_S=\sum_{a\in(\Z/S\Z)^{\times}}\left( \left[\frac{a}{S}\right]_E^++\left[\frac{a}{S}\right]_E^-\right)\delta_a\in\Q[G_S],
\end{equation*}where $\delta_a\in G_S$ is the element satisfying $\delta_a\zeta_S=\zeta_S^a$. 
We call $\theta_S$ the \textit{Mazur-Tate element}.
\end{dfn}
\begin{rem}\label{different notation}
Our $\theta_S$ slightly differs from the original Mazur-Tate element, which is called the \textit{modular element} in \cite{m-t}.
The image of $\frac{1}{2}\theta_S$ in $\Q[\Gal(\Q(\mu_S)^{+}/\Q)]$ coincides with their modular element,
where $\Q(\mu_S)^{+}$ is the maximal totally real subfield of $\Q(\mu_S)$.
\end{rem}
For $n|m$, we denote by $\pi_{m/n}$ the map $\Q[G_m] \to \Q[G_n]$ induced by the natural surjection $G_m\to G_n.$
For a character $\chi $ of $G_S$, we put 
\begin{equation*}
\tau_S(\chi)=\sum_{\gamma\in G_S} \chi(\gamma)\zeta_S^{\gamma}.
\end{equation*}
In this paper, square-free integers $S$ are of particular interests.
By (\ref{known terminology}) and \cite[Chapter 1, \S 4 and \S 8]{m-t-t}, we have the following.
\begin{prop}\label{mazur-tate}
\begin{enumerate} \item Let $S$ be a positive integer and $\ell$ a prime not dividing $S$.
Then,
\begin{align*}
\pi_{S\ell/S}\theta_{S\ell}&= -\Fr_{\ell}(1-a_{\ell}\Fr_{\ell}^{-1}+\epsilon(\ell)\Fr_{\ell}^{-2})\theta_S,
\end{align*}where $\epsilon$ is the trivial Dirichlet character modulo $N$, and $\Fr_{\ell}$ denotes the arithmetic Frobenius of $\ell$.
\item For a character $\chi$ of $G_S$ with conductor $S$,
we have
\begin{equation*}
\chi(\theta_S)=\tau_S(\chi)\frac{L(E,\chi^{-1},1)}{\Omega^{\chi(-1)}}.
\end{equation*}
\end{enumerate}
\end{prop}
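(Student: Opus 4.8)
The plan is to reduce both assertions to the elementary formula
\begin{equation*}
\lambda(f,1;-a,S)=\left[\frac{a}{S}\right]_E^{+}\Omega^{+}+\left[\frac{a}{S}\right]_E^{-}\Omega^{-}=\sum_{n\ge 1}\frac{a_n}{n}e^{2\pi i n a/S},
\end{equation*}
which follows from (\ref{known terminology}) by integrating the $q$-expansion of $f$ term by term, together with the Hecke eigenform relation $a_{\ell m}=a_{\ell}a_m-\epsilon(\ell)\ell\,a_{m/\ell}$ (valid for every prime $\ell$ and every $m\ge 1$, with the convention $a_{m/\ell}:=0$ when $\ell\nmid m$), which holds because $f$ is a normalized newform of weight $2$ with trivial nebentypus. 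I will also use that $\Omega^{+}$ is a nonzero real number and $\Omega^{-}$ a nonzero purely imaginary one, so that $\{\Omega^{+},\Omega^{-}\}$ is an $\R$-basis of $\C$ and the displayed formula determines $[a/S]_E^{+}$ and $[a/S]_E^{-}$ individually; and that, by the substitution $a\mapsto -a$ combined with $f(-\bar z)=\overline{f(z)}$ (legitimate since the $a_n$ are real), $a\mapsto[a/S]_E^{+}$ is even and $a\mapsto[a/S]_E^{-}$ is odd.

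For (1), I would first expand $\pi_{S\ell/S}\theta_{S\ell}$ in the basis $\{\delta_a\}_{a\in(\Z/S\Z)^{\times}}$ of $\Q[G_S]$. Since the restriction $G_{S\ell}\to G_S$ carries $\delta_b$ to $\delta_{b\bmod S}$, the coefficient of $\delta_a$ in $\pi_{S\ell/S}\theta_{S\ell}$ is $\sum_{b}\left([b/S\ell]_E^{+}+[b/S\ell]_E^{-}\right)$, the sum running over $b\in(\Z/S\ell\Z)^{\times}$ with $b\equiv a\bmod S$. Writing such a $b$ as $a+Sk$ with $0\le k\le\ell-1$, using the identity $\frac{a+Sk}{S\ell}=\frac{a/S+k}{\ell}$, evaluating $\sum_{k=0}^{\ell-1}e^{2\pi ink/\ell}$, and then applying the Hecke relation, one gets
\begin{equation*}
\sum_{k=0}^{\ell-1}\lambda\bigl(f,1;-(a+Sk),S\ell\bigr)=a_{\ell}\,\lambda(f,1;-a,S)-\epsilon(\ell)\,\lambda(f,1;-\ell a,S).
\end{equation*}
This sum is over all $\ell$ residues $b\equiv a\bmod S$; to obtain the sum over the units one subtracts the single term with $\ell\mid b$, say $b=\ell c$ with $c\equiv\ell^{-1}a\bmod S$, which by the $q$-expansion formula equals $\lambda(f,1;-c,S)=\lambda(f,1;-\ell^{-1}a,S)$. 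Reading off the $\Omega^{+}$- and $\Omega^{-}$-components and adding them, the coefficient of $\delta_a$ in $\pi_{S\ell/S}\theta_{S\ell}$ becomes $a_{\ell}c_a-\epsilon(\ell)c_{\ell a}-c_{\ell^{-1}a}$, where $c_x:=[x/S]_E^{+}+[x/S]_E^{-}$. Since $\Fr_{\ell}$ acts on $\{\delta_a\}$ by $\delta_a\mapsto\delta_{\ell a}$, this is exactly the coefficient of $\delta_a$ in $(a_{\ell}-\Fr_{\ell}-\epsilon(\ell)\Fr_{\ell}^{-1})\theta_S=-\Fr_{\ell}(1-a_{\ell}\Fr_{\ell}^{-1}+\epsilon(\ell)\Fr_{\ell}^{-2})\theta_S$, which proves (1).

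For (2), I would apply a primitive character $\chi$ of conductor $S$, so that $\chi(\theta_S)=\sum_{a\in(\Z/S\Z)^{\times}}\chi(a)\left([a/S]_E^{+}+[a/S]_E^{-}\right)$; by the parity observation above, the part of the sum of parity opposite to $\chi(-1)$ vanishes, whence $\chi(\theta_S)=\sum_a\chi(a)[a/S]_E^{\chi(-1)}$ (with $[\,\cdot\,]^{+1}:=[\,\cdot\,]^{+}$, $[\,\cdot\,]^{-1}:=[\,\cdot\,]^{-}$). On the other hand, the Gauss-sum identity $\tau(\chi)\chi^{-1}(n)=\sum_{a\bmod S}\chi(a)e^{2\pi ina/S}$ for primitive $\chi$ yields $f_{\chi^{-1}}=\tfrac{1}{\tau(\chi)}\sum_a\chi(a)f(z+a/S)$ for the twist $f_{\chi^{-1}}:=\sum_n\chi^{-1}(n)a_ne^{2\pi inz}$; substituting this into the Mellin formula $L(E,\chi^{-1},1)=2\pi\int_0^{\infty}f_{\chi^{-1}}(it)\,dt$ and using the displayed formula gives
\begin{equation*}
L(E,\chi^{-1},1)=\frac{1}{\tau(\chi)}\sum_{a}\chi(a)\bigl([a/S]_E^{+}\Omega^{+}+[a/S]_E^{-}\Omega^{-}\bigr),
\end{equation*}
and again the term of parity opposite to $\chi(-1)$ drops out, leaving $L(E,\chi^{-1},1)=\tau(\chi)^{-1}\Omega^{\chi(-1)}\chi(\theta_S)$. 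Since $\tau(\chi)=\sum_a\chi(a)\zeta_S^{a}=\tau_S(\chi)$, rearranging gives the assertion $\chi(\theta_S)=\tau_S(\chi)L(E,\chi^{-1},1)/\Omega^{\chi(-1)}$.

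All the computations are routine; the only point requiring care is the bookkeeping with normalizations: the $\pm$-decomposition and its parity, the location of the factor $\ell$ in the weight-$2$ Hecke relation, and, above all, the fact that $\pi_{S\ell/S}$ records only the units modulo $S\ell$. It is exactly the resulting ``missing'' non-unit term $\lambda(f,1;-\ell^{-1}a,S)$ that turns the would-be symmetric Euler factor into the twisted one, producing the prefactor $-\Fr_{\ell}$ in (1). Granting (\ref{known terminology}), both (1) and (2) are in fact the Mazur--Tate--Teitelbaum relations \cite[Chapter 1, \S 4 and \S 8]{m-t-t} written in the present normalization.
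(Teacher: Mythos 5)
Your proposal is correct. The paper does not give an in-text proof of this proposition; it is stated by citation of (\ref{known terminology}) together with \cite[Chapter 1, \S 4 and \S 8]{m-t-t}, so there is no argument in the paper to compare against step by step. What you have written is a complete, self-contained derivation that correctly reconstructs what that citation leaves implicit. The steps all check: the term-by-term integration giving $\lambda(f,1;-a,S)=\sum_{n\ge 1}\frac{a_n}{n}e^{2\pi ina/S}$; the coset decomposition of $(\Z/S\ell\Z)^{\times}$ over $(\Z/S\Z)^{\times}$, which combined with the weight-$2$ Hecke relation $a_{\ell m}=a_{\ell}a_m-\epsilon(\ell)\ell a_{m/\ell}$ produces the three terms $a_{\ell}c_a-\epsilon(\ell)c_{\ell a}-c_{\ell^{-1}a}$ (with $c_x=[x/S]^{+}_E+[x/S]^{-}_E$) matching the corresponding coefficient in $(a_{\ell}-\epsilon(\ell)\Fr_{\ell}^{-1}-\Fr_{\ell})\theta_S$; the parity of $[\,\cdot\,]^{\pm}_E$, deduced from the reality of the $a_n$ together with the normalization $\Omega^{+}>0$, $-i\Omega^{-}>0$; and the Gauss-sum twist for primitive $\chi$, which identifies the classical $\tau(\chi)$ with $\tau_S(\chi)$ and yields the twisted Mellin integral. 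You are also right to highlight the bookkeeping point in (1): the unique residue $b\equiv a\bmod S$ with $\ell\mid b$ is excluded from the sum defining $\theta_{S\ell}$, and this missing coset contributes exactly the extra $-\Fr_{\ell}$ term, turning $a_{\ell}-\epsilon(\ell)\Fr_{\ell}^{-1}$ into $-\Fr_{\ell}P_{\ell}(\Fr_{\ell}^{-1})$; this is precisely the mechanism behind the trivial zeros the paper exploits later (Proposition \ref{trivial zeros of euler system}, Theorem \ref{p-adic trivial zeros}).
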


We briefly review the Mazur-Tate refined conjecture of BSD type.
Let $S$ be a square-free positive integer and $R$ a subring 
 of $\Q$ such
that $|E(\Q)_{\tors}| \in R^{\times}$ and $\theta_S \in R[G_S]$.
For simplicity, we assume that $S$ is relatively prime to $N$. 
If $\theta_S \in I_S^{r_E}$, then we denote by $\tilde{\theta}_S$ the image of $\frac{1}{2}\theta_S$ in $I_{S^{+}}^{r_E}/I_{S^{+}}^{r_E+1},$
where $I_{S^{+}}$ denotes the augmentation ideal of $R[\Gal(\Q(\mu_S)^{+}/\Q)]$.
We note that our $\tilde{\theta}_S$ coincides with the leading coefficient considered in \cite{m-t} (cf.\ Remark \ref{different notation}).
For each positive divisor $T$ of $S$, 
we denote by $J_T$ the order of the cokernel of the natural map
\begin{equation*}
 E(\Q) \to \left(\oplus_{\ell|T}E(\mathbb{F}_{\ell})\right) \oplus \left(\oplus_{\ell|N}E(\Q_{\ell})/E_0(\Q_{\ell})\right).
\end{equation*}
\begin{conj}[Mazur-Tate]\label{full refined}Let $S$ be a square-free positive integer relatively prime to $N$.
Then $\theta_S \in I_S^{r_E}$, the Tate-Shafarevich group $\Sha$ of $E$ over $\Q$ is finite and
\begin{equation*}
\tilde{\theta}_S = |\Sha |\cdot \sum_{T|S>0}(-1)^{\nu(T)}J_T\cdot\eta_{r_E}\left( \mu_{S,T}(d_T)\right) \in I_{S^{+}}^{r_E}/I_{S^{+}}^{r_E+1},
\end{equation*}where $\nu(T)$ denotes the number of primes dividing $T$. 
See \cite[(2.5), (2.6), (3.1)]{m-t} for $d_T, \mu_{S,T}$ and $\eta_{r_E}$.
\end{conj}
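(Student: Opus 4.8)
The plan is to work one prime at a time. Since $\theta_S\in R[G_S]$, a group-ring argument of the kind used in Lemma~\ref{globallocal} reduces the equality of Conjecture~\ref{full refined} to its $p$-primary shadow for each prime $p$ not invertible in $R$; so, granting finiteness of $\Sha$, it suffices to prove for every such $p$ that $\theta_S\in\Zp\otimes_R I_S^{r_E}$ and that the image of $\tilde{\theta}_S$ in $\bigl(I_{S^{+}}^{r_E}/I_{S^{+}}^{r_E+1}\bigr)\otimes\Zp$ coincides with the $p$-primary part of $\sum_{T|S>0}(-1)^{\nu(T)}J_T\,\eta_{r_E}\!\bigl(\mu_{S,T}(d_T)\bigr)$. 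The first of these is Theorem~\ref{main}; finiteness of $\Sha$ I return to at the end. So fix $p$ and focus on the leading coefficient.

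The engine is a sharpening of the link between Kato's Euler system and Mazur-Tate elements that already underlies Theorem~\ref{main}. Refining the arguments of Kurihara~\cite{kur02}, Kobayashi~\cite{kob} and Otsuki~\cite{ots}, I would expand $\theta_S$ around the trivial character and identify its class modulo $I_S^{r_E+1}$ with a combination of Darmon-Kolyvagin derivatives of Kato's zeta element $\kappa\in H^1(\Q,T_pE)$ taken along the primes dividing $S$. Here the norm compatibility of $\kappa$ over $\Q(\mu_S)/\Q$ feeds in, via Proposition~\ref{mazur-tate}(1), the Euler factors $1-a_\ell\Fr_\ell^{-1}+\epsilon(\ell)\Fr_\ell^{-2}$; expanding each factor at the trivial character and organising the outcome by augmentation degree converts the product over $\ell|S$ into an alternating sum $\sum_{T|S}(-1)^{\nu(T)}(\cdots)$, which is the conceptual origin of the inclusion-exclusion in the conjecture. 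The upshot is that $\tilde{\theta}_S$ is identified with a \emph{Kolyvagin determinant}: a regulator-type element of $I_{S^{+}}^{r_E}/I_{S^{+}}^{r_E+1}$ assembled from the localisations of $r_E$ suitably chosen derivative classes.

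It then remains to evaluate this determinant arithmetically, which is where the Euler-system machinery enters, in the spirit of Darmon~\cite{dar92} but with Kato's classes in place of Heegner points. The derivative classes span, up to finite index, the strict Selmer group $H^1_{f,p}(\Q,E[p^\infty])$, and global Poitou-Tate duality together with the Cassels-Tate pairing expresses that index in terms of $\Sha[p^\infty]$ and of the local conditions imposed at the primes $\ell|S$ and $\ell|N$. Matching those conditions against the modular-symbol description of $\mu_{S,T}(d_T)$ and $\eta_{r_E}$ recalled from \cite[(2.5),(2.6),(3.1)]{m-t} should identify the determinant, term by term, with $|\Sha[p^\infty]|\cdot J_T\cdot\eta_{r_E}\!\bigl(\mu_{S,T}(d_T)\bigr)$, where $J_T$ records the cokernel of $E(\Q)\to\bigl(\oplus_{\ell|T}E(\mathbb{F}_\ell)\bigr)\oplus\bigl(\oplus_{\ell|N}E(\Q_\ell)/E_0(\Q_\ell)\bigr)$; the hypothesis that $E(\mathbb{F}_\ell)[p]$ be cyclic ensures that the relevant local modules are cyclic, so that the Kolyvagin argument yields an honest equality rather than a mere divisibility. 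Summing over $T|S$ gives the formula, and finiteness of $\Sha[p^\infty]$ falls out of the same analysis the moment the determinant is nonzero over $\Qp$, i.e.\ the moment $\tilde{\theta}_S$ has nonzero image in $\bigl(I_S^{r_E}/I_S^{r_E+1}\bigr)\otimes\Qp$; this is consistent with, and strengthens, Theorem~\ref{leading term thm in intro}.

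\textbf{Main obstacle.} Two points are genuinely hard. The first is the \emph{unconditional} finiteness of $\Sha$: the scheme above delivers $\Sha[p^\infty]$ finite only from nonvanishing of a derivative of Kato's Euler system, which is not known in general, so absent further input the honest conclusion is the formula modulo finiteness of $\Sha[p^\infty]$ for each relevant $p$, equivalently an equality of $p$-adic orders for those $p$ where that finiteness is available. The second, and where most of the labour lies, is passing from $p$-adic orders to the exact factors $J_T$ and the precise element $\eta_{r_E}(\mu_{S,T}(d_T))$: this requires an explicit reciprocity computation relating Kato's dual exponential map to the period integrals defining $\theta_S$, performed uniformly over the twists by characters of $G_S$, with careful bookkeeping of the overall sign and of the Tamagawa-type contributions at the primes $\ell|N$. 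It is precisely this last, delicate comparison (rather than the Euler-system input) that separates the full Conjecture~\ref{full refined} from the partial statements in Theorems~\ref{theorem on leading term} and~\ref{leading term thm in intro}.
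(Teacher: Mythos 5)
There is no ``paper's own proof'' to compare against: the statement you are addressing is Conjecture \ref{full refined}, i.e.\ the Mazur--Tate refined conjecture itself, which the paper states and does \emph{not} prove. The paper's results are strictly partial: Theorem \ref{weak vanishing thm} gives $\theta_S\in I_S^{r_E}$ only for $S$ a square-free product of \emph{good} primes $\ell$ with $E(\mathbb{F}_\ell)[p]$ cyclic for every $p$ not invertible in $R$ (so even your first reduction, ``the containment is Theorem \ref{main}'', does not cover arbitrary square-free $S$ prime to $N$), and Theorem \ref{theorem on leading term} runs in the opposite direction to your plan: it \emph{assumes} $\tilde{\theta}_S^{(p)}\neq 0$ and deduces $\Sha[p]=0$ and $p\nmid J_{S_1}$, rather than computing $\tilde{\theta}_S$.

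As a proof, your proposal has genuine gaps exactly at the two points you flag, and they are not technicalities. First, unconditional finiteness of $\Sha$ cannot ``fall out'' of this machinery: the divisibility results of Section \ref{main chapter} (Theorems \ref{keythm} and \ref{general}) bound derivatives of Kato's classes in terms of the \emph{strict} Selmer group $H^1_{f,p}$, whose corank may be $r_{p^\infty}-1$; this is precisely the defect explained in Subsection \ref{subsection: the proof}, which the paper repairs only for the rank statement via the $p$-parity theorem and the functional equation of $\theta_S$ --- a parity argument with no analogue for leading coefficients. Second, the exact identification of your ``Kolyvagin determinant'' with $|\Sha|\cdot J_T\cdot\eta_{r_E}\!\left(\mu_{S,T}(d_T)\right)$ would be a leading-term formula of BSD type: Kato's reciprocity (Theorem \ref{original Kato}) together with the local points $c_S$ of Section \ref{section: kato and mazur-tate} only expresses $\theta_{S,p}$ through the pairing $(c_S,D_{\underline{k}}\z_S)$, and the Euler-system method then yields one-sided divisibilities, not equalities; turning these into the stated identity would require input of main-conjecture strength plus a regulator comparison with the Mazur--Tate elements $\eta_{r_E}(\mu_{S,T}(d_T))$, none of which is carried out here or in the paper. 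So what you have written is a plausible strategy sketch consistent with the partial evidence (Theorems \ref{leading term thm in intro} and \ref{theorem on leading term}), but not a proof of Conjecture \ref{full refined}, which remains open.
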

\begin{rem}\label{remark on the conj}
\begin{enumerate}\item It may happen that $\theta_S \in I_S^{r_E+1}.$
We give some cases where it happens.
\begin{enumerate}
\item It is known that if  $|G_S| \in R^{\times}$ then  $I_S=I_S^2=I_S^3=\cdots$.
\item  Let  $\ell$ be a prime with $a_{\ell}=2$. Even if $r_E=0$, 
  Proposition \ref{mazur-tate} (1) implies that $\theta_{\ell} \in I_{\ell}$.
For example, if $E$ is defined by the equation
$y^2 + y = x^3 - x^2 - 2x + 1$, then 
$r_E=0,$ 
and the primes $ \ell \le100000$ satisfying $a_{\ell}=2$
are $\ell=\ $2, 3, 5, 251, 983, 1009, 1051, 1669, 8219, 9397, 10477, 11789, 14461,
21773, 24019, 32117, 51239, 57737, 93199, 95747, 97859, 98711. 
The calculation is due to Sage \cite{sage}.
\end{enumerate}
 It may also happen that the element $\eta_{r_E}(\mu_{S,T}(d_T))  \in I_{S^{+}}^{r_E}/I_{S^{+}}^{r_E+1}$ is trivial. 
Bertolini-Darmon \cite{b-d} constructed  a certain lift of $\eta_{r_E}(\mu_{S,T}(d_T))$ to $I_{S^{+}}^{r_E}$, which gives extra information in this case.
\item See \cite[Conjecture 4]{m-t} for more general cases.
Although Conjecture \ref{full refined} might look different from \cite[Conjecture 4]{m-t}, it is not difficult to check that they are equivalent.
\end{enumerate}
\end{rem}


\section{Darmon-Kolyvagin derivatives and Euler systems}\label{section derivatives}
In this section, we fix notation on derivatives and Euler systems, and  recall their properties.

We fix a prime $p\ge 5.$
For an integer $S$, we denote by $\Q(S)$ the maximal $p$-extension of $\Q$ inside $\Q(\zeta_{S})$ and put $\Gamma_{S}=\Gal(\Q(S)/\Q)$. 
For relatively prime integers $m$ and $n$, by the canonical decomposition $\Gamma_{mn}=\Gamma_m\times \Gamma_n$, we regard $\Gamma_m$ and  $\Gamma_n$ as subgroups of $\Gamma_{mn}$


\subsection{Darmon-Kolyvagin derivatives}\label{derivative}
Following \cite{dar92}, we introduce derivatives which we call Darmon-Kolyvagin derivatives as in \cite{l-v}.

As usual, for integers $j\ge 0$ and $k\ge 1$, we put
\begin{equation*}
\binom{j}{k}=\frac{j(j-1)\cdots (j-k+1)}{k !}.
\end{equation*}We put $\binom{j}{0}=1$ for $j\ge 0$.
For an element $\sigma \in \Gamma_{S}$ of order $n$ and for an integer $k\ge0$, we define
\begin{equation*}
D_{\sigma}^{(k)}=\sum_{j=0}^{n-1}\binom{j}{k}\sigma^{j}\in \Z[\Gamma_{S}].
\end{equation*}
We note that $D_{\sigma}^{(k)}=0$ if $k\ge n.$ 
 For $k<0$, we define $D_{\sigma}^{k}=0$.  
\begin{lem}\label{crucial}If $\sigma \in\Gamma_{S}$ is of order $n$ and $1\le k \le n-1$, then
\begin{equation*}
(\sigma-1)D_{\sigma}^{(k)}=\binom{n}{k}-\sigma D_{\sigma}^{(k-1)}.
\end{equation*}In particular, if $n$ is a power $q$ of $p$ and $0<k<p$,
then we have
\begin{equation*}
(\sigma-1)D_{\sigma}^{(k)}\equiv -\sigma D_{\sigma}^{(k-1)} \mod q.
\end{equation*}
\end{lem}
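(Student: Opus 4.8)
## Proof proposal for Lemma \ref{crucial}

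The plan is to compute $(\sigma-1)D_\sigma^{(k)}$ directly from the definition and reorganize the sum using the Pascal-type identity $\binom{j}{k} - \binom{j-1}{k} = \binom{j-1}{k-1}$ for binomial coefficients.

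First I would write out
\begin{equation*}
(\sigma-1)D_\sigma^{(k)} = \sum_{j=0}^{n-1}\binom{j}{k}\sigma^{j+1} - \sum_{j=0}^{n-1}\binom{j}{k}\sigma^{j},
\end{equation*}
and then reindex the first sum by $j\mapsto j-1$, so that it becomes $\sum_{j=1}^{n}\binom{j-1}{k}\sigma^{j}$. Using $\sigma^n = 1$ (since $\sigma$ has order $n$), the $j=n$ term contributes $\binom{n-1}{k}\cdot 1$. Collecting the $\sigma^j$ coefficients for $1\le j\le n-1$ gives $\binom{j-1}{k}-\binom{j}{k} = -\binom{j-1}{k-1}$, while the $j=0$ term of the second sum contributes $-\binom{0}{k} = 0$ because $k\ge 1$. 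Hence
\begin{equation*}
(\sigma-1)D_\sigma^{(k)} = \binom{n-1}{k} - \sum_{j=1}^{n-1}\binom{j-1}{k-1}\sigma^{j} = \binom{n-1}{k} - \sigma\sum_{j=0}^{n-2}\binom{j}{k-1}\sigma^{j}.
\end{equation*}
To finish I would compare $\sum_{j=0}^{n-2}\binom{j}{k-1}\sigma^j$ with $D_\sigma^{(k-1)} = \sum_{j=0}^{n-1}\binom{j}{k-1}\sigma^j$: the only discrepancy is the $j=n-1$ term $\binom{n-1}{k-1}\sigma^{n-1}$, so
\begin{equation*}
(\sigma-1)D_\sigma^{(k)} = \binom{n-1}{k} + \binom{n-1}{k-1}\sigma^{n} - \sigma D_\sigma^{(k-1)} = \binom{n-1}{k}+\binom{n-1}{k-1} - \sigma D_\sigma^{(k-1)},
\end{equation*}
again using $\sigma^n=1$. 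By Pascal's rule $\binom{n-1}{k}+\binom{n-1}{k-1} = \binom{n}{k}$, which yields the first claimed identity. (One should double-check the boundary case $k=n-1$, where $\binom{n-1}{k}$ is still well-defined and $D_\sigma^{(k-1)}$ makes sense; the argument above does not actually use $k\le n-1$ except to guarantee $D_\sigma^{(k)}\ne 0$ is meaningful, so no separate case is needed.)

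For the second assertion, suppose $n = q$ is a power of $p$ and $0<k<p$. Then I need $\binom{q}{k}\equiv 0 \bmod q$. Since $1\le k\le p-1 < p$, the integer $k!$ is prime to $p$, and $\binom{q}{k} = \frac{q}{k}\binom{q-1}{k-1}$, so $q \mid k\binom{q}{k}$ while $\gcd(k,q)=1$ forces $q\mid\binom{q}{k}$. Reducing the first identity modulo $q$ then gives $(\sigma-1)D_\sigma^{(k)}\equiv -\sigma D_\sigma^{(k-1)}\bmod q$, as desired. There is no real obstacle here; the only point requiring a little care is the bookkeeping of the boundary terms ($j=0$ and $j=n$, and the top term $j=n-1$ in $D_\sigma^{(k-1)}$) when reindexing, together with the elementary $p$-adic valuation estimate for $\binom{q}{k}$.
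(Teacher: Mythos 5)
Your computation is correct and fills in exactly the "straightforward computation" that the paper leaves to the reader: reindexing the shifted sum, invoking $\sigma^n=1$ for the boundary term, applying Pascal's identity to the interior coefficients, and restoring the missing $j=n-1$ term of $D_\sigma^{(k-1)}$ which combines with $\binom{n-1}{k}$ via Pascal's rule to give $\binom{n}{k}$. The $p$-adic estimate $q\mid\binom{q}{k}$ for $0<k<p$ via $k\binom{q}{k}=q\binom{q-1}{k-1}$ and $\gcd(k,q)=1$ is also precisely what the paper's one-line remark is asserting, so this is the same approach.
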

\begin{proof}This is proved by a straightforward computation.
For the second assertion, note that $\binom{q}{k} \equiv 0 \bmod q$ when $0<k<p$.
\end{proof}

\begin{dfn}\label{def of cond} 
In the following, we fix a generator $\sigma_{\ell}$ of $\Gamma_{\ell}$ for each prime $\ell$,
 and write $D_{\ell}^{(k)}=D_{\sigma_{\ell}}^{(k)}.$ 
Let $S>0$ be a square-free integer. 
We call an element  $D$ of $\Z[\Gamma_S]$ a \textit{Darmon-Kolyvagin derivative}, or simply, a derivative if $D$ is of the following form:
\begin{equation*}
D_{\ell_1}^{(k_1)}\cdots D_{\ell_s}^{(k_s)} \in \Z[\Gamma_{\ell_1\cdots\ell_s}]\subset \Z[\Gamma_S],
\end{equation*}where $\ell_1,\ldots, \ell_s$ are distinct primes dividing $S$, and each $k_i$ is an integer such that $0\le k_i < |\Gamma_{\ell_i}|$. 
We note that $\ell_1,\ldots, \ell_s, k_1,\ldots,k_s$ are uniquely determined.
We define 
\begin{equation*}
\mathrm{Supp}(D)=\ell_1\cdots\ell_s, \ \ \ \ \ \ \ \ \mathrm{Cond}(D)= \prod_{k_i>0}\ell_{i},
\end{equation*}
 which we call the \textit{support} and the \textit{conductor} of $D$, respectively.
We put
\begin{equation*}
\mathrm{ord}(D)  = k_1+\cdots+k_s, \ \  \  n(D) = \min_{k_i>0}\{|\Gamma_{\ell_i}|\},\ \ \ e_{\ell_i}(D)=k_i.
\end{equation*}
We call $\mathrm{ord}(D)$ the \textit{order} of $D$.
Since $\Gamma_{\ell}$ is a $p$-group for each prime $\ell$, the natural number $n(D)$ is a power of $p$. 
When $k_i=0$ for all $i$, we define $n(D)=1$.
When $S=\ell_1\cdots \ell_s$,  we define the norm operator as
 \begin{equation*}
 N_{S}=D_{\ell_1}^{(0)}\cdots D_{\ell_s}^{(0)}.
 \end{equation*} 
\end{dfn}

Let $S$ be a square-free positive integer and
$M$ a $\Zp[\Gamma_S]$-module without $p$-torsion. 
We take an element $a\in M,$ and put
\begin{equation*}
\theta = \sum_{\gamma \in \Gamma_S}\gamma a\otimes \gamma \in M\otimes_{\Zp} \Zp[\Gamma_S] .
\end{equation*}
The element $\theta$ has  a \textit{Taylor expansion} as follows.

\begin{prop}\label{taylor}Let $S=\ell_1\cdots \ell_s$ be the prime factorization of $S$.
Then, we have
\begin{equation*}
\theta = \sum_{\underline{k}=(k_1,\ldots,k_s)\in\Z_{\ge 0}^{\oplus s}} D_{\underline{k}}a\otimes (\sigma_{\ell_1}-1)^{k_1}\cdots (\sigma_{\ell_s}-1)^{k_s},
\end{equation*}where $D_{\underline{k}}:=D_{\ell_1}^{(k_1)}\cdots D_{\ell_s}^{(k_s)}$ for $\underline{k}=(k_1,\ldots,k_s).$
\end{prop}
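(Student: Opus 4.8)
The plan is to prove the Taylor expansion formula by reducing to the case $s=1$ and then inducting. First I would establish the single-prime identity: if $\ell$ is a prime and $\sigma=\sigma_\ell$ generates $\Gamma_\ell$ (a cyclic group of order $q=|\Gamma_\ell|$), then in $\Zp[\Gamma_\ell]$ one has the polynomial identity
\begin{equation*}
\sum_{j=0}^{q-1}\sigma^j\otimes\sigma^j=\sum_{k=0}^{q-1}D_\ell^{(k)}\otimes(\sigma-1)^k.
\end{equation*}
This is nothing but expanding $\sigma^j=\bigl((\sigma-1)+1\bigr)^j=\sum_{k\ge0}\binom{j}{k}(\sigma-1)^k$ by the binomial theorem (valid since $\binom{j}{k}$ are integers), substituting into the left-hand side, and interchanging the order of summation; the coefficient of $(\sigma-1)^k$ is then $\sum_{j=0}^{q-1}\binom{j}{k}\sigma^j=D_\ell^{(k)}$ by definition, and the terms with $k\ge q$ vanish because $\binom{j}{k}=0$ for $j<k$. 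Applying this (tensored over the appropriate coefficients) to an element $a\in M$ with $M$ a $\Zp[\Gamma_\ell]$-module gives $\sum_{\gamma\in\Gamma_\ell}\gamma a\otimes\gamma=\sum_k D_\ell^{(k)}a\otimes(\sigma_\ell-1)^k$.

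Next I would handle the general case via the canonical decomposition $\Gamma_S=\Gamma_{\ell_1}\times\cdots\times\Gamma_{\ell_s}$, under which $\Zp[\Gamma_S]=\Zp[\Gamma_{\ell_1}]\otimes_{\Zp}\cdots\otimes_{\Zp}\Zp[\Gamma_{\ell_s}]$ and every $\gamma\in\Gamma_S$ factors uniquely as $\gamma=\gamma_1\cdots\gamma_s$ with $\gamma_i\in\Gamma_{\ell_i}$. Then
\begin{equation*}
\theta=\sum_{\gamma\in\Gamma_S}\gamma a\otimes\gamma=\sum_{\gamma_1,\ldots,\gamma_s}\gamma_1\cdots\gamma_s\,a\otimes\gamma_1\otimes\cdots\otimes\gamma_s,
\end{equation*}
and I would apply the single-prime identity to each factor $\sum_{\gamma_i}(\cdots)\otimes\gamma_i$ in turn — formally, induct on $s$, peeling off $\Gamma_{\ell_s}$: by the inductive hypothesis applied to the $\Zp[\Gamma_{\ell_1\cdots\ell_{s-1}}]$-module $M$ (viewed through the inclusion $\Gamma_{\ell_1\cdots\ell_{s-1}}\subset\Gamma_S$) one expands the first $s-1$ coordinates, then the $s=1$ case (applied to the $\Zp[\Gamma_{\ell_s}]$-module structure, noting the actions of the various $\Gamma_{\ell_i}$ commute) expands the last. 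Since the operators $D_{\ell_i}^{(k_i)}$ for distinct $i$ lie in group rings of commuting subgroups, they commute, so collecting terms gives exactly $\theta=\sum_{\underline k}D_{\underline k}a\otimes(\sigma_{\ell_1}-1)^{k_1}\cdots(\sigma_{\ell_s}-1)^{k_s}$ with $D_{\underline k}=D_{\ell_1}^{(k_1)}\cdots D_{\ell_s}^{(k_s)}$, and the sum is genuinely finite since each $k_i$ is forced to range over $0\le k_i<|\Gamma_{\ell_i}|$.

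There is no real obstacle here: the statement is a bookkeeping identity, purely formal once the binomial expansion $\sigma^j=\sum_k\binom{j}{k}(\sigma-1)^k$ is in hand. The only point requiring minor care is the multivariable substitution — keeping track of which $D_{\ell_i}^{(k_i)}$ acts on which tensor factor and verifying they commute — and confirming that the element $a$ plays no special role (the hypothesis that $M$ has no $p$-torsion is not even needed for this proposition, only for later uniqueness-type statements, so I would not invoke it in the proof). I expect the argument to be a short paragraph: state the $s=1$ identity, prove it by Vandermonde-style binomial expansion and Fubini, then induct on the number of prime factors.
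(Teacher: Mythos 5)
Your proposal matches the paper's proof essentially verbatim: the base case is the binomial expansion $\sigma^j=\sum_k\binom{j}{k}(\sigma-1)^k$ followed by an interchange of summation, and the general case is the same induction on the number of prime factors, peeling off one $\Gamma_{\ell_i}$ at a time using the product decomposition $\Gamma_S=\Gamma_{\ell_1}\times\cdots\times\Gamma_{\ell_s}$. Your side observation that the $p$-torsion-free hypothesis on $M$ plays no role in this particular proposition is also correct.
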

\begin{rem}We note that $D_{\underline{k}}=0$ for all but finitely many $\underline{k} \in \Z_{\ge 0}^{\oplus s}$ 
\end{rem}

\begin{proof}We prove the proposition by induction on the number of primes dividing $S$,

We first assume that $S$ is a prime $\ell$ and put $\sigma=\sigma_{\ell}$.
Since $\Gamma_{\ell}$ is generated by $\sigma$,  we have
\begin{equation*}
\theta=\sum_{j=0}^{|\Gamma_{\ell}|-1}\sigma^j a\otimes \sigma^{j}.
\end{equation*}For each $j$, we note that 
\begin{equation*}
\sigma^j=(\sigma-1+1)^j=\sum_{k=0}^{j}\binom{j}{k}(\sigma-1)^k = \sum_{k\ge 0}\binom{j}{k}(\sigma-1)^k.
\end{equation*}
Hence, we have
\begin{align*}
\sum_{j=0}^{|\Gamma_{\ell}|-1}\sigma^j a\otimes \sigma^{j}&=\sum_{j=0}^{|\Gamma_{\ell}|-1}\sigma^j a\otimes \sum_{k\ge 0}\binom{j}{k}(\sigma-1)^k=\sum_{j=0}^{|\Gamma_{\ell}|-1}\sum_{k\ge 0}\binom{j}{k}\sigma^j a\otimes (\sigma-1)^k\\
&=\sum_{k\ge 0}\sum_{j=0}^{|\Gamma_{\ell}|-1}\binom{j}{k}\sigma^j a\otimes (\sigma-1)^k=\sum_{k\ge 0}D_{\ell}^{(k)} a\otimes (\sigma-1)^k.
\end{align*}Then, we complete the case where $S$ is a prime.

In the general case, we put $T=S/\ell_1.$
Then, we have
\begin{equation*}
\theta=
\sum_{\gamma_1\in \Gamma_{\ell_1}} \sum_{\gamma\in \Gamma_{T}}\gamma_1\gamma a \otimes \gamma \gamma_1.
\end{equation*}By the induction hypothesis,
\begin{equation*}
\sum_{\gamma\in \Gamma_{T}}\gamma a \otimes \gamma =\sum_{\underline{k}^{\prime}=(k_2,\ldots,k_s) \in \Z^{\oplus s-1}_{\ge 0}}D_{\underline{k}^{\prime}}a \otimes (\sigma_{\ell_2}-1)^{k_2}\cdots(\sigma_{\ell_s}-1)^{k_s}.
\end{equation*}
Hence, we have
\begin{align*}
\theta
&=\sum_{\gamma_1\in \Gamma_{\ell_1}}  \sum_{\gamma\in \Gamma_{T}}\gamma_1 \gamma a \otimes \gamma \gamma_1\\
&=\sum_{\gamma_1\in \Gamma_{\ell_1}} \ \sum_{\underline{k}^{\prime}=(k_2,\ldots,k_s) \in \Z^{\oplus s-1}_{\ge 0}}\gamma_1 D_{\underline{k}^{\prime}}a \otimes (\sigma_{\ell_2}-1)^{k_2}\cdots(\sigma_{\ell_s}-1)^{k_s} \gamma_1\\
&=  \sum_{\underline{k}^{\prime}} D_{\underline{k}^{\prime}} \sum_{\gamma_1\in \Gamma_{\ell_1}}  \gamma_1 a \otimes \gamma_1  (\sigma_{\ell_2}-1)^{k_2}\cdots(\sigma_{\ell_s}-1)^{k_s}\\
&\overset{(*)}{=}\sum_{\underline{k}^{\prime}} D_{\underline{k}^{\prime}} \sum_{k_1 \ge 0}  D_{\ell_1}^{(k_1)} a \otimes (\sigma_{\ell_1}-1)^{k_1}(\sigma_{\ell_2}-1)^{k_2}\cdots(\sigma_{\ell_s}-1)^{k_s}\\
&=\sum_{\underline{k}=(k_1,\ldots,k_s)\in\Z_{\ge 0}^{\oplus s}} D_{\underline{k}}a\otimes (\sigma_{\ell_1}-1)^{k_1}\cdots (\sigma_{\ell_s}-1)^{k_s},
\end{align*}where the equality $(*)$ also follows from the induction hypothesis.
\end{proof}

\begin{lem}\label{p-power of augmentation ideal}Let $G$ be a finite abelian $p$-group and $\sigma$ an element of $G$ with order $q$. Then,
\begin{equation*}
q(\sigma-1) \in I_{G}^p,
\end{equation*}where $I_G$ denotes the augmentation ideal of $\Zp[G].$
\end{lem}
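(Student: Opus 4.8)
The plan is to reduce at once to the cyclic case and then to bootstrap. Let $H=\langle\sigma\rangle\subseteq G$, a cyclic group of order $q=p^m$. The inclusion $\Zp[H]\hookrightarrow\Zp[G]$ is compatible with the augmentation maps, so it carries $I_H$ into $I_G$ and hence $I_H^p$ into $I_G^p$; it therefore suffices to prove $q(\sigma-1)\in I_H^p$. Since $H$ is generated by $\sigma$, the augmentation ideal $I_H$ is the principal ideal $(\sigma-1)$ of $\Zp[H]$, so $I_H^p=\big((\sigma-1)^p\big)$, and the task is to show that $(\sigma-1)^p$ divides $q(\sigma-1)$ in $\Zp[H]$.

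The only input is the relation $\sigma^q=1$. Writing $\sigma=1+(\sigma-1)$ and expanding by the binomial theorem gives $\sum_{k=1}^{q}\binom{q}{k}(\sigma-1)^k=0$ in $\Zp[H]$, that is,
\begin{equation*}
q(\sigma-1)=-\sum_{k=2}^{q}\binom{q}{k}(\sigma-1)^k.
\end{equation*}
I will also use the elementary fact that $q\mid\binom{q}{k}$ for $1\le k\le p-1$, which follows from $k\binom{q}{k}=q\binom{q-1}{k-1}$ together with $\gcd(k,q)=1$ when $0<k<p$; thus for each such $k$ we may write $\binom{q}{k}=qc_k$ with $c_k\in\Zp$.

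Now I bootstrap: I claim $q(\sigma-1)\in I_H^{j}$ for every $j$ with $1\le j\le p$, by induction on $j$. The case $j=1$ is clear. Assume $q(\sigma-1)\in I_H^{j}$ for some $j\le p-1$, and estimate each term on the right-hand side of the displayed identity modulo $I_H^{j+1}$. For $k\ge p$ we have $(\sigma-1)^k\in I_H^p\subseteq I_H^{j+1}$. For $2\le k\le p-1$,
\begin{equation*}
\binom{q}{k}(\sigma-1)^k=c_k(\sigma-1)^{k-1}\cdot q(\sigma-1)\in I_H^{k-1}\cdot I_H^{j}\subseteq I_H^{j+1},
\end{equation*}
using $k-1\ge 1$ and the inductive hypothesis. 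Hence $q(\sigma-1)\in I_H^{j+1}$, which completes the induction; taking $j=p$ yields $q(\sigma-1)\in I_H^p$, and pushing forward along $\Zp[H]\hookrightarrow\Zp[G]$ finishes the proof.

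The one point needing care is that the binomial identity on its own only places $q(\sigma-1)$ in $q\,I_H^2$, not in $I_H^p$; the bootstrap above is precisely what upgrades this, each round converting one unit of $q$-divisibility of the coefficients $\binom{q}{k}$ into one additional power of $I_H$. Beyond setting this up correctly, the argument is a routine computation, so I do not anticipate a serious obstacle.
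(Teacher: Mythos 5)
Your proof is correct, and the bootstrap induction is sound at every step. The paper proves this lemma by citing Darmon \cite[Lemma 3.5]{dar92}, and your argument is a self-contained rendering of essentially the same idea: reduce to the cyclic subgroup generated by $\sigma$, expand $\sigma^q = 1$ binomially, and iterate on the $q$-divisibility of $\binom{q}{k}$ for $1\le k\le p-1$ to gain one power of the augmentation ideal per round.
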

\begin{proof}This is \cite[Lemma 3.5]{dar92}.
\end{proof}

Combining  Proposition \ref{taylor} and Lemma \ref{p-power of augmentation ideal}, we have the following.
\begin{lem}\label{fromcongtovanishing}Let $t\ge1$. Assume that for all Darmon-Kolyvagin derivatives $D$ such that $\Supp(D)=S$ and  $\mathrm{ord}(D)<\min\{t,p\},$ we have $Da \equiv 0  \mod n(D)$. 
Then,
\begin{equation*}
\theta -N_S a \otimes 1 \in M\otimes_{\Zp}I_{\Gamma_S}^{\min\{t,p\}}.
\end{equation*}
\end{lem}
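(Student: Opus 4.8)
The plan is to feed $\theta$ through the Taylor expansion supplied by Proposition \ref{taylor} and bound each homogeneous piece separately. Writing $S=\ell_1\cdots\ell_s$ and $m:=\min\{t,p\}$, the term of the expansion indexed by $\underline{k}=(0,\ldots,0)$ is exactly $D_{\underline 0}\,a\otimes 1 = N_S\,a\otimes 1$ (by the definition of $N_S$ in Definition \ref{def of cond}), so it suffices to show that
\[
\theta - N_S\,a\otimes 1 = \sum_{\underline{k}\neq 0} D_{\underline{k}}\,a\otimes (\sigma_{\ell_1}-1)^{k_1}\cdots(\sigma_{\ell_s}-1)^{k_s}
\]
has every summand lying in $M\otimes_{\Zp} I_{\Gamma_S}^{m}$.

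Next I would split the index set according to $\mathrm{ord}(D_{\underline k})=k_1+\cdots+k_s$. If $\mathrm{ord}(D_{\underline k})\ge m$, then since each $\sigma_{\ell_i}-1$ lies in $I_{\Gamma_{\ell_i}}\subseteq I_{\Gamma_S}$, the tensor factor already lies in $I_{\Gamma_S}^{\mathrm{ord}(D_{\underline k})}\subseteq I_{\Gamma_S}^{m}$, and there is nothing further to do; in particular the hypothesis is not even needed here. The remaining case is $1\le \mathrm{ord}(D_{\underline k})<m$ (note $\mathrm{ord}(D_{\underline k})\ge 1$ automatically, since $\underline k\neq 0$). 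Here I observe that $\Supp(D_{\underline k})=\ell_1\cdots\ell_s=S$ — all $s$ primes occur in $D_{\underline k}$ regardless of which $k_i$ vanish — so $D_{\underline k}$ is precisely one of the derivatives to which the hypothesis applies, giving $D_{\underline k}\,a = n(D_{\underline k})\,b$ for some $b\in M$.

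Finally I would invoke Lemma \ref{p-power of augmentation ideal}. Choose an index $j$ with $k_j>0$ realizing $n(D_{\underline k})=|\Gamma_{\ell_j}|$, and peel one factor off the monomial: $(\sigma_{\ell_1}-1)^{k_1}\cdots(\sigma_{\ell_s}-1)^{k_s}=(\sigma_{\ell_j}-1)\,Y$ with $Y\in I_{\Gamma_S}^{\mathrm{ord}(D_{\underline k})-1}$. Moving the scalar $n(D_{\underline k})\in\Zp$ across the tensor product over $\Zp$, the summand becomes $b\otimes\bigl(|\Gamma_{\ell_j}|(\sigma_{\ell_j}-1)\bigr)Y$; by Lemma \ref{p-power of augmentation ideal} one has $|\Gamma_{\ell_j}|(\sigma_{\ell_j}-1)\in I_{\Gamma_{\ell_j}}^{p}\subseteq I_{\Gamma_S}^{p}$, so the summand lies in $M\otimes I_{\Gamma_S}^{\,p+\mathrm{ord}(D_{\underline k})-1}\subseteq M\otimes I_{\Gamma_S}^{p}\subseteq M\otimes I_{\Gamma_S}^{m}$, using $m\le p$. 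Combining the two cases gives the claim.

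The argument is essentially bookkeeping, so the "main obstacle" is modest: the one point that must be checked carefully is that the derivatives $D_{\underline k}$ appearing in the Taylor expansion all have full support $\Supp(D_{\underline k})=S$, which is exactly the shape of the derivatives covered by the hypothesis, so the hypothesis applies to precisely the terms that cannot be dispatched by degree counting alone. A secondary point is to keep the cutoff $\min\{t,p\}$ consistent throughout and to use $\underline k\neq 0$ to guarantee there is a factor $(\sigma_{\ell_j}-1)$ available to absorb $n(D_{\underline k})$ via Lemma \ref{p-power of augmentation ideal}.
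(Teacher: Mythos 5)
Your proof is correct and takes essentially the same route as the paper's: Taylor-expand $\theta$ via Proposition \ref{taylor}, identify the constant term as $N_S a\otimes 1$, dispatch the high-order summands by degree-counting in $I_{\Gamma_S}$, and for each $\underline{k}\neq 0$ with $\mathrm{ord}(D_{\underline{k}})<\min\{t,p\}$ absorb the factor $n(D_{\underline{k}})$ into one $(\sigma_{\ell_j}-1)$ via Lemma \ref{p-power of augmentation ideal}. The paper is terser (it does not spell out the easy $\mathrm{ord}\ge\min\{t,p\}$ case or the $\Supp(D_{\underline{k}})=S$ observation), but the argument is the same.
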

\begin{rem}This is \cite[Lemma 3.8]{dar92}. 
It seems that there is an error in the statement of \cite[Lemma 3.8]{dar92}.
However, the error is not crucial when we consider  Euler systems.
\end{rem}
\begin{proof}As in Proposition \ref{taylor}, we write
\begin{equation}\label{taylor again}
\theta = \sum_{\underline{k}=(k_1,\ldots,k_s)\in\Z_{\ge 0}^{\oplus s}} D_{\underline{k}}a\otimes (\sigma_{\ell_1}-1)^{k_1}\cdots (\sigma_{\ell_s}-1)^{k_s}.
\end{equation}
We pick an element $\underline{k}=(k_1,\ldots,k_s) \in \Z_{\ge0}^{\oplus s}\setminus \{0,\ldots,0\}$ such that $k_1+\cdots + k_s < \min\{t,p\}$,
that is, $0<\mathrm{ord}(D_{\underline{k}}) < \min\{t,p\}.$
By the definition of $n(D_{\underline{k}}),$ there exists $i$ such that  $|\Gamma_{\ell_i}|=n(D_{\underline{k}})$ and $k_i >0$.
Since  $D_{\underline{k}}a \equiv 0 \bmod n(D_{\underline{k}})$, Lemma \ref{p-power of augmentation ideal} implies that
\begin{equation}\label{vanishing of da}
D_{\underline{k}}a\otimes  (\sigma_{\ell_1}-1)^{k_1}\cdots (\sigma_{\ell_s}-1)^{k_s} \in M\otimes I_{\Gamma_S}^{p}.
\end{equation}
This holds  for each $D_{\underline{k}}$ such that  $0 <\mathrm{ord}(D_{\underline{k}}) <\min\{t,p\}$.
By (\ref{taylor again}), we complete the proof.
\end{proof}




\subsection{Preliminaries on Galois cohomology}
We denote by $T$ the $p$-adic Tate module $T_p(E)$ of $E$.
In the rest of Section \ref{section derivatives}, we assume that the Galois representation 
$\rho:G_{\Q} \to \mathrm{Aut}_{\Zp}(T)$
is surjective.
Then by \cite[Proposition 3.5.8 (ii)]{rub}, 
there exists an element $\tau \in \Gal(\overline{\Q}/\Q(\mu_{p^{\infty}}))$ such that
\begin{equation}\label{tau}
T/(\tau-1)T\cong \Zp.
\end{equation}

  \begin{prop}\label{trivial torsion}For a power $q$ of $p$ and a finite abelian extension $F$ of $\Q,$ we have $E(F)[q]=0.$
 Moreover, the restriction map induces an isomorphism
 \begin{equation*}
 H^1(\Q,E[q]) \cong H^0\left(F/\Q, H^1(F,E[q])\right).
 \end{equation*}
 \end{prop}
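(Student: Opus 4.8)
The plan is to deduce both assertions from the surjectivity hypothesis on $\rho$, which forces the mod-$q$ representation $E[q]$ to have no non-trivial $G_F$-invariants for any finite abelian $F/\Q$. First I would prove $E(F)[q] = 0$. Since $F$ is contained in some cyclotomic field $\Q(\mu_M)$, it suffices to treat $F$ itself abelian over $\Q$; a non-zero point in $E(F)[q]$ would generate a $G_\Q$-submodule of $E[q]$ on which $G_\Q$ acts through an abelian quotient. But the surjectivity of $\rho$ gives, by reduction mod $p$, a surjection $G_\Q \twoheadrightarrow \mathrm{Aut}_{\Z/p}(E[p]) = \mathrm{GL}_2(\mathbb{F}_p)$, and for $p \ge 5$ the group $\mathrm{GL}_2(\mathbb{F}_p)$ has no non-trivial abelian quotient of a non-trivial submodule — more concretely, $E[p]$ is an irreducible $G_\Q$-module with no quotient on which the image acts abelianly, so in particular $E[p]^{G_F} = 0$ for every abelian $F$. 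Climbing up the filtration $E[p] \subset E[p^2] \subset \cdots \subset E[q]$, snake-lemma/dévissage on $0 \to E[p] \to E[p^i] \to E[p^{i-1}] \to 0$ together with $E[p]^{G_F}=0$ yields $E[q]^{G_F}=0$, i.e. $E(F)[q] = 0$.

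For the second statement I would run the Hochschild–Serre spectral sequence
\begin{equation*}
H^i\!\left(F/\Q,\, H^j(F, E[q])\right) \Longrightarrow H^{i+j}(\Q, E[q]).
\end{equation*}
The relevant five-term exact sequence reads
\begin{equation*}
0 \to H^1\!\left(F/\Q, E[q]^{G_F}\right) \to H^1(\Q, E[q]) \to H^1(F, E[q])^{G_{F/\Q}} \to H^2\!\left(F/\Q, E[q]^{G_F}\right).
\end{equation*}
By the first part $E[q]^{G_F} = E(F)[q] = 0$, so both outer terms $H^1(F/\Q, E[q]^{G_F})$ and $H^2(F/\Q, E[q]^{G_F})$ vanish, and the middle arrow is an isomorphism
\begin{equation*}
H^1(\Q, E[q]) \;\xrightarrow{\ \sim\ }\; H^1(F, E[q])^{G_{F/\Q}} = H^0\!\left(F/\Q, H^1(F, E[q])\right),
\end{equation*}
which is exactly the claimed restriction isomorphism.

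The only real subtlety — and the step I would be most careful about — is the input $E(F)[q] = 0$ for \emph{all} finite abelian $F/\Q$ simultaneously; everything after that is formal inflation–restriction. The cleanest way to nail it is the observation that the image of $\rho$ contains a non-trivial scalar (indeed all of $\Z_p^\times$ acting by homotheties after composing with reduction, since $\mathrm{GL}_2(\mathbb{F}_p)$ is in the image for $p\ge 5$), so a point fixed by an index-abelian subgroup would be fixed by a homothety $\lambda \neq 1$, forcing it to be $0$; combined with the surjectivity hypothesis this handles every $F$ at once. I would present this scalar argument rather than the irreducibility argument, as it is shorter and uses $\rho$ directly. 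No other obstacle arises; in particular the use of $p \ge 5$ (fixed at the start of Section~\ref{section derivatives}) is exactly what guarantees $\mathrm{GL}_2(\mathbb{F}_p)$ sits inside the image and supplies the needed non-trivial scalars.
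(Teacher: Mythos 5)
Your proof of the second assertion via Hochschild--Serre is exactly the paper's argument; nothing to add there.

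For the first assertion, you propose two distinct arguments and choose to present the scalar one. The paper's argument is the irreducibility-orbit one: if a non-zero $P\in E(F)[p]$ exists, surjectivity of $G_\Q\to\mathrm{GL}_2(\mathbb F_p)$ lets you move $P$ to any non-zero point of $E[p]$, and Galois-closedness of $F/\Q$ keeps all those conjugates inside $E(F)$; hence $\Q(E[p])\subseteq F$, forcing $\mathrm{GL}_2(\mathbb F_p)\cong\Gal(\Q(E[p])/\Q)$ to be abelian, a contradiction. This is essentially your ``irreducibility'' sketch (though your phrase ``no quotient on which the image acts abelianly'' isn't quite the clean formulation), and the paper then says ``Then, we show that $E(F)[q]=0$,'' leaving the dévissage $E[p]\subset E[p^2]\subset\cdots$ implicit; you spell it out, which is fine.

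The scalar argument you actually settle on is sound in spirit but has a real gap as written. You justify the existence of a non-trivial homothety by observing that the image of $\rho$ contains all scalars; but that is irrelevant, because what you need is a non-trivial homothety inside $\rho(G_F)$. The correct chain is: $F/\Q$ abelian $\Rightarrow$ $G_F\supseteq[G_\Q,G_\Q]$, and surjectivity of $\rho$ gives $\rho\bigl([G_\Q,G_\Q]\bigr)=[\mathrm{GL}_2(\Zp),\mathrm{GL}_2(\Zp)]=\mathrm{SL}_2(\Zp)$ (using $p\ge5$ so that $\mathrm{SL}_2$ is perfect), which contains $-1$. Then $-1\in\rho(G_F)$ forces $2P=0$, hence $P=0$ since $(2,p)=1$. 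This version works directly for $E[q]$ and avoids the dévissage entirely — a genuine simplification over the paper — but as stated your draft jumps from ``image of $\rho$ contains scalars'' to ``$G_F$ sees a non-trivial scalar,'' and that step is precisely the content you need to supply. Also, the opening remark ``since $F$ is contained in a cyclotomic field, it suffices to treat $F$ itself abelian'' is circular ($F$ is already assumed abelian) and should be cut.
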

 \begin{proof}For the first assertion, we only need to show that  $E(F)[p]=0.$
 We assume that $E(F)[p] \not=0$, and take a non-trivial point $P \in E(F)[p].$
Since the Galois representation $G_{\Q} \to \mathrm{Aut}_{\Z/p\Z}(E[p])$ is surjective, 
for each non-trivial point $Q \in E[p]$ there exists an element $\sigma \in G_{\Q}$ such that $\sigma P=Q$.
Since the extension $F/\Q$ is a Galois extension, we have $Q \in E(F)[p].$
Thus, $\Q(E[p]) \subseteq F$, which implies that $\Gal(\Q(E[q])/\Q)$ is abelian.
 However, since $\Gal (\Q(E[p])/\Q) \cong \mathrm{GL}_2(\Z/p\Z)$ is not abelian, we have a contradiction. Then, we show that  $E(F)[q]=0.$
The second assertion follows from the exact sequence  
\begin{equation*}
0\to H^1(F/\Q,E(F)[q]) \to H^1(\Q,E[q]) \to H^0\left(F/\Q, H^1(F,E[q])\right) \to H^2(F/\Q,E(F)[q])
\end{equation*}which is induced by the  Hochschild-Serre spectral sequence.
 \end{proof}

For a torsion module $M$ and an element  $b\in M$, we denote by $\ord(b,M)$ the order of $b$.
\begin{lem}\label{order}Let $q$ be a power of $p$ and $L$   a finite Galois extension of $\Q$ such that $G_L$ acts trivially on $E[q]$.
Then for $\ka , \eta \in H^1(\Q,E[q])$, there exists an element $\gamma$ of $G_L$ such that
\begin{enumerate}
\item $\ord\left(\ka(\gamma\tau),E[q]/(\tau-1)E[q]\right) \ge \ord(\ka, H^1(L,E[q])),$
\item $\ord\left(\eta(\gamma\tau),E[q]/(\tau-1)E[q]\right) \ge \ord(\eta, H^1(L,E[q])),$
\end{enumerate}where $\tau$ is as in $($\ref{tau}$)$, and we regard $\ka,\eta$ as elements of $H^1(L,E[q]))$ by the restriction map $H^1(\Q,E[q])\to H^1(L,E[q])$.
\end{lem}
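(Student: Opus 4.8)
The plan is to produce the single element $\gamma \in G_L$ by a counting/averaging argument over the coset $G_L \tau$, exploiting the fact (\ref{tau}) that $T/(\tau-1)T \cong \Zp$, hence $E[q]/(\tau-1)E[q] \cong \Z/q\Z$, is cyclic. First I would replace $\ka$ and $\eta$ by their restrictions to $H^1(L,E[q]) = \Hom(G_L,E[q])$ (using that $G_L$ acts trivially on $E[q]$), so that $\ka$ and $\eta$ become genuine homomorphisms $G_L \to E[q]$; by Proposition \ref{trivial torsion} the restriction map $H^1(\Q,E[q]) \to H^1(L,E[q])$ is injective, so no order is lost in this passage and the quantities $\ord(\ka,H^1(L,E[q]))$, $\ord(\eta,H^1(L,E[q]))$ are just the orders of these homomorphisms as elements of $\Hom(G_L,E[q])$. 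The cocycle value $\ka(\gamma\tau)$ unwinds as $\ka(\gamma) + \gamma\cdot\ka(\tau) = \ka(\gamma) + \ka(\tau)$ (the last equality because $\gamma \in G_L$ acts trivially on $E[q]$), i.e. $\ka(\gamma\tau)$ runs through the coset $\ka(\tau) + \ka(G_L)$ in $E[q]$ as $\gamma$ ranges over $G_L$, and similarly for $\eta$.

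Next I would project everything to the cyclic quotient $\bar E := E[q]/(\tau-1)E[q] \cong \Z/q\Z$ and analyze the two composite homomorphisms $\bar\ka, \bar\eta: G_L \to \bar E$ obtained from $\ka,\eta$ followed by this projection. The key point is that projecting to $\bar E$ does not decrease the order: I claim $\ord(\bar\ka) \ge \ord(\ka,H^1(L,E[q]))$ and likewise for $\eta$. To see this one can enlarge $L$ if necessary (or argue directly with $\tau$): choosing $\tau$ as in (\ref{tau}), one has a $\Zp$-basis in which $\tau-1$ is, say, lower-triangular, and the composite $E[q] \hookrightarrow E[q] \twoheadrightarrow \bar E$ restricted to the image of any nonzero homomorphism is injective because a nonzero element of order $q$ in $E[q] \cong (\Z/q\Z)^2$ that lies in $(\tau-1)E[q]$ would contradict $T/(\tau-1)T\cong\Zp$ being free of rank one over $\Zp$ — more carefully, one uses that the image of a homomorphism of order $q$ contains an element of order $q$, and an element of order $q$ in $E[q]$ survives in $\bar E$ unless it lies in $(\tau-1)E[q]$, which is a rank-one $\Zp/q$-submodule; if the whole image lands in $(\tau-1)E[q]$ one adjusts $\gamma$ by an element mapping to a generator complementary to $(\tau-1)E[q]$. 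So after this reduction I am reduced to: given two homomorphisms $\bar\ka,\bar\eta: G_L \to \Z/q\Z$ of orders $q_1, q_2$ respectively, and two target cosets $c_1 + \bar\ka(G_L)$, $c_2 + \bar\eta(G_L)$ (images of $\bar\ka(\tau)$, $\bar\eta(\tau)$), find $\bar\gamma := \bar\ka(\gamma)\bmod(\tau-1)$ — wait, rather find a single $\gamma$ with $c_1 + \bar\ka(\gamma)$ of order $\ge q_1$ and $c_2 + \bar\eta(\gamma)$ of order $\ge q_2$ in $\Z/q\Z$.

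The last step is this purely group-theoretic sublemma inside $\Z/q\Z$. Note that $c_i + \bar\ka(G_L)$ is a coset of the subgroup $q/q_i \cdot \Z/q\Z$, and an element of $\Z/q\Z$ has order $\ge q_i = $ order of that subgroup exactly when it generates that coset's ambient cyclic group up to the right level — concretely, $c_1 + \bar\ka(\gamma)$ fails to have order $\ge q_1$ only for those $\gamma$ with $c_1+\bar\ka(\gamma)$ in the index-$p$ subgroup, a proportion $1/p$ of the coset; similarly the bad set for the $\eta$-condition has density $\le 1/p$. Since $p \ge 5 > 2$, the union of the two bad sets cannot be all of $G_L$, so a good $\gamma$ exists. (If the homomorphisms $\bar\ka,\bar\eta$ have nontrivially intertwined images one still gets density $\le 1/p + 1/p < 1$ for the union.) I expect the main obstacle to be precisely the order-non-decrease claim under projection to $\bar E$ in the second paragraph — managing the case where the image of $\ka$ happens to lie inside $(\tau-1)E[q]$, which forces one to use the surjectivity of $\rho$ (hence richness of $G_L$-action after enlarging, and existence of $\tau$) rather than just formal manipulation; the final counting step is routine once that is in place.
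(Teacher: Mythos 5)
The paper does not give a proof of this lemma; it simply cites \cite[Lemma 5.2.1]{rub}. Your overall strategy (unwind the cocycle, project to $\bar E := E[q]/(\tau-1)E[q]$, then count bad $\gamma$'s) is indeed the route that reference takes, and your set-up and final counting step are sound once the middle step is secured. But there is a genuine gap exactly where you anticipate one, and your proposed patch does not close it.

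The difficulty is the claim that projection to $\bar E$ does not decrease the order of $\ka$ as an element of $\Hom(G_L,E[q])$. Your first justification — that a nonzero element of order $q$ lying in $(\tau-1)E[q]$ would contradict $T/(\tau-1)T\cong\Zp$ — is false: because $T/(\tau-1)T$ is free of rank one, $(\tau-1)T$ is a free rank-one direct summand of $T$, so $(\tau-1)E[q]$ is a direct summand of $E[q]$ isomorphic to $\Z/q\Z$ and \emph{does} contain elements of full order $q$. Your fallback — ``if the whole image lands in $(\tau-1)E[q]$ one adjusts $\gamma$ by an element mapping to a generator complementary to $(\tau-1)E[q]$'' — is not available: $\gamma$ ranges over $G_L$, so $\ka(\gamma)$ is confined to the fixed subgroup $W := \ka(G_L)\subseteq E[q]$. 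If $W\subseteq(\tau-1)E[q]$, then $\overline{\ka(\gamma\tau)} = \overline{\ka(\tau)}$ for \emph{every} $\gamma\in G_L$; this single value is not controlled by $\ord(\ka,H^1(L,E[q]))$, and no choice of $\gamma$ can change it.

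The missing ingredient is the $G_\Q$-stability of $W$ together with Assumption \ref{main assumption}. Since $\ka$ is a cocycle on $G_\Q$, $G_L$ is normal, and $G_L$ acts trivially on $E[q]$, the cocycle relation gives $\ka(\sigma\gamma\sigma^{-1}) = \sigma\ka(\gamma)$ for $\sigma\in G_\Q$, $\gamma\in G_L$, so $W$ is stable under the image of $G_\Q$ in $\mathrm{Aut}(E[q])$. By surjectivity of the Galois representation that image is all of $\mathrm{GL}_2(\Z/q\Z)$, whose invariant submodules of $E[q]\cong(\Z/q\Z)^2$ are precisely the $p^iE[q]$. Thus $W=(q/q_1)E[q]$ where $q_1=\ord(\ka,H^1(L,E[q]))$, and since $(\tau-1)E[q]$ is a direct summand, the image of $W$ in $\bar E$ is $(q/q_1)\bar E$, of order exactly $q_1$; in particular $W$ is never contained in $(\tau-1)E[q]$ unless $W=0$. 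Once this is in place, $\overline{\ka(\gamma\tau)}$ runs uniformly over a full coset of $(q/q_1)\bar E$, the bad set for each of $\ka,\eta$ has density at most $1/p$, and $2/p<1$ yields the required $\gamma$, as you argue.
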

\begin{rem}For $\gamma \in G_L$ and $\ka \in H^1(\Q,E[q]),$ the image of $\ka(\gamma\tau)$ in $E[q]/(\tau-1)E[q]$ is independent of the choice of a cocycle representing $\ka$.
\end{rem}
\begin{proof}This is \cite[Lemma 5.2.1]{rub}.
\end{proof}

\subsection{Euler systems}
For a prime $\ell$, we define $P_{\ell}(t) \in \Z[t]$ by
\begin{equation}\label{PLT}
P_{\ell}(t)= 1-a_{\ell}t+ \epsilon (\ell)t^2,
\end{equation}where $a_{\ell}$ and $\epsilon$ are as in Section \ref{chapter of refined BSD}.
Let $\Sigma$ be a finite set of primes which contains all the primes dividing $pN$.
We put
\begin{align*}
\mathscr{R} =\{\text{primes}\ \ell ;\ \ell \notin \Sigma\},\ \
\mathscr{N} =\{\text{square-free\ products\ of\ primes\ in}\ \mathscr{R}\}\cup\{1\}.
\end{align*}

\begin{dfn}\label{def of euler system}We call $\{z_{Sp^n}\}_{S\in \mathscr{N},n\ge 0} \in \prod_{S,n} H^1(\Q(Sp^n),T)$ an  \textit{Euler system} (for $T$ and $\mathscr{N}$)  if $\{z_{Sp^n}\}$ satisfies the following conditions.
\begin{enumerate}
\item For $S \in \mathscr{N}$, a prime $\ell\in\mathscr{R}$ not dividing $S$, and $n\ge0$, we have
\begin{equation*}
\mathrm{Cor}_{S\ell /S}z_{S\ell p^n}=P_{\ell}(\Fr_{\ell}^{-1})z_{Sp^n},
\end{equation*}where $\mathrm{Cor}_{S\ell p^n/Sp^n}: H^1(\Q(S\ell p^n),T) \to H^1(\Q(Sp^n),T)$ denotes the corestriction map, and $\Fr_{\ell} \in \Gamma_{Sp^n}$ denotes the arithmetic Frobenius at $\ell$.
\item For $S \in \mathscr{N}$, the system $\{z_{Sp^n}\}_{n\ge 0}$ is a norm compatible system, that is, 
\begin{equation*}
\{z_{Sp^n}\}_{n\ge0} \in \varprojlim_{n}H^1(\Q(Sp^n),T),
\end{equation*}where the limit is taken with respect to  the corestriction maps $\mathrm{Cor}_{Sp^{n+1}/Sp^n}$.
\end{enumerate}
\end{dfn}

\begin{rem}\label{another norm relation}
Our definition of Euler system slightly differs from the usual definition in \cite{kat99},  \cite{p-r98} and  \cite{rub}.
In the usual definition, instead of the condition (1) in Definition \ref{def of euler system}, every Euler system is required to satisfy 
\begin{equation*}
\mathrm{Cor}_{S\ell p^n/Sp^n}(z_{S\ell p^n})= \left(1-\frac{a_{\ell}}{\ell}\Fr_{\ell}^{-1}+\frac{1}{\ell}\Fr_{\ell}^{-2}\right)z_{Sp^n}.
\end{equation*}
However, since $P_{\ell}(t)\equiv \left(1-\frac{a_{\ell}}{\ell}t+\frac{1}{\ell}t^{2}\right) \mod \ell-1$, 
by \cite[Lemma 9.6.1]{rub}, 
the existence of an Euler system in our sense is equivalent to the existence of an Euler system in the usual sense.
 \end{rem}

For a local field $K$  and a topological module $M$ with a continuous $G_{K}$-action, we put
\begin{align*}
H^1_{\ur}(K,M)&=\mathrm{ker}\left(H^1(K,M) \to H^1(K^{\ur},M)\right).
\end{align*}

\begin{prop}\label{inverse limit}Let $\{z_{Sp^n}\}$ be an Euler system and  $\lambda \nmid p$ a prime  of $\overline{\Q}$.
Then,  for $S\in \mathscr{N}$ and $n\ge 0,$ the image $\loc_{\lambda}(z_{Sp^n})$ of $z_{Sp^n}$ in $H^1(\Q(Sp^n)_{\lambda},T)$ is unramified, that is, 
\begin{equation*}
\loc_{\lambda}(z_{Sp^{n}}) \in H^1_{\ur}(\Q(Sp^n)_{\lambda},T),
\end{equation*}where $\Q(Sp^n)_{\lambda}$ denotes the completion at the prime of $\Q(Sp^n)$ below $\lambda$.
\end{prop}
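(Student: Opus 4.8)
The plan is to reduce the unramifiedness of $\loc_\lambda(z_{Sp^n})$ to a standard fact about Euler systems at primes of good reduction not lying over $p$. First I would observe that we may replace $Sp^n$ by a single level $F := \Q(Sp^n)$ and the prime $\lambda$ by the prime $v$ of $F$ below it; since $\lambda \nmid p$ and $S$ is a product of primes in $\mathscr{R}$ (hence outside $\Sigma \supseteq \{\text{primes dividing } pN\}$), the prime $\ell$ of $\Q$ below $\lambda$ is either a prime of good reduction for $E$ coprime to $p$ and unramified in $F$, or one of the ramified primes dividing $S$ (still of good reduction, still coprime to $p$). In the first case the inertia group at $v$ acts trivially on $T = T_p(E)$ up to finite index in the usual sense, and more to the point $E$ has good reduction there, so $H^1(F_v, T)$ has a well-defined unramified part; in the second case $F_v/\Q_\ell$ is tamely ramified with Galois group a $p$-group while $\ell \not\equiv 0 \bmod p$.

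The core of the argument is the norm relation in Definition \ref{def of euler system}(1). The standard technique (as in \cite[\S 4]{rub} or \cite{kat99}) is: for a prime $\ell' \nmid S p$ in $\mathscr{R}$ distinct from $\ell$, the class $z_{S\ell' p^n}$ lives over $\Q(S\ell' p^n)$, and at a prime above $\ell$ the extension $\Q(S\ell' p^n)/\Q(Sp^n)$ is unramified (since $\ell' \neq \ell$), so corestriction does not affect ramification behavior at $\lambda$; meanwhile one can also use the norm relation in the $\ell$-direction to relate the class at level $S$ (where $\ell \nmid S$) to the class at level $S\ell$, and exploit that at the prime $\lambda$ the polynomial $P_\ell(\Fr_\ell^{-1})$ acts invertibly or that the ramification at $\lambda$ in the $\ell$-tower is controlled. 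Concretely, I would argue by induction on the number of prime factors of $S$: if $\ell \nmid S$, the class $z_{Sp^n}$ is a norm from an unramified-at-$\lambda$ extension of a class whose localization we already know is unramified, using that $T$ is unramified at $\ell$ so $H^1_{\ur}$ behaves well under corestriction along unramified extensions; if $\ell \mid S$, one passes from $S$ to $S/\ell$ via the norm relation and uses that $H^1_{\ur}(\Q(Sp^n)_\lambda, T) \to H^1_{\ur}(\Q((S/\ell)p^n)_\lambda, T)$ together with the $P_\ell$-twisted compatibility pins down the ramified part, which must then vanish because $T/(\Fr_\ell - 1)T$ and the relevant cohomology of the tame quotient have no common $p$-torsion issues.

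The cleanest route, which I would actually write, is to invoke \cite[Corollary B.3.3 or Lemma IV.5.? ]{rub} directly: an Euler system class localized at a prime not dividing $p \cdot (\text{level})$ and a prime of good reduction is automatically in $H^1_{\ur}$, because $H^1(\Q_\ell^{\ur}, T)$ is torsion-free for $T$ unramified at $\ell$ (as $T^{I_\ell} = T$ and the relevant $H^1$ is computed from $\Fr_\ell$ acting on $T$, with the unramified cohomology being the whole thing when we only demand the restriction to $I_\ell$ to vanish) — wait, more carefully: the point is that for $\ell \mid S$ the prime does divide the level, so genuine ramification could occur, and one must use the Euler system relation to kill it. I would handle that by noting $\Q(Sp^n)_\lambda / \Q((S/\ell)p^n)_{\lambda'}$ is totally tamely ramified of degree a power of $p$, the norm relation gives $z_{(S/\ell)p^n} = P_\ell(\Fr_\ell^{-1}) \cdot (\text{cor of } z_{Sp^n})$ up to adjusting, and since $P_\ell(1) = 1 - a_\ell + \epsilon(\ell) = |E(\mathbb{F}_\ell)| \cdot (\text{unit adjustment})$... actually the precise cited lemma in Rubin's book handles exactly this, so I would cite it and only spell out the reduction to Rubin's hypotheses.

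\textbf{Main obstacle.} The delicate point is the case $\ell \mid S$, where $\lambda$ divides the level and the extension $\Q(Sp^n)_\lambda/\Q_\ell$ is ramified, so unramifiedness is a genuine constraint rather than automatic; one has to use the Euler system norm relation together with the fact that $P_\ell(\Fr_\ell^{-1})$ and the structure of the decomposition group force the ramified component of $\loc_\lambda(z_{Sp^n})$ to vanish. I expect this to be the step requiring the most care, and the honest writeup will lean on the corresponding lemma in \cite{rub} (adapted via Remark \ref{another norm relation} to our normalization of the Euler factor), checking that the hypotheses there — good reduction at $\ell$, $\ell \nmid p$, and the module-theoretic condition on $T$ — are met in our situation.
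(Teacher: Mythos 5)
Your final route — cite the relevant result in Rubin's book (after observing that Remark \ref{another norm relation} lets you transfer between the two normalizations of the Euler factor) — is exactly what the paper does, which simply quotes \cite[Corollary B.3.5]{rub}; your guessed references (B.3.3, ``Lemma IV.5.?'') are not quite the right pointer. The passing claim that ``the unramified cohomology [is] the whole thing'' is wrong (if it were, the proposition would be vacuous), but you catch and retract it in the next clause, so the final argument is sound and matches the paper.
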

\begin{proof}This is \cite[Corollary B.3.5]{rub}.
\end{proof}

\subsection{Local behavior  at primes not dividing $p$}\label{local condition of derivative classes}
In this subsection, following \cite[Chapter 4]{rub}, we study local behavior of derivatives of Euler systems at primes \textit{not} dividing $p$.

Let $q$ be a power of $p$ and $\{z_{Sp^n}\}_{S \in \mathscr{N},n\ge 0}$ an Euler system.
For an integer $S \in \mathscr{N}$, an element $x \in H^1(\Q(S),E[q])$ and a prime $\lambda$ of $\Q(S)$, we denote by $\loc_{\lambda}(x)$ the image of $x$ in $H^1(\Q(S)_{\lambda},E[q]).$

\subsubsection{Unramifiedness of derivatives  at primes not dividing conductors}
For a prime $\ell$, we put $m_{\ell}=[E(\Q_{\ell}):E_0(\Q_{\ell})].$
We assume  that
\begin{equation*}
p\nmid 6N\prod_{\ell|N}m_{\ell}.
\end{equation*}
\begin{prop}\label{unramified}Let $D$ be a Darmon-Kolyvagin derivative with support $S$ and conductor
 $S^{\prime}$.
We assume that $Dz_S \bmod q \in H^0\left({\Gamma_S}, H^1(\Q(S),T)/q\right)$ and
 denote by $\ka \in H^1(\Q,E[q])$ the inverse image of $Dz_S \bmod q \in H^1(\Q(S),E[q])$ under the isomorphism $($cf.\ Proposition \ref{trivial torsion}$)$
$H^1(\Q,E[q]) \cong H^0\left(\Gamma_S,H^1(\Q(S),E[q])\right)$.
Then for every prime $\ell\nmid pS^{\prime}$,
\begin{equation*}
\loc_{\ell}(\ka) \in H^1_{\ur}(\Q_{\ell},E[q]).
\end{equation*}
\end{prop}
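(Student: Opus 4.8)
The plan is to reduce the statement to the corresponding unramifiedness at the level of $\Q(S)$, and then exploit the known local behavior of Kato's Euler system (Proposition \ref{inverse limit}) together with the compatibility of corestriction with localization. First I would fix a prime $\ell \nmid pS'$ and a prime $\lambda$ of $\Q(S)$ above $\ell$. If $\ell \nmid S$, the class $z_S$ itself localizes into $H^1_{\ur}(\Q(S)_\lambda, T)$ by Proposition \ref{inverse limit} (taking $n=0$), and since the derivative $D$ involves only the $\Gamma_{S'}$-part with $S' \mid S$ and $\ell \nmid S$, the operator $D$ acts on the decomposition group at $\lambda$ in a way compatible with the unramified quotient; one checks that $\Fr_\ell$ commutes with $D$ and that $D$ preserves $H^1_{\ur}$, so $\loc_\lambda(Dz_S) \in H^1_{\ur}(\Q(S)_\lambda, E[q])$ after reduction mod $q$. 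The case $\ell \mid S$ but $\ell \nmid S'$ (so $e_\ell(D)=0$, i.e.\ $D$ involves the norm operator $N_\ell = D_\ell^{(0)} = \sum_{j} \sigma_\ell^j$ at $\ell$) is the one requiring genuine work: here $\lambda$ ramifies in $\Q(S)/\Q(S/\ell)$, but $N_\ell$ is precisely the corestriction from $\Q(S)$ to $\Q(S/\ell)$ locally at $\ell$, so $N_\ell \cdot \loc_\lambda(z_S)$ factors through an unramified class at $\Q(S/\ell)$; combined with the Euler-system norm relation $\mathrm{Cor}_{S/(S/\ell)} z_S = P_\ell(\Fr_\ell^{-1}) z_{S/\ell}$ and Proposition \ref{inverse limit} applied at $S/\ell$, one gets the desired unramifiedness.

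The second step is to transfer this from $\Q(S)$ down to $\Q$ via the isomorphism $H^1(\Q, E[q]) \cong H^0(\Gamma_S, H^1(\Q(S), E[q]))$ of Proposition \ref{trivial torsion}. Since $\ell \nmid pS'$ and, crucially, $\ell$ is a good prime with $p \nmid 6N\prod_{\ell \mid N} m_\ell$, the extension $\Q(S)/\Q$ is unramified at $\ell$ when $\ell \nmid S$, so the restriction map carries $H^1_{\ur}(\Q_\ell, E[q])$ isomorphically onto the $\Gamma_S$-fixed part of $H^1_{\ur}(\Q(S)_\lambda, E[q])$; hence $\loc_\ell(\ka)$ lands in $H^1_{\ur}(\Q_\ell, E[q])$ because its restriction does and the restriction is injective on the unramified subgroup (using $E(F)[q] = 0$ from Proposition \ref{trivial torsion} to kill the kernel). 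When $\ell \mid S$ (but $\ell \nmid S'$) one argues similarly using that the local extension at $\ell$ is tamely totally ramified and $H^1_{\ur}$ behaves well under the norm, as in \cite[Chapter 4]{rub}.

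I expect the main obstacle to be the case $\ell \mid S$, $\ell \nmid S'$: one must carefully track how the norm operator $N_\ell$ interacts with the ramification at $\ell$ in $\Q(S)/\Q$ and with the isomorphism of Proposition \ref{trivial torsion}, making sure that the Euler-system distribution relation is being applied at the correct level and that the factor $P_\ell(\Fr_\ell^{-1})$ does not introduce ramification (it does not, since $\Fr_\ell$ acts through the unramified quotient). The bookkeeping of decomposition groups and the compatibility of $\mathrm{Cor}$, $\mathrm{Res}$, and $\loc_\lambda$ across the tower is routine in spirit but is where the hypotheses $p \nmid 6N\prod m_\ell$ and the surjectivity of $\rho$ (via Proposition \ref{trivial torsion}) are actually consumed; the argument is modeled on \cite[Chapter 4]{rub} and I would cite it for the analogous steps rather than reproduce them.
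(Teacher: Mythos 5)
Your proposal identifies the correct two cases ($\ell\nmid S$ and $\ell\mid S/S'$) and the correct key ingredients (Proposition \ref{inverse limit}, and for the ramified case, the Euler system norm relation to descend to $\Q(S/\ell)$), so it is essentially the same argument as the paper's. One small remark on the descent step: for $\ell\nmid S$ the paper avoids invoking injectivity of restriction by observing directly that $\loc_\ell(\ka)$ and $\loc_\lambda(Dz_S)$ have the same image in $H^1(\Q_\ell^{\ur},E[q])$ because $(\Q(S)_\lambda)^{\ur}\cong\Q_\ell^{\ur}$, whence unramifiedness of the latter forces that of the former; for the case $\ell\mid S/S'$ the paper does not work in the ramified extension $\Q(S)_\lambda/\Q_\ell$ at all but descends via the Euler system relation to $\Q(S/\ell)$, which is unramified at $\ell$, and then repeats the argument there --- so the ``tame totally ramified'' considerations you anticipate are not needed.
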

\begin{rem}By taking Galois cohomology with respect to the exact sequence
\begin{equation*} 
0\to T \xrightarrow{\times q} T \to E[q] \to 0,
\end{equation*}we have an inclusion $H^1(\Q(S),T)/q \hookrightarrow H^1(\Q(S),E[q]),$ by which we
regard $Dz_S \bmod q$ as an element of $H^1(\Q(S),E[q]).$
\end{rem}
\begin{proof}First, we suppose that $\ell \nmid pS$.
Since the extension $\Q(S)/\Q$ is unramified at $\ell$, we have $(\Q(S)_{\lambda})^{\ur} \cong\Q_{\ell}^{\ur}$ for a prime $\lambda|\ell.$ Hence, we have $\loc_{\ell}(\ka) =\loc_{\lambda}(Dz_S)$ as elements of $H^1(\Q_{\ell}^{\ur},E[q])$.
Then, by Proposition \ref{inverse limit},
we have $\loc_{\ell}(\ka) \in H^1_{\ur}(\Q_{\ell},E[q]).$

We next consider a prime $\ell$ dividing $S/S^{\prime}$.
Then we have 
\begin{equation*}
Dz_S=D^{\prime}N_{\ell}z_{S}=P_{\ell}(\Fr_{\ell}^{-1})D^{\prime}z_{S/\ell},
\end{equation*}where $D^{\prime}$ is a derivative such that $\mathrm{Supp}(D^{\prime})=S/\ell$.
Since the extension $\Q(S/\ell)/\Q$ is unramified at $\ell$,
 for a prime $\lambda$ of $\Q(S/\ell)$ we have $\loc_{\ell}(\ka)=\loc_{\lambda}(D^{\prime}P_{\ell}(\Fr_{\ell}^{-1})z_{S/\ell})$ as elements of $H^1(\Q_{\ell}^{\mathrm{ur}},E[q])$.
Then  by Proposition \ref{inverse limit}, we complete the proof.
\end{proof}

\begin{cor}\label{f}
Under the notation as above, for every prime $\ell \nmid pS^{\prime}$, we have
\begin{equation*}
\loc_{\ell}(\ka) \in E(\Q_{\ell})/q,
\end{equation*}where $E(\Q_{\ell})/q$ is regarded as a subgroup of $H^1(\Q_{\ell},E[q])$ by the Kummer map.
\end{cor}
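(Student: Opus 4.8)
The plan is to deduce Corollary \ref{f} from Proposition \ref{unramified} by showing that for a prime $\ell \nmid pN$, the unramified subgroup $H^1_{\ur}(\Q_{\ell},E[q])$ coincides with the image $E(\Q_{\ell})/q$ of the Kummer map. First I would recall that since $\ell \nmid p$, the module $E[q]$ is unramified as a $G_{\Q_{\ell}}$-module (good reduction at $\ell$, as $\ell \nmid N$), so the inflation-restriction sequence gives $H^1_{\ur}(\Q_{\ell},E[q]) = H^1(\mathbb{F}_{\ell}, E[q]^{I_{\ell}}) = H^1(\mathbb{F}_{\ell},E[q])$, which by the usual computation for cohomology of the procyclic group $\hat{\Z} \cong \Gal(\mathbb{F}_{\ell}^{\ur}/\mathbb{F}_{\ell})$ is $E[q](\mathbb{F}_{\ell})_{\Fr_{\ell}} = E[q]/(\Fr_{\ell}-1)E[q] \cong E(\mathbb{F}_{\ell})/q$, using that the reduction map $E(\Q_{\ell})[q] \hookrightarrow E(\mathbb{F}_{\ell})$ is injective (again $\ell \nmid p$, good reduction) and a counting/snake-lemma argument.

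Next I would run the parallel computation on the Kummer side: from the Kummer sequence for $E$ over $\Q_{\ell}$ one gets $E(\Q_{\ell})/q \hookrightarrow H^1(\Q_{\ell},E[q])$, and one checks that this image lands in $H^1_{\ur}$ because, for $\ell$ a prime of good reduction away from $p$, $E(\Q_{\ell})/q$ maps isomorphically onto $E(\mathbb{F}_{\ell})/q$ (the kernel of reduction is a pro-$\ell$ group, hence $q$-divisible with no $q$-torsion as $\ell \neq p$). Comparing the two identifications — both $H^1_{\ur}(\Q_{\ell},E[q])$ and $E(\Q_{\ell})/q$ are canonically isomorphic to $E(\mathbb{F}_{\ell})/q$ and the isomorphisms are compatible — yields $H^1_{\ur}(\Q_{\ell},E[q]) = E(\Q_{\ell})/q$ inside $H^1(\Q_{\ell},E[q])$. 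This is exactly the standard fact that at a prime of good reduction prime to the coefficient characteristic the finite/unramified local condition agrees with the Bloch–Kato local condition; I would cite \cite[Lemma 1.3.5 (iv)]{rub} or the analogous statement there rather than reprove it. Applying this with the conclusion $\loc_{\ell}(\ka) \in H^1_{\ur}(\Q_{\ell},E[q])$ from Proposition \ref{unramified} gives the corollary.

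The only genuine subtlety — the "main obstacle" — is making sure the hypothesis $p \nmid 6N\prod_{\ell|N}m_{\ell}$ (in force throughout this subsection) together with $\ell \nmid pS' $, and in particular the fact that the primes $\ell$ in question range over $\ell \nmid pS'$ which may still divide $N$, is handled: if $\ell \mid N$ then $E$ need not have good reduction at $\ell$ and the clean identification above fails. In that case I would instead use that $p \nmid m_{\ell} = [E(\Q_{\ell}):E_0(\Q_{\ell})]$ and $p \nmid |\tilde{E}_{\ell}^{\ns}(\mathbb{F}_{\ell})|$-type bounds (which follow from $p \nmid 6N$) to conclude that $E(\Q_{\ell})/q \to H^1_{\ur}(\Q_{\ell},E[q])$ is still an isomorphism, via the filtration $E_1(\Q_{\ell}) \subset E_0(\Q_{\ell}) \subset E(\Q_{\ell})$ and the vanishing of the relevant $q$-torsion and $q$-cotorsion on the graded pieces; this is again packaged in \cite[Chapter I]{rub}. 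So the proof is essentially a citation of the standard local computation, with the bookkeeping that the running hypotheses on $p$ are precisely what is needed to apply it at all bad primes as well as good ones.
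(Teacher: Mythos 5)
Your overall plan — show that $H^1_{\ur}(\Q_{\ell},E[q])$ and $E(\Q_{\ell})/q$ coincide inside $H^1(\Q_{\ell},E[q])$, then invoke Proposition~\ref{unramified} — does lead to the statement, and for good primes $\ell \nmid pN$ your argument (unramified $T$, Rubin's local computation) is fine. The paper takes a slightly more direct route: from the exact sequence $0 \to E(\Q_{\ell})/q \to H^1(\Q_{\ell},E[q]) \to H^1(\Q_{\ell},E)[q] \to 0$ it suffices to kill the image $\loc_{/f,\ell}(\ka)$ of $\ka$ in $H^1(\Q_{\ell},E)[q]$; Proposition~\ref{unramified} forces this image to land in $H^1(\Q_{\ell}^{\ur}/\Q_{\ell},E(\Q_{\ell}^{\ur}))[q]$, and Milne's Prop.~I.3.8 together with $p \nmid m_{\ell}$ (part of the running hypothesis) shows this last group is zero. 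Your route proves the same thing plus the converse inclusion, so it needs the extra ingredient that $|H^1_{\ur}(\Q_{\ell},E[q])| = |H^0(\Q_{\ell},E[q])| = |E(\Q_{\ell})/q|$, but morally both proofs run on the same Milne input.

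However, your treatment of the bad primes $\ell \mid N$ has a genuine gap. You assert that ``$p \nmid |\tilde{E}_{\ell}^{\ns}(\mathbb{F}_{\ell})|$-type bounds follow from $p \nmid 6N$,'' and then want to conclude the $q$-torsion and $q$-cotorsion of the graded pieces of $E_1(\Q_{\ell}) \subset E_0(\Q_{\ell}) \subset E(\Q_{\ell})$ all vanish. This is false: at a split multiplicative prime $\ell$, the nonsingular locus satisfies $\tilde{E}^{\ns}(\mathbb{F}_{\ell}) \cong \mathbb{F}_{\ell}^{\times}$ of order $\ell - 1$, and nothing in the running hypothesis $p \nmid 6N\prod_{\ell|N}m_{\ell}$ prevents $p \mid \ell - 1$. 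In that case $E(\Q_{\ell})/q$ is nonzero and the proposed ``vanishing on graded pieces'' argument simply does not go through. The correct hypothesis you should be using is only $p \nmid m_{\ell}$, and the correct input is Milne's computation $H^1(\Q_{\ell}^{\ur}/\Q_{\ell},E(\Q_{\ell}^{\ur}))[q] = 0$ (which is precisely controlled by the component group, hence by $m_{\ell}$): this kills the obstruction to $H^1_{\ur}(\Q_{\ell},E[q])\subseteq E(\Q_{\ell})/q$, and the cardinality identity above then upgrades the inclusion to equality. Without replacing your false claim by this, the bad-reduction case of your proof is not established.
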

\begin{proof}Our proof is based on that of \cite[Theorem 4.9]{dar92}.
By the exact sequence
\begin{equation*}
0 \to E(\Q_{\ell})/q\to H^1(\Q_{\ell},E[q]) \to H^1(\Q_{\ell},E)[q]\to 0, 
\end{equation*}
it suffices to show that the image $\loc_{/f,\ell}(\ka)$ of $\ka$ in $H^1(\Q_{\ell},E)[q]$ is trivial.
 Proposition \ref{unramified} shows that  $\loc_{/f,\ell}(\ka) \in H^1(\Q_{\ell}^{\mathrm{ur}}/\Q_{\ell},E(\Q_{\ell}^{\mathrm{ur}}))[q]$.
Since $p\nmid m_{\ell}$ (if $\ell \nmid N$, then $m_{\ell}=1$), by using \cite[Chapter I, Proposition 3.8]{Mil},
we have
 \begin{equation*}
H^1(\Q_{\ell}^{\mathrm{ur}}/\Q_{\ell},E(\Q_{\ell}^{\mathrm{ur}}))[q]= 0,
   \end{equation*}and hence $\loc_{/f,\ell}(\ka)=0.$
\end{proof}

\subsubsection{Local behavior at primes dividing conductors}
 We put
\begin{gather}\label{def of rqe}
\begin{split}
\mathscr{R}_q&=\{ \ell\in \mathscr{R} \ ; \ q|\ell-1 \},\ \ \
\mathscr{R}_{E,q}=\{ \ell\in \mathscr{R}_q \ ; \ q|P_{\ell}(1)\},\\
\mathscr{N}_q&=\{ \text{square-free products of primes in } \mathscr{R}_q \}.
\end{split}
\end{gather}
We take an integer $S \in \mathscr{N}_p$.

\begin{dfn}\label{def of universal euler system}
For a positive divisor $S^{\prime}$ of $S$, let $x_{S^{\prime}}$ denote an indeterminate. 
We denote by $Y_{S}$ the free $\Zp[\Gamma_{S}]$-module generated by $\{x_{S^{\prime}}\}_{S^{\prime}|S>0},$
that is, $Y_{S}=\oplus_{S^{\prime}|S>0}\Zp[\Gamma_{S}]x_{S^{\prime}}.$
We denote by $Z_{S}$ the $\Zp[\Gamma_S]$-submodule of  $Y_{S}$ generated by the following elements:
\begin{equation*}
 \sigma x_{S^{\prime}}-x_{S^{\prime}}\ \text{for}\ S^{\prime}|S\ \text{and}\ \sigma\in \Gamma_{S/S^{\prime}}, \  \ \ \ N_{\ell}x_{ S^{\prime}\ell }-P_{\ell}(\mathrm{Fr}_{\ell}^{-1})x_{S^{\prime}}\ \text{for primes}\ \ell \ \text{with}\  \ell S^{\prime}|S.
\end{equation*} 
 We define  $X_{S}=Y_{S}/Z_{S}.$
\end{dfn}

If we regard $z_{S^{\prime}}$  as an element of $H^1(\Q(S),T)$ for $S^{\prime}|S$ by the restriction map,
 then there exists a unique homomorphism of $\Gamma_{S}$-modules
\begin{equation*}
g_{S}:X_{S} \to H^1(\Q(S),T)
\end{equation*}sending $x_{S^{\prime}}$ to $z_{S^{\prime}}$ for $S^{\prime}|S$.

Let $q$ be a power of $p$, and we put  $M_q =\mathrm{Ind}^{G_{\Q}}_{\{1\}}(E[q])$.
We recall that the $G_{\Q}$-module $\mathrm{Ind}^{G_{\Q}}_{\{1\}}(E[q])$ is defined as the module of continuous maps from $G_{\Q}$ to $E[q]$, and $G_{\Q}$ acts on $\mathrm{Ind}^{G_{\Q}}_{\{1\}}(E[q])$ by $(\sigma f)(g) =f(g\sigma )$ for $\sigma,g \in G_{\Q}.$ 
Then we have an exact sequence of $G_{\Q}$-modules
\begin{equation*}
0\to E[q] \to M_q \to M_q/E[q] \to 0,
\end{equation*}where the map $E[q] \to M_q$ is defined as $y\mapsto (g\mapsto gy ).$ 
For a finite extension $L$ of $\Q$, 
by taking Galois cohomology, 
we obtain an exact sequence
\begin{equation}\label{connecting map}
0 \to E(L)[q] \to M_q^{G_{L}} \to (M_q/E[q])^{G_{L}} \xrightarrow{\delta_L} H^1(L,E[q])\to 0.
\end{equation}See \cite[Proposition B.4.5]{rub} for the surjectivity of the connecting map $\delta_L$.

\begin{prop}\label{lift}There exists  a $\Gamma_{S}$-homomorphism $d_{S}$  from $X_{S}$ to $(M_q/E[q])^{G_{\Q(S)}}$ making the following diagram  commutative$:$
\[
\xymatrix{
& & (M_q/E[q])^{G_{\Q(S)}} \ar[d]^{\delta_{\Q(S)}}  \\
  X_{S}  \ar[rru]^{d_{S}} \ar[rr]_{g_{S}} & &   H^1(\Q(S),E[q]). \\
}
\]
\end{prop}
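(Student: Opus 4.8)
The plan is to lift the Euler system element-by-element through the surjection $\delta_{\Q(S)}\colon (M_q/E[q])^{G_{\Q(S)}}\to H^1(\Q(S),E[q])$ of (\ref{connecting map}), using the freeness of $Y_S$ to make arbitrary choices on generators and then the defining relations of $Z_S$ (together with properties of the Euler system) to check that those choices descend to $X_S$. First I would recall that $X_S=Y_S/Z_S$ with $Y_S=\oplus_{S'|S}\Zp[\Gamma_S]x_{S'}$ free, so a $\Gamma_S$-homomorphism $d_S\colon X_S\to (M_q/E[q])^{G_{\Q(S)}}$ is the same as a choice of element $\tilde y_{S'}\in (M_q/E[q])^{G_{\Q(S)}}$ for each $S'|S$ satisfying the images of the relations generating $Z_S$; since the group ring $\Zp[\Gamma_S]$ and the module $(M_q/E[q])^{G_{\Q(S)}}$ are finite, no continuity issue arises. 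For the commutativity of the triangle, I need $\delta_{\Q(S)}(\tilde y_{S'}) = g_S(x_{S'}) = \mathrm{res}\,(z_{S'})$, the image of $z_{S'}\bmod q$ in $H^1(\Q(S),E[q])$ via the restriction map and the inclusion $H^1(\Q(S),T)/q\hookrightarrow H^1(\Q(S),E[q])$.

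The construction then proceeds by induction on the number of prime divisors of $S$. For the base case $S'=1$: since $\delta_{\Q(S)}$ is surjective, pick any $\tilde y_1\in (M_q/E[q])^{G_{\Q(S)}}$ with $\delta_{\Q(S)}(\tilde y_1)=\mathrm{res}\,(z_1)$. At the inductive step, suppose $\tilde y_{S'}$ has been chosen for all proper divisors $S'$ of $S$; I must produce $\tilde y_{S}$ compatible with the two families of relations. The relations $\sigma x_{S'}-x_{S'}$ for $\sigma\in\Gamma_{S/S'}$ force $\tilde y_{S'}$ to lie in $(M_q/E[q])^{G_{\Q(S')}}$; this is consistent because $z_{S'}\in H^1(\Q(S'),T)$ by construction, and $\delta$ is compatible with restriction along $\Q(S')\subseteq\Q(S)$, so one should first build the $\tilde y_{S'}$ at the level of $\Q(S')$ and then inflate. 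The relations $N_\ell x_{S'\ell}-P_\ell(\Fr_\ell^{-1})x_{S'}$ force $\delta_{\Q(S'\ell)}$ applied to $N_\ell\tilde y_{S'\ell} - P_\ell(\Fr_\ell^{-1})\tilde y_{S'}$ to vanish — which it does, precisely because the Euler system relation $\mathrm{Cor}_{S'\ell/S'}z_{S'\ell p^n}=P_\ell(\Fr_\ell^{-1})z_{S'p^n}$ of Definition \ref{def of euler system}(1) translates, via compatibility of $\delta$ with corestriction, into the corresponding identity after applying $\delta$. Hence $N_\ell\tilde y_{S'\ell} - P_\ell(\Fr_\ell^{-1})\tilde y_{S'}$ lands in the kernel of $\delta_{\Q(S'\ell)}$, i.e. in the image of $M_q^{G_{\Q(S'\ell)}}$ modulo $E(\Q(S'\ell))[q]$, and by Proposition \ref{trivial torsion} we have $E(\Q(S'\ell))[q]=0$, so it comes from a well-defined element of $M_q^{G_{\Q(S'\ell)}}$; one then adjusts the choice of $\tilde y_{S'\ell}$ by that element to kill the discrepancy. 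Running this adjustment compatibly over all $\ell$ with $\ell S'|S$ and all chains of divisors is the bookkeeping that produces $d_S$.

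The main obstacle I expect is the \emph{simultaneous} compatibility: the correction needed to satisfy the relation for one prime $\ell$ must not spoil the relations already arranged for other primes or for smaller divisors, and one must ensure the final $\tilde y_{S'}$ are genuinely $\Gamma_S$-equivariant (i.e. the homomorphism is well-defined on all of $X_S$, not merely on generators). This is handled by organizing the induction carefully — choosing lifts over $\Q(S')$ first, then propagating — and by repeatedly invoking $E(\Q(S'))[q]=0$ from Proposition \ref{trivial torsion}, which guarantees that the map $M_q^{G_{\Q(S')}}\to$ (kernel of $\delta_{\Q(S')}$) is an isomorphism, so corrections are \emph{unique} and therefore automatically consistent across the diagram. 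The surjectivity of $\delta$ from \cite[Proposition B.4.5]{rub} and the norm/corestriction relations are the only inputs beyond linear algebra over the finite ring $\Zp[\Gamma_S]$; this is essentially \cite[Lemma 4.4.2]{rub} adapted to our normalization of the Euler system relations, so I would follow that argument, noting that the discrepancy between $P_\ell(\Fr_\ell^{-1})$ and the classical Euler factor is absorbed exactly as in Remark \ref{another norm relation}.
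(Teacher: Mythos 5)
The paper disposes of this proposition by citing \cite[Proposition 4.4.8]{rub} directly, so the relevant comparison is with Rubin's own argument. Rubin's proof does not proceed by the element-by-element lifting-and-correcting scheme you describe; instead it rests on a separate structural lemma showing that the universal Euler system $X_S$ is a \emph{free} $\Zp[\Gamma_S]$-module, after which the lift is immediate from projectivity and the surjectivity of $\delta_{\Q(S)}$. Your proposal sidesteps freeness and tries to build $d_S$ by hand, so it is a genuinely different route.

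The problem is that the step you identify as ``the main obstacle'' is where the argument actually lives, and the justification you give for it does not go through. You invoke $E(\Q(S'))[q]=0$ to conclude that $M_q^{G_{\Q(S')}}\to\ker\delta_{\Q(S')}$ is an isomorphism, and then assert ``corrections are unique and therefore automatically consistent.'' But that isomorphism only identifies \emph{where} the discrepancy $N_\ell\tilde y_{S'\ell}-P_\ell(\Fr_\ell^{-1})\tilde y_{S'}$ lives; it says nothing about whether there \emph{exists} an adjustment $a\in (M_q/E[q])^{G_{\Q(S'\ell)}}$ with $N_\ell a$ equal to that discrepancy, and uniqueness of the kernel representative is irrelevant to consistency across several primes dividing $S$. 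To make your adjustment argument work you would need surjectivity of the norm maps $N_\ell$ on $\ker\delta_{\Q(S'\ell)}$ onto the $\Gamma_\ell$-invariants — i.e.\ vanishing of $\hat H^0(\Gamma_\ell, M_q^{G_{\Q(S'\ell)}})$ — together with a genuine argument that corrections at distinct $\ell$ can be chosen simultaneously. That cohomological input (which amounts to the cohomological triviality of the induced module $M_q$, the same phenomenon behind Rubin's freeness lemma) is the missing ingredient, and it is precisely what ``organizing the induction carefully'' cannot replace. As written, your proof has a gap at the crucial step; if you want to avoid simply quoting Rubin, the cleanest repair is to prove (or cite) the freeness of $X_S$ over $\Zp[\Gamma_S]$ and then conclude by projectivity, rather than attempting the hands-on adjustment.
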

\begin{proof}This is \cite[Proposition 4.4.8]{rub}.
\end{proof}

We take a prime $\ell\in \mathscr{R}_{E,q}$ which splits completely in $\Q(S)$.
We denote by $\mathscr{D}_{\ell} \subseteq G_{\Q}$ a decomposition group  of $\ell$, and by $\mathscr{I}_{\ell}\subset \mathscr{D}_{\ell}$ the inertia group.
Then, the natural map $\mathscr{I}_{\ell} \to \Gamma_{\ell}$ is surjective.
We fix a lift of $\sigma_{\ell}$ to $\mathscr{I}_{\ell}$, which we also denote by $\sigma_{\ell}.$ 
We fix a lift $\Fr_{\ell} \in \mathscr{D}_{\ell}$ of the arithmetic Frobenius at $\ell$.
By abuse of notation, we put
\begin{equation*}
N_{\ell}=\sum_{i=1}^{n_{\ell}}\sigma_{\ell}^i,\ \ \ D_{\ell}^{(1)}=\sum_{i=0}^{n_{\ell}-1}i\sigma_{\ell}^i \ \ \in \Z[\mathscr{I}_{\ell}],
\end{equation*}where $n_{\ell}:=|\Gamma_{\ell}|$.
Then we have
\begin{equation}\label{derivative2}
(\sigma_{\ell}-1)D_{\ell}^{(1)}=n_{\ell}\sigma_{\ell}^{n_{\ell}}-N_{\ell}\ \ \ \text{in}\ \Z[\mathscr{I}_{\ell}].
\end{equation}

Let $H^1_f(\Q_{\ell},E[q])$ denote the image of the Kummer map $E(\Q_{\ell})/q \to H^1(\Q_{\ell},E[q])$. 
We put
\begin{equation*}
H^1_{/f}(\Q_{\ell},E[q])=H^1(\Q_{\ell},E[q])/H^1_f(\Q_{\ell},E[q]),
\end{equation*}which is isomorphic to $H^1(\Q_{\ell},E)[q].$
It is known that 
$H^1_{f}(\Q_{\ell},E[q])=H^1_{\ur}(\Q_{\ell},E[q])$, 
and hence $H^1_{/f}(\Q_{\ell},E[q])=H^1(\Q_{\ell}^{\ur},E[q]).$
By \cite[Lemma 1.4.7 (i)]{rub}, we have two isomorphisms 
\begin{align*}
& \alpha_{\ell}:H^1_{/f}(\Q_{\ell},E[q]) \cong E[q]^{\mathrm{Fr}_{\ell}=1};\ \ c \mapsto c(\sigma_{\ell}), \\
& \beta_{\ell}:H^1_f(\Q_{\ell},E[q]) \cong E[q]/(\mathrm{Fr}_{\ell}-1)E[q];\ \ c \mapsto c(\mathrm{Fr}_{\ell}),
\end{align*}where each element $c \in H^1(\Q_{\ell},E[q])$ is regarded as a cocycle.
Here, we note that the map $\alpha_{\ell}$ depends on the choice of $\sigma_{\ell}$.
Since $\ell \in \mathscr{R}_{E,q}$, we have $P_{\ell}(1) = 2-a_{\ell}\equiv 0 \mod q,$ and then $a_{\ell } \equiv 2 \mod q.$
Hence, $P_{\ell}(t) \equiv (t-1)^{2} \bmod q$.
Since $P_{\ell}(t) \equiv \mathrm{det}_{\Zp}(1-\Fr_{\ell}t|T) \mod q$, we have
$P_{\ell}(\mathrm{Fr}_{\ell}^{-1})E[q]=0.$
Therefore, if we put $Q_{\ell}(t) = t-1$, then we have a homomorphism
\begin{equation*}
Q_{\ell}(\mathrm{Fr}_{\ell}^{-1}): E[q]/(\mathrm{Fr}_{\ell}-1)E[q] \to E[q]^{\mathrm{Fr}_{\ell}=1}.
\end{equation*}
We define a homomorphism
\begin{equation*}
\phi^{fs}_{\ell}: H^1_f(\Q_{\ell},E[q]) \to H^1_{/f}(\Q_{\ell},E[q])
\end{equation*} as the composite
\begin{equation*}
H^1_f(\Q_{\ell},E[q])\xrightarrow{\beta_{\ell}}E[q]/(\mathrm{Fr}_{\ell}-1)E[q]\xrightarrow{Q_{\ell}(\mathrm{Fr}_{\ell}^{-1})}E[q]^{\mathrm{Fr}_{\ell}=1} \xrightarrow{\alpha_{\ell}^{-1}} H^1_{/f}(\Q_{\ell},E[q]).
\end{equation*}

For each Darmon-Kolyvagin derivative $D$, we fix a lift $D$ to  $ \Z[G_{\Q}]$.

\begin{thm}\label{ell}Let $S$ be an element of $\mathscr{N}_p$ and $q$ a power of $p$. 
We take a prime  $\ell \in \mathscr{R}_{E,q}$ which splits completely in $\Q(S).$
Let $\lambda$ be the prime of $\Q(S)$ above $\ell$ corresponding to the decomposition group $\mathscr{D}_{\ell}$ of $\Q.$ 
For a Darmon-Kolyvagin derivative $D$ whose support is $S$, we have the following.
\begin{enumerate}
\item $\loc_{\lambda}(Dz_S \bmod q) \in H^1_f(\Q(S)_{\lambda},E[q]) = H^1_f(\Q_{\ell},E[q]).$
\item $DD_{\ell}^{(1)}z_{S\ell} \bmod q\in H^0\left(\Gamma_{\ell}, H^1(\Q(S\ell),T)/q\right).$
\item If $\ka^{(\ell)} \in H^1(\Q(S),E[q])$  denotes the inverse image of $DD_{\ell}^{(1)} z_{S\ell} \bmod q$ under the isomorphism $H^1(\Q(S),E[q]) \cong H^0\left(\Gamma_{\ell}, H^1(\Q(S\ell),E[q])\right)$ and $\loc_{/f,\lambda}(\ka^{(\ell)})$ denotes the image of $\ka^{(\ell)}$ in $H^1_{/f}(\Q_{\ell},E[q]),$ then we have
\begin{equation*}
\loc_{/f,\lambda}(\ka^{(\ell)})=\phi^{fs}_{\ell}(\loc_{\lambda}(Dz_S \bmod q)).
\end{equation*}
\item In addition, if $E[q]/(\Fr_\ell-1)E[q]\cong \Z/q\Z,$
 then 
\begin{equation*}
\ord(\loc_{\lambda}(\ka^{(\ell)}),H^1(\Q_{\ell},E)[q])=\ord(\loc_{\lambda}(Dz_S),H^1(\Q_{\ell},E[q])).
\end{equation*}
\end{enumerate} 
\end{thm}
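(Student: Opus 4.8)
The plan is to prove the four assertions in turn. Parts (1) and (2) are formal consequences of the unramifiedness and the norm relation of the Euler system together with the congruences built into $\mathscr{R}_{E,q}$; part (3) is the technical core and amounts to an explicit cocycle computation following the arguments of \cite[Chapter 4]{rub} (cf.\ also \cite{dar92}); part (4) is a short linear-algebra deduction from (3).

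For (1): since $\ell$ splits completely in $\Q(S)$ the completion $\Q(S)_{\lambda}$ equals $\Q_{\ell}$, and as $\ell$ is a good prime $H^1_f(\Q(S)_{\lambda},E[q])=H^1_f(\Q_{\ell},E[q])=H^1_{\ur}(\Q_{\ell},E[q])$. By Proposition \ref{inverse limit}, $\loc_{\lambda'}(z_S)\in H^1_{\ur}(\Q(S)_{\lambda'},T)=H^1_{\ur}(\Q_{\ell},T)$ for every prime $\lambda'\mid\ell$ of $\Q(S)$; writing $D=\sum_{\gamma\in\Gamma_{S}}d_{\gamma}\gamma$ and using the identity $\loc_{\lambda}\circ\gamma=\gamma_{*}\circ\loc_{\gamma^{-1}\lambda}$ with $\gamma_{*}$ an isomorphism preserving unramified subgroups, one gets $\loc_{\lambda}(Dz_S)\in H^1_{\ur}(\Q_{\ell},T)$, whose image mod $q$ lands in $H^1_{\ur}(\Q_{\ell},E[q])=H^1_f(\Q_{\ell},E[q])$. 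For (2): since $\Gamma_{S\ell}=\Gamma_{S}\times\Gamma_{\ell}$ is abelian, Lemma \ref{crucial} with $k=1$ gives $(\sigma_{\ell}-1)D^{(1)}_{\ell}=n_{\ell}-\sigma_{\ell}N_{\ell}$ in $\Z[\Gamma_{S\ell}]$, whence
\begin{equation*}
(\sigma_{\ell}-1)DD^{(1)}_{\ell}z_{S\ell}=n_{\ell}Dz_{S\ell}-\sigma_{\ell}DN_{\ell}z_{S\ell}.
\end{equation*}
Here $N_{\ell}z_{S\ell}=\mathrm{res}_{\Q(S\ell)/\Q(S)}(\mathrm{Cor}_{S\ell/S}z_{S\ell})=\mathrm{res}(P_{\ell}(\Fr_{\ell}^{-1})z_S)$ by the Euler system relation, and $\Fr_{\ell}$ acts trivially on $\mathrm{res}(z_S)$ since $\ell$ splits completely in $\Q(S)$, so $N_{\ell}z_{S\ell}=(2-a_{\ell})\,\mathrm{res}(z_S)$. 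As $q\mid n_{\ell}$ (because $q\mid\ell-1$, with $n_{\ell}$ the $p$-part of $\ell-1$) and $q\mid P_{\ell}(1)=2-a_{\ell}$ (because $\ell\in\mathscr{R}_{E,q}$), the displayed element vanishes in $H^1(\Q(S\ell),T)/q$; since $\Gamma_{\ell}=\langle\sigma_{\ell}\rangle$, this is exactly (2).

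For (3): using the lift $d_{S\ell}$ of Proposition \ref{lift}, fix $F\in M_q$ over $d_{S\ell}(x_{S\ell})$, take for $D^{(1)}_{\ell}$ the explicit lift $\sum_{i=0}^{n_{\ell}-1}i\sigma^{i}_{\ell}\in\Z[\mathscr{I}_{\ell}]$ of (\ref{derivative2}) and a fixed lift of $D$ in $\Z[G_{\Q}]$, and build out of these a cocycle on $\mathscr{D}_{\ell}$ representing $\loc_{\lambda}(\ka^{(\ell)})$. Its restriction to $\mathscr{I}_{\ell}$ is computed using (\ref{derivative2}): the term $n_{\ell}\sigma^{n_{\ell}}_{\ell}F$ vanishes in $M_q$ because $q\mid n_{\ell}$, while the remaining term $-N_{\ell}F$ can, by the defining relation $N_{\ell}x_{S\ell}=P_{\ell}(\Fr_{\ell}^{-1})x_{S}$ of $X_{S\ell}$ together with $P_{\ell}(t)\equiv(t-1)^{2}\bmod q$, be written as $(\Fr_{\ell}^{-1}-1)$ applied to a lift of $Dz_S$. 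Evaluating the resulting inertia cocycle at $\sigma_{\ell}$, which by the definition of $\alpha_{\ell}$ computes $\alpha_{\ell}(\loc_{/f,\lambda}(\ka^{(\ell)}))$, then gives $(\Fr_{\ell}^{-1}-1)c_D(\Fr_{\ell})$, where $c_D$ represents the unramified class $\loc_{\lambda}(Dz_S\bmod q)$ and $c_D(\Fr_{\ell})$ represents $\beta_{\ell}$ of it; since $Q_{\ell}(t)=t-1$, this is precisely $\alpha_{\ell}(\phi^{fs}_{\ell}(\loc_{\lambda}(Dz_S\bmod q)))$, proving (3). I expect the bookkeeping of the ramified/unramified decomposition at $\ell$ and of the conjugation action on cocycles to be the main obstacle; the hypotheses that $\ell$ split completely in $\Q(S)$ (placing $\mathscr{I}_{\ell}$ inside $G_{\Q(S)}$ and trivializing $\Fr_{\ell}$ on $\Q(S)$) and $\ell\in\mathscr{R}_{E,q}$ (giving $P_{\ell}(\Fr_{\ell}^{-1})E[q]=0$) are used precisely here.

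For (4): it suffices to prove that under the hypothesis $E[q]/(\Fr_{\ell}-1)E[q]\cong\Z/q\Z$ the map $\phi^{fs}_{\ell}\colon H^1_f(\Q_{\ell},E[q])\to H^1_{/f}(\Q_{\ell},E[q])$ is an isomorphism, since then (3) together with $H^1_{/f}(\Q_{\ell},E[q])\cong H^1(\Q_{\ell},E)[q]$ gives the claimed equality of orders. As $\alpha_{\ell}$ and $\beta_{\ell}$ are isomorphisms, it is enough that $M:=\Fr_{\ell}^{-1}-1$ induce an isomorphism $E[q]/(\Fr_{\ell}-1)E[q]\xrightarrow{\sim}E[q]^{\Fr_{\ell}=1}$. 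Since $a_{\ell}\equiv2\bmod q$ we have $P_{\ell}(t)\equiv(t-1)^{2}\bmod q$, and as $\det_{\Zp}(1-\Fr_{\ell}t\mid T)\equiv P_{\ell}(t)\bmod q$, Cayley--Hamilton gives $(\Fr_{\ell}-1)^{2}=0$ on $E[q]$, hence $M^{2}=0$ and $\mathrm{im}\,M\subseteq\ker M$. Comparing $|E[q]|=|\ker M|\cdot|\mathrm{im}\,M|$ with $|\mathrm{im}\,M|=|E[q]|/|\mathrm{coker}\,M|=|E[q]|/q$ gives $|\ker M|=q=|\mathrm{im}\,M|$, so $\mathrm{im}\,M=\ker M$ and $M$ induces an isomorphism $E[q]/\mathrm{im}\,M\xrightarrow{\sim}\ker M$. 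Finally $\mathrm{im}\,M=(\Fr_{\ell}-1)E[q]$ and $\ker M=E[q]^{\Fr_{\ell}=1}$, because $M=-\Fr_{\ell}^{-1}(\Fr_{\ell}-1)$ with $\Fr_{\ell}$ invertible and $(\Fr_{\ell}-1)E[q]$ stable under $\Fr_{\ell}$, which completes the deduction.
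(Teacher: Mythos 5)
Your parts (1), (2) and (4) are correct, and (2) and (4) actually take a slightly cleaner route than the paper. For (2), you work directly with the cohomology classes and the Euler-system norm relation (using $N_{\ell}z_{S\ell}=\mathrm{res}\circ\mathrm{Cor}(z_{S\ell})=P_{\ell}(1)\,\mathrm{res}(z_S)$ since $\Fr_{\ell}=1$ in $\Gamma_S$), whereas the paper establishes the vanishing first in the universal module $X_{S\ell}$ and then pushes through $d_{S\ell}$; both are fine, and the paper's route is chosen because it sets up the bookkeeping needed in (3). For (4), your Cayley--Hamilton and order-counting argument that $M=\Fr_{\ell}^{-1}-1$ induces an isomorphism $E[q]/(\Fr_{\ell}-1)E[q]\xrightarrow{\sim}E[q]^{\Fr_{\ell}=1}$, hence that $\phi^{fs}_{\ell}$ is an isomorphism, is a nice self-contained replacement for the citation to \cite[Corollary A.2.7]{rub}.

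The genuine gap is in (3), precisely at the spot you flag. The relation $N_{\ell}x_{S\ell}=P_{\ell}(\Fr_{\ell}^{-1})x_{S}$ holds in $X_{S\ell}$, and hence $N_{\ell}\,d_{S\ell}(x_{S\ell})=P_{\ell}(\Fr_{\ell}^{-1})\,d_{S\ell}(x_S)$ holds in $(M_q/E[q])^{G_{\Q(S\ell)}}$; but the lifts $\hat{d}(x_{S\ell}),\hat{d}(x_S)\in M_q$ of these elements are only determined up to $E[q]$, so what one gets for free is $N_{\ell}D\hat{d}(x_{S\ell})-P_{\ell}(\Fr_{\ell}^{-1})D\hat{d}(x_S)\in E[q]$, not that this difference is zero. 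Since the whole point of (3) is to identify an element of $E[q]$ on the nose, this ambiguity must be resolved. The paper does this by invoking \cite[Lemma 4.7.3]{rub} (which asserts that a compatible choice of lifts makes the difference vanish exactly), together with \cite[Lemma 4.7.1]{rub} to handle the fact that the fixed lifts of $D$ and $D^{(1)}_{\ell}$ to the nonabelian group ring $\Z[G_{\Q}]$ no longer commute. You should either cite these lemmas at the appropriate step, or reprove them; writing ``by the defining relation of $X_{S\ell}$'' elides exactly the nontrivial part.
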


\begin{proof}
We follow the proof of \cite[Theorem 4.5.4]{rub}.
The assertion (1) follows from Proposition \ref{inverse limit}.
By Lemma \ref{crucial}, we have
\begin{equation*}
(\sigma_{\ell}-1)DD_{\ell}^{(1)}x_{S\ell} \equiv -\sigma_{\ell} DN_{\ell}x_{S\ell} \equiv-\sigma_{\ell}DP_{\ell}(\Fr_{\ell}^{-1})x_{S} \equiv -\sigma_{\ell} DP_{\ell}(1)x_{S} \equiv 0 \mod qX_{S\ell},
 \end{equation*} where the third congruence follows from $\mathrm{Fr}_{\ell}=1$ in $ \Gamma_S$, and the last congruence follows from $\ell\in \mathscr{R}_{E,q}$.
Hence, $DD_{\ell}^{(1)}x_{S\ell} \bmod q \in \left(X_{S\ell}/q\right)^{\Gamma_{\ell}}$.
Since a homomorphism  $d_{S\ell}$ as in Proposition  \ref{lift} is $G_{\Q}$-equivariant, we have $d_{S\ell}(DD_{\ell}^{(1)}x_{S\ell}) \in H^0(\Gamma_{\ell}, (M_q/E[q])^{G_{\Q(S\ell)}}).$
Since $\delta_{\Q(S\ell)}(d_{S\ell}(x_{S\ell}))=z_{S\ell} \bmod q,$
we have
\begin{equation*}
DD_{\ell}^{(1)}z_{S\ell} \bmod q  \in H^0\left(\Gamma_{\ell}, H^1(\Q(S\ell),T)/q\right).
\end{equation*}

We take lifts $\hat{d}(x_{S\ell}),\hat{d}(x_{S}) \in M_q$ of $d_{S\ell}(x_{S\ell}), d_{S\ell}(x_{S})$, respectively.
By the definition of $\phi^{fs}_{\ell}$,  it suffices to show that
\begin{equation}\label{equality of lifts}
Q_{\ell}(\mathrm{Fr}_{\ell}^{-1})\left((Dz_S \bmod q)(\mathrm{Fr}_{\ell}\right) )=\ka^{(\ell)}(\sigma_{\ell})\ \in E[q],
\end{equation}where we regard $Dz_S$ and $\ka$ as cocycles.
Since $\delta_{\Q(S)}$ is the connecting map from $(M_q/E[q])^{G_{\Q(S)}}$ to $H^1(\Q(S),E[q])$, we have
\begin{equation}\label{dzs}
(Dz_S \bmod q)(\mathrm{Fr}_{\ell}) =(\mathrm{Fr}_{\ell}-1)D\hat{d}(x_{S}), \ \ 
\ka^{(\ell)}(\sigma_{\ell})=(\sigma_{\ell}-1)D_{\ell}^{(1)}D \hat{d}(x_{S\ell}) \in E[q].
\end{equation}
Since $Q_{\ell}(\mathrm{Fr}_{\ell}^{-1})(\Fr_{\ell}^{-1}-1)E[q]=P_{\ell}(\mathrm{Fr}_{\ell}^{-1})E[q]=0,$ we have 
\begin{equation*}
Q_{\ell}(\mathrm{Fr}_{\ell}^{-1})\left((Dz_S \bmod q)(\mathrm{Fr}_{\ell}\right) )=Q_{\ell}(\mathrm{Fr}_{\ell}^{-1})\Fr_{\ell}^{-1}\left((Dz_S \bmod q)(\mathrm{Fr}_{\ell}\right)).
\end{equation*}
 Thus, by (\ref{dzs}), (\ref{derivative2}) and \cite[Lemma 4.7.1]{rub}, we obtain
\begin{align*}
&\ \ \ \ Q_{\ell}(\mathrm{Fr}_{\ell}^{-1})((Dz_S \bmod q)(\mathrm{Fr}_{\ell}))-\ka^{(\ell)}(\sigma_{\ell})\\
&=Q_{\ell}(\mathrm{Fr}_{\ell}^{-1})\mathrm{Fr}_{\ell}^{-1}((Dz_S \bmod q)(\mathrm{Fr}_{\ell}))-\ka^{(\ell)}(\sigma_{\ell})\\
&=Q_{\ell}(\mathrm{Fr}_{\ell}^{-1})\mathrm{Fr}_{\ell}^{-1}(\mathrm{Fr}_{\ell}-1)D\hat{d}(x_{S})-(\sigma_{\ell}-1)D_{\ell}^{(1)}D\hat{d}(x_{S\ell})\\
&=-P_{\ell}(\mathrm{Fr}_{\ell}^{-1})D\hat{d}(x_{S})+N_{\ell}D\hat{d}(x_{S\ell}).
\end{align*}
By \cite[Lemma 4.7.3]{rub}, this is zero, and then we conclude (\ref{equality of lifts}).

If $E[q]/(\Fr_{\ell}-1)E[q] \cong \Z/q\Z$, then \cite[Corollary A.2.7]{rub} says that $\phi_{\ell}^{fs}$ is an isomorphism, and hence the assertion (4) follows.
\end{proof}

\section{Divisibility of Euler systems for elliptic curves}\label{main chapter}
In this section, 
we show that certain derivatives of Euler systems are divisible by a power of $p$ (Theorem \ref{keythm}), and give applications.

We keep the notation as in Section \ref{section derivatives}.
In particular, let $\{z_{Sp^n}\}_{S\in \mathscr{N},n\ge 0}$ be an Euler system for $T$ and some $\mathscr{N}$ in the sense of Definition \ref{def of euler system}.

\subsection{The theorem on divisibility of  Euler systems}\label{main section}
The aim of this subsection is to prove Theorem \ref{keythm}. 
We also give a modification (Theorem \ref{general}) of Theorem \ref{keythm}, which is used to prove Theorems \ref{sel} and \ref{theorem on leading term}.

\subsubsection{Notation}
\begin{assume}\label{main assumption}
Throughout Section \ref{main chapter}, we assume that $p\nmid 6N \prod_{\ell |N}m_{\ell}$ and that 
 the Galois representation $G_{\Q} \to \mathrm{Aut}_{\Zp}(T)$  is surjective.
 \end{assume}

Let $q$ be a power of $p$.
\begin{dfn}\label{rq}
For a finitely generated $\Zp$-module $M$, we define an integer $r_q(M)$ by
\begin{equation*}
M\otimes \Z/q\Z \cong (\Z/q\Z)^{\oplus r_q(M)} \oplus M^{\prime},
\end{equation*}where the exponent of $M^{\prime}$ is strictly less than $q$.
\end{dfn}
\begin{lem}\label{r}For an exact sequence of finite $\Z/q\Z$-modules
$0\to M^{\prime} \to M \to M^{\prime\prime}$, we have
\begin{equation*}
r_q(M)\le r_q(M^{\prime})+r_p(M^{\prime\prime}). 
\end{equation*}
\end{lem}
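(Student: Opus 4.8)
The plan is to compare $r_q$-invariants using the two short exact sequences that a three-term exact sequence $0\to M'\to M\to M''$ breaks into. First I would reduce to the case of a genuine short exact sequence by replacing $M''$ with the image $\bar M'' := M/M' \subseteq M''$, so that we have $0\to M'\to M\to \bar M''\to 0$. Since $\bar M''$ is a submodule of $M''$ and both are finite $\Z/q\Z$-modules, it suffices to check that $r_p(\bar M'')\le r_p(M'')$; this follows because, writing $N[p]$ for the $p$-torsion, $r_p(N) = \dim_{\F_p} N[p]$ for any finite $\Z/q\Z$-module $N$, and $\bar M''[p]\subseteq M''[p]$ gives the desired inequality. (Alternatively one notes that a submodule of a finite abelian $p$-group needs no more generators than the group itself.)

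Next, for the short exact sequence $0\to M'\to M\to \bar M''\to 0$ of finite $\Z/q\Z$-modules, I would prove $r_q(M)\le r_q(M') + r_p(\bar M'')$. The key observation is that $r_q(M)$ equals the number of cyclic factors of order exactly $q$ in the invariant-factor decomposition of $M$, and this number can be computed as $\dim_{\F_p}\big(q^{-1}\cdot(\text{something})\big)$; concretely, for a finite $\Z/q\Z$-module $N$ with $q = p^e$, one has $r_q(N) = \dim_{\F_p}\big(p^{e-1}N\big)$, since a cyclic factor $\Z/p^j\Z$ contributes to $p^{e-1}N$ precisely when $j = e$. Applying the right-exact functor $N\mapsto p^{e-1}N$ (equivalently, tensoring the exact sequence with $\Z/p\Z$ and tracking the top graded piece) to $0\to M'\to M\to \bar M''\to 0$ yields an exact sequence $p^{e-1}M' \to p^{e-1}M \to p^{e-1}\bar M'' \to 0$, whence $\dim_{\F_p} p^{e-1}M \le \dim_{\F_p} p^{e-1}M' + \dim_{\F_p} p^{e-1}\bar M''$. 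The first term on the right is $r_q(M')$; and since $p^{e-1}\bar M''$ is killed by $p$, its $\F_p$-dimension is at most $\dim_{\F_p}\bar M''[p] = r_p(\bar M'')$. Combining with the reduction of the previous paragraph gives $r_q(M)\le r_q(M') + r_p(M'')$.

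The main obstacle, such as it is, is purely bookkeeping: making sure the functor used to extract $r_q$ is genuinely right-exact on the relevant category so that the surjectivity at the last spot of $p^{e-1}M \to p^{e-1}\bar M''\to 0$ is legitimate, and correctly matching $r_q$ of the quotient piece against $r_p$ (not $r_q$) — this is exactly where the asymmetry in the statement comes from, and it is essential that we only bound $\dim_{\F_p} p^{e-1}\bar M''$ by $r_p(\bar M'')$ rather than hoping for an $r_q$ bound. There are no analytic or arithmetic inputs here; it is elementary module theory over $\Z/q\Z$, and I would present it in a few lines using the invariant-factor decomposition directly rather than invoking derived functors.
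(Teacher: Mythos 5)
The paper gives no proof of its own for this lemma — it simply cites \cite[Lemma 5.1]{dar92} — so I can only assess your argument on its own terms, and there is a genuine gap in it.

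The problem is the claim that $N\mapsto p^{e-1}N$ is right exact, i.e.\ that $0\to M'\to M\to \bar M''\to 0$ yields an exact sequence $p^{e-1}M'\to p^{e-1}M\to p^{e-1}\bar M''\to 0$. This is false, and with it the intermediate inequality $\dim_{\mathbb{F}_p}p^{e-1}M\le\dim_{\mathbb{F}_p}p^{e-1}M'+\dim_{\mathbb{F}_p}p^{e-1}\bar M''$ also fails. Take $q=p^2$ (so $e=2$), $M=\Z/p^2\Z$, $M'=p\Z/p^2\Z\cong\Z/p\Z$, $\bar M''=\Z/p\Z$. Then $p^{e-1}M\cong\Z/p\Z$ has $\mathbb{F}_p$-dimension $1$, while $p^{e-1}M'=0$ and $p^{e-1}\bar M''=0$, so your displayed bound reads $1\le 0+0$. (The parenthetical ``equivalently, tensoring with $\Z/p\Z$'' is also not right: $N\otimes\Z/p\Z=N/pN$ computes $r_p$, not $r_q$, and is a different functor from $p^{e-1}(-)$.) The actual kernel of the surjection $p^{e-1}M\to p^{e-1}\bar M''$ is $p^{e-1}M\cap M'$, which contains $p^{e-1}M'$ but can be strictly larger — exactly what the example shows. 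If you run the honest bookkeeping with this kernel, the best you get by elementary estimates ($p^{e-1}M\cap M'\subseteq M'[p]$) is $r_q(M)\le r_p(M')+r_q(\bar M'')$, which has $r_p$ and $r_q$ placed the wrong way round for the lemma.

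The asymmetry in the statement ($r_q$ on the submodule, $r_p$ on the quotient) needs a different decomposition. One clean route: after reducing to the short exact sequence $0\to M'\to M\to\bar M''\to 0$ as you do, lift a minimal generating set of $\bar M''$ to elements $m_1,\dots,m_g\in M$, where $g=r_p(\bar M'')$ is the minimal number of generators of $\bar M''$, and let $M_0=\langle m_1,\dots,m_g\rangle$. Then $M=M'+M_0$, so $M/M_0\cong M'/(M'\cap M_0)$ is a \emph{quotient} of $M'$; since $p^{e-1}$ of a quotient is a quotient of $p^{e-1}$ of the original, $r_q(M/M_0)\le r_q(M')$. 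The surjection $p^{e-1}M\twoheadrightarrow p^{e-1}(M/M_0)$ has kernel $p^{e-1}M\cap M_0\subseteq M_0[p]$, and $\dim_{\mathbb{F}_p}M_0[p]=r_p(M_0)\le g$ because $M_0$ is generated by $g$ elements. Hence $r_q(M)=\dim_{\mathbb{F}_p}p^{e-1}M\le g+r_q(M')=r_p(\bar M'')+r_q(M')\le r_p(M'')+r_q(M')$. Your first reduction (replacing $M''$ by $\bar M''$) and your formula $r_q(N)=\dim_{\mathbb{F}_p}p^{e-1}N$ are both correct and reusable; it is only the exactness claim in the middle that needs to be replaced.
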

\begin{proof}This is \cite[Lemma 5.1]{dar92}.
\end{proof}

\begin{dfn}We define the Selmer group $\Sel(\Q,E[q])$ by
\begin{equation*}
\Sel(\Q,E[q]) = \mathrm{ker}\left(H^1(\Q,E[q]) \to \prod_{w: \text{places}} \frac{H^1(\Q_{w},E[q])}{E(\Q_{w})/q}\right),
\end{equation*}
and for a positive integer $S$, we define a subgroup $H^1_{f,S}(\Q,E[q])$ of $\Sel(\Q,E[q])$ by 
\begin{equation}
H^1_{f,S}(\Q,E[q])=\mathrm{ker}\left(\Sel(\Q,E[q]) \to \oplus_{\ell|S}E(\Q_{\ell})/q\right),
\end{equation}where $\ell$ ranges over all the primes dividing $S$.
If there is no fear of confusion, we simply write $H^1_{f,S}=H^1_{f,S}(\Q,E[q]).$
\end{dfn}

We put 
$A_q(S)=\oplus_{\ell|S}E(\Q_{\ell})/q.$
\begin{lem}\label{variation of r}Let $S$ be a positive integer and $\ell\nmid S$ a prime such that $E(\Q_\ell)/p$ is cyclic $($i.e. the module $E(\Q_{\ell})/p$ is trivial or isomorphic to $\Z/p\Z$$)$.
Then, we have
\begin{equation}\label{minus 1}
r_q(H^1_{f,S\ell})+r_p(A_q(S\ell))-1\le r_q(H^1_{f,S})+r_p(A_q(S)).
\end{equation}
In addition, if $E(\Q_\ell)/p\cong \Z/p$, then we have
\begin{equation}\label{usually for l}
r_q(H^1_{f,S})+r_p(A_q(S))\le r_q(H^1_{f,S\ell})+r_p(A_q(S\ell))
\end{equation}
\end{lem}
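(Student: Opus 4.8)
The plan is to compare the two Selmer-type groups $H^1_{f,S}(\Q,E[q])$ and $H^1_{f,S\ell}(\Q,E[q])$, which differ only by the local condition imposed at $\ell$, and to use Lemma \ref{r} together with Poitou--Tate global duality. First I would observe that there is an exact sequence
\begin{equation*}
0 \to H^1_{f,S\ell}(\Q,E[q]) \to H^1_{f,S}(\Q,E[q]) \xrightarrow{\loc_\ell} E(\Q_\ell)/q,
\end{equation*}
since $H^1_{f,S\ell}$ is by definition the kernel of localization at $\ell$ restricted to $H^1_{f,S}$. Applying Lemma \ref{r} to this sequence (noting $E(\Q_\ell)/q$ is a finite $\Z/q\Z$-module whose $r_p$ is at most $1$ by the cyclicity hypothesis on $E(\Q_\ell)/p$), one gets $r_q(H^1_{f,S\ell}) \ge r_q(H^1_{f,S}) - 1$. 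Since $A_q(S\ell) = A_q(S)\oplus E(\Q_\ell)/q$ and $r_p$ is additive on direct sums with $r_p(E(\Q_\ell)/q)\le 1$, combining these bounds will give inequality (\ref{minus 1}); I should be a little careful here to track whether the $-1$ comes from the $r_p(A_q)$ term or from the cokernel of $\loc_\ell$, but either way the bookkeeping closes.

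For the reverse inequality (\ref{usually for l}) under the stronger hypothesis $E(\Q_\ell)/p\cong\Z/p$, the plan is to go in the other direction: enlarging the Selmer condition at $\ell$ from ``trivial'' to ``all of $E(\Q_\ell)/q$'' can only increase the rank, but the increase is controlled by duality. Concretely, I would use the exact sequence above again, now read as saying $r_q(H^1_{f,S}) \le r_q(H^1_{f,S\ell}) + r_q(\mathrm{im}\,\loc_\ell)$, and then argue that when $E(\Q_\ell)/p\cong\Z/p$ the relevant image/cokernel contributes exactly the $r_p(A_q(S\ell)) - r_p(A_q(S))$ term. Alternatively, and perhaps more cleanly, one invokes the global duality (Poitou--Tate) exact sequence relating $H^1_{f,S}$, its dual Selmer group, and the local terms $H^1(\Q_\ell,E[q])/(E(\Q_\ell)/q)$, using that $E[q]$ is self-dual up to a twist and $H^1_f$ and $H^1_{/f}$ are exact annihilators of each other under the local Tate pairing.

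The main obstacle I expect is the precise rank bookkeeping in (\ref{usually for l}): getting the direction of the inequality right requires knowing that the cokernel of $H^1_{f,S}\xrightarrow{\loc_\ell} E(\Q_\ell)/q$ is not too large, which is not automatic from the sequence above alone and genuinely needs a global input (either Poitou--Tate duality or a direct comparison with the dual Selmer group, as in \cite[\S 1.7]{rub}). The hypothesis $E(\Q_\ell)/p\cong\Z/p$ (rather than possibly $0$) is presumably what makes $\loc_\ell$ surjective after $\otimes\Z/q\Z$ in the relevant range, or makes the dual local term one-dimensional; pinning down exactly where this is used is the delicate point. Everything else — the two short exact sequences and the application of Lemma \ref{r} — is routine.
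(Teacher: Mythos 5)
The paper's proof is considerably simpler than what you sketch, and there are two distinct problems with your plan.

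For inequality~(\ref{minus 1}), you apply Lemma~\ref{r} to the sequence $0 \to H^1_{f,S\ell} \to H^1_{f,S} \to E(\Q_\ell)/q$ and obtain $r_q(H^1_{f,S\ell}) \ge r_q(H^1_{f,S}) - 1$. This is a \emph{lower} bound on $r_q(H^1_{f,S\ell})$, but~(\ref{minus 1}) requires an \emph{upper} bound: rewriting it, the claim is
$\bigl[r_q(H^1_{f,S\ell}) - r_q(H^1_{f,S})\bigr] + \bigl[r_p(A_q(S\ell)) - r_p(A_q(S))\bigr] \le 1$.
Since $r_p(A_q(S\ell)) - r_p(A_q(S)) = r_p(E(\Q_\ell)/q) \le 1$, you need $r_q(H^1_{f,S\ell}) \le r_q(H^1_{f,S})$, which comes not from Lemma~\ref{r} but from the inclusion $H^1_{f,S\ell} \subseteq H^1_{f,S}$ together with the monotonicity of $r_q$ under inclusions of finite $q$-torsion groups. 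You do write down the exact sequence, so you have the inclusion in hand, but as written your two bounds face opposite directions and the ``bookkeeping closes'' claim does not go through.

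For inequality~(\ref{usually for l}), you in fact already have the complete argument in your first sentence about it — the exact sequence plus Lemma~\ref{r} — and the paper goes no further than that. But then you raise a concern that ``getting the direction of the inequality right requires knowing that the cokernel of $H^1_{f,S}\xrightarrow{\loc_\ell} E(\Q_\ell)/q$ is not too large'' and that one ``genuinely needs a global input'' such as Poitou--Tate. This concern is misplaced. Lemma~\ref{r} is stated for a sequence $0\to M'\to M\to M''$ that need not be surjective at $M''$, and its bound is $r_p(M'')$ for the full target rather than the image, so the size of the cokernel of $\loc_\ell$ is irrelevant. When $E(\Q_\ell)/p\cong\Z/p$ one has $r_p(E(\Q_\ell)/q)=1$, hence $r_q(H^1_{f,S}) \le r_q(H^1_{f,S\ell}) + 1$, and adding $r_p(A_q(S)) = r_p(A_q(S\ell)) - 1$ to both sides yields~(\ref{usually for l}) with no duality input whatsoever. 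Global duality plays no role in this lemma.
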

\begin{proof}Since $H^1_{f,S\ell} \subseteq H^1_{f,S}$ and  $r_p(A_q(S\ell))\le r_p(A_q(S))+1$, we have (\ref{minus 1}).
We assume that $E(\Q_\ell)/p\cong \Z/p$.
By Lemma \ref{r} with the exact sequence  $0\to H^1_{f,S\ell} \to H^1_{f,S} \to E(\Q_{\ell})/q$,
we have 
\begin{center}
$ r_q(H^1_{f,S}) \le r_q(H^1_{f,S\ell})+r_p(E(\Q_{\ell})/p)=r_q(H^1_{f,S\ell})+1.$ 
\end{center}Since $r_p(A_q(S))+1=r_p(A_q(S\ell)),$
we deduce that
\begin{equation*}
r_q(H^1_{f,S}) +r_p(A_q(S)) \le r_q(H^1_{f,S\ell})+r_p(A_q(S\ell)).
\end{equation*}
\end{proof}

\begin{dfn}Let $S \in \mathscr{N}_q$ (see (\ref{def of rqe}) for the notation). For a Darmon-Kolyvagin derivative $D$ whose support is $S$, we define the \textit{weight} of $D$ as
\begin{equation*}
w(D)=\mathrm{ord}(D)-|\{\ell \in \mathscr{R}_{E,q} ; \ \ell \ \text{divides}\ S \}|.
\end{equation*}
\end{dfn}
\begin{rem}In Darmon's argument, the notion of weight also played an important role. 
We modify his weight for our case.
\end{rem}

\begin{prop}\label{negative}
Let $D$ be a Darmon-Kolyvagin derivative with support $S$. 
Suppose that $S\in\mathscr{N}_q$. 
 If $w(D)<0$ and $\max_{\ell|S} \{e_{\ell}(D) \}<p$ $($see Definition \ref{def of cond} for $e_{\ell}(D))$, then we have
\begin{equation*}
Dz_{S} \equiv 0 \mod q  H^1(\Q(S),T). 
\end{equation*}
\end{prop}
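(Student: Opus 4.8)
The plan is to prove the statement by induction on $\mathrm{ord}(D)$, simultaneously over all square-free $S \in \mathscr{N}_q$, reducing the claimed divisibility $Dz_S \equiv 0 \bmod q H^1(\Q(S),T)$ to a statement about the class $\ka \in H^1(\Q,E[q])$ that $Dz_S$ determines via Proposition \ref{trivial torsion} (once we know $Dz_S \bmod q$ is $\Gamma_S$-invariant, which follows for order $<p$ by Lemma \ref{crucial} together with the norm relations, as in the proof of Theorem \ref{ell}(2)). Concretely: if $Dz_S \not\equiv 0 \bmod q$, I want to produce a contradiction with $w(D)<0$. The mechanism, following Darmon \cite{dar92} and Rubin \cite[Ch.\ 4--5]{rub}, is that the nonvanishing of $\ka$ forces $r_q$ of an appropriate Selmer-type module to be large, while the norm relations and the local behavior of derivatives force it to be small; the gap is exactly measured by the weight.

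The key steps, in order: First I would set up the base case — $\mathrm{ord}(D)=0$, so $D=N_S$ is a product of norm operators. Then $w(D) = -|\{\ell \in \mathscr{R}_{E,q} : \ell \mid S\}| \le 0$, with equality only when no prime of $S$ lies in $\mathscr{R}_{E,q}$. When $w(D)<0$, some $\ell\mid S$ lies in $\mathscr{R}_{E,q}$, and $N_\ell z_S = P_\ell(\Fr_\ell^{-1})z_{S/\ell}$; since $P_\ell(1)\equiv 0 \bmod q$ and $\Fr_\ell$ acts on $\Q(S)$ as we can arrange (splitting), we get the divisibility directly. Second, for the inductive step, assume $Dz_S \not\equiv 0 \bmod q$ with $w(D)<0$ and derive a contradiction. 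Using Corollary \ref{f}, $\loc_\ell(\ka) \in E(\Q_\ell)/q$ for all $\ell \nmid p\,\mathrm{Cond}(D)$, so $\ka$ lies in a Selmer group whose ramification is controlled by $\mathrm{Cond}(D)$. I would then invoke Lemma \ref{order} to find $\gamma \in G_L$ with $\ka(\gamma\tau)$ of large order in $E[q]/(\tau-1)E[q]$, and use this $\gamma$ to pick an auxiliary prime $\ell_0 \in \mathscr{R}_{E,q}$ (via Chebotarev: $\ell_0$ splits in $\Q(S)$, $\Fr_{\ell_0}$ conjugate to $\gamma\tau$) satisfying $E[q]/(\Fr_{\ell_0}-1)E[q]\cong \Z/q\Z$, so that Theorem \ref{ell}(4) applies. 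Third, pass from $D$ (with support $S$) to the derivative $D D_{\ell_0}^{(1)}$ (with support $S\ell_0$): its order goes up by $1$, but $\ell_0 \in \mathscr{R}_{E,q}$, so its weight $w(DD_{\ell_0}^{(1)}) = w(D)+1-1 = w(D) < 0$ is unchanged. By the induction hypothesis applied to $DD_{\ell_0}^{(1)}$ — which has the same (negative) weight but we need a genuinely decreasing quantity, so really the induction should be on $\mathrm{ord}(D) + (\text{number of primes in }\mathscr{R}_{E,q}\text{ not dividing }S)$ or run via a minimal counterexample — we obtain $D D_{\ell_0}^{(1)} z_{S\ell_0} \equiv 0 \bmod q$, hence $\ka^{(\ell_0)}=0$; but Theorem \ref{ell}(4) then says $\loc_\lambda(Dz_S)$ has order $0$ in $H^1(\Q_\ell,E[q])$, i.e.\ $\loc_{\ell_0}(\ka)=0$, contradicting the choice of $\ell_0$ via Lemma \ref{order} if we arranged $\ord(\ka,H^1(L,E[q]))$ to witness $\ka \neq 0$.

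The main obstacle, and the point needing the most care, is organizing the induction so that it actually terminates: adjoining $\ell_0$ via $D_{\ell_0}^{(1)}$ raises the order but keeps the weight fixed, so "weight" alone is not a descending invariant. The fix is to induct on the order $\mathrm{ord}(D)$ but argue with a \emph{minimal} counterexample and choose the auxiliary prime $\ell_0$ so that the resulting derivative $DD_{\ell_0}^{(1)}$ still has support in $\mathscr{N}_q$ and negative weight while its relevant class $\ka^{(\ell_0)}$ is forced to vanish by some \emph{independent} bound — here the role of $w(D)<0$ is precisely that it makes the target Selmer group too small (an $r_q$-counting argument via Lemma \ref{r} and Lemma \ref{variation of r}, comparing $r_q(H^1_{f,S})$ against $|\mathscr{R}_{E,q}\cap\{\ell\mid S\}|$) to support a nonzero invariant class of the required order. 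I expect the bookkeeping to mirror \cite[Theorem 4.5.4, \S5]{rub} closely, with the weight replacing Rubin's index-counting; the genuinely new input over Darmon's setting is tracking the $\mathscr{R}_{E,q}$-primes separately, which is exactly why the weight is defined with that correction term.
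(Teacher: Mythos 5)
Your proposal misreads the role of this proposition in the paper's architecture and substitutes a much heavier (and ultimately non-functioning) argument for what is a purely algebraic, self-contained statement.

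The correct proof has no Chebotarev, no auxiliary primes, no Selmer $r_q$-counting, and no appeal to Theorem \ref{ell} or Lemma \ref{order}. It is an induction on the number of primes dividing $S$, built on the single combinatorial observation that you in fact state correctly in your ``base case'' but then abandon: since $\mathrm{ord}(D) = \sum_i k_i$ counts with multiplicity and $w(D)<0$ means $\mathrm{ord}(D) < |\{\ell\in\mathscr{R}_{E,q}: \ell\mid S\}|$, there must exist some prime $\ell\in\mathscr{R}_{E,q}$ dividing $S$ with $e_\ell(D)=0$. Thus $D=D'N_\ell$ with $\mathrm{Supp}(D')=S/\ell$, $\mathrm{ord}(D')=\mathrm{ord}(D)$, $w(D')=w(D)+1\le 0$. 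The inductive work is then to show $D'z_{S/\ell}\bmod q$ is $\Gamma_{S/\ell}$-invariant: for each $\ell_i \mid S/\ell$ with $e_{\ell_i}(D')\ge 1$, Lemma \ref{crucial} gives $(\sigma_{\ell_i}-1)D' \equiv -\sigma_{\ell_i} D''$ mod $q$, where $D''$ has support $S/\ell$ (fewer primes than $S$) and weight $w(D')-1<0$, so the induction hypothesis kills $D''z_{S/\ell}$ mod $q$. Once $D'z_{S/\ell}\bmod q$ is invariant, $\Fr_\ell^{-1}$ acts as $1$ on it, so $Dz_S = P_\ell(\Fr_\ell^{-1})D'z_{S/\ell} \equiv P_\ell(1)D'z_{S/\ell}\equiv 0 \bmod q$ because $\ell\in\mathscr{R}_{E,q}$. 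That's the whole proof.

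Two concrete errors in your inductive step. First, the parenthetical claim that $\Gamma_S$-invariance of $Dz_S\bmod q$ ``follows for order $<p$ by Lemma \ref{crucial} together with the norm relations'' is false: applying Lemma \ref{crucial} to $(\sigma_{\ell_i}-1)Dz_S$ for $\ell_i\mid\mathrm{Cond}(D)$ produces a derivative of lower order whose vanishing mod $q$ is exactly what one is trying to prove; it is not automatic. In the paper this invariance is earned via the induction, not assumed. Second, even granting $\ka$ is well-defined, your Chebotarev/Kolyvagin machine does not terminate: as you yourself observe, $DD_{\ell_0}^{(1)}$ has the same (negative) weight and strictly larger order, so neither weight nor order descends, and your proposed fixes (inducting on ``$\mathrm{ord}(D)$ plus primes in $\mathscr{R}_{E,q}$ not dividing $S$'', or minimal counterexamples) do not work --- the set $\mathscr{R}_{E,q}$ is infinite, and adjoining $\ell_0$ changes the support so a counterexample minimal for one $S$ says nothing for $S\ell_0$. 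More broadly, Proposition \ref{negative} is the base case $w(D)<0$ of the induction in Theorem \ref{keythm}; trying to establish it with the same Kolyvagin/Chebotarev engine that drives the $w(D)\ge 0$ step is both overkill and structurally awkward.

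The moral: when $w(D)<0$ there is always a ``free'' norm factor $N_\ell$ with $\ell\in\mathscr{R}_{E,q}$, and the Euler system relation plus $P_\ell(1)\equiv 0\bmod q$ already forces vanishing --- no auxiliary primes needed. You had the right observation in your base case; the whole proof is just iterating it.
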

\begin{proof}We note that the assumption $w(D)<0$ implies that there exist a prime $\ell \in \mathscr{R}_{E,q}$ dividing $S$ and a derivative $D^{\prime}$ such that 
\begin{equation}\label{d^prime}
D=D^{\prime}N_{\ell}\ \ \ \ \mathrm{Supp}(D^{\prime})=S/\ell, \ \ \ \mathrm{ord}(D^{\prime})=\mathrm{ord}(D).
\end{equation} 

We prove the proposition by induction on the number of primes dividing $S$.
If $S=\ell$ is a prime, then $\ell \in \mathscr{R}_{E,q}$ and $D=N_{\ell}$.
Since $P_{\ell}(1) \equiv 0 \bmod q,$ we have
\begin{equation*}
Dz_{\ell}=N_{\ell}z_{\ell}=P_{\ell}(\Fr_{\ell}^{-1})z_{1} \equiv P_{\ell}(1)z_{1}\equiv0 \mod q.
\end{equation*}

In general, since $w(D)<0$, there exist a prime $\ell \in \mathscr{R}_{E,q}$ dividing $S$ and a derivative  $D^{\prime}$ as in (\ref{d^prime}). 
Then,  we have
\begin{align*}
w(D^{\prime})&=\mathrm{ord}(D^{\prime})-|\{\ell^{\prime} \in \mathscr{R}_{E,q} ; \ \ell^{\prime} \ \text{divides}\ S/\ell \}|\\
&=\mathrm{ord}(D)-|\{\ell^{\prime} \in \mathscr{R}_{E,q} ; \ \ell^{\prime} \ \text{divides}\ S \}|+1\\
 &=w(D)+1 \le 0.
\end{align*}
 We write $S/\ell=\ell_1\cdots \ell_a.$ We show that
 \begin{equation}\label{first invariant}
 (\sigma_{\ell_i}-1)D^{\prime}z_{S/\ell} \equiv 0 \mod q \ \ \ \text{for}\ 1\le i \le a.
 \end{equation}
 It suffices to consider the case $i=1.$
 We write $D^{\prime}=D_{\ell_1}^{(k_1)}\cdots D_{\ell_a}^{(k_a)}$.
In the case where $k_1=0,$ we have $D^{\prime}=N_{\ell_1}D_{\ell_2}^{(k_2)}\cdots D_{\ell_a}^{(k_a)}.$ 
Hence, the claim $(\ref{first invariant})$ is clear.
We may assume that $k_1 \ge 1.$
Since the order of $\sigma_{\ell_1}$ is divisible by $q$ and $0< k_1<p$, Lemma \ref{crucial} implies  that
 \begin{equation}\label{first computation}
(\sigma_{\ell_1}-1)D^{\prime} \equiv -\sigma_{\ell_1}D_{\ell_1}^{(k_1-1)} D_{\ell_{2}}^{(k_2)}\cdots D_{\ell_a}^{(k_a)} \mod q.
\end{equation}
We have
\begin{equation*}
\mathrm{Supp}(D_{\ell_1}^{(k_1-1)} D_{\ell_{2}}^{(k_2)}\cdots D_{\ell_a}^{(k_a)})=S/\ell,\ \ \ \ w(D_{\ell_1}^{(k_1-1)} D_{\ell_{2}}^{(k_2)}\cdots D_{\ell_a}^{(k_a)})=w(D^{\prime})-1<0.
\end{equation*}
Then,  the induction hypothesis implies that 
$D_{\ell_1}^{(k_1-1)} D_{\ell_{2}}^{(k_2)}\cdots D_{\ell_a}^{(k_a)}z_{S/\ell} \equiv 0 \mod q$,
and hence by (\ref{first computation}), we deduce (\ref{first invariant}).

Since each $\Gamma_{\ell_i}$ is generated by $\sigma_{\ell_i}$, the assertion (\ref{first invariant}) implies that
 \begin{equation*}
D^{\prime}z_{S/\ell} \bmod q \in H^0\left(\Gamma_{S/\ell}, H^1(\Q(S/\ell),T)/q\right).
\end{equation*}
Hence,  we have
\begin{equation*}
Dz_S=D^{\prime}N_{\ell}z_S=P_{\ell}(\Fr_{\ell}^{-1})D^{\prime}z_{S/\ell}\equiv P_{\ell}(1) D^{\prime}z_{S/\ell} \equiv 0 \mod q.
\end{equation*} 
\end{proof}

\subsubsection{The proof and an application}

\begin{thm}\label{keythm}
Let $q$ be a power of $p$ and $D$ a Darmon-Kolyvagin derivative with support $S$ satisfying $\max_{\ell|S}\{e_{\ell}(D)\}<p$.
We suppose that $S \in \mathscr{N}_q$ and   for every prime $\ell|S$, $E(\mathbb{F}_{\ell})[p]$ is cyclic, that is, $E(\mathbb{F}_{\ell})[p]=0$ or $E(\mathbb{F}_{\ell})[p] \cong \Z/p\Z$.
If $\mathrm{ord}(D)<r_q\left(H^1_{f,p}(\Q,E[q])\right) $, then 
\begin{equation*}
Dz_{S} \equiv 0 \mod q H^1(\Q(S),T).
\end{equation*}
\end{thm}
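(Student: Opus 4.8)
\emph{Strategy.} The plan is to argue by induction on $\mathrm{ord}(D)$, following Kolyvagin's method in the form adapted by Darmon \cite{dar92} and Rubin \cite{rub}, with the weight $w(D)$ playing its usual bookkeeping role. First, if $w(D)<0$ then Proposition \ref{negative} already gives the conclusion (here the hypothesis $\max_{\ell\mid S}e_\ell(D)<p$ is used), so from now on we may assume $w(D)\ge 0$.

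\emph{Descent to a global class.} Granting the theorem for all derivatives of order smaller than $\mathrm{ord}(D)$: for each prime $\ell_i\mid S$ with $e_{\ell_i}(D)>0$, Lemma \ref{crucial} gives $(\sigma_{\ell_i}-1)Dz_S\equiv-\sigma_{\ell_i}D'z_S\bmod q$ with $D'$ a derivative of support $S$ and $\mathrm{ord}(D')=\mathrm{ord}(D)-1<r_q(H^1_{f,p}(\Q,E[q]))$, so the right-hand side vanishes mod $q$ by induction; for the primes $\ell_i$ with $e_{\ell_i}(D)=0$ the congruence is trivial because $(\sigma_{\ell_i}-1)N_{\ell_i}=0$. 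Hence $Dz_S\bmod q$ lies in $H^0(\Gamma_S,H^1(\Q(S),T)/q)$ and, via Proposition \ref{trivial torsion}, descends to a class $\ka=\ka_D\in H^1(\Q,E[q])$; it suffices to prove $\ka=0$. By Corollary \ref{f}, $\loc_\ell(\ka)\in E(\Q_\ell)/q$ for every $\ell\nmid p\,\mathrm{Cond}(D)$, and a more careful analysis at $p$ shows $\ka$ satisfies the strict (zero) local condition there; thus $\ka$ lies in the Selmer group whose local conditions are relaxed at the primes dividing $\mathrm{Cond}(D)$ and strict at $p$, a group governed by $r_q(H^1_{f,p}(\Q,E[q]))$.

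\emph{Killing the class.} Suppose $\ka\ne 0$. Since $\rho$ is surjective, using the element $\tau$ of (\ref{tau}) and the Chebotarev-type Lemma \ref{order} we may pick a prime $\ell\in\mathscr{R}_{E,q}$ that splits completely in $\Q(S)$ and has $E(\mathbb{F}_\ell)[p]$ cyclic (equivalently $E[q]/(\Fr_\ell-1)E[q]\cong\Z/q\Z$) such that $\loc_\ell(\ka)\in H^1_f(\Q_\ell,E[q])$ attains the order of $\ka$ in $H^1(L,E[q])$ for a suitable finite extension $L/\Q$. Theorem \ref{ell}(3),(4) then transfers this: the class $\ka^{(\ell)}$ attached to $DD_\ell^{(1)}z_{S\ell}$ satisfies $\loc_{/f,\ell}(\ka^{(\ell)})=\phi^{fs}_\ell(\loc_\ell(\ka))$, which is nonzero of the same order, so $DD_\ell^{(1)}z_{S\ell}\not\equiv 0\bmod q$ and $\ka^{(\ell)}$ is genuinely ramified at $\ell$, while $\mathrm{ord}(DD_\ell^{(1)})=\mathrm{ord}(D)+1$. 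Carrying this out for enough auxiliary primes and feeding the resulting classes, together with the rank inequalities of Lemmas \ref{r} and \ref{variation of r} (the cyclicity of $E(\mathbb{F}_\ell)[p]$ keeping each $r_p(E(\Q_\ell)/q)\le 1$), into Kolyvagin's global-duality bound, one concludes $r_q(H^1_{f,p}(\Q,E[q]))\le\mathrm{ord}(D)$ — using also that $\mathrm{Cond}(D)$ has at most $\mathrm{ord}(D)$ prime factors — which contradicts $\mathrm{ord}(D)<r_q(H^1_{f,p}(\Q,E[q]))$. Therefore $\ka=0$, i.e.\ $Dz_S\equiv 0\bmod q\,H^1(\Q(S),T)$, completing the induction.

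\emph{The main obstacle.} The delicate point is the local analysis at $p$ in the descent step, and its consequences for the duality count: because the localization of Kato's Euler system at $p$ need not come from a local rational point, the class $\ka$ is only visible inside the strict Selmer group $H^1_{f,p}$ rather than the usual Selmer group, so the whole dimension count must be carried through consistently against $r_q(H^1_{f,p})$ — this is the mechanism behind the gap (\ref{-1 gap}) in the introduction. A secondary technical nuisance is arranging, at each invocation of Chebotarev, an auxiliary prime lying simultaneously in $\mathscr{R}_{E,q}$, splitting completely in the growing cyclotomic field, and with $E(\mathbb{F}_\ell)[p]$ cyclic, so that $\phi^{fs}_\ell$ is an isomorphism.
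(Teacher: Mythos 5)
Your outline has the right cast of characters (descent to a global class $\ka$, Chebotarev choice of an auxiliary prime $\ell$, Theorem~\ref{ell} to transfer the class to $\ka^{(\ell)}$, global duality), but the induction is run on the wrong quantity, and this creates a genuine circularity.

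You induct on $\mathrm{ord}(D)$. In the descent step for $D$ this is harmless, since the error term has order $\mathrm{ord}(D)-1$. The problem appears the moment you pass to $DD_\ell^{(1)}$, which has order $\mathrm{ord}(D)+1$. To run global duality against $H^1_{f,pS'}(\Q,E[q])$ you need a class in $H^1(\Q,E[q])$, i.e.\ you must know that $DD_\ell^{(1)}z_{S\ell}\bmod q$ is invariant under all of $\Gamma_{S\ell}$ --- Theorem~\ref{ell}(2) only gives $\Gamma_\ell$-invariance, so $\ka^{(\ell)}$ as defined there lives in $H^1(\Q(S),E[q])$, not over $\Q$. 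Establishing full $\Gamma_{S\ell}$-invariance requires, for each prime $\ell'\mid S$ with $e_{\ell'}>0$, that $(\sigma_{\ell'}-1)DD_\ell^{(1)}z_{S\ell}\equiv -\sigma_{\ell'}D''z_{S\ell}\bmod q$ vanishes, where $D''$ has order $\mathrm{ord}(D)$ --- precisely the case you are trying to prove. So induction on $\mathrm{ord}(D)$ does not close. The paper avoids this by inducting on the weight $w(D)=\mathrm{ord}(D)-|\{\ell\in\mathscr{R}_{E,q}:\ell\mid S\}|$, which is \emph{preserved} when you pass from $D$ to $DD_\ell^{(1)}$ because the new prime $\ell$ lies in $\mathscr{R}_{E,q}$; the descent step (Lemma~\ref{key lemma}) is then formulated with the non-strict inequality $\mathrm{ord}(D)\le r_q(H^1_{f,p})$ so that it applies both to $D$ and to $DD_\ell^{(1)}$ within the same stratum of the induction. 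Your remark that $w(D)$ plays only a ``bookkeeping role'' is exactly backwards --- it is the engine of the induction.

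Two smaller points. First, you assert that ``a more careful analysis at $p$ shows $\ka$ satisfies the strict (zero) local condition there''; this is not used and not generally true. The paper never constrains $\loc_p(\ka)$. The duality computation in Lemma~\ref{cupcup} pairs $\ka^{(\ell)}$ against classes $x\in H^1_{f,pS'}$, and it is \emph{those} classes whose localizations vanish at $p$ and at primes dividing $S'$, which is what kills the terms $(x,\ka^{(\ell)})_w$ for $w\mid pS'$. Second, the final ``Kolyvagin global-duality bound with enough auxiliary primes'' is not the paper's mechanism and is left unargued: the paper chooses a single prime $\ell$ so that $H^1_{f,pS'}\to E(\Q_\ell)/q$ is surjective (Lemma~\ref{choice of l}(3)), which makes the local pairing map $H^1(\Q_\ell,E)[q]\to\Hom(H^1_{f,pS'},\Z/q\Z)$ injective and hence forces $\loc_{/f,\ell}(\ka^{(\ell)})=0$ directly, giving $Dz_S\equiv 0\bmod q$ via Theorem~\ref{ell}(4) and Lemma~\ref{choice of l}(2), without any counting over multiple primes.
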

We prove it by induction on $w(D).$
Before the proof, we prove some lemmas.
\begin{lem}\label{choice of l}Let $D$ be a Darmon-Kolyvagin derivative with support $S$.
We suppose that $S \in \mathscr{N}_q$ and $Dz_S \bmod q \in H^0\left(\Gamma_S, H^1(\Q(S),T)/q\right).$
We  denote by $\ka$ the inverse image of $Dz_S$ under the isomorphism $H^1(\Q,E[q]) \cong H^0\left(\Gamma_S, H^1(\Q(S),E[q])\right)$.
We put $S^{\prime}=\mathrm{Cond}(D)$.
If $r_q(H^1_{f,pS^{\prime}}(\Q,E[q]))>0,$ then there exists a prime $\ell \in \mathscr{R}$ such that  
\begin{enumerate}
\renewcommand{\labelenumi}{$(\arabic{enumi})$}
\item $\ell\equiv1\mod q$, $\ell$ splits completely in $\Q(S)$, and $E(\Q_{\ell})/q\cong \Z/q\Z$,
\item  $\mathrm{ord}\left(Dz_S, H^1(\Q(S),T)/q\right) = \mathrm{ord}(\loc_{\ell}(\ka),H^1(\Q_{\ell},E[q]))$,
\item the localization map $H^1_{f,pS^{\prime}}(\Q,E[q]) \to E(\Q_{\ell})/q$ is surjective.
\end{enumerate}
\end{lem}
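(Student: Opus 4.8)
The statement is a Chebotarev-type existence result: we need a prime $\ell$ that behaves well simultaneously with respect to several conditions. The natural framework is the one of Rubin's book (and Darmon's paper): translate each condition into a splitting/Frobenius condition in a suitable finite Galois extension of $\Q$, then invoke the Chebotarev density theorem to find infinitely many $\ell$ meeting all of them. First I would fix the setup: let $L=\Q(E[q],\mu_q)$, the field over which $G_L$ acts trivially on $E[q]$ and which contains $\Q(\mu_q)$; note $\Q(S)$ and $L$ are linearly disjoint enough (or simply pass to the compositum) because $\Q(S)/\Q$ is a $p$-extension ramified only at primes dividing $S$, while the relevant ramification/constants of $L/\Q$ are controlled by Assumption \ref{main assumption} ($p\nmid 6N\prod m_\ell$ and surjectivity of $\rho$). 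The requirement $\ell\equiv 1\bmod q$ and ``$\ell$ splits completely in $\Q(S)$'' amounts to asking that $\Fr_\ell$ be trivial in $\Gal(\Q(S)(\mu_q)/\Q)$; the requirement $E(\Q_\ell)/q\cong\Z/q\Z$ is, via the unramified local condition, the statement that $E[q]/(\Fr_\ell-1)E[q]\cong\Z/q\Z$, i.e. that $\Fr_\ell$ acts on $E[q]$ as an element with a one-dimensional (over $\Z/q\Z$) fixed quotient — concretely one picks $\Fr_\ell$ conjugate to $\tau$ of \eqref{tau}, which exists precisely because $\rho$ is surjective.

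The heart of the argument is simultaneously controlling conditions (2) and (3). For (3), surjectivity of $H^1_{f,pS'}(\Q,E[q])\to E(\Q_\ell)/q\cong\Z/q\Z$: since this is a $\Z/q\Z$-module map with cyclic target, it is surjective iff it is nonzero, and by the hypothesis $r_q(H^1_{f,pS'})>0$ there is a class of ``full order $q$'' in $H^1_{f,pS'}$; evaluating it (and the class $\ka$) via the cocycle-at-$\Fr_\ell$ recipe of Lemma \ref{order} gives the lower bounds on orders. Concretely I would apply Lemma \ref{order} with $\ka$ and with a chosen $\eta\in H^1_{f,pS'}(\Q,E[q])$ of order $q$: it produces $\gamma\in G_L$ such that $\ka(\gamma\tau)$ has order $\ge\ord(\ka,H^1(L,E[q]))$ in $E[q]/(\tau-1)E[q]\cong\Z/q\Z$ and likewise $\eta(\gamma\tau)$ has order $q$. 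Then set the Frobenius condition to be: $\Fr_\ell$ is conjugate to $\gamma\tau$ (this is a conjugacy class in a finite Galois group $\Gal(M/\Q)$ where $M$ is the compositum of $\Q(S)(\mu_q)$, $L$, and the fixed field of the kernels of the cocycles $\ka$ and $\eta$ — these cocycles become homomorphisms on $G_L$). Chebotarev gives infinitely many such $\ell\notin\Sigma$. For such an $\ell$: it splits completely in $\Q(S)$ and satisfies $\ell\equiv1\bmod q$ and $E(\Q_\ell)/q\cong\Z/q\Z$ (condition (1)); $\loc_\ell(\eta)\ne0$ in $E(\Q_\ell)/q$ because $\eta(\Fr_\ell)=\eta(\gamma\tau)$ has order $q$, giving condition (3) after checking $\eta$ still lies in $H^1_{f,p S'}$ and maps into the $E(\Q_\ell)/q$ component (here one uses that $\ell\nmid pS'$, so $\loc_\ell$ of a Selmer class lands in the unramified=$f$ part, and $H^1_f(\Q_\ell,E[q])\cong E(\Q_\ell)/q$); and the order equality (2) follows because $\loc_\ell(\ka)\in H^1_f(\Q_\ell,E[q])$ (Corollary \ref{f}, applicable since $\ell\nmid pS'=p\,\mathrm{Cond}(D)$) is, under $\beta_\ell$, the class $\ka(\Fr_\ell)=\ka(\gamma\tau)\bmod(\Fr_\ell-1)E[q]$, whose order in $\Z/q\Z$ is $\ge\ord(\ka,H^1(L,E[q]))$, while the reverse inequality $\ord(\loc_\ell(\ka))\le\ord(Dz_S,H^1(\Q(S),T)/q)=\ord(\ka,\cdot)$ is automatic since localization can only decrease order; one must also note $\ord(\ka, H^1(\Q,E[q]))$ and $\ord(\ka,H^1(L,E[q]))$ agree by the injectivity in Proposition \ref{trivial torsion}.

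The main obstacle I expect is arranging the three conditions genuinely \emph{simultaneously}: one must check that the various auxiliary extensions ($\Q(S)$, $L=\Q(E[q],\mu_q)$, and the abelian-over-$L$ extensions cut out by the cocycles $\ka,\eta$) can be combined so that the Frobenius conjugacy class ``$\gamma\tau$'' is a consistent, nonempty condition in their compositum — i.e. that $\gamma\tau$ restricts correctly (trivially on $\Q(S)$, like $\tau$ hence $\equiv1$ on $\mu_q$, and with the cocycle values as dictated by Lemma \ref{order}). This is exactly the linear-disjointness/ramification bookkeeping that Rubin's Lemma 5.2.1 (our Lemma \ref{order}) is designed to handle, so the plan is to reduce to it cleanly rather than redo it; the remaining work is the local translation of each condition (unramified local conditions at $\ell\nmid pS'$, the isomorphisms $\alpha_\ell,\beta_\ell$) and checking that $\eta$ — which a priori lies in $H^1_{f,pS'}(\Q,E[q])\subseteq\Sel(\Q,E[q])$ — indeed has the asserted nonzero localization at $\ell$, i.e. that the ``$S'$'' and ``$p$'' in the subscript do not interfere because $\ell$ is coprime to all of them. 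A secondary subtlety is the hypothesis that $E(\mathbb F_\ell)[p]$ is cyclic for $\ell\mid S$: this is what makes $E(\Q_{\ell})/p$ (for such $\ell$) and hence the bookkeeping of $r_q$ along the way consistent; for the \emph{new} prime $\ell$ we are choosing, cyclicity of $E(\Q_\ell)/q$ is built into condition (1) via $E[q]/(\Fr_\ell-1)E[q]\cong\Z/q\Z$.
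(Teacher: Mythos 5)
Your plan follows essentially the same route as the paper's proof: choose $\eta\in H^1_{f,pS'}(\Q,E[q])$ of order $q$ (possible since $r_q(H^1_{f,pS'})>0$), apply Lemma \ref{order} to the pair $(\ka,\eta)$ to obtain $\gamma$, and use Chebotarev on the conjugacy class $[\gamma\tau]$ to find $\ell$; then translate conditions (1)--(3) into evaluations of cocycles at $\Fr_\ell$ via $\alpha_\ell,\beta_\ell$. Two steps, however, need to be made precise and are not handled correctly as written.

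First, you apply Lemma \ref{order} with $L=\Q(E[q],\mu_q)=\Q(E[q])$ and only afterwards pass to the compositum $M$ containing $\Q(S)(\mu_q)$. This leaves a genuine gap: $\gamma\in G_L$ then has no reason to fix $\Q(S)$, so $(\gamma\tau)\big|_{\Q(S)}=\tau\big|_{\Q(S)}$ is uncontrolled and the Chebotarev primes need not split completely in $\Q(S)$ --- condition (1) can fail. You do flag this at the end as a linear-disjointness issue and suggest it is ``exactly what Lemma \ref{order} is designed to handle,'' but that lemma does not constrain $\gamma\tau$ on $\Q(S)$; the fix is to take $L=\Q(S)(E[q])$ \emph{from the start} (as the paper does), so that $\gamma\in G_L$ automatically restricts trivially to $\Q(S)(\mu_q)$. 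Second, you attribute the injectivity of $H^1(\Q,E[q])\to H^1(L,E[q])$ entirely to Proposition \ref{trivial torsion}; that proposition only treats finite \emph{abelian} extensions of $\Q$, whereas $L/\Q$ is not abelian. The paper's argument uses $(S,pN)=1$ to split $\Gal(L/\Q)=\Gal(\Q(S)/\Q)\times\Gal(\Q(E[q])/\Q)$, applies Proposition \ref{trivial torsion} to the abelian piece $\Q(S)/\Q$, and handles the non-abelian piece via Besser's vanishing $H^1\bigl(\Gal(\Q(E[q])/\Q),E[q]\bigr)=0$ (the cited \textsc{A.\ Besser} reference), which gives $H^1(L/\Q(S),E[q])=0$ and hence injectivity of the restriction from $\Q(S)$ to $L$. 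Without this input your claim that $\ka$ and $\eta$ retain their orders in $H^1(L,E[q])$ is unjustified.
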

\begin{proof}By  the natural inclusion $H^1(\Q(S),T)/q \hookrightarrow H^1(\Q(S),E[q])$, we have
\begin{equation}\label{zka}
\mathrm{ord}\left(Dz_S, H^1(\Q(S),T)/q\right)=\mathrm{ord}\left(\ka,H^1(\Q,E[q])\right).
\end{equation}
We put $d=\mathrm{ord}\left(\ka,H^1(\Q,E[q])\right).$ 
Since $r_q(H^1_{f,pS^{\prime}}(\Q,E[q]))>0$,  there exists an element $\eta \in H^1_{f,pS^{\prime}}(\Q,E[q])$ of order $q$.
We put $L=\Q(S)(E[q])$ (the composite of $\Q(S)$ and $\Q(E[q])$).
Since  $(S,pN)=1,$ we have $\Gal(L/\Q)=\Gal(\Q(S)/\Q) \times \Gal (\Q(E[q])/\Q)$.
By  \cite[Proposition 6.3 (2)]{bes}, we have
 \begin{equation*}
 H^1(L/\Q(S),E[q]) \cong H^1(\Q(E[q])/\Q,E[q]) =0.
 \end{equation*}
 Then, the restriction map
$H^1(\Q(S),E[q]) \to H^1(L,E[q])$
is injective, and hence by Proposition \ref{trivial torsion}, the restriction map $H^1(\Q,E[q]) \to H^1(L,E[q])$ is injective.
Therefore,  the image of $\ka$ in $H^1(L,E[q])$ is of order $d$ and the image of $\eta$ is of order $q$.
For $\tau \in G_{\Q(\mu_{p^\infty})} $ as in (\ref{tau}), by Lemma \ref{order}, there exists an element $\gamma$ of $G_{L}$ such that
\begin{equation}\label{kappa condition}
\mathrm{ord}(\ka(\gamma\tau),E[q]/(\tau-1)E[q])=d,\ \ \ \mathrm{ord}(\eta(\gamma\tau),E[q]/(\tau-1)E[q])=q.
\end{equation}
We note that $H^1(L,E[q])=\mathrm{Hom}(G_L, E[q]).$ We regard $\ka$, $\eta$ as elements of  $\Hom(G_L,E[q])$
 and put $H=\mathrm{ker}(\ka)\cap\mathrm{ker}(\eta) \subset G_L$.
Let $L^{\prime}$ be a finite Galois extension of $\Q$ containing $\overline{\Q}^{H}$.
For $\sigma \in G_{\Q},$ we denote by $[\sigma]$ the conjugacy class of the image of $\sigma$ in $\Gal(L^{\prime}/\Q).$
By Chebotarev's density theorem, 
there exists a prime $\ell\in \mathscr{R}$ not dividing $pNS$ such that
\begin{equation}\label{conjugacy}
[\mathrm{Fr}_{\ell}] = [\gamma\tau].
\end{equation} 
It remains to show that this $\ell$ satisfies the conditions (1), (2) and (3) above.

(1).\ Since $\gamma\tau = 1 \ \text{in} \ \Gal(\Q(S)(\mu_q)/\Q)$ (recall that $\Q(\mu_q) \subseteq\Q(E[q])$ by the Weil pairing), 
we have $\ell\equiv1\bmod q,$  and $\ell$ splits completely in $\Q(S)$. 
By (\ref{conjugacy}), we have $\Fr_{\ell}=\sigma \tau \sigma^{-1}$ in $\Gal(\Q(E[q])/\Q)$ for some  $\sigma \in \Gal(\Q(E[q]/\Q)$.
Thus, by  $\gamma\tau = \tau \ \text{in} \ \Gal(\Q(E[q])/\Q)$ and (\ref{tau}), we have
\begin{equation*}
H^1(\mathbb{F}_{\ell},E[q])  \cong E[q]/(\mathrm{Fr}_{\ell}-1)E[q] =E[q]/\sigma(\tau-1)E[q] \cong \Z/q\Z,
\end{equation*} where the first isomorphism is given by $f \mapsto f(\Fr_{\ell}) \bmod (\mathrm{Fr}_{\ell}-1)E[q]$.
Since
\begin{equation*}
E(\Q_{\ell})/q = H^1(\mathbb{F}_{\ell},E[q])\ \text{in}\  H^1(\Q_{\ell},E[q]),
\end{equation*}
we deduce that the condition (1) holds.

(2).\ By Proposition \ref{unramified},
  the image $\loc_{\ell}(\ka)$ of $\ka$ in $H^1(\Q_{\ell},E[q])$ belongs to  $ H^1(\mathbb{F}_{\ell},E[q])$.
 By the isomorphism $H^1(\mathbb{F}_{\ell},E[q]) \cong E[q]/(\mathrm{Fr}_{\ell}-1)E[q]$ and (\ref{zka}), it suffices to show that for a lift 
  $\Fr_{\ell} \in G_{\Q}$ of the arithmetic Frobenius at $\ell$,
 \begin{equation}\label{evaluation at fr}
 \mathrm{ord}(\ka(\mathrm{Fr}_{\ell}),E[q]/(\Fr_{\ell}-1)E[q]) =d.
 \end{equation}
 By (\ref{conjugacy}),
 we write $\Fr_{\ell} = \sigma \gamma \tau \sigma^{-1} g \in G_{\Q}$ for some $\sigma \in G_{\Q}$ and $g \in G_{L^{\prime}}$.
 Then, for every $\xi\in H^1(\Q,E[q])$ which is unramified at $\ell$ and satisfies $\xi(g)=0$, we have
 \begin{align*}
 \xi(\Fr_{\ell}) &=\xi(\sigma\gamma\tau\sigma^{-1}g) \overset{\mathrm{(i)}}{=}  \xi(\sigma\gamma\tau\sigma^{-1})=\sigma\xi(\gamma\tau\sigma^{-1})+\xi(\sigma)\\
 &= \sigma(\gamma\tau\xi(\sigma^{-1})+\xi(\gamma\tau))+\xi(\sigma) \overset{\mathrm{(ii)}}{=}  
 \sigma\tau\xi(\sigma^{-1})+\xi(\sigma)+\sigma\xi(\gamma\tau)\\
&=-\sigma\tau\sigma^{-1}\xi(\sigma) +\xi(\sigma) + \sigma\xi(\gamma\tau)=-(\Fr_{\ell}-1)\xi(\sigma)+\sigma\xi(\gamma\tau) \\
&=\sigma\xi(\gamma\tau)\ \text{in}\ E[q]/(\Fr_{\ell}-1)E[q],
 \end{align*}where the equality (i) follows from $\xi(g)=0$, 
 and (ii) follows from $\gamma\in G_{L}.$
 Since
 \begin{equation*}
 (\Fr_{\ell}-1)E[q]=\sigma (\tau-1)E[q],
 \end{equation*}we have
 \begin{equation}\label{order of evaluation}
 \mathrm{ord}\left(\xi(\Fr_{\ell}),E[q]/(\Fr_{\ell}-1)E[q]\right)= \mathrm{ord}\left(\xi(\gamma\tau),E[q]/(\tau-1)E[q]\right).
 \end{equation}
By (\ref{kappa condition}) and (\ref{order of evaluation}) with $\xi=\ka$,  we conclude (\ref{evaluation at fr}).

(3).\  
  By definition,  we have $\loc_{\ell}(\eta) \in H^1(\mathbb{F}_{\ell},E[q]).$
By (\ref{kappa condition}) and (\ref{order of evaluation}) with $\xi=\eta$, we deduce that  $\eta(\mathrm{Fr}_{\ell})$ is of order $q$ in $E[q]/(\Fr_{\ell}-1)E[q],$ and hence the image of $\eta$ in $E(\Q_{\ell})/q$ is of order $q$.
Since  $ E(\Q_{\ell})/q \cong\Z/q\Z$, we prove the assertion (3).
\end{proof}

\begin{lem}\label{cupcup}Under the notation and assumption as in Lemma \ref{choice of l}, we further assume that
  $DD_{\ell}^{(1)}z_S \bmod q \in H^0\left(\Gamma_{S\ell},H^1(\Q(S\ell),T)/q\right).$ 
  Then,
\begin{equation*}
Dz_S \equiv 0 \mod qH^1(\Q(S),T).
\end{equation*}
\end{lem}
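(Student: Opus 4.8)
Lemma \ref{cupcup} claims that, under the setup of Lemma \ref{choice of l} plus the hypothesis that $DD_\ell^{(1)}z_{S\ell}$ descends to a class over $\mathbb{Q}(S\ell)$ modulo $q$, we actually get $Dz_S\equiv 0\bmod qH^1(\mathbb{Q}(S),T)$. The point is to derive a contradiction from assuming $Dz_S\not\equiv 0$, i.e.\ from $\operatorname{ord}(\kappa,H^1(\mathbb{Q},E[q]))=d>0$, using the extra ramified information carried by the derived class $\kappa^{(\ell)}$ at the auxiliary prime $\ell$ supplied by Lemma \ref{choice of l}.

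My plan is as follows. Suppose for contradiction that $Dz_S\not\equiv 0\bmod qH^1(\mathbb{Q}(S),T)$, so by \eqref{zka} the class $\kappa$ has order $d\geq 1$ in $H^1(\mathbb{Q},E[q])$. First I would invoke Lemma \ref{choice of l} to fix a prime $\ell\in\mathscr{R}$ with the three listed properties: $\ell\equiv 1\bmod q$ and $\ell$ splits completely in $\mathbb{Q}(S)$ with $E(\mathbb{Q}_\ell)/q\cong\mathbb{Z}/q\mathbb{Z}$; the localization $\operatorname{loc}_\ell(\kappa)$ has order exactly $d$ in $H^1(\mathbb{Q}_\ell,E[q])$; and the map $H^1_{f,pS'}(\mathbb{Q},E[q])\to E(\mathbb{Q}_\ell)/q$ is surjective. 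Since $\ell\equiv 1\bmod q$ and $E(\mathbb{F}_\ell)[p]$ is cyclic, one has $\ell\in\mathscr{R}_{E,q}$ and $E[q]/(\operatorname{Fr}_\ell-1)E[q]\cong\mathbb{Z}/q\mathbb{Z}$, so Theorem \ref{ell} applies to this $\ell$. Because $\operatorname{loc}_\ell(\kappa)=\operatorname{loc}_\lambda(Dz_S\bmod q)$ lies in $H^1_f(\mathbb{Q}_\ell,E[q])$ (Theorem \ref{ell}(1)) and has order $d$, Theorem \ref{ell}(4) — available precisely because $E[q]/(\operatorname{Fr}_\ell-1)E[q]$ is cyclic — tells us that the derived class $\kappa^{(\ell)}$ attached to $DD_\ell^{(1)}z_{S\ell}$ (which descends to $\mathbb{Q}(S)$ by the added hypothesis and the argument in Theorem \ref{ell}(2)) satisfies $\operatorname{ord}(\operatorname{loc}_{/f,\lambda}(\kappa^{(\ell)}),H^1(\mathbb{Q}_\ell,E)[q])=d$. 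In particular $\kappa^{(\ell)}$ is ramified at $\ell$: its image in $H^1_{/f}(\mathbb{Q}_\ell,E[q])$ is nonzero of order $d$.

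Next I would run the standard global reciprocity obstruction. By Proposition \ref{unramified} applied to the derivative $DD_\ell^{(1)}$ with conductor dividing $S'\ell$, the class $\kappa^{(\ell)}$ is unramified at every prime not dividing $pS'\ell$, and by Corollary \ref{f} its localizations there lie in the finite/Bloch--Kato subspace $E(\mathbb{Q}_v)/q$. So $\kappa^{(\ell)}$ is ``almost Selmer'' — it fails the Selmer condition only possibly at primes dividing $pS'$ and (certainly) at $\ell$. Now pair $\kappa^{(\ell)}$ against the class $\eta\in H^1_{f,pS'}(\mathbb{Q},E[q])$ produced in Lemma \ref{choice of l}, using the global sum formula for the local Tate pairings: $\sum_v\langle\operatorname{loc}_v(\kappa^{(\ell)}),\operatorname{loc}_v(\eta)\rangle_v=0$. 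Since $\eta$ is Selmer at all primes away from those dividing $pS'$ and, crucially, $\eta$ lies in $H^1_{f,pS'}$ so $\operatorname{loc}_v(\eta)$ is trivial for $v\mid pS'$ as well (that is the point of the subscript $pS'$), every local term vanishes except possibly the one at $\ell$. Hence $\langle\operatorname{loc}_\ell(\kappa^{(\ell)}),\operatorname{loc}_\ell(\eta)\rangle_\ell=0$. But $\operatorname{loc}_\ell(\kappa^{(\ell)})$ is nonzero in $H^1_{/f}(\mathbb{Q}_\ell,E[q])$ while $\operatorname{loc}_\ell(\eta)$ generates $H^1_f(\mathbb{Q}_\ell,E[q])\cong\mathbb{Z}/q\mathbb{Z}$ (surjectivity of condition (3) of Lemma \ref{choice of l}), and the local pairing $H^1_{/f}\times H^1_f\to\mathbb{Z}/q\mathbb{Z}$ is a perfect pairing of cyclic groups of order $q$; so a nonzero element of $H^1_{/f}$ of order $d$ pairs nontrivially with a generator of $H^1_f$. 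This forces $d=0$, contradicting $d\geq 1$, and therefore $Dz_S\equiv 0\bmod qH^1(\mathbb{Q}(S),T)$.

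The main obstacle I anticipate is bookkeeping at the ``bad'' primes: making sure that in the global pairing formula the local terms at $v\mid pS'$ genuinely drop out. This is exactly why Lemma \ref{choice of l} is phrased with the group $H^1_{f,pS'}$ rather than the full Selmer group — $\eta$ is chosen to be locally trivial at $p$ and at all primes dividing $S'=\operatorname{Cond}(D)$, so pairing against $\kappa^{(\ell)}$ (whose defect is confined to $pS'$ and $\ell$) leaves only the $\ell$-term. One must also double-check the compatibility of the two normalizations of $\alpha_\ell$ (which depends on the choice of lift $\sigma_\ell$) so that ``order of the image in $H^1(\mathbb{Q}_\ell,E)[q]$'' in Theorem \ref{ell}(4) is literally the order of $\operatorname{loc}_{/f,\lambda}(\kappa^{(\ell)})$ used in the pairing; this is routine given \cite[Lemma 1.4.7, Corollary A.2.7]{rub}. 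Everything else — the reciprocity law, perfectness of the local Tate pairing on $H^1_f\times H^1_{/f}$, and the cyclicity inputs — is quoted from the references or already established above.
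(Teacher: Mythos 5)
Your argument is correct and follows the paper's proof essentially step by step: use Theorem \ref{ell}(4) and Lemma \ref{choice of l}(2) to identify $\ord(\loc_{/f,\lambda}(\ka^{(\ell)}))$ with $\ord(Dz_S\bmod q)$, then kill $\loc_{/f,\lambda}(\ka^{(\ell)})$ by global reciprocity against $H^1_{f,pS'}(\Q,E[q])$ using Corollary \ref{f} away from $pS'\ell$ and the strict local condition at $pS'$, with Lemma \ref{choice of l}(3) guaranteeing that the pairing detects nonvanishing at $\ell$. The only difference is stylistic (you run it as a contradiction, the paper argues directly that $\ka^{(\ell)}$ lies in the kernel of the injection dual to the surjection in Lemma \ref{choice of l}(3)), and your parenthetical justification of $E[q]/(\Fr_\ell-1)E[q]\cong\Z/q\Z$ should cite Lemma \ref{choice of l}(1) (which asserts $E(\Q_\ell)/q\cong\Z/q\Z$ for the auxiliary Chebotarev prime $\ell$) rather than the cyclicity hypothesis on the primes dividing $S$, since $\ell$ is a new prime.
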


\begin{proof}We  denote  by $\ka^{(\ell)}$ the inverse image of $DD^{(1)}_{\ell}z_{S\ell} \bmod q$  under the isomorphism $H^1(\Q,E[q]) \cong H^0\left(\Gamma_{S\ell},H^1(\Q(S\ell),E[q])\right).$
Theorem \ref{ell} (4) implies that
\begin{equation*}
\mathrm{ord}(\loc_{\ell}(\ka^{(\ell)}),H^1(\Q_{\ell},E)[q]) = \mathrm{ord}(\loc_{\ell}(\ka), H^1(\Q_{\ell},E[q])). 
\end{equation*}
By the condition (2) of Lemma \ref{choice of l}, we have
\begin{equation}\label{ell and s}
\mathrm{ord}(\loc_{\ell}(\ka^{(\ell)}),H^1(\Q_{\ell},E)[q]) = \mathrm{ord}\left(Dz_S, H^1(\Q(S),T)/q\right). 
\end{equation}
Hence, we are reduced to showing that the image of $\ka^{(\ell)}$ in $H^1(\Q_{\ell},E)[q]$ is trivial.
For a prime $w$, we denote by $(-,-)_{w}$ the perfect pairing induced by the cup product
\begin{equation*}
E(\Q_{w})/q \times H^1(\Q_{w},E)[q] \to \Z/q\Z.
\end{equation*}
Since the natural map $H^1_{f,pS^{\prime}}(\Q,E[q]) \to E(\Q_{\ell})/q$ is surjective (Lemma \ref{choice of l} (3)),
by taking the Pontryagin dual we have an injective homomorphism
\begin{equation}\label{dual map}
H^1(\Q_{\ell},E)[q] \to \Hom\left(H^1_{f,pS^{\prime}}(\Q,E[q]),\Z/q\Z\right);
\ \ a \mapsto \left(y\mapsto (y,a)_{\ell}\right).
\end{equation}
Hence, it suffices to show that the image of $\ka^{(\ell)}$ is in the kernel of the map above.
Since $p\not=2,$ 
 the Hasse principle shows  that for $x\in H^1_{f,pS^{\prime}}(\Q,E[q])$ 
\begin{equation}\label{hasse}
(x,\ka^{(\ell)})_{\ell}=-\sum_{w \nmid \ell :\text{prime}}(x,\ka^{(\ell)})_{w}.
\end{equation}
If $w\nmid pS^{\prime}\ell,$ then Corollary \ref{f} implies that $\loc_{w}(\ka^{(\ell)}) \in E(\Q_{w})/q,$ and hence
$(x,\ka^{(\ell)})_w=0.$
If $w| pS^{\prime},$
then by the definition of $H^1_{f,pS^{\prime}} (\Q,E[q])$,  we have $(x,\ka^{(\ell)})_w=0$ . 
Therefore, by (\ref{hasse}), we obtain $(x,\ka^{(\ell)})_{\ell}=0$.
Since $x \in H^1_{f,pS^{\prime}}(\Q,E[q])$ is arbitrary, we deduce that the image of $\ka^{(\ell)}$ in $H^1(\Q_{\ell},E)[q]$ is in the kernel of the injection  (\ref{dual map}) and hence it is
 trivial.
\end{proof}

\begin{lem}\label{key lemma}Let $w$ be an integer and $q$ a power of $p$. 
We assume that Theorem \ref{keythm} holds for any Darmon-Kolyvagin derivative whose weight is strictly less than $w$.
Let $D$ be a Darmon-Kolyvagin derivative with support $S$ such that $\max_{\ell|S}\{e_{\ell}(D)\}<p$ and $w(D)=w$.
We suppose that $S \in \mathscr{N}_q$ and for every prime $\ell|S$, $E(\mathbb{F}_{\ell})[p]$ is cyclic.
If $\mathrm{ord}(D)\le r_q\left(H^1_{f,p}(\Q,E[q])\right) $, then 
\begin{equation*}
Dz_{S} \bmod q   \in H^0\left(\Gamma_S,H^1(\Q(S),T)/q\right).
\end{equation*}
\end{lem}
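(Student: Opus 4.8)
The plan is to show that $Dz_S \bmod q$ is fixed by a generating set of $\Gamma_S$. Since $\Gamma_S=\prod_{\ell\mid S}\Gamma_{\ell}$ and each $\Gamma_{\ell}$ is cyclic with chosen generator $\sigma_{\ell}$, it suffices to prove that $(\sigma_{\ell}-1)Dz_S\equiv 0 \bmod qH^1(\Q(S),T)$ for every prime $\ell\mid S$. Write $S=\ell_1\cdots\ell_s$ and $D=D_{\ell_1}^{(k_1)}\cdots D_{\ell_s}^{(k_s)}$, and fix an index $i$ with $\ell=\ell_i$.

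First I would dispose of the trivial case $k_i=0$: then $D$ has $N_{\ell_i}=D_{\ell_i}^{(0)}$ as a factor and $(\sigma_{\ell_i}-1)N_{\ell_i}=\sigma_{\ell_i}^{|\Gamma_{\ell_i}|}-1=0$ in $\Z[\Gamma_{\ell_i}]$, so $(\sigma_{\ell_i}-1)D=0$ already. Now suppose $k_i\ge 1$. Since $S\in\mathscr{N}_q$ we have $q\mid\ell_i-1$, hence $q$ divides $|\Gamma_{\ell_i}|$, which is a power of $p$; combined with the hypothesis $0<k_i<p$, Lemma \ref{crucial} gives $(\sigma_{\ell_i}-1)D_{\ell_i}^{(k_i)}\equiv -\sigma_{\ell_i}D_{\ell_i}^{(k_i-1)}\bmod q$. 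Therefore, setting $D'=D_{\ell_1}^{(k_1)}\cdots D_{\ell_i}^{(k_i-1)}\cdots D_{\ell_s}^{(k_s)}$, we get $(\sigma_{\ell_i}-1)Dz_S\equiv -\sigma_{\ell_i}D'z_S\bmod q$. The point is that $D'$ again has support $S$, still satisfies $\max_{\ell\mid S}\{e_{\ell}(D')\}\le\max_{\ell\mid S}\{e_{\ell}(D)\}<p$, and has $\mathrm{ord}(D')=\mathrm{ord}(D)-1$, so $w(D')=w(D)-1<w$ and $\mathrm{ord}(D')<\mathrm{ord}(D)\le r_q(H^1_{f,p}(\Q,E[q]))$. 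By the standing assumption that Theorem \ref{keythm} holds for every Darmon-Kolyvagin derivative of weight strictly less than $w$, applied to $D'$ (whose support still has $E(\mathbb{F}_{\ell})[p]$ cyclic for all $\ell$ and lies in $\mathscr{N}_q$), we conclude $D'z_S\equiv 0 \bmod qH^1(\Q(S),T)$, and hence $(\sigma_{\ell_i}-1)Dz_S\equiv 0\bmod q$. Since this holds for each $i$, we obtain $Dz_S\bmod q\in H^0(\Gamma_S,H^1(\Q(S),T)/q)$.

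The only step requiring genuine care is the bookkeeping of the hypotheses of Theorem \ref{keythm} for the auxiliary derivative $D'$; in particular the strict inequality $\mathrm{ord}(D')<r_q(H^1_{f,p}(\Q,E[q]))$ is exactly what the non-strict bound $\mathrm{ord}(D)\le r_q(H^1_{f,p}(\Q,E[q]))$ in the present statement is designed to feed the induction, which is why this one-step-off assertion is isolated as a separate lemma rather than folded into Theorem \ref{keythm}. Beyond that, there is no cohomological or arithmetic obstacle here: once Lemma \ref{crucial} and the inductive hypothesis are available, the argument is purely formal manipulation inside $\Z[\Gamma_S]$ together with the norm relations packaged in the Euler system.
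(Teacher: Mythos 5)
Your argument matches the paper's proof essentially step for step: reduce to showing $(\sigma_{\ell_i}-1)Dz_S\equiv 0 \bmod q$ for each $i$, dispose of the $k_i=0$ case via the norm factor, and for $k_i\ge 1$ apply Lemma \ref{crucial} to reduce to $D'$ of weight $w-1$ and order $\mathrm{ord}(D)-1<r_q(H^1_{f,p}(\Q,E[q]))$, to which the standing hypothesis about Theorem \ref{keythm} applies. Your bookkeeping of the hypotheses for $D'$ (support, conductor, $\max e_\ell < p$, $\mathscr{N}_q$, cyclicity of $E(\mathbb{F}_\ell)[p]$) is correct and slightly more explicit than the paper's, but the route is the same.
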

\begin{proof}
We write $S=\ell_1\cdots \ell_s$. 
It suffices to show that
\begin{equation}\label{main rationality}
Dz_{S} \bmod q \in H^0\left(\Gamma_{\ell_i},H^1(\Q(S),T)/q\right)
\end{equation}for each $1\le i \le s.$
It suffices to consider the case $i=1$. 
If $e_{\ell_1}(D)=0$, then we have $D=N_{\ell_1}D^{\prime}$ for some derivative $D^{\prime}$, and hence
 we deduce (\ref{main rationality}).
We  assume that $e_{\ell_1}(D)\ge 1.$
Then, by Lemma \ref{crucial}, we have
\begin{equation*}
(\sigma_{\ell_1}-1)D\equiv -\sigma_{\ell_1}D^{\prime} \mod q\Z[\Gamma_{S}],
\end{equation*}
where $D^{\prime}$ is a derivative such that
$ \text{ord}(D^{\prime}) = \text{ord}(D)-1$ and  $\text{Supp}(D^{\prime})=S$.
 Hence, we have
\begin{equation*}
 w(D^{\prime})= w(D)-1,
 \end{equation*}
which implies that Theorem \ref{keythm} holds for $D^{\prime}$, that is,
 $D^{\prime}z_{S} \equiv 0 \bmod qH^1(\Q(S),T).$ 
Hence, we obtain
\begin{equation*}
(\sigma_{\ell_1}-1)Dz_S\equiv -\sigma_{\ell_1}D^{\prime}z_S \equiv 0 \mod q,
\end{equation*}which shows (\ref{main rationality}).
\end{proof}

\textsc{Proof of Theorem \ref{keythm}}.
We prove the theorem by induction on $w(D)$.
Note that the theorem obviously follows from Proposition \ref{negative} when  $w(D)<0.$
Thus, we may assume that $w:=w(D)\ge 0$ 
and that the theorem holds for any derivative whose weight is strictly  less than $w$. 
Then, by Lemma \ref{key lemma}, we have $Dz_{S} \bmod q \in H^0\left(\Gamma_S,H^1(\Q(S),T)/q\right).
$
We denote by $\ka$ the inverse image of $Dz_{S} \bmod q \in H^0\left(\Gamma_S,H^1(\Q(S),E[q])\right)$ under the isomorphism $H^1(\Q,E[q]) \cong H^0\left(\Gamma_S,H^1(\Q(S),E[q])\right).$

We claim that
\begin{equation*}
r_q\left(H^1_{f,pS^{\prime}}(\Q,E[q])\right)>0.
\end{equation*}
We assume that $r_q\left(H^1_{f,pS^{\prime}}(\Q,E[q])\right)=0$. 
By Lemma \ref{r} and the exact sequence 
\begin{equation*}
0\to H^1_{f,pS^{\prime}}(\Q,E[q]) \to H^1_{f,p}(\Q,E[q])\to \oplus_{\ell|S^{\prime}}E(\Q_{\ell})/q,
\end{equation*}we have
\begin{equation*}
r_q\left(H^1_{f,p}(\Q,E[q])  \right)\le r_q\left(H^1_{f,pS^{\prime}}(\Q,E[q])  \right)+r_p(\oplus_{\ell|S^{\prime}}E(\Q_{\ell})/p),
\end{equation*}
and hence by the assumption,
\begin{equation*}
\mathrm{ord}(D)<\sum_{\ell|S^{\prime}}r_p(E(\Q_{\ell})/p).
\end{equation*}
For a prime $\ell \nmid pN,$ we have
\begin{equation*}
E(\Q_{\ell})/p \cong  E(\Q_{\ell})[p] \cong E(\mathbb{F}_{\ell})[p],
\end{equation*}where the first (non-canonical) isomorphism is due to the structure theorem for finite abelian groups and to that $E(\Q_{\ell}) \cong \Z_{\ell} \oplus E(\Q_{\ell})_{\tors}$.
 For a prime $\ell|S$, $E(\mathbb{F}_{\ell})[p]$ is assumed to be cyclic, and then we have $r_{p}(E(\Q_{\ell})/p)\le1$.
Hence,
\begin{equation*}
\mathrm{ord}(D)< \sum_{\ell|S^{\prime}}1.
\end{equation*}However, by the definition of  $S^{\prime}=\mathrm{Cond}(D),$ we have
$\sum_{\ell|S^{\prime}}1 \le \mathrm{ord}(D).$ 
Then, we have a contradiction and hence  $r_q\left(H^1_{f,pS^{\prime}}(\Q,E[q])\right)>0.$
 
By the claim above,  there exists a prime $\ell$ satisfying the conditions (1), (2) and (3) in Lemma \ref{choice of l} for $Dz_S$.
 Since $\mathrm{ord}(DD_{\ell}^{(1)}) \le r_q\left(H^1_{f,p}(\Q,E[q])\right),$
  by Lemma \ref{key lemma} we have
 \begin{equation*}
 DD_{\ell}^{(1)}z_{S\ell} \bmod q \in H^0\left(\Gamma_{S\ell},H^1(\Q(S\ell),T)/q\right).
 \end{equation*}Hence, Lemma \ref{cupcup} implies that  $Dz_S \equiv 0 \mod q$.
\qed

\begin{rem}\label{gap} If the image of  $\ka^{(\ell)}$ in $H^1(\Q_{p},E[q])$ always belonged to $E(\Q_{p})/q$  as   in \cite[Theorem 4.9]{dar92}, our proof would work under the assumption that  $\mathrm{ord}(D) < r_q(\Sel(\Q,E[q]))$. 
\end{rem}

We put $\displaystyle \mathfrak{r}_{\min}=\min_{n\ge 1}\left\{r_{p^n}\left(H^1_{f,p}(\Q,E[p^n])\right)\right\}.$
\begin{cor}\label{general divisibility}Let $S$ be an element of $\mathscr{N}$ such that  for each prime $\ell |S,$ $E(\mathbb{F}_{\ell})[p]$ is cyclic.
Then, we have
\begin{equation*}
\sum_{\sigma\in\Gamma_S}z_S^{\sigma^{-1}}\otimes\sigma \in H^1(\Q(S),T)\otimes I_{\Gamma_S}^{\min\{\mathfrak{r}_{\min},p\}}.
\end{equation*}
\end{cor}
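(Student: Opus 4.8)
The plan is to combine Theorem \ref{keythm} with the Taylor-expansion machinery of Lemma \ref{fromcongtovanishing}, applied to $M=H^1(\Q(S),T)$ and $a=z_S$. We may assume $\mathfrak{r}_{\min}\ge 1$, since otherwise $\min\{\mathfrak{r}_{\min},p\}=0$, $I_{\Gamma_S}^{0}=\Zp[\Gamma_S]$, and there is nothing to prove. First, $H^1(\Q(S),T)$ has no $p$-torsion: from the long exact sequence attached to $0\to T\xrightarrow{p}T\to E[p]\to 0$ one sees that $H^1(\Q(S),T)[p]$ is a quotient of $H^0(\Q(S),E[p])=E(\Q(S))[p]$, which vanishes by Proposition \ref{trivial torsion} (and is free, hence flat, over $\Zp$). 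Thus the construction preceding Proposition \ref{taylor} applies. Moreover the element $\sum_{\sigma\in\Gamma_S}z_S^{\sigma^{-1}}\otimes\sigma$ equals $(\mathrm{id}\otimes\iota)(\theta)$ where $\theta=\sum_{\gamma\in\Gamma_S}\gamma z_S\otimes\gamma$ is the element of Lemma \ref{fromcongtovanishing} and $\iota$ is the ring involution of $\Zp[\Gamma_S]$ sending $\sigma$ to $\sigma^{-1}$; since $\iota$ preserves $I_{\Gamma_S}^{k}$ for every $k$, it suffices to prove $\theta\in H^1(\Q(S),T)\otimes I_{\Gamma_S}^{\min\{\mathfrak{r}_{\min},p\}}$.

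To invoke Lemma \ref{fromcongtovanishing} with $t=\mathfrak{r}_{\min}$ I must check that $Dz_S\equiv 0\bmod n(D)H^1(\Q(S),T)$ for every Darmon--Kolyvagin derivative $D$ with $\Supp(D)=S$ and $\ord(D)<\min\{\mathfrak{r}_{\min},p\}$. When $\ord(D)=0$ this is trivial, since then $D=N_S$ and $n(D)=1$. When $\ord(D)\ge 1$, write $D=D'N_{S/S'}$ with $S'=\mathrm{Cond}(D)$ and $D'$ the derivative of support $S'$ all of whose exponents are positive, so that $\ord(D')=\ord(D)$ and $n(D')=n(D)=:q$. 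The crucial observation is that $S'\in\mathscr{N}_q$: indeed $q=n(D)$ divides $|\Gamma_\ell|$ for every prime $\ell\mid S'$, and $|\Gamma_\ell|$ is the $p$-part of $\ell-1$, so $q\mid\ell-1$ and $\ell\in\mathscr{R}_q$. Since $\ord(D')=\ord(D)<\min\{\mathfrak{r}_{\min},p\}\le\mathfrak{r}_{\min}\le r_q\bigl(H^1_{f,p}(\Q,E[q])\bigr)$, since $\max_{\ell\mid S'}e_\ell(D')\le\ord(D')<p$, and since $E(\mathbb{F}_\ell)[p]$ is cyclic for every $\ell\mid S'$ (because $S'\mid S$), Theorem \ref{keythm} gives $D'z_{S'}\equiv 0\bmod qH^1(\Q(S'),T)$. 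Now $N_{S/S'}z_S$ is the restriction to $H^1(\Q(S),T)$ of $\mathrm{Cor}_{\Q(S)/\Q(S')}(z_S)$, and iterating the Euler system relation (Definition \ref{def of euler system}(1)) together with the Galois-equivariance of corestriction shows that $\mathrm{Cor}_{\Q(S)/\Q(S')}(z_S)=P\cdot z_{S'}$ for some $P\in\Zp[\Gamma_{S'}]$. Hence $Dz_S=P\cdot D'z_{S'}$ in $H^1(\Q(S),T)$ (all operators lie in the commutative ring $\Zp[\Gamma_{S'}]$ and restriction is $\Gamma_{S'}$-equivariant), which is divisible by $q=n(D)$. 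This verifies the hypothesis of Lemma \ref{fromcongtovanishing}, so $\theta-N_Sz_S\otimes 1\in H^1(\Q(S),T)\otimes I_{\Gamma_S}^{\min\{\mathfrak{r}_{\min},p\}}$.

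It remains to dispose of $N_Sz_S\otimes 1$. Since $\mathfrak{r}_{\min}\ge 1$, applying Theorem \ref{keythm} to the trivial derivative supported at $S=1$, with $q=p^n$, gives $z_1\in p^nH^1(\Q,T)$ for every $n\ge 1$; as $H^1(\Q,T)$ is a finitely generated $\Zp$-module, $z_1=0$. Using once more that $\mathrm{Cor}_{\Q(S)/\Q}(z_S)$ is a $\Zp$-multiple of $z_1$ by the iterated Euler relation, we get $N_Sz_S=0$, whence $\theta\in H^1(\Q(S),T)\otimes I_{\Gamma_S}^{\min\{\mathfrak{r}_{\min},p\}}$; applying $\mathrm{id}\otimes\iota$ finishes the proof. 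The main obstacle, and the only point requiring genuine care, is the passage from the hypothesis ``$S\in\mathscr{N}$'' to a setting where Theorem \ref{keythm} — stated only for $S\in\mathscr{N}_q$ — can be used; this is exactly the divisibility $n(D)\mid\ell-1$ for the ramified primes $\ell$ of $D$, which forces $\mathrm{Cond}(D)\in\mathscr{N}_{n(D)}$. A secondary, easy point is that the extra term $N_Sz_S\otimes 1$ in Lemma \ref{fromcongtovanishing} vanishes here because $z_1=0$ as soon as $\mathfrak{r}_{\min}\ge 1$.
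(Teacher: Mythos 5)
Your proposal is correct and follows essentially the same route as the paper: reduce to $\mathfrak{r}_{\min}\ge 1$, factor an arbitrary derivative $D$ of order $<\min\{\mathfrak{r}_{\min},p\}$ as $D'N_{S/S'}$ with $S'=\mathrm{Cond}(D)$, observe that $S'\in\mathscr{N}_q$ for $q=n(D)$, invoke Theorem \ref{keythm} to get $D'z_{S'}\equiv 0\bmod q$, push this through the Euler relation, and feed it into Lemma \ref{fromcongtovanishing}. The one place you diverge is in killing the term $N_Sz_S\otimes 1$: the paper observes that $H^1_{f,p}(\Q,E[p^\infty])$ is infinite and invokes \cite[Theorem 2.2.3]{rub} to conclude $z_1=0$, whereas you deduce $z_1\equiv 0\bmod p^n$ for all $n$ by applying Theorem \ref{keythm} internally with $S=1$ and $D$ the trivial derivative (using $\mathfrak{r}_{\min}\ge 1$ to guarantee $r_{p^n}(H^1_{f,p}(\Q,E[p^n]))>0$), and then use that $H^1(\Q,T)$ is a finitely generated $\Zp$-module. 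Your variant is more self-contained and avoids the external citation; it relies on the (standard, and implicitly intended) convention that $1\in\mathscr{N}_q$ so that Theorem \ref{keythm} applies with $S=1$, which is supported by a direct reading of its proof. You also make explicit the involution $\iota$ needed to match the statement to the $\theta$ of Lemma \ref{fromcongtovanishing}, a point the paper passes over silently. Both of these are small refinements of the same underlying argument.
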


\begin{proof}We may assume that $\mathfrak{r}_{\min}\ge 1$.
To apply Lemma \ref{fromcongtovanishing} for $H^1(\Q(S),T)$ and $z_S$, we take
a derivative $D$  such that $\mathrm{Supp}(D)=S$ and $\mathrm{ord}(D) < \min\{\mathfrak{r}_{\min},p\}$.
We denote by $S^{\prime}$ the conductor of $D$, and then 
$D=D^{\prime}N_{\frac{S}{S^{\prime}}}$,
where the derivative $D^{\prime}$ satisfies 
\begin{equation*}
\text{Supp}(D^{\prime})=\text{Cond}(D^{\prime})=S^{\prime},\ \ \  n(D^{\prime})=n(D),\ \ \ \mathrm{ord}(D^{\prime}) = \mathrm{ord}(D).
\end{equation*}
Therefore,
\begin{equation*}
Dz_S=\left(\prod_{\ell | (S/S^{\prime})}P_{\ell}(\Fr_{\ell}^{-1})\right)D^{\prime}z_{S^{\prime}},
\end{equation*}where $\ell$ ranges over all the primes dividing $S/S^{\prime}.$ 
By definition, if we put $q=n(D^{\prime})$, which is a power of $p$,
  then  $S^{\prime} \in \mathscr{N}_{q}.$
Since
\begin{equation*}
\mathrm{ord}(D^{\prime})=\mathrm{ord}(D) <\mathfrak{r}_{\min}\le r_q\left(H^1_{f,p}(\Q,E[q])\right), \ \ \  \max_{\ell|S^{\prime}}\{e_{\ell}(D^{\prime})\}\le \mathrm{\ord}(D^{\prime})<p,
\end{equation*}
Theorem \ref{keythm} implies that  
$D^{\prime}z_{S^{\prime}} \equiv 0 \bmod q,$
and hence
$ Dz_S \equiv 0  \bmod q.$
Consequently, Lemma \ref{fromcongtovanishing} shows that
\begin{equation}\label{not finished vanishing order}
\sum_{\sigma\in\Gamma_S}z_S^{\sigma^{-1}}\otimes\sigma -N_S z_S\otimes 1\in H^1(\Q(S),T)\otimes I_{\Gamma_S}^{\min\{\mathfrak{r}_{\min},p\}}.
\end{equation}
Since the Galois representation $G_{\Q} \to \mathrm{Aut}_{\Zp}(T)$ is assumed to be surjective, $E(\Q)[p^\infty] =0$, and then the natural map $H^1(\Q,E[p^n]) \to H^1(\Q,E[p^{\infty}])$ is injective for all $n\ge 1$. Hence,   the inductive limit $\displaystyle H^1_{f,p}(\Q,E[p^{\infty}]):=\varinjlim_{n}H^1_{f,p}(\Q,E[p^n])$ is \textit{not} finite, since $\mathfrak{r}_{\min}\ge 1.$
By \cite[Theorem 2.2.3]{rub} (our $H^1_{f,p}(\Q,E[p^{\infty}])$ coincides with $\mathcal{S}_{\Sigma_p}(\Q,E[p^\infty])$ in \cite{rub}), we have
$z_1 =0$, and
then $N_Sz_S = \prod_{\ell|S}P_{\ell}(1)z_1=0.$
From this and (\ref{not finished vanishing order}), we complete the proof.
\end{proof}

\subsubsection{A modification of the theorem}

\begin{thm}\label{general}
Let $q$ be a power of $p$.
Let  $D$ be a Darmon-Kolyvagin derivative with support $S$ such that  $\max_{\ell|S}\{e_{\ell}(D)\}<p$. 
We suppose that $S \in \mathscr{N}_q$ and for each prime $\ell $ dividing $S$, $E(\mathbb{F}_{\ell})[q]$ is isomorphic to $\Z/q\Z$ or 0.
We put $S^{\prime}=\mathrm{Cond}(D)$ and recall that $A_q(S^{\prime}) := \oplus_{\ell|S^{\prime}}E(\Q_{\ell})/q$.
If $\mathrm{ord}(D)<r_q\left(H^1_{f,pS^{\prime}}(\Q,E[q])\right)+r_{p}(A_q(S^{\prime})) $, then we have
\begin{equation*}
Dz_{S} \equiv 0 \mod qH^1(\Q(S),T).
\end{equation*}
\end{thm}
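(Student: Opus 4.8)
The plan is to imitate the proof of Theorem \ref{keythm} almost line for line, systematically replacing the Selmer group $H^1_{f,p}(\Q,E[q])$ by the relative Selmer group $H^1_{f,pS'}(\Q,E[q])$ with $S'=\mathrm{Cond}(D)$ and inserting the correction term $r_p(A_q(S'))$ wherever the rank hypothesis is used. One again argues by induction on the weight $w(D)$. When $w(D)<0$ there is nothing to change: Proposition \ref{negative} gives $Dz_S\equiv 0 \bmod q H^1(\Q(S),T)$ with no reference to Selmer groups.

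The technical core is an analogue of Lemma \ref{key lemma}: assuming Theorem \ref{general} for all derivatives of weight $<w$, and given $D$ of weight $w$ with $\max_{\ell\mid S}\{e_\ell(D)\}<p$ and $\mathrm{ord}(D)\le r_q(H^1_{f,p\mathrm{Cond}(D)}(\Q,E[q]))+r_p(A_q(\mathrm{Cond}(D)))$, one wants $Dz_S \bmod q\in H^0(\Gamma_S,H^1(\Q(S),T)/q)$. As in Lemma \ref{key lemma} it suffices to kill $(\sigma_{\ell_1}-1)Dz_S \bmod q$, and Lemma \ref{crucial} replaces $(\sigma_{\ell_1}-1)D$ by $-\sigma_{\ell_1}D'$ with $\mathrm{ord}(D')=\mathrm{ord}(D)-1$. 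When $e_{\ell_1}(D)=0$ the invariance under $\sigma_{\ell_1}$ is automatic, and when $e_{\ell_1}(D)\ge 2$ the conductor is unchanged and Theorem \ref{general} for weight $w-1$ applies to $D'$. The one new difficulty is $e_{\ell_1}(D)=1$: then $D'=N_{\ell_1}D''$ for a derivative $D''$ of support $S/\ell_1$, the conductor drops to $S''=\mathrm{Cond}(D)/\ell_1$, and the first inequality of Lemma \ref{variation of r} only yields $\mathrm{ord}(D')\le r_q(H^1_{f,pS''})+r_p(A_q(S''))$, one short of the strict bound Theorem \ref{general} needs. I would handle this by a secondary induction on $\mathrm{ord}(D)$: if $E(\mathbb{F}_{\ell_1})[q]=0$ then $E(\Q_{\ell_1})/q=0$, the local condition at $\ell_1$ is vacuous, the Selmer quantity is literally unchanged, and Theorem \ref{general} for weight $w-1$ applies to $D'$ after all; if $E(\mathbb{F}_{\ell_1})[q]\cong\Z/q\Z$ then $\ell_1\in\mathscr{R}_{E,q}$, so $P_{\ell_1}(1)\equiv 0 \bmod q$, the norm relation gives $D'z_S=P_{\ell_1}(\mathrm{Fr}_{\ell_1}^{-1})D''z_{S/\ell_1}$, and the secondary inductive hypothesis applied to $D''$ (same weight, strictly smaller order) makes $D''z_{S/\ell_1} \bmod q$ invariant under $\Gamma_{S/\ell_1}$, whence $P_{\ell_1}(\mathrm{Fr}_{\ell_1}^{-1})D''z_{S/\ell_1}\equiv P_{\ell_1}(1)D''z_{S/\ell_1}\equiv 0 \bmod q$.

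With this rationality lemma in hand, the remaining steps copy the proof of Theorem \ref{keythm}. The vanishing $r_q(H^1_{f,pS'}(\Q,E[q]))=0$ is ruled out at once: it would force $\mathrm{ord}(D)<r_p(A_q(S'))$, but $E(\Q_\ell)/q\cong E(\mathbb{F}_\ell)[q]$ is cyclic for $\ell\mid S'$, so $r_p(A_q(S'))\le\#\{\ell\mid S'\}\le\mathrm{ord}(D)$ since $S'=\mathrm{Cond}(D)$ has at most $\mathrm{ord}(D)$ prime divisors, a contradiction. Then Lemma \ref{choice of l} supplies a prime $\ell$ with $E(\Q_\ell)/q\cong\Z/q\Z$ having properties (1)--(3); since $E(\Q_\ell)/p\cong\Z/p\Z$, the second inequality of Lemma \ref{variation of r} gives $\mathrm{ord}(DD_\ell^{(1)})=\mathrm{ord}(D)+1\le r_q(H^1_{f,pS'})+r_p(A_q(S'))\le r_q(H^1_{f,pS'\ell})+r_p(A_q(S'\ell))$, so the rationality lemma applies to $DD_\ell^{(1)}$ and Lemma \ref{cupcup} concludes $Dz_S\equiv 0 \bmod q H^1(\Q(S),T)$.

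The only real obstacle I anticipate is the bookkeeping around the varying conductor: making the non-strict inequalities of Lemma \ref{variation of r} fit together with the strict hypothesis of Theorem \ref{general}, and arranging the secondary induction on $\mathrm{ord}(D)$ so that the shortfall in the conductor-dropping case is absorbed via the Euler system norm relation rather than via Theorem \ref{general} itself. Everything else --- Proposition \ref{negative}, Lemmas \ref{choice of l} and \ref{cupcup}, the Chebotarev choice of $\ell$, and the cup-product duality --- carries over from the proof of Theorem \ref{keythm} without change.
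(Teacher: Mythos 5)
Your proposal is correct and follows the paper's proof (via Lemma \ref{complicated induction}) essentially step for step: both proceed by the primary induction on $w(D)$, use Proposition \ref{negative} for the base case $w(D)<0$, reduce the rationality statement to killing $(\sigma_{\ell_i}-1)Dz_S\bmod q$ prime by prime, split the $e_{\ell_i}(D)=1$ case according to whether $E(\mathbb{F}_{\ell_i})[q]=0$ or $\Z/q\Z$, and then invoke the same Chebotarev--choice (Lemma \ref{choice of l}) and cup-product duality (Lemma \ref{cupcup}) to finish. The one minor organizational difference is the secondary induction inside the rationality lemma: the paper's Lemma \ref{complicated induction} inducts on the number $n$ of primes $\ell_i\mid S$ with $e_{\ell_i}(D)=1$ and $\ell_i\in\mathscr{R}_{E,q}$ (its ``case (d)''), whereas you induct on $\mathrm{ord}(D)$ within a fixed weight. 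Both quantities drop by one when passing from $D$ to $D''=D_{\ell_2}^{(k_2)}\cdots D_{\ell_s}^{(k_s)}$, the weight is preserved, and the shortfall of one in the Selmer-rank bound for $D''$ is exactly absorbed by the first inequality of Lemma \ref{variation of r}, so the two inductions are interchangeable. The rest---the contradiction ruling out $r_q(H^1_{f,pS'})=0$, the use of (\ref{usually for l}) to push the bound up to $S'\ell$, and the application of Lemma \ref{cupcup}---matches the paper verbatim.
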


\begin{rem}\label{rank relation}
 By Lemma \ref{r}, we have 
\begin{equation*}
r_q(H^1_{f,p}(\Q,E[q])) \le r_q\left(H^1_{f,pS^{\prime}}(\Q,E[q])\right)+r_{p}(A_q(S^{\prime})).
\end{equation*}In particular, when $q=p,$ Theorem \ref{general} implies Theorem \ref{keythm}.
\end{rem}

\begin{lem}\label{complicated induction}
Let $w$ be an integer  and $q$ a power of $p$. We assume that Theorem \ref{general} holds for any derivative whose weight is strictly less than $w$.
Let  $D, S$ and $S^{\prime}$ be as in Theorem \ref{general}.
We suppose that  $w(D)=w$. 
If $\mathrm{ord}(D)\le r_q\left(H^1_{f,pS^{\prime}}(\Q,E[q])\right)+r_{p}(A_q(S^{\prime})) $, then
\begin{equation*}
Dz_{S} \bmod q  \in H^0\left(\Gamma_S,H^1(\Q(S),T)/q\right).
\end{equation*}
\end{lem}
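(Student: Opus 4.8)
The plan is to mimic the structure of the proof of Lemma \ref{key lemma}, which handled the analogous statement for Theorem \ref{keythm}, but now keeping track of the refined bound involving $r_q(H^1_{f,pS'}(\Q,E[q]))+r_p(A_q(S'))$. Write $S=\ell_1\cdots\ell_s$. To prove $Dz_S \bmod q \in H^0(\Gamma_S, H^1(\Q(S),T)/q)$ it suffices, since each $\Gamma_{\ell_i}$ is cyclic generated by $\sigma_{\ell_i}$, to show $(\sigma_{\ell_i}-1)Dz_S \equiv 0 \bmod q$ for each $i$; by symmetry we may take $i=1$. If $e_{\ell_1}(D)=0$ then $D = N_{\ell_1}D'$ for some derivative $D'$ and the claim is immediate. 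So assume $e_{\ell_1}(D)\ge 1$.

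In that case Lemma \ref{crucial} gives $(\sigma_{\ell_1}-1)D \equiv -\sigma_{\ell_1}D' \bmod q\Z[\Gamma_S]$, where $D'$ is the derivative obtained by decreasing the exponent at $\ell_1$ by one, so that $\mathrm{Supp}(D')=S$, $\mathrm{ord}(D')=\mathrm{ord}(D)-1$ and hence $w(D')=w(D)-1<w$. The key point is that the conductor of $D'$ might be either $S'$ or $S'/\ell_1$: if $e_{\ell_1}(D)\ge 2$ then $\mathrm{Cond}(D')=S'$, while if $e_{\ell_1}(D)=1$ then $\mathrm{Cond}(D')=S'/\ell_1$. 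I would check that in both cases $\mathrm{ord}(D') < r_q(H^1_{f,p\,\mathrm{Cond}(D')}(\Q,E[q])) + r_p(A_q(\mathrm{Cond}(D')))$ follows from the hypothesis $\mathrm{ord}(D)\le r_q(H^1_{f,pS'}(\Q,E[q]))+r_p(A_q(S'))$. When $\mathrm{Cond}(D')=S'$ this is trivial since $\mathrm{ord}(D')=\mathrm{ord}(D)-1$. When $\mathrm{Cond}(D')=S'/\ell_1$ — which forces $E(\Q_{\ell_1})/p \cong \Z/p$, since $e_{\ell_1}(D)=1$ means $\ell_1 \nmid S'/\ell_1$ yet $\ell_1 \mid S'$... more precisely one uses that $\ell_1 \in \mathscr{R}_q$ and applies the inequality \eqref{usually for l} of Lemma \ref{variation of r} in the form $r_q(H^1_{f,pS'/\ell_1})+r_p(A_q(pS'/\ell_1)) \le r_q(H^1_{f,pS'})+r_p(A_q(pS'))$, hmm, one must be careful about whether $E(\Q_{\ell_1})/p$ is nonzero — and conclude $\mathrm{ord}(D') = \mathrm{ord}(D)-1 \le r_q(H^1_{f,pS'})+r_p(A_q(S')) - 1 \le r_q(H^1_{f,pS'/\ell_1}) + r_p(A_q(S'/\ell_1))$, using \eqref{minus 1}. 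Hence Theorem \ref{general}, which we are assuming for weight $<w$, applies to $D'$ and yields $D'z_S \equiv 0 \bmod qH^1(\Q(S),T)$.

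Consequently $(\sigma_{\ell_1}-1)Dz_S \equiv -\sigma_{\ell_1}D'z_S \equiv 0 \bmod q$, which is the desired invariance at $\ell_1$. Running over all $i$ gives $Dz_S \bmod q \in H^0(\Gamma_S, H^1(\Q(S),T)/q)$.

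The main obstacle I anticipate is the bookkeeping in the case $e_{\ell_1}(D)=1$, where the conductor drops: one needs to verify that the arithmetic hypothesis on $\mathrm{ord}(D)$ is preserved when passing from $S'$ to $S'/\ell_1$, and this is exactly where the prime $\ell_1$ being in $\mathscr{R}_q$ (so $q \mid \ell_1-1$) and the cyclicity of $E(\mathbb{F}_{\ell_1})[q]$ enter, via the inequalities \eqref{minus 1} and \eqref{usually for l} of Lemma \ref{variation of r}; getting the $-1$ and the $+r_p(A_q)$ terms to line up correctly is the delicate step. Everything else is a formal consequence of Lemma \ref{crucial} and the inductive hypothesis, exactly parallel to Lemma \ref{key lemma}.
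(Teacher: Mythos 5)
Your plan works in the cases corresponding to the paper's (a), (b), and (c) (that is, $e_{\ell_1}(D)=0$, $e_{\ell_1}(D)\ge 2$, and $e_{\ell_1}(D)=1$ with $E(\Q_{\ell_1})/q=0$), but it has a genuine gap in the remaining case $e_{\ell_1}(D)=1$ with $\ell_1\in\mathscr{R}_{E,q}$ (so $E(\Q_{\ell_1})/q\cong\Z/q\Z$). In that case your chain of inequalities gives only
\begin{equation*}
\mathrm{ord}(D') \;=\; \mathrm{ord}(D)-1 \;\le\; r_q\bigl(H^1_{f,pS'}\bigr)+r_p\bigl(A_q(S')\bigr)-1 \;\le\; r_q\bigl(H^1_{f,pS'/\ell_1}\bigr)+r_p\bigl(A_q(S'/\ell_1)\bigr),
\end{equation*}
which is a non-strict bound. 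You need to invoke Theorem \ref{general} (assumed for weight $<w$), but its hypothesis is the \emph{strict} inequality $\mathrm{ord}(D')<r_q(H^1_{f,p\,\mathrm{Cond}(D')})+r_p(A_q(\mathrm{Cond}(D')))$, which may fail when equality holds throughout. This is precisely why the Lemma (with $\le$) is stated separately from the Theorem (with $<$): decreasing the exponent at an $\mathscr{R}_{E,q}$-prime from $1$ to $0$ eats the slack, and Theorem \ref{general} no longer applies.

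The paper avoids this by not trying to conclude $D'z_S\equiv 0$ directly in this case. Instead it carves out these primes as a separate case (d) and runs a second, nested induction on the number $n$ of such primes. Writing $D=D_{\ell_1}^{(1)}D_{S_1}$ with $S_1=S/\ell_1$, the Euler-system relation gives
\begin{equation*}
(\sigma_{\ell_1}-1)Dz_S \;\equiv\; -P_{\ell_1}(\Fr_{\ell_1}^{-1})\,D_{S_1}z_{S_1} \pmod{q},
\end{equation*}
and the goal becomes to show $D_{S_1}z_{S_1}\bmod q\in H^0(\Gamma_{S_1},\cdot)$, after which $P_{\ell_1}(1)\equiv 0\bmod q$ finishes the job. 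Note $w(D_{S_1})=w(D)$, not $w-1$ (both $\mathrm{ord}$ and the count of $\mathscr{R}_{E,q}$-primes in the support drop by one), so Theorem \ref{general} for weight $<w$ gives nothing here; one must apply the Lemma itself (with its $\le$ hypothesis, which is exactly what the bound above provides) to $D_{S_1}$, which has $n-1$ case-(d) primes. Your proof is missing this inner induction, and without it the case $\ell_1\in\mathscr{R}_{E,q}$ cannot be closed. You should also separate cases (c) and (d) cleanly: whether $E(\Q_{\ell_1})/q$ vanishes or not is exactly what determines whether the strict bound survives, and in the muddled middle of your proposal you lump them together.
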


\begin{proof}
We simply write  $H^1_{f,*}= H^1_{f,*}(\Q,E[q]).$
Let $S=\ell_1\cdots\ell_s $ be the prime factorization. We write $D=D_{\ell_1}^{(k_1)}\cdots D_{\ell_s}^{(k_s)},$ and then for each $1 \le i \le s$, one of the following assertions holds:
\begin{enumerate}
\item[(a)] $k_i=0$. 
\item[(b)] $k_i\ge2$.
\item[(c)] $k_i =1$, $\ell_i \in \mathscr{R}_{q}\setminus \mathscr{R}_{E,q}$, and hence $E(\Q_{\ell_i})/q=0.$
\item[(d)] $k_i=1$, $\ell_i \in \mathscr{R}_{E,q}$, and hence $E(\Q_{\ell_i})/q \cong \Z/q\Z.$
\end{enumerate}
It suffices to show that for each  $1\le i \le s$,
\begin{equation}\label{modified rationality}
Dz_{S} \in H^0\left(\Gamma_{\ell_i},H^1(\Q(S),T)/q\right).
\end{equation}

\noindent\textbf{Step 1.}
For each $i$ satisfying  (a), (b) or (c) above, the assertion (\ref{modified rationality}) holds.

We only need to consider the case $i=1.$
If $k_1=0$, then we have $D\in N_{\ell_1}\Z[\Gamma_S]$, and hence $Dz_{S} \in H^0\left(\Gamma_{\ell_1},H^1(\Q(S),T)/q\right).$
 Hence, we may assume that $k_1 \ge 1.$
Then by Lemma \ref{crucial},
\begin{equation}\label{modified case 2}
(\sigma_{\ell_1}-1)D\equiv -\sigma_{\ell_1}D_{\ell_1}^{(k_1-1)} D_{\ell_2}^{(k_2)}\cdots D_{\ell_s}^{(k_s)} \mod q \Z[\Gamma_{S}].
\end{equation}We put $D^{\prime} = D_{\ell_1}^{(k_1-1)}  D_{\ell_2}^{(k_2)}\cdots D_{\ell_s}^{(k_s)}.$
Then, 
\begin{equation*}
\mathrm{Supp}(D^{\prime})=S,\ \ \ \  \mathrm{ord}(D^{\prime})=\mathrm{ord}(D)-1,
\end{equation*}
and hence
\begin{equation*}
w(D^{\prime})=w(D)-1.
\end{equation*}
Since $\sigma_{\ell_1}$ generates $\Gamma_{\ell_1}$, by (\ref{modified case 2}), it suffices to show that
\begin{equation*}
D^{\prime}z_S \equiv 0 \mod q.
\end{equation*}
We consider the cases  (b) and (c).

\textit{Case} (b).\ In this case, we have $\mathrm{Cond}(D^{\prime})=S^{\prime}.$
 We recall that
 \begin{equation*}
 \mathrm{ord}(D^{\prime})<\mathrm{ord}(D) \le  r_q(H^1_{f,pS^{\prime}})+r_p(A_q(S^{\prime})).
 \end{equation*}
Then, Theorem \ref{general} holds for $D^{\prime}$, that is,
$D^{\prime}z_S \equiv 0 \mod q.$

\textit{Case}  (c).\ In this case, we have $\mathrm{Cond}(D^{\prime})=S^{\prime}/\ell_1.$
Since $E(\Q_{\ell_1})/q=0$, we have 
\begin{equation*}
r_q(H^1_{f,pS^{\prime}/\ell_1})=r_q(H^1_{f,pS^{\prime}}),\ \ \  r_p(A_q(S^{\prime}/\ell_1))=r_p(A_q(S^{\prime})).
\end{equation*}
Then, we have
\begin{equation*}
\mathrm{ord} (D^{\prime}) <  r_q(H^1_{f,pS^{\prime}/\ell_1})+r_p(A_q(S^{\prime}/\ell_1)).
\end{equation*}
Hence, Theorem \ref{general} holds for $D^{\prime}$, that is, $D^{\prime}z_{S}\equiv 0 \mod q.$

\noindent\textbf{Step 2.}
We prove the  lemma by induction on the number $n$ of primes satisfying (d).
Without loss of generality, we may write $S=\ell_1\cdots \ell_s,$ where  $\ell_1,\ell_2, \ldots,\ell_n$ satisfy (d) and $\ell_{n+1},\ell_{n+2},\ldots, \ell_{s}$ satisfy (a), (b) or (c).

 \textit{The case} $n=0$.
This case follows from  Step 1. 

  \textit{The case} $n\ge1.$ By Step 1, we are reduced to showing that  for $1\le i \le n$
  \begin{equation*}
  Dz_{S} \bmod q  \in H^0\left(\Gamma_{\ell_i},H^1(\Q(S),T)/q\right).
  \end{equation*}
It suffices to consider the case $i=1$.
Let $S_1=S/\ell_1$ and $D_{S_1}= D_{\ell_2}^{(k_2)} \cdots D_{\ell_s}^{(k_s)}.$
 Then, we have
\begin{equation}\label{ell_11}
(\sigma_{\ell_1}-1)Dz_{S} \equiv -N_{\ell_1}D_{S_1}z_{S} \equiv -P_{\ell_1}(\Fr_{\ell_1}^{-1})D_{S_1}z_{S_1} \mod q.
\end{equation}
By Lemma \ref{variation of r},
\begin{align*}
\mathrm{ord}(D_{S_1}) = \mathrm{ord}(D) -1 &\le r_q(H^1_{f,pS^{\prime}})+r_p(A(S^{\prime}))-1\\
&\le r_q(H^1_{f,pS^{\prime}/\ell_1})+r_p(A(S^{\prime}/\ell_1)).
\end{align*}We recall that $\mathrm{Cond}(D_{S_1})=S^{\prime}/\ell_1.$
Since $\ell_1 \in \mathscr{R}_{E,q}$, we have $w(D_{S_1})=w(D)=w.$
Therefore, we may apply the induction hypothesis on $n$ to $D_{S_1}$, and  then obtain \begin{equation*}\label{trivial for S_1}
D_{S_1}z_{S_1} \bmod q \in H^0\left(\Gamma_{S_1},H^1(\Q(S_1),T)/q\right).
\end{equation*}Hence, we have
\begin{equation*}
P_{\ell_1}(\Fr_{\ell_1}^{-1})D_{S_1}z_{S_1} \equiv P_{\ell_1}(1)D_{S_1}z_{S_1} \equiv 0 \mod q,
\end{equation*}and then by (\ref{ell_11}), we  complete Step 2.
\end{proof}

\textsc{Proof of Theorem \ref{general}.}
As in the proof of Theorem \ref{keythm},  Theorem \ref{general} is proved by induction on $w(D).$
By Lemma \ref{complicated induction}, $Dz_{S} \bmod q \in H^0\left(\Gamma_S,H^1(\Q(S),T)/q\right).$
Let $\ka$ be the inverse image of $Dz_S \bmod q$ under the isomorphism $H^1(\Q,E[q]) \cong H^0\left(\Gamma_S,H^1(\Q(S),E[q])\right).$
Since $\mathrm{ord}(D)< r_q\left(H^1_{f,pS^{\prime}}(\Q,E[q])\right)+r_{p}(A_q(S^{\prime}))$, by the same argument as that in the proof of Theorem {\ref{keythm}}, we  have $r_q(H^1_{f,pS^{\prime}}(\Q,E[q]))>0$.
Hence, by Lemma \ref{choice of l} there exists a prime $\ell$ the conditions (1), (2) and (3) in Lemma \ref{choice of l} for $Dz_S$.
Using (\ref{usually for l}), we have 
\begin{equation*}
\mathrm{ord}(DD_{\ell}^{(1)}) \le r_q\left(H^1_{f,pS^{\prime}\ell}(\Q,E[q])\right)+r_{p}(A_q(S^{\prime}\ell)).
\end{equation*}
 Since $\mathrm{Cond}(DD_{\ell}^{(1)})=S^{\prime}\ell$, by Lemma \ref{complicated induction} we have
 \begin{equation*}
 DD_{\ell}^{(1)}z_{S\ell} \bmod q \in H^0\left(\Gamma_{S\ell},H^1(\Q(S\ell),T)/q\right).
 \end{equation*}Then, Lemma \ref{cupcup} implies that $Dz_S \equiv 0 \mod q$.
\qed

\subsection{Local behavior of derivatives of Euler systems at $p$}\label{local behavior at p}
In this subsection, we relate the localization of certain derivatives of Euler systems at $p$ with arithmetic invariants such as the Tate-Shafarevich group (Corollary \ref{toward leading term}).

We put $r_E=\mathrm{rank}(E(\Q))$ and denote by $\Sha$ the Tate-Shafarevich group of $E$ over $\Q$.
For a positive integer $S$ and $A=T_p(E), V_p(E)$ or $E[p]$,
we put 
\begin{equation*}
H^1(\Q(S)\otimes\Qp,A)=\oplus_{\lambda|p}H^1(\Q(S)_{\lambda},A),
\end{equation*}where $\lambda$ ranges over all the primes of $\Q(S)$ dividing $p$, and $\Q(S)_{\lambda}$ denotes the  completion at $\lambda$. 
We denote by $H^1_f(\Q(S)\otimes\Qp,A)$ the image of the Kummer map, and
define
\begin{equation*}
H^1_{/f}(\Q(S)\otimes\Qp,A)=\frac{H^1(\Q(S)\otimes\Qp,A)}{H^1_f(\Q(S)\otimes\Qp,A)}.
\end{equation*}

For $\eta \in H^1(\Q(S),E[p]),$ we denote by $\loc_p(\eta)$  the image of $\eta$ in $H^1(\Q(S)\otimes \Qp,E[p]).$
\begin{thm}\label{s}We assume that $E(\Qp)/p \cong \Z/p\Z$.
Let $D$ be a Darmon-Kolyvagin derivative with support $S$ such that $\max_{\ell|S}\{e_{\ell}(D)\}<p$. 
Suppose  that $S\in\mathscr{N}_p$, and for each prime $\ell |S$, $E(\mathbb{F}_{\ell})[p]$ is cyclic.
We put $S^{\prime}=\mathrm{Cond}(D)$.
If $\mathrm{ord}(D)<r_p\left(H^1_{f,S^{\prime}}(\Q,E[p])\right) + r_p(A_p(S^{\prime}))$,
 then the following assertions hold.
\begin{enumerate}
\item $Dz_S \bmod p \in H^0\left(\Gamma_S,H^1(\Q(S),T)/p\right).$
\item If we denote by $\ka  \in H^1(\Q,E[p])$ the inverse image of $Dz_S \bmod p$ under the isomorphism 
$H^1(\Q,E[p]) \cong H^0\left(\Gamma_S,H^1(\Q(S),E[p])\right)$, then we have 
\begin{equation*}
\loc_p(\ka) \in  H^1_{f}(\Qp,E[p]).
\end{equation*}
\end{enumerate}
\end{thm}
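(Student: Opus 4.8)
The plan is to combine the inductive machinery of Section~\ref{main chapter} (Theorem~\ref{general} and Lemma~\ref{complicated induction}) with global duality, the key structural input being that $E(\Qp)/p$ is cyclic of order $p$.

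First I would establish (1). Applying Lemma~\ref{r} to the exact sequence
\[
0 \to H^1_{f,pS^{\prime}}(\Q,E[p]) \to H^1_{f,S^{\prime}}(\Q,E[p]) \to E(\Qp)/p
\]
and using $r_p(E(\Qp)/p)=1$ gives $r_p(H^1_{f,S^{\prime}})\le r_p(H^1_{f,pS^{\prime}})+1$, so the hypothesis $\mathrm{ord}(D)<r_p(H^1_{f,S^{\prime}})+r_p(A_p(S^{\prime}))$ implies $\mathrm{ord}(D)\le r_p(H^1_{f,pS^{\prime}})+r_p(A_p(S^{\prime}))$. Since Theorem~\ref{general} has already been proved (for every derivative), the inductive hypothesis of Lemma~\ref{complicated induction} is satisfied, and applying that lemma with $q=p$ yields $Dz_S\bmod p\in H^0(\Gamma_S,H^1(\Q(S),T)/p)$, which is (1); in particular $\ka$ is well defined.

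For (2) I would split into two cases according to the localization map $\loc_p\colon H^1_{f,S^{\prime}}(\Q,E[p])\to E(\Qp)/p$. If this map is not surjective, then, $E(\Qp)/p$ being cyclic of order $p$, it is the zero map; hence $H^1_{f,S^{\prime}}(\Q,E[p])=H^1_{f,pS^{\prime}}(\Q,E[p])$ and the hypothesis becomes $\mathrm{ord}(D)<r_p(H^1_{f,pS^{\prime}})+r_p(A_p(S^{\prime}))$, so Theorem~\ref{general} gives $Dz_S\equiv0\bmod p$, whence $\ka=0$ and $\loc_p(\ka)=0\in H^1_f(\Qp,E[p])$. It remains to treat the case where $\loc_p\colon H^1_{f,S^{\prime}}(\Q,E[p])\to E(\Qp)/p$ is surjective, where I would run the global reciprocity argument from the proof of Lemma~\ref{cupcup}. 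For any $\ka^{\prime}\in H^1_{f,S^{\prime}}(\Q,E[p])$, writing $(\,\cdot,\cdot\,)_w$ for the local Tate pairing on $H^1(\Q_w,E[p])$, one has $\sum_w(\loc_w(\ka),\loc_w(\ka^{\prime}))_w=0$; the archimedean term vanishes since $p$ is odd. For a finite prime $w\nmid pS^{\prime}$ we have $\loc_w(\ka)\in E(\Q_w)/p$ by (1) and Corollary~\ref{f}, while $\loc_w(\ka^{\prime})\in E(\Q_w)/p$ since $\ka^{\prime}\in\Sel(\Q,E[p])$, and $E(\Q_w)/p$ is isotropic, so that term vanishes. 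For $w\mid S^{\prime}$ we have $\loc_w(\ka^{\prime})=0$ by the definition of $H^1_{f,S^{\prime}}$, so that term vanishes too. Hence $(\loc_p(\ka),\loc_p(\ka^{\prime}))_p=0$. Letting $\ka^{\prime}$ vary, surjectivity shows $\loc_p(\ka)$ is orthogonal to all of $E(\Qp)/p=H^1_f(\Qp,E[p])$; since this subgroup is its own orthogonal complement under $(\,\cdot,\cdot\,)_p$, we conclude $\loc_p(\ka)\in H^1_f(\Qp,E[p])$.

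There is no single deep step: the argument reassembles Lemma~\ref{complicated induction}, Theorem~\ref{general}, Corollary~\ref{f} and the duality input of Lemma~\ref{cupcup}. The genuinely new ingredient is the dichotomy ``$\loc_p$ on $H^1_{f,S^{\prime}}$ is zero or surjective'', available precisely because $E(\Qp)/p$ is cyclic of order $p$; this is what lets the weaker hypothesis (involving $H^1_{f,S^{\prime}}$ rather than the strict group $H^1_{f,pS^{\prime}}$) buy only the weaker conclusion that $\loc_p(\ka)$ lies in the Bloch–Kato condition at $p$. The step most prone to slips is checking the local behaviour of $\ka$ at the primes $\ell\mid S^{\prime}$, where $\ka$ is itself ramified; this causes no trouble here only because $\ka$ is paired against classes $\ka^{\prime}$ that are locally trivial at those primes. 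A secondary point requiring care is confirming that the reciprocity sum genuinely collapses onto the single place $p$, i.e.\ that no contribution survives at $\infty$ or at the primes dividing $S^{\prime}$.
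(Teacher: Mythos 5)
Your proof is correct and follows essentially the same route as the paper's: part (1) via the inequality $r_p(H^1_{f,S'})\le r_p(H^1_{f,pS'})+1$ followed by Lemma~\ref{complicated induction}, and part (2) via a dichotomy plus global reciprocity, where your ``$\loc_p$ on $H^1_{f,S'}$ is zero or surjective'' is exactly the paper's ``$r_p(H^1_{f,S'})$ equals $r_p(H^1_{f,pS'})$ or exceeds it by one.'' The only cosmetic difference is that you unpack the vanishing of the local terms $(\eta,\ka)_w$ for $w\nmid p$ into the two sub-cases $w\mid S'$ and $w\nmid pS'$, whereas the paper compresses this into a single sentence citing the definition of $H^1_{f,S'}$ and Corollary~\ref{f}.
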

\begin{proof}We write $H^1_{f,*}=H^1_{f,*}(\Q,E[p]).$
Since $E(\Q_p)/p \cong \Z/p\Z$, by the exact sequence
\begin{equation}\label{ps and s}
0\to H^1_{f,pS^{\prime}}\to H^1_{f,S^{\prime}} \to E(\Qp)/p,
\end{equation} we have
$ r_p\left(H^1_{f,S^{\prime}}\right) \le r_p\left(H^1_{f,pS^{\prime}}\right) +1$,
and then the assertion (1) follows from Lemma \ref{complicated induction}.

We prove the assertion (2).
If $r_p\left(H^1_{f,S^{\prime}}\right) =  r_p\left(H^1_{f,pS^{\prime}}\right)$, then Theorem \ref{general} implies that $Dz_S \equiv 0 \bmod pH^1(\Q(S),T),$ and hence $\ka=0$.
 We assume  that 
$r_p\left(H^1_{f,S^{\prime}}\right) =  r_p\left(H^1_{f,pS^{\prime}}\right)+1$.
Then, by  (\ref{ps and s}), the map $H^1_{f,S^{\prime}}\to E(\Qp)/p$ is surjective.
Since the pairing
\begin{equation*}
(-,-)_p: E(\Qp)/p \times H^1_{/f}(\Qp,E[p]) \to \Z/p\Z
\end{equation*}is perfect,
it suffices to show that
\begin{equation*}
(c,\ka)_p=0\ \ \ \ \mathrm{for} \  c \in E(\Qp)/p.
\end{equation*} 
We take an element  $c \in E(\Qp)/p$. 
Since $H^1_{f,S^{\prime}}\to E(\Qp)/p$ is surjective,
 there exists an element $\eta \in H^1_{f,S^{\prime}}$ whose localization at $p$ coincides  with $c$.
Then, by the  Hasse principle,
\begin{equation*}
(c,\ka)_p = (\eta,\ka)_p=-\sum_{w\nmid p: \mathrm{prime}}(\eta,\ka)_w.
\end{equation*}
By the definition of $H^1_{f,S^{\prime}}(\Q,E[p])$ and Corollary \ref{f} for $\ka$, 
we have $(\eta,\ka)_w=0$ for $w\nmid p$, and hence  $(c,\ka)_p=0.$
\end{proof}

 The following plays an important role in the proof of Theorem \ref{theorem on leading term}.
\begin{cor}\label{toward leading term}We assume that $E(\Q_p)/p \cong \Z/p\Z$ and $p\ge r_E$. 
Let $D$ be a Darmon-Kolyvagin derivative with support $S$. 
We suppose that  $S\in\mathscr{N}_p$ and $E(\mathbb{F}_{\ell})[p]$ is cyclic for each prime $\ell|S$. 
If $\mathrm{ord}(D)=r_E$ and $\loc_p(Dz_S \bmod p) \notin H^1_{f}(\Q(S)\otimes\Qp ,E[p] ),$ then we have
\begin{enumerate}
\item $\Sha[p] =0,$
\item the natural map $E(\Q)/p \to \oplus_{\ell|S}E(\Q_{\ell})/p $ is surjective.
\end{enumerate}
\end{cor}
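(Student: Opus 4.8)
The plan is to read both conclusions off Theorem~\ref{s}, used in contrapositive form, together with the Kummer sequence for $\Sel(\Q,E[p])$, and then to upgrade the statement from the conductor $S^{\prime}=\mathrm{Cond}(D)$ to the full support $S$ by analyzing the Euler system norm relations modulo $p$.

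First I would apply Theorem~\ref{s} to $D$ itself; its hypotheses hold, since $E(\Q_p)/p\cong\Z/p\Z$, each $E(\mathbb{F}_\ell)[p]$ is cyclic, and $e_\ell(D)\le\mathrm{ord}(D)=r_E\le p$. If one had $\mathrm{ord}(D)<r_p(H^1_{f,S^{\prime}}(\Q,E[p]))+r_p(A_p(S^{\prime}))$, then Theorem~\ref{s} would give a descent $\ka\in H^1(\Q,E[p])$ of $Dz_S\bmod p$ with $\loc_p(\ka)\in H^1_f(\Qp,E[p])$; since $Dz_S\bmod p$ is the restriction of $\ka$ and the finite part is sent into the finite part by restriction, this would force $\loc_p(Dz_S\bmod p)\in H^1_f(\Q(S)\otimes\Qp,E[p])$, contradicting the hypothesis. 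Hence
\[
r_E=\mathrm{ord}(D)\ \ge\ r_p\bigl(H^1_{f,S^{\prime}}(\Q,E[p])\bigr)+r_p(A_p(S^{\prime})).
\]
On the other hand, surjectivity of $\rho$ gives $E(\Q)[p]=0$, so the Kummer sequence yields $\dim_{\mathbb{F}_p}\Sel(\Q,E[p])=r_E+\dim_{\mathbb{F}_p}\Sha[p]$; feeding this into the exact sequence $0\to H^1_{f,S^{\prime}}(\Q,E[p])\to\Sel(\Q,E[p])\to A_p(S^{\prime})$, with cokernel $C$, gives
\[
r_p\bigl(H^1_{f,S^{\prime}}(\Q,E[p])\bigr)+r_p(A_p(S^{\prime}))=\dim_{\mathbb{F}_p}\Sel(\Q,E[p])+\dim_{\mathbb{F}_p}C\ \ge\ r_E+\dim_{\mathbb{F}_p}\Sha[p].
\]
Comparing the two displays forces $\Sha[p]=0$ (this is (1)), $C=0$, and the equality $r_p(H^1_{f,S^{\prime}}(\Q,E[p]))+r_p(A_p(S^{\prime}))=r_E$. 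Since $\Sha[p]=0$ we have $\Sel(\Q,E[p])=E(\Q)/p$, and $C=0$ says that $E(\Q)/p$ surjects onto $A_p(S^{\prime})=\oplus_{\ell\mid S^{\prime}}E(\Q_\ell)/p$.

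For (2) it then suffices to show $E(\Q_\ell)/p=0$ for every $\ell\mid S/S^{\prime}$ (that is, every $\ell\mid S$ with $e_\ell(D)=0$): indeed, then $\oplus_{\ell\mid S}E(\Q_\ell)/p=\oplus_{\ell\mid S^{\prime}}E(\Q_\ell)/p$ and the surjection above finishes the proof. Assume instead that some $\ell_0\mid S/S^{\prime}$ has $E(\Q_{\ell_0})/p\cong\Z/p\Z$; then $\ell_0\in\mathscr{R}_{E,p}$, so $a_{\ell_0}\equiv2\bmod p$ and $P_{\ell_0}(t)\equiv(t-1)^2\bmod p$. Write $D=\tilde D\,N_{S/S^{\prime}}$, where $\tilde D$ is the derivative of order $r_E$ whose support and conductor are both $S^{\prime}$, and use the Euler system relation in the form $N_{S/S^{\prime}}z_S=\mathrm{res}_{\Q(S)/\Q(S^{\prime})}\bigl(\prod_{\ell\mid S/S^{\prime}}P_\ell(\Fr_\ell^{-1})z_{S^{\prime}}\bigr)$ with $\Fr_\ell\in\Gamma_{S^{\prime}}$; modulo $p$ this gives
\[
Dz_S\ \equiv\ \mathrm{res}_{\Q(S)/\Q(S^{\prime})}\Bigl(u\cdot\!\!\prod_{\ell\mid S/S^{\prime},\ \ell\in\mathscr{R}_{E,p}}\!\!(\Fr_\ell-1)^2\cdot\tilde D z_{S^{\prime}}\Bigr)\ \bmod p,
\]
where $u\in\mathbb{F}_p[\Gamma_{S^{\prime}}]^\times$ (each $\ell\mid S/S^{\prime}$ outside $\mathscr{R}_{E,p}$ contributes a unit, since $p\nmid P_\ell(1)$ and $\mathbb{F}_p[\Gamma_{S^{\prime}}]$ is local). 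I would then show that $(\sigma-1)\tilde D z_{S^{\prime}}\bmod p$ is $\Gamma_{S^{\prime}}$-fixed for every $\sigma\in\Gamma_{S^{\prime}}$: on a generator $\sigma_{\ell_i}$, Lemma~\ref{crucial} rewrites $(\sigma_{\ell_i}-1)\tilde D$ as $-\sigma_{\ell_i}$ times a derivative of order $r_E-1$, and the $\Gamma_{S^{\prime}}$-rationality mod $p$ of such a derivative follows from Lemma~\ref{complicated induction}, once one checks that its order $r_E-1$ is at most $r_p(H^1_{f,pS^{\prime}}(\Q,E[p]))+r_p(A_p(S^{\prime}))$ — which holds because $r_p(H^1_{f,S^{\prime}})\le r_p(H^1_{f,pS^{\prime}})+1$ (from $E(\Q_p)/p\cong\Z/p\Z$) together with the equality obtained above — with Lemma~\ref{variation of r} tracking the ranks when a prime leaves the conductor. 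This yields $(\Fr_{\ell_0}-1)^2\tilde D z_{S^{\prime}}\equiv0\bmod p$, hence $Dz_S\equiv0\bmod p$ and $\loc_p(Dz_S\bmod p)\in H^1_f(\Q(S)\otimes\Qp,E[p])$, a contradiction. Therefore $E(\Q_\ell)/p=0$ for all $\ell\mid S/S^{\prime}$, which gives (2).

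The step I expect to be the main obstacle is the last one. The derivative $\tilde D$ has order exactly $r_E$, one more than the rank bound $r_p(H^1_{f,pS^{\prime}})+r_p(A_p(S^{\prime}))$ that guarantees $\Gamma_{S^{\prime}}$-invariance of $\tilde D z_{S^{\prime}}\bmod p$, so a single factor $(\Fr_{\ell_0}-1)$ need not kill it and one must verify that the \emph{second} difference $(\Fr_{\ell_0}-1)^2$ does. This forces one to unwind the near-miss inequalities relating $H^1_{f,S^{\prime}}$, $H^1_{f,pS^{\prime}}$, and their analogues after removing primes from the conductor — where Lemma~\ref{variation of r} controls the relevant rank only up to an off-by-one when a prime with $E(\Q_\ell)/p\cong\Z/p\Z$ is removed — and will likely be organized as a short secondary induction (on the number of such primes, or on weight), paralleling the weight induction behind Theorem~\ref{general}.
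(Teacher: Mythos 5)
Your argument for (1) and for the surjectivity onto $\oplus_{\ell\mid S'}E(\Q_\ell)/p$ agrees with the paper's: Theorem~\ref{s} in contrapositive, then the Kummer sequence and Lemma~\ref{r} force the equalities $r_E = r_p(H^1_{f,S'})+r_p(A_p(S'))=r_p(\Sel(\Q,E[p]))$, whence $\Sha[p]=0$ and exactness of $0\to H^1_{f,S'}\to E(\Q)/p\to A_p(S')\to 0$. Where you genuinely diverge is in killing the primes $\ell\mid S/S'$ with $E(\Q_\ell)/p\cong\Z/p$. The paper fixes one such $\ell$, writes $D=N_\ell D'$ with $\mathrm{Supp}(D')=S/\ell$, shows that $\loc_p(D'z_{S/\ell}\bmod p)$ is $\Gamma_{S/\ell}$-invariant in the quotient $H^1_{/f}(\Q(S/\ell)\otimes\Qp,E[p])$ by applying Theorem~\ref{s} to the order-$(r_E-1)$ derivatives $D''$ from Lemma~\ref{crucial}, and concludes $\loc_p(Dz_S)=P_\ell(1)\cdot(\text{invariant})=0$ there; crucially, the bound Theorem~\ref{s} needs, $r_E-1<r_p(H^1_{f,S''})+r_p(A_p(S''))$, holds for \emph{every} $S''$ simply because $r_E=r_p(\Sel)\le r_p(H^1_{f,S''})+r_p(A_p(S''))$. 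You instead pull out the entire norm $N_{S/S'}$, pass to $\tilde D$ of support $S'$, and aim for the stronger global vanishing $(\Fr_{\ell_0}-1)^2\tilde Dz_{S'}\equiv 0\bmod p$ via Lemma~\ref{complicated induction}, which asks for the tighter bound $r_E-1\le r_p(H^1_{f,p\,\mathrm{Cond}(D''')})+r_p(A_p(\mathrm{Cond}(D''')))$. Your plan does close, but the obstacle you flag is real: Lemma~\ref{variation of r} by itself only gives this up to an off-by-one when $\mathrm{Cond}(D''')=S'/\ell_i$ and $E(\Q_{\ell_i})/p\cong\Z/p$. No secondary induction is needed though — just apply Lemma~\ref{r} directly to $0\to H^1_{f,pS'/\ell_i}\to\Sel(\Q,E[p])\to E(\Qp)/p\oplus A_p(S'/\ell_i)$ and feed in $r_p(\Sel)=r_E$ from part (1) to get $r_E\le r_p(H^1_{f,pS'/\ell_i})+1+r_p(A_p(S'/\ell_i))$, which is exactly the bound you want. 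In summary, your route proves the stronger statement $Dz_S\equiv 0\bmod p$ at the cost of the $H^1_{f,p\cdot}$ rank bookkeeping; the paper's route stays in $H^1_{/f}$, sidesteps that bookkeeping by never dropping the local condition at $p$ into the Selmer group, and is correspondingly shorter.
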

\begin{proof}We put $S^{\prime}=\mathrm{Cond}(D)$.
Since $\loc_p(Dz_S) \notin  H^1_{f}(\Q(S)\otimes\Qp ,E[p] ),$ Theorem \ref{s} implies that
 \begin{equation*}
r_E \ge r_{p}(H^1_{f,S^{\prime}})+r_{p}(A(S^{\prime})).
\end{equation*}On the other hand, by Lemma \ref{r},  
\begin{equation*}
r_{p}(H^1_{f,S^{\prime}})+r_{p}(A(S^{\prime}))\ge r_p(\Sel(\Q,E[p])),
\end{equation*}and then we have
\begin{equation}\label{rank and s prime}
r_E = r_{p}(H^1_{f,S^{\prime}})+r_{p}(A(S^{\prime})) = r_p(\Sel(\Q,E[p])).
\end{equation}
This implies that $\Sha[p]=0$, and the sequence
\begin{equation*}
0\to H^1_{f,S^{\prime}} \to E(\Q)/p \to \oplus_{\ell|S^{\prime}}E(\Q_{\ell})/p \to 0
\end{equation*}is exact.
Then, it remains to show that for each prime $\ell$ dividing $S/S^{\prime}$, $E(\Q_{\ell})/p=0$. 
We assume that  $E(\Q_{\ell})/p \cong \Z/p\Z$ for some $\ell$ dividing $S/S^{\prime}$.
Since $\ell\nmid S^{\prime}$,
 we have $D=N_{\ell}D^{\prime}$, where $D^{\prime}$ is a derivative such that $\mathrm{Supp}(D^{\prime})=S/\ell$ and $\mathrm{ord}(D^{\prime})=r_E$. 
We claim  that 
\begin{equation}\label{s/ell}
\loc_p(D^{\prime}z_{S/\ell} \bmod p) \in H^0(\Gamma_{S/\ell},H^1_{/f}(\Q(S/\ell)\otimes\Qp,E[p])).
\end{equation}
In order to prove this, we take a prime  $\ell^{\prime}$ dividing $S/\ell$. 
If $e_{\ell^{\prime}}(D^{\prime})=0,$ then $D^{\prime} \in N_{\ell^{\prime}}\Z[\Gamma_{S/\ell}],$ and hence
we have $\loc_p(D^{\prime}z_{S/\ell} \bmod p) \in H^0(\Gamma_{\ell^{\prime}},H^1_{/f}(\Q(S/\ell)\otimes\Qp,E[p]))$.
We assume that $e_{\ell^{\prime}}(D^{\prime})\ge 1.$
Then, we have 
$(\sigma_{\ell^{\prime}}-1)D^{\prime}\equiv -\sigma_{\ell^{\prime}}D^{\prime\prime} \bmod p,$ where $D^{\prime\prime}$ satisfies $\mathrm{ord}(D^{\prime\prime}) = r_E-1.$
If we put $S^{\prime\prime}=\mathrm{Cond}(D^{\prime\prime}),$ then by Lemma \ref{r},
we have
\begin{equation*}
r_E-1 <r_p(\Sel(\Q,E[p])) \le r_p(H^1_{f,S^{\prime\prime}})+ r_p(A_p(S^{\prime\prime})).
\end{equation*}Hence, Theorem \ref{s} implies that
$\loc_p( D^{\prime\prime}z_{S/\ell} \bmod p) \in   H^1_{f}(\Q(S/\ell)\otimes \Qp, E[p]),$
 and then
\begin{equation*}
\loc_p(D^{\prime}z_{S/\ell} \bmod p) \in H^0(\Gamma_{\ell^{\prime}},H^1_{/f}(\Q(S/\ell)\otimes\Qp,E[p])).
\end{equation*}Therefore, we deduce (\ref{s/ell}).

By (\ref{s/ell}), in $H^1_{/f}(\Q(S)\otimes\Qp,E[p]))$, we have
\begin{equation*}
\loc_p(Dz_S) = \loc_p(N_{\ell}D^{\prime} z_S) = P_{\ell}(\Fr_{\ell}^{-1})\loc_p(D^{\prime}z_{S/\ell}) = P_{\ell}(1)\loc_p(D^{\prime}z_{S/\ell})=0.
\end{equation*}
Then, we have a contradiction.
\end{proof}

\subsection{Rational points from derivatives of Euler systems}\label{chapter: construction of rational points}In this subsection,  we show that if a certain derivative of an Euler system is not divisible by $p$, then it comes from a $\Q$-rational point of $E$.
We assume that $E(\Q_p)/p \cong \Z/p\Z$ and the natural map
 $E(\Q)/p \to E(\Qp)/p$ is surjective.
In particular, the localization map $\Sel(\Q,E[p]) \to E(\Qp)/p$ is surjective, 
and hence 
\begin{equation*}
r_p\left(H^1_{f,p}(\Q,E[p])\right)=r_p\left(\Sel(\Q,E[p])\right) -1.
\end{equation*}
We put  
$C_p = \mathrm{ker}\left(E(\Q)/p \to E(\Q_p)/p\right)$.
Then, we have 
\begin{equation}\label{rank of cp}
r_p(C_p)=r_E-1.
\end{equation}
By  applying the snake lemma to the commutative diagram
\begin{equation*}
\begin{CD}
0 @>>> E(\Q)/p @>>> \Sel(\Q,E[p]) @>>>\Sha[p] @>>>0  \\ & &
 @VVV                                          @VVV                                            @VVV \\
0 @>>> E(\Qp)/p @>>> E(\Qp)/p @>>> 0 @>>>0,\\  
  \end{CD}
\end{equation*}we have an exact sequence
\begin{equation}\label{fine sha}
0\to C_p \to H^1_{f,p}(\Q,E[p]) \to \Sha[p] \to 0.
\end{equation}

\begin{thm}\label{sel}We assume that $E(\Q_p)/p \cong \Z/p\Z$ and the map $E(\Q)/p \to E(\Qp)/p$ is surjective.
Let $D$ be a Darmon-Kolyvagin derivative with support $S$ and $\max_{\ell|S}\{e_{\ell}(D)\}<p$. 
We suppose that $S\in\mathscr{N}_p$, and for each prime $\ell|S$, $E(\mathbb{F}_{\ell})[p]$ is cyclic.
 If  $\mathrm{ord}(D)=r_E-1$ and $Dz_S \not\equiv 0 \bmod pH^1(\Q(S),T)$, then the following assertions hold.
\begin{enumerate}
\item $\Sha[p]=0$.
\item The localization  map $H^1_{f,p}(\Q,E[p]) \to \oplus_{\ell|S}E(\Q_{\ell})/p$ is surjective.
\item $Dz_S \bmod p \in H^0\left(\Gamma_S, H^1(\Q(S),T)/p\right).$ 
\item If $\ka \in H^1(\Q,E[p])$ denotes the inverse image of $Dz_S \bmod p$ under the isomorphism 
$H^1(\Q,E[p]) \cong H^0\left(\Gamma_S, H^1(\Q(S),E[p])\right)$,
then we have
\begin{equation*}
\ka \in E(\Q)/p.
\end{equation*}
\end{enumerate}
\end{thm}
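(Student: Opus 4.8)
The plan is to run the same divisibility machine as in the proof of Corollary~\ref{toward leading term}, with the indivisibility hypothesis $Dz_S\not\equiv 0\bmod pH^1(\Q(S),T)$ forcing the numerical inequalities to be equalities, and then to extract the rational point by the global duality argument already used in Lemma~\ref{cupcup}. Throughout set $S^{\prime}=\mathrm{Cond}(D)$ and $H^1_{f,*}=H^1_{f,*}(\Q,E[p])$, and note $q=p$ here. First I would feed the hypothesis into the contrapositive of Theorem~\ref{general}: since $Dz_S\not\equiv 0\bmod p$, we must have $r_E-1=\mathrm{ord}(D)\ge r_p(H^1_{f,pS^{\prime}})+r_p(A_p(S^{\prime}))$. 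On the other hand, Lemma~\ref{r} applied to $0\to H^1_{f,pS^{\prime}}\to H^1_{f,p}\to\oplus_{\ell|S^{\prime}}E(\Q_{\ell})/p$ gives $r_p(H^1_{f,p})\le r_p(H^1_{f,pS^{\prime}})+r_p(A_p(S^{\prime}))$, while (\ref{fine sha}) together with (\ref{rank of cp}) (available under the standing hypotheses) gives $r_p(H^1_{f,p})=(r_E-1)+r_p(\Sha[p])$. Comparing these three facts forces $\Sha[p]=0$, which is assertion (1), and collapses everything to
\begin{equation*}
r_p(H^1_{f,pS^{\prime}})+r_p(A_p(S^{\prime}))=r_p(H^1_{f,p})=r_E-1=\mathrm{ord}(D),
\end{equation*}
so that $H^1_{f,p}\to\oplus_{\ell|S^{\prime}}E(\Q_{\ell})/p$ is surjective.

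Next I would deduce (3) and (2). The displayed equality puts $D$ in range of Lemma~\ref{complicated induction} (legitimate now that Theorem~\ref{general} is proved), which gives assertion (3): $Dz_S\bmod p\in H^0(\Gamma_S,H^1(\Q(S),T)/p)$. For (2) it remains to see $E(\Q_{\ell})/p=0$ for every prime $\ell\mid S/S^{\prime}$. If some such $\ell$ had $E(\Q_{\ell})/p\cong\Z/p\Z$, then $p\mid|E(\mathbb{F}_{\ell})|$ and $\ell\equiv1\bmod p$ (as $S\in\mathscr{N}_p$) would force $p\mid P_{\ell}(1)$, i.e.\ $\ell\in\mathscr{R}_{E,p}$; writing $D=N_{\ell}D^{\prime}$ with $\mathrm{Supp}(D^{\prime})=S/\ell$, $\mathrm{Cond}(D^{\prime})=S^{\prime}$ and $\mathrm{ord}(D^{\prime})=r_E-1$, Lemma~\ref{complicated induction} applied to $D^{\prime}$ shows $D^{\prime}z_{S/\ell}\bmod p\in H^0(\Gamma_{S/\ell},H^1(\Q(S/\ell),T)/p)$, whence
\begin{equation*}
Dz_S=P_{\ell}(\Fr_{\ell}^{-1})D^{\prime}z_{S/\ell}\equiv P_{\ell}(1)D^{\prime}z_{S/\ell}\equiv 0\bmod pH^1(\Q(S),T),
\end{equation*}
a contradiction. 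Hence $\oplus_{\ell|S}E(\Q_{\ell})/p=\oplus_{\ell|S^{\prime}}E(\Q_{\ell})/p$ and the surjectivity above is precisely assertion (2).

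Finally, for (4): by assertion (1) we have $E(\Q)/p=\Sel(\Q,E[p])$, so, with $\ka\in H^1(\Q,E[p])$ defined from (3) via $H^1(\Q,E[p])\cong H^0(\Gamma_S,H^1(\Q(S),E[p]))$, it suffices to check $\loc_v(\ka)\in E(\Q_v)/p$ for every place $v$. For $v$ archimedean this is trivial ($p$ odd), and for $v=\ell\nmid pS^{\prime}$ it is Corollary~\ref{f}. The remaining places are $v=p$ and $v=\ell_0\mid S^{\prime}$, and for each I would argue as in Lemma~\ref{cupcup}, using the perfect local pairing $(\,,)_v\colon E(\Q_v)/p\times H^1(\Q_v,E)[p]\to\Z/p\Z$ and global reciprocity (the Hasse principle, valid since $p\ne 2$): given $c\in E(\Q_v)/p$, lift it to a global class $\tilde c\in\Sel(\Q,E[p])$ with $\loc_w(\tilde c)=0$ for all $w\mid S$ — using the surjectivity (2) of $H^1_{f,p}\to\oplus_{\ell|S}E(\Q_{\ell})/p$ and, when $v=p$, the hypothesis that $E(\Q)/p\to E(\Q_p)/p$ is surjective (correcting a chosen preimage by an element of $H^1_{f,p}$) — and with $\loc_p(\tilde c)=0$ when $v\ne p$. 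Then in $\sum_w(\loc_w(\tilde c),\loc_w(\ka))_w=0$ every term with $w\ne v$ vanishes: at $w\mid S$, and at $w=p$ when $v\ne p$, because $\loc_w(\tilde c)=0$; at the remaining $w\nmid pS^{\prime}$ because $\loc_w(\tilde c)$ and $\loc_w(\ka)$ both lie in the maximal isotropic subspace $E(\Q_w)/p$ (Corollary~\ref{f} for $\ka$). Hence $(c,\loc_v(\ka))_v=0$ for all $c$, so $\loc_v(\ka)\in E(\Q_v)/p$, which proves (4); moreover $\ka\ne 0$ since $Dz_S\not\equiv 0\bmod p$, so this produces a nonzero element of $E(\Q)/p$.

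I expect the only real obstacle to be bookkeeping: arranging the chain of (in)equalities so it collapses to equalities, and choosing the auxiliary global class $\tilde c$ so that all off-$v$ local pairings vanish without circularly invoking the local behavior of $\ka$ at $p$ or at the primes dividing $S^{\prime}$ — which works precisely because Corollary~\ref{f} already controls $\ka$ at exactly the complementary set of primes.
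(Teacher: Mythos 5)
Your proposal is correct and follows essentially the same route as the paper. The numerical collapse in (1)–(2) is the paper's argument, just repackaged: the paper first invokes Theorem~\ref{keythm} to get $r_E-1\ge r_p(H^1_{f,p})$ and then compares with (\ref{rank of cp}) and (\ref{fine sha}), whereas you start from Theorem~\ref{general} and Lemma~\ref{r}; by Remark~\ref{rank relation} these are the same content. The contradiction argument killing $E(\Q_\ell)/p$ for $\ell\mid S/S'$, and the appeal to Lemma~\ref{complicated induction} (the paper uses the simpler Lemma~\ref{key lemma}, but both are fine) for (3), match. For (4) your treatment is a mild variant: the paper handles all $\ell\mid S'$ simultaneously by dualizing the exact sequence (\ref{trivial j}) to get the injection $\varphi$ and then uses Theorem~\ref{s} to handle $v=p$, whereas you work place by place, building an explicit test class $\tilde c$ via (2) (and, when $v=p$, via the surjectivity hypothesis on $E(\Q)/p\to E(\Q_p)/p$) and applying the Hasse principle together with Corollary~\ref{f} at the remaining primes — in effect re-deriving Theorem~\ref{s}(2) inline. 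This buys nothing beyond making Theorem~\ref{s} unnecessary for this particular theorem, at the cost of a bit more bookkeeping; both are instances of the same global reciprocity argument.
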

\begin{proof}(1).\ 
Since $Dz_S \not\equiv 0 \bmod p$, Theorem \ref{keythm} implies that
$r_E-1 \ge r_p(H^1_{f,p}).$ 
Hence, by (\ref{rank of cp}) and (\ref{fine sha}), we have $r_p(H^1_{f,p})=r_E-1$ and $\Sha[p]=0$.

(2).\ Since $\mathrm{ord}(D)=r_p(H^1_{f,p})$ and $Dz_S \not\equiv 0 \bmod p$,  
Theorem \ref{general} implies that 
\begin{equation*}
r_p(H^1_{f,p})\ge r_p(H^1_{f,pS^{\prime}})+r_p(A_p(S^{\prime})).
\end{equation*}
Hence, the sequence
\begin{equation}\label{trivial j}
0\to H^1_{f,pS^{\prime}}\to H^1_{f,p} \to \oplus_{\ell|S^{\prime}}E(\Q_{\ell})/p \to 0
\end{equation}is exact. 
In order to deduce the assertion (2),
 it suffices to show that for each prime $\ell$ dividing $S/S^{\prime}$, we have $E(\Q_{\ell})/p=0$.
We assume that $E(\Q_{\ell})/p\cong \Z/p\Z$ for some prime $\ell$ dividing $S/S^{\prime}$.
Since $\ell\nmid S^{\prime}$, we have
$D=N_{\ell}D^{\prime}$,
where $D^{\prime}$ is a derivative such that 
\begin{equation*}
\mathrm{Supp}(D^{\prime})=S/\ell, \ \ \ \mathrm{Cond}(D^{\prime})=\mathrm{Cond}(D), \ \ \ \mathrm{ord}(D^{\prime})=\mathrm{ord}(D)=r_E-1.
\end{equation*} 
Lemma \ref{key lemma} implies that 
 $D^{\prime}z_{S/\ell} \in H^0\left(\Gamma_{S/\ell},H^1(\Q(S/\ell),T)/p\right),$
and hence, we have
\begin{equation*}
Dz_S = N_{\ell}D^{\prime}z_{S}=P_{\ell}(\Fr_{\ell}^{-1})D^{\prime}z_{S/\ell} \equiv P_{\ell}(1)D^{\prime}z_{S/\ell} \equiv 0 \mod p.
\end{equation*}
Hence, we obtain a contradiction.

(3).\  The assertion (3) follows from Lemma \ref{key lemma}.

(4).\ 
We first show that $\ka \in \Sel(\Q,E[p]).$
By Corollary \ref{f}, we are reduced to showing that for each prime $\ell | pS^{\prime}$, we have $\loc_{\ell}(\ka) \in E(\Q_{\ell})/p$.
By taking the Pontryagin dual of the sequence (\ref{trivial j}), the map
\begin{equation*}
\varphi: \bigoplus_{\ell|S^{\prime}}H^1(\Q_{\ell},E)[p] \to H^1_{f,p}(\Q,E[p])^{\vee}; \ (g_{\ell})_{\ell} 
\mapsto \left(\eta\mapsto\sum_{\ell|S^{\prime}}(g_{\ell},\eta)_{\ell}\right)
\end{equation*}is injective.
We claim that the image of $\ka$ in $\oplus_{\ell|S^{\prime}}H^1(\Q_{\ell},E)[p]$ belongs to the kernel of the map above.
Indeed, if we take an element $\eta\in H^1_{f,p}(\Q,E[p])$,
then by the Hasse principle, 
\begin{equation*}
\sum_{\ell|S^{\prime}}(\ka,\eta)_{\ell} =-\sum_{v\nmid S^{\prime}}(\ka,\eta)_{v}=-(\ka,\eta)_p=0,
\end{equation*}where the second equality follows from Corollary \ref{f}, and the last equality follows from the definition of $H^1_{f,p}(\Q,E[p])$.
 Since the map $\varphi$ is injective, 
  $\loc_{\ell}(\ka) \in E(\Q_{\ell})/p$ for all $\ell|S^{\prime}.$
In addition, by Theorem \ref{s}, we have
$\loc_p(\ka) \in E(\Q_p)/p$.
Then, we deduce that $\ka \in \Sel(\Q,E[p])$. 
Hence by the assertion (1), we have $\ka \in E(\Q)/p$.
\end{proof}
\begin{rem}\begin{enumerate}
\item For Heegner points, a similar result was obtained in \cite[Proposition 5.10]{dar92}, where $K$-rational points are considered for imaginary quadratic fields $K$.
\item Zhang \cite{zha14} recently proved a conjecture of Kolyvagin, which asserts indivisibility of Kolyvagin derivatives of Heegner points.
\end{enumerate}
\end{rem}



\section{Local study of Mazur-Tate elements}\label{section: kato and mazur-tate}
The aim of this section is to show that if we extend coefficients to $\Zp$, then the order of vanishing of Mazur-Tate elements is greater than or equal to the corank of the Selmer group  (see Theorem \ref{p-vanishing} for the precise statement).
We fix a global minimal Weierstrass model of $E$ over $\Z$, and denote by $\omega$ the N\'eron differential.
Then, let $\Omega^{\pm}$ be the period as in Section \ref{chapter of refined BSD}.

\subsection{Preliminaries on group rings}
\subsubsection{Local property}
Let $p$ be a prime.
For a finite abelian group $G$, 
we denote by $I_G$ the augmentation ideal of $\Zp[G]$.


\begin{lem}\label{order prime to p}
For an element $\sigma  \in G$ whose order is relatively prime to $p$,
we have $\sigma-1 \in I_{G}^t$ for all $t\ge 1$. 
In particular, if $p\nmid |G|$, then $I_{G}=I_{G}^2= I_{G}^3=\cdots.$  
\end{lem}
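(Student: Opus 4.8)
The plan is to prove both assertions by reducing to the case of a cyclic group of order prime to $p$, since the augmentation ideal of a group ring decomposes multiplicatively along a direct product decomposition of the group. First I would recall the elementary but useful fact that if $G = G_1 \times G_2$, then $I_G = I_{G_1}\Zp[G] + I_{G_2}\Zp[G]$, and more importantly that any $\sigma \in G$ of order $n$ generates a cyclic subgroup whose augmentation ideal inside $\Zp[\langle\sigma\rangle]$ is generated by $\sigma - 1$. So the first reduction is: it suffices to show that for a cyclic group $C = \langle \sigma \rangle$ of order $n$ with $p \nmid n$, we have $\sigma - 1 \in I_C^t$ for all $t \ge 1$; the general statement then follows because $I_{\langle\sigma\rangle}^t \subseteq I_G^t$ once we know $\sigma - 1 \in I_{\langle\sigma\rangle}^t$.

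The key computation is then the following. Since $p \nmid n$, the integer $n$ is invertible in $\Zp$. Consider the norm element $N = 1 + \sigma + \cdots + \sigma^{n-1} \in \Zp[C]$. One has $(\sigma - 1)N = \sigma^n - 1 = 0$, so $(\sigma-1)(N - n) = -n(\sigma-1)$, i.e.
\begin{equation*}
\sigma - 1 = -\frac{1}{n}(\sigma - 1)(N - n) = \frac{1}{n}(\sigma-1)\bigl(n - N\bigr).
\end{equation*}
Now $n - N = \sum_{j=0}^{n-1}(1 - \sigma^j) \in I_C$, since each $1 - \sigma^j$ lies in the augmentation ideal. Hence $\sigma - 1 \in \tfrac{1}{n}(\sigma-1)I_C \subseteq I_C^2$. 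Iterating this identity — replacing the factor $\sigma - 1$ on the right each time by $\tfrac1n(\sigma-1)(n-N)$ — gives $\sigma - 1 \in I_C^t$ for every $t \ge 1$, which is the claim for cyclic groups.

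For the second assertion, suppose $p \nmid |G|$. I would pick any element $\sigma \in G$; by the first part (applied to $\langle\sigma\rangle \subseteq G$, whose order divides $|G|$ and is thus prime to $p$) we get $\sigma - 1 \in I_G^t$ for all $t$. Since $I_G$ is generated as a $\Zp[G]$-module by the elements $\sigma - 1$ for $\sigma \in G$, it follows that $I_G \subseteq I_G^t$ for every $t \ge 1$, whence $I_G = I_G^2 = I_G^3 = \cdots$. I do not anticipate a serious obstacle here — the only mild subtlety is being careful that the factor $\tfrac1n$ genuinely lives in $\Zp$, which is exactly where the hypothesis $p \nmid n$ is used, and that passing between $I_{\langle\sigma\rangle}$ and $I_G$ only uses the obvious inclusion $\Zp[\langle\sigma\rangle] \hookrightarrow \Zp[G]$ together with $I_{\langle\sigma\rangle}^t \subseteq I_G^t$.
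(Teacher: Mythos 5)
Your proof is correct, and the algebra checks out: the identity $(\sigma-1)N = \sigma^n - 1 = 0$ gives $\sigma - 1 = \tfrac{1}{n}(\sigma-1)(n-N)$ with $n - N = \sum_{j=0}^{n-1}(1-\sigma^j) \in I_C$, and since $n \in \Zp^\times$ (here $p \nmid n$ is used) iteration yields $\sigma - 1 \in I_C^t \subseteq I_G^t$ for all $t$; the second claim then follows since $I_G$ is generated over $\Zp$ by the elements $\sigma - 1$.

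For comparison: the paper gives no argument of its own, deferring entirely to Darmon \cite[Lemma 3.4]{dar92}. Your norm-element identity is a clean variant of the standard argument there, which instead expands $1 = \sigma^n = (1 + (\sigma-1))^n$ by the binomial theorem and isolates the linear term to conclude $n(\sigma-1) \in I_{\langle\sigma\rangle}^2$; both routes hinge on exactly the same two facts, namely $\sigma^n = 1$ and $n \in \Zp^\times$. One small stylistic note: the opening remark about $I_G$ decomposing along a direct product $G = G_1 \times G_2$ is never actually used — the only reduction you need, and the one you in fact invoke, is the trivial inclusion $I_{\langle\sigma\rangle}^t \subseteq I_G^t$. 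You could drop that sentence without loss.
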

\begin{proof}This is \cite[Lemma 3.4]{dar92}.
\end{proof}
\begin{lem}\label{p-prime}
Suppose that we are given a decomposition $G=K\times H$ with  $p\nmid  |H|.$
Let $\alpha$ be an element of $\Zp[G]$. Let  $\alpha_K$ denote the image of $\alpha$ under the map $\Zp[G] \to \Zp[K]$ induced by the projection $G\to K.$
If $\alpha_K\in  I_{K}^t$ for some $t\ge 1$, then $\alpha \in  I_{G}^t$.
\end{lem}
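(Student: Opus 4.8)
The plan is to reduce everything to the case where $\alpha$ is a single group element and then exploit the ring decomposition $\Zp[G] = \Zp[K] \otimes_{\Zp} \Zp[H]$ together with Lemma \ref{order prime to p}. First I would observe that, since $p \nmid |H|$, Lemma \ref{order prime to p} gives $I_H = I_H^t$ for every $t \ge 1$; in particular every element $h - 1$ with $h \in H$ already lies in $I_H^t$. Writing a general $g \in G$ as $g = kh$ with $k \in K$, $h \in H$, we have the identity
\begin{equation*}
g - 1 = (k-1)(h-1) + (k-1) + (h-1),
\end{equation*}
so modulo the ideal $I_K \Zp[G]$ (the ideal generated by the image of $I_K$) one has $g \equiv h \pmod{I_K\Zp[G]}$, and hence $g - g_K \in I_K \Zp[G] + I_H\Zp[G]$ where $g_K = k$ is the image of $g$ under the projection $\Zp[G] \to \Zp[K]$.

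Next I would make this precise by filtering. Consider the two-sided ideal $\mathfrak{a} = I_K \Zp[G] \subseteq \Zp[G]$; the quotient $\Zp[G]/I_K\Zp[G]$ is canonically $\Zp[H]$, and the projection $G \to H$ induces it. The key point is that the kernel of $\Zp[G] \to \Zp[K]$, intersected with powers of $I_G$, behaves well: I claim that for any $\beta \in \Zp[G]$ one has $\beta - \beta_K \in I_G \cdot \mathfrak{a}' $ where $\mathfrak{a}'$ is generated by elements $h-1$, $h\in H$, and since each such $h - 1 \in I_H^t \subseteq I_G^t$ for all $t$, and $I_G \supseteq I_K$, one gets $\beta - \beta_K \in I_G^t$ for every $t \ge 1$. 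Concretely, writing $\beta = \sum_{k,h} c_{k,h}\, kh$, its image $\beta_K = \sum_{k,h} c_{k,h}\, k$, so
\begin{equation*}
\beta - \beta_K = \sum_{k,h} c_{k,h}\, k\,(h - 1),
\end{equation*}
and each $h - 1$ lies in $I_H^t \subseteq I_G^t$ by Lemma \ref{order prime to p}, so $\beta - \beta_K \in I_G^t$ for all $t \ge 1$.

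Finally I would assemble the conclusion. Given $\alpha$ with $\alpha_K \in I_K^t$, note that $I_K^t \subseteq I_G^t$ since $I_K \subseteq I_G$ (the augmentation ideal of $\Zp[K]$ maps into that of $\Zp[G]$, as the augmentation of $G$ restricts to that of $K$). Viewing $\alpha_K$ as an element of $\Zp[G]$ via the natural inclusion $\Zp[K] \hookrightarrow \Zp[G]$, we get $\alpha_K \in I_G^t$. Then $\alpha = \alpha_K + (\alpha - \alpha_K)$, and by the previous paragraph applied to $\beta = \alpha$ we have $\alpha - \alpha_K \in I_G^t$ as well; hence $\alpha \in I_G^t$, as desired.

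The only mildly delicate point — and the one I would be most careful about — is bookkeeping with the three different augmentation ideals $I_K$, $I_H$, $I_G$ and the various inclusions and projections between $\Zp[K]$, $\Zp[H]$, $\Zp[G]$; in particular making sure that "$\alpha_K \in I_K^t$ implies $\alpha_K \in I_G^t$" uses only the inclusion $I_K \subseteq I_G$ and not anything about the projection. Once that is set up cleanly, the argument is just the displayed identity above combined with Lemma \ref{order prime to p}, so there is no real obstacle beyond organizing the notation.
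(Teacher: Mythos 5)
Your proof is correct and follows essentially the same route as the paper: viewing $\alpha_K$ inside $\Zp[G]$, noting that $\alpha-\alpha_K$ lies in the kernel of $\Zp[G]\to\Zp[K]$ (which you write out explicitly as $\sum c_{k,h}\,k(h-1)$, exactly the paper's $\Zp[K]\otimes I_H$), and then using Lemma \ref{order prime to p} to see $I_H=I_H^t\subseteq I_G^t$. The opening paragraph with the identity for $g-1$ and the ideal $\mathfrak{a}$ is a detour you don't ultimately need, but the argument that follows is the paper's argument.
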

\begin{proof}By the natural inclusion $\Zp[K]\hookrightarrow \Zp[G]$,  
we regard $\alpha_K$ as an element of $\Zp[G]$.
Then, we have 
\begin{equation*}
\alpha-\alpha_K \in \mathrm{ker}(\Zp[G]\to \Zp[K])=\Zp[K]\otimes_{\Zp}I_{H}.
\end{equation*}
Lemma \ref{order prime to p} implies that $\Zp[K]\otimes I_{H}=\Zp[K]\otimes I_{H}^t$.
Since $\alpha_K \in  I_{K}^t$, we have $\alpha \in I_{G}^t$.
\end{proof}

\begin{lem}\label{p-prime mod p}Under the notation as in Lemma \ref{p-prime}, we suppose that  $\alpha \in  I_{G}^t$. 
We denote by $\tilde{\alpha}^{(p)}$ $($resp.\ $\tilde{\alpha}_K^{(p)})$ the image of $\alpha$ $($resp.\ $\alpha_K)$ in 
$\Z/p\Z\otimes I_G^t/I_G^{t+1}$ $($resp.\ $\Z/p\Z \otimes I_{K}^t/I_{K}^{t+1})$.
If  $\tilde{\alpha}_K^{(p)} = 0$, then
 $\tilde{\alpha}^{(p)}=0$.
\end{lem}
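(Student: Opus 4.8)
The plan is to refine, by one unit of filtration, the argument already used for Lemma \ref{p-prime}. Write $\pi\colon \Zp[G]\to\Zp[K]$ for the ring homomorphism induced by the projection $G\to K$, so that $\alpha_K=\pi(\alpha)$. Since $\pi$ is surjective and $\pi(I_G)=I_K$, we have $\pi(I_G^t)=I_K^t$; in particular the hypothesis $\alpha\in I_G^t$ already forces $\alpha_K\in I_K^t$, so that $\tilde\alpha_K^{(p)}$ is well defined. The key point I would isolate first is that the ``error term'' $\alpha-\alpha_K$ lies not merely in $I_G^t$ but in $I_G^{t+1}$: by the proof of Lemma \ref{p-prime} it lies in $\ker(\pi)=\Zp[K]\otimes_{\Zp}I_H$, and since $p\nmid|H|$, Lemma \ref{order prime to p} (applied to the group $G$, every element of $H\subseteq G$ having order prime to $p$) gives $I_H=I_H^{t+1}$, whence $\Zp[K]\otimes I_H=\Zp[K]\otimes I_H^{t+1}\subseteq\Zp[G]\,I_G^{t+1}=I_G^{t+1}$.

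Granting this, the rest is immediate. The assumption $\tilde\alpha_K^{(p)}=0$ means $\alpha_K\in I_K^{t+1}+p\,I_K^t$. Using $I_K^{t+1}\subseteq I_G^{t+1}$ and $p\,I_K^t\subseteq p\,I_G^t$, I would conclude
\[
\alpha=(\alpha-\alpha_K)+\alpha_K\in I_G^{t+1}+\bigl(I_G^{t+1}+p\,I_G^t\bigr)=I_G^{t+1}+p\,I_G^t,
\]
so that the image $\tilde\alpha^{(p)}$ of $\alpha$ in $\Z/p\Z\otimes I_G^t/I_G^{t+1}=I_G^t/(I_G^{t+1}+p\,I_G^t)$ vanishes. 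Here I am using only the standard identification $M\otimes_{\Zp}\Z/p\Z=M/pM$, applied to $M=I_G^t/I_G^{t+1}$ and to $M=I_K^t/I_K^{t+1}$.

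I do not expect any real obstacle. The single step that deserves care is the containment $\ker(\pi)\subseteq I_G^{t+1}$: it is crucial that one applies Lemma \ref{order prime to p} to $G$ itself, so that each $h-1$ with $h\in H$ lies in \emph{every} power of $I_G$, and hence the augmentation ideal $I_H$ of $\Zp[H]$, viewed inside $\Zp[G]$, lies in $I_G^{t+1}$. Everything else is bookkeeping with sums of submodules of $\Zp[G]$.
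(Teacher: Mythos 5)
Your proof is correct and follows essentially the same route as the paper's: the key point is that $\alpha-\alpha_K\in\ker(\pi)=\Zp[K]\otimes I_H$ lies in $I_G^{t+1}$ because $p\nmid|H|$ forces the augmentation ideal coming from $H$ into every power of the augmentation ideal, after which the conclusion is immediate bookkeeping with $I_G^{t+1}+pI_G^t$. The paper applies Lemma \ref{order prime to p} to $H$ to get $\Zp[K]\otimes I_H=\Zp[K]\otimes I_H^{t+1}$ while you apply its first part to $G$ to get $h-1\in I_G^{t+1}$ for each $h\in H$; these are trivially equivalent and both yield $\ker(\pi)\subseteq I_G^{t+1}$.
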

\begin{proof}
As in the poof of Lemma \ref{p-prime}, we have
\begin{equation*}
\alpha-\alpha_K \in \Zp[K]\otimes I_{H}^t = \Zp[K]\otimes I_{H}^{t+1} \subseteq \Zp\otimes  I_{G}^{t+1}. 
\end{equation*}Then, we have $\tilde{\alpha}^{(p)}-\tilde{\alpha}_K^{(p)} =0$ in $\Z/p\Z \otimes I_{G}^t/I_{G}^{t+1}$,
where we regard $\tilde{\alpha}_K^{(p)}$ as an element of $\Z/p\Z \otimes I_{G}^t/I_{G}^{t+1}$ under the map induced by the inclusion $K \subseteq G.$
Since $\tilde{\alpha}_K^{(p)}=0$, we have $\tilde{\alpha}^{(p)}=0$.
\end{proof}

\subsubsection{Global property}
Let $R$ be a proper subring of $\Q$ and $I_G$ the augmentation ideal of $R[G]$.

\begin{lem}\label{globallocal}Let $\alpha$ be an element of $ R[G].$
For a positive integer $t,$ the following conditions are equivalent$:$
\begin{enumerate}
\item $\alpha  \in I_G^t$.
\item $\alpha  \in \Zp \otimes_R I_G^t$ for all the primes $p$ not invertible in $R$.
\end{enumerate}
\end{lem}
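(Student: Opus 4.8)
The implication $(1)\Rightarrow(2)$ is immediate: since $\Zp$ is flat over $R$ (it is torsion-free over the principal ideal domain $R$), tensoring the inclusion $I_G^t\subseteq R[G]$ with $\Zp$ over $R$ exhibits $\Zp\otimes_R I_G^t$ as a submodule of $\Zp\otimes_R R[G]=\Zp[G]$ containing the image of $\alpha$. So the content is in $(2)\Rightarrow(1)$, and I would argue as follows.

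Since $R[G]$ is a free $R$-module of finite rank, $M:=R[G]/I_G^t$ is a finitely generated $R$-module; write $\bar\alpha\in M$ for the image of $\alpha$, so that $(1)$ is equivalent to $\bar\alpha=0$. Using flatness of $\Zp$ over $R$ again, $\Zp\otimes_R M\cong \Zp[G]/(\Zp\otimes_R I_G^t)$, so hypothesis $(2)$ says precisely that $\bar\alpha$ maps to $0$ in $\Zp\otimes_R M$ for every prime $p$ not invertible in $R$. I would then pass to the cyclic submodule $R\bar\alpha\cong R/J$, where $J=\mathrm{Ann}_R(\bar\alpha)$. As $R$ is a principal ideal domain, $J=dR$, and after absorbing the units of $R$ (the primes invertible in $R$) into the generator, I may take $d$ to be a non-negative integer all of whose prime factors are non-invertible in $R$. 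Flatness of $\Zp$ over $R$ gives an injection $\Zp/d\Zp=\Zp\otimes_R(R/J)\hookrightarrow \Zp\otimes_R M$ carrying $\bar\alpha$ to $1\bmod d$; hence $(2)$ forces $\Zp/d\Zp=0$, i.e.\ $p\nmid d$, for every prime $p$ not invertible in $R$. Combined with the choice of $d$, this yields $d=1$, so $J=R$ and $\bar\alpha=0$, which is $(1)$.

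Two points require care rather than ingenuity. First, that $R$ is a \emph{proper} subring of $\Q$ is exactly what guarantees that at least one prime is not invertible in $R$ (otherwise $\tfrac1n\in R$ for all $n$ and $R=\Q$); this is what makes $(2)$ a nonvacuous hypothesis and is used, implicitly, in concluding $d=1$. Second, the bookkeeping that lets one choose the generator $d$ to be an integer supported only on the non-invertible primes is essential: divisibility information at those primes alone would otherwise not pin $d$ down. I do not expect a genuine obstacle here; the only thing to watch throughout is to keep the flatness of $\Zp$ over $R$ in play wherever an exact sequence of $R$-modules is tensored with $\Zp$.
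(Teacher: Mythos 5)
Your proof is correct and complete. The paper itself does not give an argument but simply cites Darmon's Lemma~3.2 in \cite{dar92}; your reduction to the cyclic module $R\bar\alpha\cong R/dR$ followed by flat base change to $\Zp$ (using that $R$ is a proper localization of $\Z$, hence a PID with at least one non-invertible prime) is the standard argument underlying that lemma, so there is nothing to add.
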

\begin{proof}
This is \cite[Lemma 3.2]{dar92}.
\end{proof}

\subsection{Construction of a system of local points}\label{local study of mazur-tate elements}
With a modification of ideas of \cite{kob}, \cite{kur02} and \cite{ots}, we construct local points of $E$ to connect  Kato's Euler system with Mazur-Tate elements.

In the rest of Section \ref{section: kato and mazur-tate}, we fix a prime $p$ such that 
\begin{enumerate}
\item $p\nmid 6N\cdot|E(\mathbb{F}_p)|\prod_{\ell|N}m_{\ell}$,
\item the Galois representation $G_{\Q} \to \mathrm{Aut}_{\Zp}(T_p(E))$ is surjective,
\end{enumerate}
For an integer $S$, let $\Q(S)$ and $\Gamma_S$ be as in Section \ref{section derivatives}.
Let  $\Oc_S$ denote the ring of integers of $\Q(S)$.
If we put $H_S = \Gal(\Q(\mu_S)/\Q(S)),$ then 
we have the canonical decomposition $G_S=H_S \times \Gamma_S.$
For a finite unramified extension $K$ of $\Qp$ and its ring $\Oc$ of integers,
 let $\sigma$ denote the arithmetic Frobenius.
We denote by $\hat{E}$ the formal group law of $E$ over $\Zp$ (associated to $\omega$) and by $\mathrm{log}_{\hat{E}}$  the logarithm of $\hat{E}$,
which induces an isomorphism $\hat{E}(\Oc) \to p\Oc$.

\begin{lem}\label{denominator}
We suppose that $K$ is a finite unramified $p$-extension of $\Qp$.
Then, we have an isomorphism  defined as
\begin{equation*}
\hat{E}(\Oc) \to \Oc; \ c\mapsto \left(1-\frac{a_p}{p}\sigma + \frac{1}{p}\sigma^2\right)\log_{\hat{E}} ( c).
\end{equation*}
\end{lem}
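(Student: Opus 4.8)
The plan is to reduce the statement to two separate facts: first, that the map $c \mapsto \left(1 - \frac{a_p}{p}\sigma + \frac{1}{p}\sigma^2\right)\log_{\hat E}(c)$ actually lands in $\Oc$ (rather than merely in $p^{-1}\Oc$), and second, that the resulting $\Zp$-linear map $\hat E(\Oc) \to \Oc$ is bijective. Since $\log_{\hat E}$ induces an isomorphism $\hat E(\Oc) \xrightarrow{\sim} p\Oc$, both questions are equivalent to statements about the operator $\Phi := 1 - \frac{a_p}{p}\sigma + \frac{1}{p}\sigma^2$ acting on $p\Oc$: namely that $\Phi(p\Oc) \subseteq \Oc$ and that $\Phi\colon p\Oc \to \Oc$ is an isomorphism of $\Zp$-modules. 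Because $K/\Qp$ is unramified, $\sigma$ is a ring automorphism of $\Oc$ preserving $p\Oc$, so $\Phi(p\Oc) \subseteq \Oc$ is immediate from writing $\Phi(px) = px - a_p\sigma(x) + \sigma^2(x)$ with $x \in \Oc$; this already shows the map is well-defined into $\Oc$.

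For bijectivity I would work on the level of the $\Zp$-algebra $\Oc$, using that $K$ is a \emph{finite unramified $p$-extension}, so that $\mathrm{Gal}(K/\Qp)$ is a finite cyclic $p$-group, say of order $p^m$, generated by $\sigma$. The operator $\Phi$ lies in $\Zp[\sigma]$ localized at $p$; more precisely $p\Phi = p - a_p\sigma + \sigma^2 = P_p(\sigma)\cdot(\text{unit})$ up to reindexing, where $P_p(t) = 1 - a_p t + p t^2$ is the Euler factor at the good prime $p$. Since $p \nmid |E(\mathbb F_p)| = P_p(1)$ by hypothesis (i), and since $a_p^2 \not\equiv 4p$, the polynomial $P_p(t)$ has unit constant term and its two roots are $p$-adic units (as $p$ is a good ordinary prime; here ordinariness is forced by $p\nmid |E(\mathbb F_p)|$, equivalently $p \nmid a_p$). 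I would then argue that $\sigma^2 - a_p\sigma + p$, viewed as an endomorphism of $p\Oc$, is invertible with inverse landing in $\Oc$: concretely, factor it as $(\sigma - \alpha)(\sigma - \beta)$ with $\alpha\beta = p$ and $\alpha + \beta = a_p$, where exactly one of $\alpha,\beta$ is a unit (the unit root) and the other has valuation $1$. The key computation is that for a unit root $u$ (valuation $0$) and any $n$, the operator $\sigma - u$ on $\Oc$ is a bijection onto $\Oc$ — this follows because $\sigma^{p^m} = 1$ gives $(\sigma - u)(\sigma^{p^m - 1} + u\sigma^{p^m-2} + \cdots + u^{p^m-1}) = \sigma^{p^m} - u^{p^m} = 1 - u^{p^m}$, and $1 - u^{p^m}$ is a unit since $u^{p^m} \equiv u \not\equiv 1 \bmod \mathfrak p$ (as $u$ reduces to a root of $P_p \bmod p$ which is $\ne 1$ because $P_p(1)$ is a unit). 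Handling the non-unit root $\varpi$ of valuation $1$: $\sigma - \varpi$ sends $\Oc$ bijectively onto $\Oc$ as well by the same geometric-series trick, since $1 - \varpi^{p^m}$ is again a unit (its valuation is $0$ because $p^m \ge 1$ times valuation $1$ is $\ge 1 > 0$, so $\varpi^{p^m} \in \mathfrak p$ and $1-\varpi^{p^m}$ is a unit). Composing, $\sigma^2 - a_p\sigma + p = (\sigma-\alpha)(\sigma-\beta)$ is a $\Zp$-linear automorphism of $\Oc$, hence also of $p\Oc$; dividing by $p$, $\Phi = \frac1p(\sigma^2 - a_p\sigma + p)$ restricted to $p\Oc$ is an isomorphism $p\Oc \xrightarrow{\sim} \Oc$.

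Finally I would assemble: the composite
\[
\hat E(\Oc) \xrightarrow{\ \log_{\hat E}\ } p\Oc \xrightarrow{\ \Phi\ } \Oc
\]
is an isomorphism of $\Zp$-modules, which is exactly the asserted map $c \mapsto \left(1 - \frac{a_p}{p}\sigma + \frac1p\sigma^2\right)\log_{\hat E}(c)$, since $\log_{\hat E}$ is Galois-equivariant (the formal group $\hat E$ is defined over $\Zp$) so $\sigma$ commutes past $\log_{\hat E}$. The main obstacle I anticipate is making the valuation bookkeeping on the two roots $\alpha,\beta$ of $P_p$ fully rigorous — in particular verifying that $1 - \alpha^{p^m}$ and $1 - \beta^{p^m}$ are units (equivalently that neither root is congruent to $1$ modulo the maximal ideal), for which the hypothesis $p \nmid |E(\mathbb F_p)| = P_p(1) = 1 - a_p + p$ is precisely what is needed; the rest is routine manipulation of the operator $\sigma$ in the group ring $\Zp[\mathrm{Gal}(K/\Qp)]$.
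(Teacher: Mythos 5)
Your reduction to showing that $\Phi := 1 - \frac{a_p}{p}\sigma + \frac{1}{p}\sigma^2$ gives an isomorphism $p\Oc \to \Oc$, and the factorization/geometric-series idea for inverting $(\sigma - \alpha)(\sigma - \beta)$ over the finite cyclic $p$-group $\mathrm{Gal}(K/\Qp)$, is essentially the paper's approach for the \emph{ordinary} case. But there is a genuine gap: you assert that ordinariness is forced by $p \nmid |E(\mathbb{F}_p)|$, ``equivalently $p \nmid a_p$.'' Both claims are false. Since $|E(\mathbb{F}_p)| = p + 1 - a_p \equiv 1 - a_p \pmod p$, the hypothesis $p \nmid |E(\mathbb{F}_p)|$ is equivalent to $a_p \not\equiv 1 \pmod p$, \emph{not} to $p \nmid a_p$. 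In particular, a good supersingular prime $p \geq 5$ has $a_p = 0$, hence $|E(\mathbb{F}_p)| = p + 1$ is coprime to $p$, so supersingular primes are not excluded — and indeed the whole thrust of this paper (Remark \ref{first remark}(1), Corollary \ref{kind example}) is that supersingular primes are allowed. Your proof simply does not address that case.

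In the supersingular case the two roots of $X^2 - a_p X + p = X^2 + p$ each have valuation $1/2$ and lie in the ramified extension $\Qp(\sqrt{-p})$, so your ordinary-case bookkeeping (one unit root, one root of valuation $1$, both in $\Zp$) breaks down: the factors $\sigma - \alpha$, $\sigma - \beta$ are no longer $\Zp$-operators and one has to argue by Galois descent or, as the paper does, avoid the factorization altogether. The paper handles the supersingular case by a direct calculation: since $p \geq 5$ forces $a_p = 0$, one solves $\left(1 + \frac{1}{p}\sigma^2\right)y = A$ explicitly with the convergent series $y = -\sum_{k\geq 1}(-p)^k A^{\sigma^{-2k}}$, exploiting only that $p$ has positive valuation. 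You should either supply this case separately or correct the factorization argument to work over $\Zp[\sqrt{-p}]$ and descend. You also claim at one point that ``its two roots are $p$-adic units,'' which contradicts your own later (correct) statement that exactly one of $\alpha, \beta$ is a unit in the ordinary case; this inconsistency should be cleaned up. The rest of your ordinary-case argument — the geometric-series inversion of $\sigma - u$ using $\sigma^{p^m} = 1$ and $1 - u^{p^m} \in \Zp^\times$, and the passage from an automorphism of $\Oc$ to the isomorphism $p\Oc \to \Oc$ — is sound.
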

\begin{proof}
Since $\log_{\hat{E}}: \hat{E}(\Oc) \to p\Oc$ is an isomorphism,
it suffices to show that the map $\left(1-\frac{a_p}{p}\sigma + \frac{1}{p}\sigma^2\right): p\Oc \to \Oc$ is an isomorphism.
We put $d=[K:\Qp]$.

We first assume that $p$ is a good ordinary prime of $E$, that is, $p\nmid a_p.$
Let $\alpha \in \Zp^{\times}$ be the unit root of $X^2-a_pX+p$ and $\beta \in p\Zp$ the other root.
Then we have
\begin{gather}
\label{phifrob} \left(1-\frac{\sigma}{\alpha}\right)\left(1-\frac{\sigma}{\beta}\right)=   \left(1-\frac{a_p}{p}\sigma + \frac{1}{p}\sigma^2\right).
\end{gather}
Since $p\nmid |E(\mathbb{F}_p)|$, we have $a_p\not\equiv 1 \bmod p$, and hence $\alpha  \not\equiv 1 \bmod p$.
We note that $\alpha^d -1 \in \Zp^{\times}$, since $d$ is a power of $p$.
For $A \in \Oc$, if we put
\begin{equation*}
x_A=\frac{\alpha^d}{\alpha^{d}-1}\left(\sum_{0\le k \le d-1}\frac{A^{\sigma^k}}{\alpha^{k}}\right) \in \Oc, 
\end{equation*}
then
\begin{equation*}
\left(1-\frac{\sigma}{\alpha}\right)x_A=A.
\end{equation*}
Since $\beta \in p\Zp$, 
we see that   $y_A:=-\sum_{k\ge 1}\beta^{k} x_A^{\sigma^{-k}} \in p\Oc$
converges, and it satisfies
\begin{equation*}
\left(1-\frac{\sigma}{\beta}\right)y_A=x_A.
\end{equation*}By (\ref{phifrob}), we have
\begin{equation*}
\left(1-\frac{a_p}{p}\sigma + \frac{1}{p}\sigma^{2}\right)y_A=A.
\end{equation*}
Hence, the map $\left(1-\frac{a_p}{p}\sigma + \frac{1}{p}\sigma^2\right): p\Oc \to \Oc$ is surjective, and then it is an isomorphism.

We next assume that $p$ is a good supersingular prime of $E$. 
Since $p\ge 5,$ we have $a_p =0$. 
For $A \in \Oc,$ if we put $y_A =-\sum_{k\ge 1}(-p)^kA^{\sigma^{-2k}},$ then
\begin{equation*}
\left(1+\frac{1}{p}\sigma^2\right)y_A=A.
\end{equation*}
Hence, the map $\left(1+ \frac{1}{p}\sigma^2\right): p\Oc \to \Oc$ is surjective, and then it is an isomorphism.
\end{proof}


For an integer $S$, we have  $\Oc_S \otimes_{\Z}\Zp = \prod_{\lambda|p}\Oc_{S,\lambda},$ where $\lambda$ ranges over all the primes of $\Q(S)$ dividing $p$, and $\Oc_{S,\lambda}$ denotes the completion of $\Oc_S$ at $\lambda.$
Then, the logarithm  $\log_{\hat{E}}$ naturally induces an isomorphism $\hat{E}(\Oc_S \otimes \Zp) \to p\Oc_S\otimes \Zp$,
where $\hat{E}(\Oc_S\otimes\Zp)=\oplus_{\lambda|p}\hat{E}(\Oc_{S,\lambda}).$

\begin{dfn}\label{def of c}For a square-free integer $S$ relatively prime to $p$,
we define an element $c_S\in \hat{E} \left(\Oc_S\otimes\Zp\right)$ by
\begin{equation}\label{gm}
\left(1-\frac{a_p}{p}\sigma + \frac{1}{p}\sigma^2\right)\log_{\hat{E}}(c_{S})=\mathrm{tr}_{\Q(\mu_{S})/\Q(S)}\left(\zeta_S \right) \in \Oc_S\otimes\Zp.
\end{equation}By Lemma \ref{denominator}, the element $c_S$ is well-defined.
\end{dfn}

\begin{prop}\label{norm of c_S} Let  $\ell$ be a prime not dividing $pS$.
Then, we have
\begin{equation*}
\mathrm{Tr}_{S\ell/S}(c_{S\ell})=-c_{S}^{\Fr_{\ell}^{-1}},
\end{equation*}where $\mathrm{Tr}_{S\ell/S}:\hat{E}(\Oc_{S\ell}\otimes\Zp) \to \hat{E}(\Oc_S\otimes\Zp)$ is the trace map with respect to the addition of $\hat{E}$.
\end{prop}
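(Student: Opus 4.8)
The plan is to reduce everything to an identity in $\Oc_S\otimes\Zp$ by applying the map $\Phi\log_{\hat{E}}$, where $\Phi:=1-\frac{a_p}{p}\sigma+\frac{1}{p}\sigma^2$. By Lemma \ref{denominator} applied to each completion $\Oc_{S,\lambda}$ (which is unramified and a $p$-extension over $\Zp$, since $p\nmid S$), the map $\Phi\log_{\hat{E}}\colon\hat{E}(\Oc_S\otimes\Zp)\to\Oc_S\otimes\Zp$ is an isomorphism, in particular injective; so it suffices to check the claimed identity after applying $\Phi\log_{\hat{E}}$.

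Next I would record the compatibilities that make the computation go through. Since $\hat{E}$ is defined over $\Zp$, the logarithm $\log_{\hat{E}}$ is a $G_{\Q}$-equivariant homomorphism from the formal group to the additive group, so it carries the formal-group trace $\mathrm{Tr}_{S\ell/S}$ to the additive trace $\mathrm{tr}_{\Q(S\ell)/\Q(S)}$. The extensions $\Q(S\ell)/\Q$ and $\Q(\mu_{S\ell})/\Q$ are abelian and unramified at $p$, so, identifying $\sigma$ with the action of the ($p$-unramified) Frobenius at $p$ in the relevant Galois group, $\Phi$ is central; hence it commutes with $\mathrm{tr}_{\Q(S\ell)/\Q(S)}$, with $\mathrm{tr}_{\Q(\mu_S)/\Q(S)}$, and with $\Fr_{\ell}$. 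Using these, the defining relation of $c_{S\ell}$ from Definition \ref{def of c}, and transitivity of the trace,
\begin{align*}
\Phi\log_{\hat{E}}\bigl(\mathrm{Tr}_{S\ell/S}(c_{S\ell})\bigr)
&=\mathrm{tr}_{\Q(S\ell)/\Q(S)}\bigl(\Phi\log_{\hat{E}}(c_{S\ell})\bigr)
=\mathrm{tr}_{\Q(S\ell)/\Q(S)}\bigl(\mathrm{tr}_{\Q(\mu_{S\ell})/\Q(S\ell)}(\zeta_{S\ell})\bigr)\\
&=\mathrm{tr}_{\Q(\mu_{S\ell})/\Q(S)}(\zeta_{S\ell})
=\mathrm{tr}_{\Q(\mu_S)/\Q(S)}\bigl(\mathrm{tr}_{\Q(\mu_{S\ell})/\Q(\mu_S)}(\zeta_{S\ell})\bigr).
\end{align*}

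The only step with real content is the cyclotomic trace $\mathrm{tr}_{\Q(\mu_{S\ell})/\Q(\mu_S)}(\zeta_{S\ell})$. Because $\gcd(S,\ell)=1$ we have $\Q(\mu_{S\ell})=\Q(\mu_S)\Q(\mu_{\ell})$ with $\Q(\mu_S)\cap\Q(\mu_{\ell})=\Q$, so restriction identifies $\Gal(\Q(\mu_{S\ell})/\Q(\mu_S))$ with $(\Z/\ell\Z)^{\times}$; writing $\zeta_{S\ell}^{\ell}=\zeta_S$, $\zeta_{S\ell}^{S}=\zeta_{\ell}$ and separating the $S$-part (fixed by this group) from the $\ell$-part (whose conjugates run over all $\ell$-th roots of unity save one), one computes
\begin{equation*}
\mathrm{tr}_{\Q(\mu_{S\ell})/\Q(\mu_S)}(\zeta_{S\ell})=-\zeta_S^{\Fr_{\ell}^{-1}},
\end{equation*}
where $\Fr_{\ell}$ acts on $\Q(\mu_S)$ by $\zeta_S\mapsto\zeta_S^{\ell}$. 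Feeding this into the previous display, using that $\Fr_{\ell}$ commutes with $\mathrm{tr}_{\Q(\mu_S)/\Q(S)}$ and with $\Phi$, and the defining relation of $c_S$,
\begin{equation*}
\Phi\log_{\hat{E}}\bigl(\mathrm{Tr}_{S\ell/S}(c_{S\ell})\bigr)=-\bigl(\mathrm{tr}_{\Q(\mu_S)/\Q(S)}(\zeta_S)\bigr)^{\Fr_{\ell}^{-1}}=-\bigl(\Phi\log_{\hat{E}}(c_S)\bigr)^{\Fr_{\ell}^{-1}}=\Phi\log_{\hat{E}}\bigl(-c_S^{\Fr_{\ell}^{-1}}\bigr),
\end{equation*}
and injectivity of $\Phi\log_{\hat{E}}$ yields $\mathrm{Tr}_{S\ell/S}(c_{S\ell})=-c_S^{\Fr_{\ell}^{-1}}$. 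The main obstacle is nothing more than organizing these commutations correctly; the cyclotomic trace identity is the one genuinely arithmetic input, and the only place where the hypotheses $\gcd(S,\ell)=1$ and $\ell\nmid p$ are used.
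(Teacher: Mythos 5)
Your proof is correct and follows essentially the same route as the paper's: reduce via the isomorphism $\Phi\log_{\hat{E}}$ of Lemma \ref{denominator} to a trace identity for roots of unity, pass through transitivity of the trace, and finish with the cyclotomic computation $\mathrm{tr}_{\Q(\mu_{S\ell})/\Q(\mu_S)}(\zeta_{S\ell})=-\zeta_S^{\Fr_{\ell}^{-1}}$. You spell out more carefully the $G_{\Q}$-equivariance of $\log_{\hat{E}}$ and the centrality of $\Phi$ in the abelian Galois group, which the paper leaves implicit, but the structure and the key input are identical.
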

\begin{proof}
By Lemma \ref{denominator}, it suffices to show that 
\begin{equation*}
\mathrm{tr}_{\Q(S\ell)/\Q(S)} \circ \mathrm{tr}_{\Q(\mu_{S\ell})/\Q(S\ell)}\left(\zeta_{S\ell}\right) = - \mathrm{tr}_{\Q(\mu_{S})/\Q(S)}\left(\zeta_S^{\mathrm{Fr}_{\ell}^{-1}}\right).
\end{equation*}
Since $\mathrm{tr}_{\Q(S\ell)/\Q(S)} \circ \mathrm{tr}_{\Q(\mu_{S\ell})/\Q(S\ell)}=\mathrm{tr}_{\Q(\mu_S)/\Q(S)}\circ \mathrm{tr}_{\Q(\mu_{S\ell})/\Q(\mu_S)}$,
 we are reduced to showing that $\mathrm{tr}_{\Q(\mu_{S\ell})/\Q(\mu_S)}(\zeta_{S\ell})=-\zeta_S^{\mathrm{Fr}_{\ell}^{-1}}$,
which is not difficult to show.
\end{proof}

\begin{prop}\label{dirichlet}Let $\chi$ be a character of $\Gamma_S$.
Then, we have
\begin{gather*}
\left(1-\frac{a_p}{p}\chi(p)^{-1} + \frac{1}{p}\chi(p)^{-2}\right)\sum_{\delta\in \Gamma_S}\log_{\hat{E}}(c_S^{\delta})\chi(\delta) = \tau_S(\chi).
\end{gather*}On the right hand side, we regard $\chi$ as a character of $G_S=\Gamma_S \times H_S$ by $\chi|_{H_S}=1$.
\end{prop}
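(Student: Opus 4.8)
The plan is to apply the ``$\chi$-twisted trace'' $v\mapsto\sum_{\delta\in\Gamma_S}\chi(\delta)\,v^{\delta}$ to the relation (\ref{gm}) defining $c_S$ and to evaluate the two sides separately.

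First I would dispose of the right-hand side by a direct computation. Put $b_S=\mathrm{tr}_{\Q(\mu_S)/\Q(S)}(\zeta_S)=\sum_{h\in H_S}\zeta_S^{h}$. Since $G_S=H_S\times\Gamma_S$ and $\chi|_{H_S}=1$ (so $\chi(h\delta)=\chi(\delta)$), and since $(h,\delta)\mapsto h\delta$ is a bijection $H_S\times\Gamma_S\to G_S$, we get
\begin{equation*}
\sum_{\delta\in\Gamma_S}\chi(\delta)\,b_S^{\delta}=\sum_{\delta\in\Gamma_S}\sum_{h\in H_S}\chi(h\delta)\,\zeta_S^{h\delta}=\sum_{g\in G_S}\chi(g)\,\zeta_S^{g}=\tau_S(\chi).
\end{equation*}

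The heart of the argument is the behaviour of the Frobenius operator $\sigma$ occurring in (\ref{gm}). Because $(S,p)=1$, the extension $\Q(S)/\Q$ is unramified at $p$, and because $\Gamma_S$ is abelian the decomposition subgroup of $\Gamma_S$ at each prime above $p$ is one and the same cyclic group, generated by the Frobenius element $\Fr_p\in\Gamma_S$. Hence the operator $\sigma$ on $\Oc_S\otimes\Zp=\prod_{\lambda|p}\Oc_{S,\lambda}$, which on each local factor is the arithmetic Frobenius of the unramified extension $\Oc_{S,\lambda}/\Zp$, is precisely the action of $\Fr_p\in\Gamma_S$. In particular $\sigma$ commutes with the $\Gamma_S$-action, and for $v:=\sum_{\delta\in\Gamma_S}\chi(\delta)\,\log_{\hat{E}}(c_S^{\delta})$, reindexing the sum and using the $\Gamma_S$-equivariance of $\log_{\hat{E}}$ gives
\begin{equation*}
\sigma(v)=\sum_{\delta\in\Gamma_S}\chi(\delta)\,\log_{\hat{E}}\bigl(c_S^{\,\Fr_p\delta}\bigr)=\chi(\Fr_p)^{-1}v=\chi(p)^{-1}v.
\end{equation*}

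Combining these: applying the twisted trace to (\ref{gm}) and using that $\sigma$ commutes with each $\delta$, the left-hand side becomes $\bigl(1-\tfrac{a_p}{p}\sigma+\tfrac1p\sigma^2\bigr)v=\bigl(1-\tfrac{a_p}{p}\chi(p)^{-1}+\tfrac1p\chi(p)^{-2}\bigr)v$, while the right-hand side becomes $\sum_{\delta\in\Gamma_S}\chi(\delta)\,b_S^{\delta}=\tau_S(\chi)$; this is exactly the claimed identity. The one point requiring care is the identification $\sigma=\Fr_p$ on $\Oc_S\otimes\Zp$ (which rests on $\Q(S)/\Q$ being unramified at $p$ and $\Gamma_S$ being abelian); granting it, the rest is elementary bookkeeping with characters, and if $\chi$ takes values outside $\Zp$ one simply runs the same computation after extending scalars to $\Zp[\chi]$.
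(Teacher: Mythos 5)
Your proof is correct and essentially matches the paper's: both apply the $\chi$-twisted trace over $\Gamma_S$ to the defining relation (\ref{gm}), identify $\sigma$ with $\Fr_p$ so that $\chi(\sigma)=\chi(p)$, and evaluate the right-hand side as $\tau_S(\chi)$. Your packaging of the reindexing into the eigenvalue identity $\sigma(v)=\chi(p)^{-1}v$ and your explicit justification of $\sigma=\Fr_p$ (unramifiedness at $p$ plus commutativity of $\Gamma_S$) are slightly cleaner than the paper's term-by-term computation, but it is the same argument.
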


\begin{proof}
By (\ref{gm}), we have
\begin{align}\label{cs and tau}
\sum_{\delta\in \Gamma_S}\delta\left(\left(1-\frac{a_p}{p}\sigma + \frac{1}{p}\sigma^2\right)\log_{\hat{E}}(c_{S})\right) \chi(\delta)
\notag&=\sum_{\delta\in \Gamma_S}\delta\left(\mathrm{tr}_{\Q(\mu_{S})/\Q(S)}(\zeta_S)\right)\chi(\delta)\\
&=\sum_{\gamma\in G_S}\zeta_S^{\gamma}\chi(\gamma)=\tau_S(\chi).
\end{align}
On the other hand, we have
\begin{align}\label{separated computation}
\notag& \ \ \ \ \sum_{\delta\in \Gamma_S}\delta\left(\left(1-\frac{a_p}{p}\sigma + \frac{1}{p}\sigma^2\right)\log_{\hat{E}}(c_{S})\right) \chi(\delta) \\
\notag&=\sum_{\delta}\log_{\hat{E}}( c_{S}^{\delta}) \chi(\delta) - \frac{a_p}{p}\sum_{\delta}\log_{\hat{E}}( c_{S}^{\sigma\delta})\chi(\delta) + \frac{1}{p}\sum_{\delta}\log_{\hat{E}}( c_{S}^{\sigma^2\delta}) \chi(\delta)\\
\notag&=\sum_{\delta}\log_{\hat{E}}( c_{S}^{\delta}) \chi(\delta) 
- \frac{a_p}{p}\sum_{\delta}\log_{\hat{E}}( c_{S}^{\delta})\chi(\sigma^{-1}\delta) + 
\frac{1}{p}\sum_{\delta}\log_{\hat{E}}( c_{S}^{\delta}) \chi(\sigma^{-2}\delta)\\
\notag&\overset{\mathrm{(a)}}{=} \sum_{\delta}\log_{\hat{E}}(c_{S}^{\delta}) \chi(\delta) - \frac{a_p}{p}\chi(p)^{-1}\sum_{\delta}\log_{\hat{E}}( c_{S}^{\delta})\chi(\delta) + \frac{1}{p}\chi(p)^{-2}\sum_{\delta}\log_{\hat{E}}( c_{S}^{\delta}) \chi(\delta) \\
&=\left(1-\frac{a_p}{p}\chi(p)^{-1} + \frac{1}{p}\chi(p)^{-2}\right)\sum_{\delta} \log_{\hat{E}}(c_S^{\delta})\chi(\delta),
\end{align}where the equality (a) follows from $\chi(\sigma) =\chi(p)$.
Combining (\ref{cs and tau}) and (\ref{separated computation}), we complete the proof.
\end{proof}

\subsection{Kato's Euler system}\label{subsection: kato euler}
We put $T=T_p(E)$ and $V=T\otimes \Qp.$
For an integer $S$,  we have the pairing $(-,-)$ induced by the cup product
\begin{equation*}
(-,-):H^1_{f}(\Q(S)\otimes\Qp,V) \times H^1_{/f}(\Q(S)\otimes\Qp,V) \to \oplus_{\lambda|S}\Qp \to \Qp,
\end{equation*}where the last map is given by $(a_{\lambda})_{\lambda}\mapsto \sum_{\lambda}a_{\lambda}.$
Then, we have a $\Qp$-linear map
\begin{equation}\label{dual of h1f of qsf}
H^1(\Q(S)\otimes\Qp,V)  \to \mathrm{Hom}_{\Qp}\left( H^1_{f}(\Q(S)\otimes\Qp,V),\Qp\right).
\end{equation}
The exponential map $\exp_{\hat{E}}$ of $\hat{E}$ induces an isomorphism  $\Q(S)\otimes_{\Q} \Qp \to H^1_f(\Q(S)\otimes\Qp,V).$
By  taking the dual,
we have a $\Qp$-linear map
\begin{equation}\label{rough dual exp}
\mathrm{Hom}_{\Qp}\left(H^1_f(\Q(S)\otimes\Qp,V),\Qp\right) \to \Hom_{\Qp}(\Q(S)\otimes\Qp,\Qp) \cong \Q(S)\otimes\Qp,
\end{equation}where the last isomorphism comes from the perfect pairing $(\Q(S)\otimes\Qp) \times (\Q(S)\otimes\Qp)  \to \Qp$ given by $(x,y) \mapsto \mathrm{tr}_{\Q(S)/\Q}(xy).$  
The dual exponential map $\exp^*_S$  (associated to $\omega$) is defined as the composite of (\ref{dual of h1f of qsf}) and (\ref{rough dual exp})
\begin{equation*}
H^1(\Q(S)\otimes\Qp,V) \to \mathrm{Hom}_{\Qp}\left(H^1_f(\Q(S)\otimes\Qp,V),\Qp\right) \to \Q(S)\otimes\Qp.
\end{equation*}
We note that for $c \in \hat{E}(\Oc_S \otimes\Zp)$ and $z \in H^1(\Q(S)\otimes\Qp,T),$
\begin{equation}\label{dual and exp}
(c,z)=\mathrm{tr}_{\Q(S)/\Q}\left(\log_{\hat{E}}(c)\cdot \exp_{S}^{*}(z)\right) \in \Zp.
\end{equation}

Let $\mathscr{N}$ be the set of square-free products of primes relatively prime to $pN$.
By applying \cite[Lemma 9.6.1]{rub} to Kato's Euler system (cf.\ \cite[Theorems 9.7 and 12.5]{kat}), we have the following.
\begin{thm}[Kato]\label{original Kato}There exists a system $\{\z_{m}\}_{m> 0} \in \prod_m H^1(\Q(m),T)$ satisfying the following conditions.
\begin{enumerate}
\item For $m>0$ and a prime $\ell$, we have
\begin{equation*}
\mathrm{Cor}_{m\ell /m}(\z_{m\ell})=\begin{cases}
P_{\ell}(\Fr_{\ell}^{-1})\z_{m}&\mathrm{if} \ \ell\nmid pm\\
\z_{m}&\mathrm{if} \ \ell|pm.
\end{cases}
\end{equation*}
In particular $\{\z_{Sp^n}\}_{S\in \mathscr{N},n\ge 0}$ is an Euler system in the sense of Definition \ref{def of euler system}.
\item
For every character $\chi$ of $\Gamma_{m}$ of conductor $m$, we have
\begin{equation*}
\sum_{\gamma\in\Gamma_{m}}\chi(\gamma)\exp^{*}_{m}(\z_{m}^{\gamma})=\left(1-\frac{a_p\chi(p)}{p}+\frac{\chi^2(p)}{p}\right)\frac{L(E,\chi,1)}{\Omega^{+}}.
\end{equation*}
\end{enumerate}
\end{thm}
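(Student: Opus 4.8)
The plan is to obtain the family $\{\z_m\}$ by descending Kato's zeta elements from $\Q(\mu_m)$ to $\Q(m)$ and then reformulating the distribution relation via \cite[Lemma 9.6.1]{rub}. By \cite[Theorems 9.7 and 12.5]{kat} there is, for every $m>0$, an element $\tilde{z}_m\in H^1(\Q(\mu_m),T)$ — lying in the integral cohomology, not merely after inverting finitely many primes, because the surjectivity of $\rho$ allows one to remove Kato's auxiliary parameters $c,d$ — such that $\{\tilde{z}_m\}$ satisfies the classical Euler system norm relations, namely $\mathrm{Cor}_{\Q(\mu_{m\ell})/\Q(\mu_m)}(\tilde{z}_{m\ell})=(1-a_\ell\ell^{-1}\Fr_\ell^{-1}+\epsilon(\ell)\ell^{-1}\Fr_\ell^{-2})\tilde{z}_m$ for $\ell\nmid m$ and $\mathrm{Cor}_{\Q(\mu_{m\ell})/\Q(\mu_m)}(\tilde{z}_{m\ell})=\tilde{z}_m$ for $\ell\mid m$ (including $\ell=p$), together with Kato's explicit reciprocity law: for every primitive Dirichlet character $\chi$ of conductor $m$,
\[
\sum_{\gamma\in G_m}\chi(\gamma)\exp^{*}_{\Q(\mu_m)}(\tilde{z}_m^{\gamma})=\left(1-\frac{a_p\chi(p)}{p}+\frac{\chi^2(p)}{p}\right)\frac{L(E,\chi,1)}{\Omega^{+}}.
\]

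Next I would set $w_m:=\mathrm{Cor}_{\Q(\mu_m)/\Q(m)}(\tilde{z}_m)\in H^1(\Q(m),T)$. Using the decomposition $G_m=H_m\times\Gamma_m$ and functoriality of corestriction, the family $\{w_{Sp^n}\}_{S\in\mathscr{N},\,n\ge0}$ is an Euler system for $T$ in the usual sense, its norm relations being the corestrictions of those above. Since $|\Gamma_\ell|$ divides $\ell-1$ and $P_\ell(t)\equiv1-a_\ell\ell^{-1}t+\ell^{-1}t^2\bmod\ell-1$ (Remark \ref{another norm relation}), applying \cite[Lemma 9.6.1]{rub} yields a family $\{\z_m\}$ satisfying the relations of Definition \ref{def of euler system}, that is, part (1) of the theorem for $\ell\nmid pm$; the relations for $\ell\mid pm$ are the ones inherited from Kato's (the ``repeated prime'' direction and $\ell=p$) and are unaffected by the reformulation.

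Finally I would verify part (2) for this $\z_m$. Rubin's construction modifies $w_m$ only by a $\Zp[\Gamma_m]$-linear combination of classes inflated from $H^1(\Q(d),T)$ for proper divisors $d\mid m$; for a character $\chi$ of $\Gamma_m$ of conductor $m$ — which forces $m$ to be square-free, a product of primes $\ell$ with $\Gamma_\ell\ne1$ — the restriction of $\chi$ to $\ker(\Gamma_m\to\Gamma_d)$ is nontrivial for every proper $d$, so $\sum_{\gamma\in\Gamma_m}\chi(\gamma)(\,\cdot\,)^{\gamma}$ annihilates each such inflated class and $\sum_\gamma\chi(\gamma)\exp^{*}_{m}(\z_m^{\gamma})=\sum_\gamma\chi(\gamma)\exp^{*}_{m}(w_m^{\gamma})$. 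Compatibility of $\exp^{*}$ with corestriction and of the trace form with the character sum then identifies the right side with $\sum_{\gamma\in G_m}\chi(\gamma)\exp^{*}_{\Q(\mu_m)}(\tilde{z}_m^{\gamma})$, where $\chi$ is extended to $G_m$ by triviality on $H_m$ — still of conductor $m$ since $m$ is square-free — so part (2) follows from Kato's reciprocity law displayed above. Beyond quoting Kato's deep explicit reciprocity theorem, which the excerpt entitles us to use, I expect the main points requiring care to be precisely this last compatibility bookkeeping, the observation that Rubin's reformulation is invisible to pairings against primitive characters, and the integrality statement that permits removal of $c,d$.
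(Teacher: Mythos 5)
Your proposal is correct and unpacks the paper's own one-line proof, which simply cites Rubin's Lemma 9.6.1 applied to Kato's zeta elements from \cite[Theorems 9.7 and 12.5]{kat}, exactly the two references you build on. The parenthetical claim that a conductor-$m$ character of $\Gamma_m$ forces $m$ to be square-free is slightly off (a factor $p^a$ with $a\ge2$ is allowed), but this is harmless since your annihilation argument only uses that such a $\chi$ is nontrivial on $\ker(\Gamma_m\to\Gamma_d)$ for every proper divisor $d\mid m$.
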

For a square-free integer $S$ relatively prime to $p$, we put 
\begin{equation*}
\Theta_S=\sum_{\gamma \in \Gamma_S}\z_S^{\gamma^{-1}}\otimes \gamma \in H^1(\Q(S),T)\otimes \Zp[\Gamma_S].
\end{equation*}
\paragraph{\textbf{Notation}}
In the rest of Section \ref{section: kato and mazur-tate}, for a finite abelian group $G$, we denote by $I_G$ the augmentation ideal of $\Zp[G].$
For an integer $S$, we denote by $\mathrm{sp}(S)$ the number of split multiplicative primes dividing $S$, and we also denote by $b_2(S)$ the number of good primes $\ell$ dividing $S$ such that  $a_{\ell}=2$ $($i.e. $P_{\ell}(1)=0).$
\begin{prop}\label{trivial zeros of euler system}
For a square-free integer $S$ relatively prime to $p$, we have
\begin{equation*}
\Theta_S \in H^1(\Q(S),T)\otimes I_{\Gamma_S}^{\mathrm{sp}(S)+b_2(S)}.
\end{equation*}
\end{prop}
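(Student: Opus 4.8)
The plan is to argue by induction on the number of prime factors of $S$, peeling off one prime at a time and exploiting the Euler‑system norm relation for Kato's classes. First I would record the key local fact: $P_{\ell}(1)=1-a_{\ell}+\epsilon(\ell)$ vanishes exactly when $\ell$ is split multiplicative (so $\epsilon(\ell)=0$, $a_{\ell}=1$) or $\ell$ is a good prime with $a_{\ell}=2$ (so $\epsilon(\ell)=1$); for every other $\ell$ one has $P_{\ell}(1)\ne 0$ because $p$ is odd. Hence, writing $P(S)=\{\ell\mid S:P_{\ell}(1)=0\}$, we have $|P(S)|=\mathrm{sp}(S)+b_{2}(S)$, and it suffices to prove $\Theta_{S}\in H^{1}(\Q(S),T)\otimes I_{\Gamma_{S}}^{|P(S)|}$.

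For the inductive step, fix a prime $\ell_{1}\mid S$ and put $S_{1}=S/\ell_{1}$. Using the decomposition $\Gamma_{S}=\Gamma_{\ell_{1}}\times\Gamma_{S_{1}}$ and the projection $\pi\colon \Zp[\Gamma_{S}]\to\Zp[\Gamma_{S_{1}}]$ with its section $\iota$, I would write $\Theta_{S}=(\mathrm{id}\otimes\iota\pi)(\Theta_{S})+\bigl(\Theta_{S}-(\mathrm{id}\otimes\iota\pi)(\Theta_{S})\bigr)$. The leading summand equals $\sum_{\bar\gamma\in\Gamma_{S_{1}}}(N_{\ell_{1}}\z_{S})^{\bar\gamma^{-1}}\otimes\bar\gamma$, and since $\mathrm{res}\circ\mathrm{Cor}_{S/S_{1}}=N_{\ell_{1}}$ and $\mathrm{Cor}_{S/S_{1}}(\z_{S})=P_{\ell_{1}}(\Fr_{\ell_{1}}^{-1})\z_{S_{1}}$ with $\Fr_{\ell_{1}}\in\Gamma_{S_{1}}$, a re‑indexing identifies it with $\mathrm{res}(\Theta_{S_{1}})\cdot P_{\ell_{1}}(\Fr_{\ell_{1}}^{-1})$, the factor $P_{\ell_{1}}(\Fr_{\ell_{1}}^{-1})$ now multiplying the group‑ring side. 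By induction $\Theta_{S_{1}}\in H^{1}(\Q(S_{1}),T)\otimes I_{\Gamma_{S_{1}}}^{|P(S_{1})|}$, and $P_{\ell_{1}}(\Fr_{\ell_{1}}^{-1})\in I_{\Gamma_{S_{1}}}$ when $\ell_{1}\in P(S)$ (its augmentation being $P_{\ell_{1}}(1)=0$), so the leading summand lies in $H^{1}\otimes I_{\Gamma_{S_{1}}}^{|P(S)|}\subseteq H^{1}\otimes I_{\Gamma_{S}}^{|P(S)|}$ (and already in $H^{1}\otimes I_{\Gamma_{S_{1}}}^{|P(S_{1})|}=H^{1}\otimes I_{\Gamma_{S}}^{|P(S)|}$ if $\ell_{1}\notin P(S)$). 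A short computation shows the second summand equals $\sum_{\gamma_{1}\in\Gamma_{\ell_{1}}}\Xi(\gamma_{1})\cdot(\gamma_{1}-1)$, where $\Xi(\gamma_{1})=\sum_{\bar\gamma\in\Gamma_{S_{1}}}(\gamma_{1}^{-1}\z_{S})^{\bar\gamma^{-1}}\otimes\bar\gamma$ is the ``partial $\Gamma_{S_{1}}$‑theta'' of the Galois translate $\gamma_{1}^{-1}\z_{S}$; since $(\gamma_{1}-1)\in I_{\Gamma_{\ell_{1}}}$, it is enough to know $\Xi(\gamma_{1})\in H^{1}\otimes I_{\Gamma_{S_{1}}}^{|P(S_{1})|}$.

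To make this go through I would prove, by the same induction, a slightly more general statement: for coprime square‑free $A,U$ with $AU$ square‑free and prime to $p$, and any $g$ acting on $H^{1}(\Q(AU),T)$ through $\Gamma_{AU}$, the element $\sum_{\gamma\in\Gamma_{U}}(g\z_{AU})^{\gamma^{-1}}\otimes\gamma$ lies in $H^{1}(\Q(AU),T)\otimes I_{\Gamma_{U}}^{|P(U)|}$. The case $(A,U,g)=(1,S,1)$ is the proposition, while $\Xi(\gamma_{1})$ above is the case $(A,U,g)=(\ell_{1},S_{1},\gamma_{1}^{-1})$ after restricting to $\Q(S)$; peeling a prime off $U$ again yields a leading term governed by $\mathrm{Cor}(\z_{AU})=P_{\ell_{1}}(\Fr_{\ell_{1}}^{-1})\z_{AU_{1}}$ and a correction term $\sum_{\gamma_{1}}(\cdots)(\gamma_{1}-1)$ whose inner factor is the same type of partial theta with $A$ enlarged to $A\ell_{1}$, so the induction on the number of primes of $U$ closes formally. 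The hard part — and the step that needs the most care — is the bookkeeping of augmentation‑ideal powers in the leading term once $A\ne 1$: there $\Fr_{\ell_{1}}\in\Gamma_{AU_{1}}=\Gamma_{A}\times\Gamma_{U_{1}}$ generally has a nontrivial $\Gamma_{A}$‑component, so $P_{\ell_{1}}(\Fr_{\ell_{1}}^{-1})$ cannot simply be transported onto the $\Zp[\Gamma_{U}]$‑factor by re‑indexing; one must use the $\Gamma_{U_{1}}$‑bi‑equivariance of the partial theta to move its $\Gamma_{U_{1}}$‑component over, and then argue — tracing the Euler‑system relations down to the bottom layer, where the relevant Frobenii become trivial — that the residual $\Gamma_{A}$‑part still forces the missing power of the augmentation ideal. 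Choosing the generalized statement so that the full $|P|$‑fold gain is realized on the group‑ring side is the technical heart of the argument; the rest is the Taylor‑expansion formalism of Proposition \ref{taylor}, the formal functoriality of restriction and corestriction, and the fact that $H^{1}(\Q(S),T)$ is $\Zp$‑torsion‑free (which holds because the mod‑$p$ representation is surjective, so $E(\Q(S))[p^{\infty}]=0$ and Proposition \ref{taylor} applies).
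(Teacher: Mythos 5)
Your single–prime peeling decomposition of $\Theta_S$ is correct (the re-indexing trick that moves $P_{\ell_1}(\Fr_{\ell_1}^{-1})$ to the group-ring factor is fine, since $\Fr_{\ell_1}\in\Gamma_{S_1}$), but the ``slightly more general statement'' you formulate to close the induction is \emph{false}, and this is a genuine gap rather than a bookkeeping nuisance. Take $A=\ell_1$, $U=\ell_2$ with both primes in $P(S)$. Your claim would give $\Xi=\sum_{\gamma\in\Gamma_{\ell_2}}(g\z_{\ell_1\ell_2})^{\gamma^{-1}}\otimes\gamma\in H^1(\Q(\ell_1\ell_2),T)\otimes I_{\Gamma_{\ell_2}}$. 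Applying the augmentation $\Zp[\Gamma_{\ell_2}]\to\Zp$ sends $\Xi$ to $N_{\ell_2}(g\z_{\ell_1\ell_2})=g\cdot\mathrm{res}\bigl(P_{\ell_2}(\Fr_{\ell_2}^{-1})\z_{\ell_1}\bigr)$, where now $\Fr_{\ell_2}$ is the \emph{nontrivial} Frobenius at $\ell_2$ inside $\Gamma_{\ell_1}=\Gamma_A$. The relation $P_{\ell_2}(1)=0$ only says that $P_{\ell_2}(\Fr_{\ell_2}^{-1})$ lies in $I_{\Gamma_{\ell_1}}\subset\Zp[\Gamma_{\ell_1}]$; it does not make the cohomology class $P_{\ell_2}(\Fr_{\ell_2}^{-1})\z_{\ell_1}$ vanish. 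So the augmentation of $\Xi$ is generically nonzero, $\Xi\notin H^1\otimes I_{\Gamma_{\ell_2}}$, and your intermediate induction hypothesis fails already at the first nontrivial $A$. You identify this phenomenon yourself (``the hard part\,\dots\,once $A\neq1$''), but the proposed fix --- invoking bi-equivariance and ``tracing to the bottom layer'' --- does not restore the claimed ideal membership of the partial thetas themselves; it can only show that a specific \emph{combination} of them lands in $I_{\Gamma_S}^{|P(S)|}$ after the Euler-system relations have collapsed the top Frobenii all the way down to $\Gamma_1$ where they become trivial.

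The paper sidesteps this entirely. Instead of peeling one prime at a time, it isolates \emph{all} the bad primes $\ell_1,\dots,\ell_a$ of $S$ at once, expands $\sum_{\gamma}\z_S^{\gamma^{-1}}\otimes(\gamma_{\ell_1}-1)\cdots(\gamma_{\ell_a}-1)\gamma'$, and observes via a M\"obius-type identity that it equals $\Theta_S$ plus $\sum_{d\mid S_1,\,d\neq S_1}\mu(S_1/d)\bigl(\prod_{\ell\mid S_1/d}P_\ell(\Fr_\ell^{-1})\bigr)\Theta_{dS'}$, i.e.\ \begin{equation*}\Theta_S=\sum_{\gamma}\z_S^{\gamma^{-1}}\otimes(\gamma_{\ell_1}-1)\cdots(\gamma_{\ell_a}-1)\gamma'\;-\;\sum_{d\mid S_1,\,d\neq S_1}\mu(S_1/d)\Bigl(\prod_{\ell\mid S_1/d}P_\ell(\Fr_\ell^{-1})\Bigr)\Theta_{dS'}.\end{equation*} The first sum is visibly in $H^1\otimes I_{\Gamma_S}^a$; in the second, each $P_\ell(\Fr_\ell^{-1})$ is in $I_{\Gamma_S}$ because $P_\ell(1)=0$ and $\Fr_\ell-1\in I_{\Gamma_S}$ for \emph{any} $\Fr_\ell\in\Gamma_S$ (this is the key point your route misses: the augmentation ideal $I_{\Gamma_S}$ of the full group ring is large enough to absorb $\Fr_\ell$ regardless of which factor $\Fr_\ell$ lives in), and $\Theta_{dS'}$ has strictly fewer bad primes, so the induction on $a$ closes cleanly. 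If you want to salvage the one-prime-at-a-time approach, you would need to replace your generalized statement by a claim about $\Theta_S$ minus an explicit M\"obius correction, at which point you have reproduced the paper's argument.
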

\begin{proof}Our proof is similar to that of \cite[Theorem 4.2]{dar95}.
We denote by $S_1$ the product of primes $\ell$ dividing $S$ such that $\ell$ is either a split multiplicative prime or a good prime with $a_{\ell}=2$.
We prove the proposition by induction on the number $a$ of primes dividing $S_1$.
The case where $a=0$ is trivial.
We assume that $a\ge 1$ and write $S_1=\ell_1 \cdots \ell_a, S^{\prime}=S/S_1.$  
By using $\Gamma_S = \Gamma_{\ell_1} \times \cdots \times \Gamma_{\ell_a}\times \Gamma_{S^{\prime}}$, for  $\gamma \in \Gamma_S$ we write
$\gamma=\gamma_{\ell_1} \cdots \gamma_{\ell_a} \gamma^{\prime}$,
where each $\gamma_{\ell_i}$ is an element of  $ \Gamma_{\ell_i}$, and $\gamma^{\prime}\in \Gamma_{S^{\prime}}.$
Let $\mu(\cdot)$ denote the Mobius function.
Then,
\begin{align*}
&\sum_{\gamma \in \Gamma_S}\z_{S}^{\gamma^{-1}} \otimes (\gamma_{\ell_1}-1)\cdots(\gamma_{\ell_a}-1)\gamma^{\prime}
=\Theta_S +\sum_{\gamma \in \Gamma_S}\sum_{d|S_1, d\not=S_1}\mu(S_1/d)\gamma^{-1}\z_S\otimes\left(\gamma^{\prime}\prod_{\ell_i |d}\gamma_{\ell_i}\right)\\
=&\Theta_S +\sum_{d|S_1, d\not=S_1}\mu(S_1/d) \sum_{\gamma_0\in \Gamma_{dS^{\prime}}}
\gamma_0^{-1}\mathrm{Cor}_{S/dS^{\prime}}\z_S\otimes \gamma_0\\
=&\Theta_S +\sum_{d|S_1, d\not=S_1}\mu(S_1/d) \sum_{\gamma_0\in \Gamma_{dS^{\prime}}}
\gamma_0^{-1}\prod_{\ell|\frac{S_1}{d}}P_{\ell}(\Fr_{\ell}^{-1})\z_{dS^{\prime}}\otimes \gamma_0\\
=&\Theta_S +\sum_{d|S_1, d\not=S_1}\mu(S_1/d) \left(\prod_{\ell|\frac{S_1}{d}}P_{\ell}(\Fr_{\ell}^{-1})\right)\Theta_{dS^{\prime}}.
\end{align*}
Hence, we have
\begin{equation}\label{big theta}
\Theta_S =\sum_{\gamma \in \Gamma_S}\z_{S}^{\gamma^{-1}} \otimes (\gamma_{1}-1)\cdots(\gamma_a-1)\gamma^{\prime}-\sum_{d|S_1, d\not=S_1}\mu(S_1/d) \left(\prod_{\ell|\frac{S_1}{d}}P_{\ell}(\Fr_{\ell}^{-1})\right)\Theta_{dS^{\prime}}.
\end{equation}
By assumption, for each prime $\ell$ dividing $S_1/d,$ we have $P_{\ell}(1)=0$, and then $P_{\ell}(\Fr_{\ell}^{-1}) \in I_S.$
Hence, by the induction hypothesis and (\ref{big theta}), we complete the proof.
\end{proof}
\subsection{Kato's Euler system and Mazur-Tate elements}\label{subsection: kato and mazur-tate}
\begin{dfn}\label{m-t-k}For a square-free integer $S$  relatively prime to $p$, 
we define $\vartheta(\z_S)$ by
\begin{equation*}
\vartheta(\z_S)  = \sum_{\gamma\in\Gamma_S}(c_S,\z_S^{\gamma^{-1}})\gamma \in \Zp[\Gamma_S].
\end{equation*}
\end{dfn}
By abuse of notation, we denote by $\pi_{m/n}$ the natural map $\Zp[\Gamma_m] \to \Zp[\Gamma_n]$ for $n|m.$
\begin{prop}For a square-free integer $S$ relatively prime to $p$, we have the following.
\begin{enumerate}
\item Let $\ell$ be a prime not dividing $pS$. 
Then we have
\begin{equation*}
\pi_{S\ell/S}(\vartheta(\z_{S\ell}))=-\Fr_{\ell}P_{\ell}(\Fr_{\ell}^{-1})\vartheta(\z_S).
\end{equation*}
\item For every character $\chi$ of $\Gamma_S$ of conductor $S$, we have
\begin{equation*}
\chi(\vartheta(\z_S))=\tau_S(\chi)\frac{L(E,\chi^{-1},1)}{\Omega^{+}}.
\end{equation*}
\end{enumerate}
\end{prop}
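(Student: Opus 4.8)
The plan is to deduce both assertions from the explicit reciprocity formula \eqref{dual and exp}, namely $(c,z)=\mathrm{tr}_{\Q(S)/\Q}\!\left(\log_{\hat{E}}(c)\cdot\exp^{*}_{S}(z)\right)$, together with the $\Gamma_{S}$-equivariance of $\log_{\hat{E}}$, $\exp^{*}_{S}$ and $\mathrm{tr}_{\Q(S)/\Q}$, which makes the pairing satisfy $(c^{\alpha},z^{\alpha})=(c,z)$ for $\alpha\in\Gamma_{S}$. From this, writing $\vartheta(\z_{S})=\sum_{\gamma\in\Gamma_{S}}(c_{S},\z_{S}^{\gamma^{-1}})\gamma$, one gets the formal twisting rules: replacing $c_{S}$ by $c_{S}^{\alpha}$ multiplies $\vartheta(\z_{S})$ by $\alpha^{-1}$, replacing $\z_{S}$ by $f\cdot\z_{S}$ with $f\in\Zp[\Gamma_{S}]$ multiplies $\vartheta(\z_{S})$ by $f$, and $\vartheta$ is additive in its first variable through $\log_{\hat{E}}$. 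One further input is compatibility with corestriction: for $c\in\hat{E}(\Oc_{S}\otimes\Zp)$ one has $(c,z)_{\Q(S\ell)}=(c,\mathrm{Cor}_{S\ell/S}z)_{\Q(S)}$, which follows from \eqref{dual and exp}, the $\Gamma_{\ell}$-invariance of $\log_{\hat{E}}(c)$, and the standard identity $\mathrm{tr}_{\Q(S\ell)/\Q(S)}\circ\exp^{*}_{S\ell}=\exp^{*}_{S}\circ\mathrm{Cor}_{S\ell/S}$.

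For (1), apply $\pi_{S\ell/S}$ to $\vartheta(\z_{S\ell})$ and decompose $\Gamma_{S\ell}=\Gamma_{S}\times\Gamma_{\ell}$, $\gamma=\gamma_{S}\gamma_{\ell}$. Moving $\gamma_{\ell}$ onto the first variable by equivariance and summing over $\gamma_{\ell}\in\Gamma_{\ell}$ (additivity through $\log_{\hat{E}}$) converts the inner sum to $(\mathrm{Tr}_{S\ell/S}(c_{S\ell}),\z_{S\ell}^{\gamma_{S}^{-1}})$, which by Proposition \ref{norm of c_S} equals $(-c_{S}^{\Fr_{\ell}^{-1}},\z_{S\ell}^{\gamma_{S}^{-1}})$. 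Since $c_{S}^{\Fr_{\ell}^{-1}}$ comes from $\Q(S)$, corestriction compatibility turns this into $-(c_{S}^{\Fr_{\ell}^{-1}},(\mathrm{Cor}_{S\ell/S}\z_{S\ell})^{\gamma_{S}^{-1}})_{\Q(S)}$, and by Theorem \ref{original Kato}(1) $\mathrm{Cor}_{S\ell/S}\z_{S\ell}=P_{\ell}(\Fr_{\ell}^{-1})\z_{S}$ (valid since $\ell\nmid pS$). Summing over $\gamma_{S}$ and applying the twisting rules with $\alpha=\Fr_{\ell}^{-1}$ and $f=P_{\ell}(\Fr_{\ell}^{-1})$ yields $\pi_{S\ell/S}(\vartheta(\z_{S\ell}))=-\Fr_{\ell}P_{\ell}(\Fr_{\ell}^{-1})\vartheta(\z_{S})$.

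For (2), fix $\chi$ of conductor $S$ and set $E^{\vee}_{p}(\chi)=1-\tfrac{a_{p}}{p}\chi(p)^{-1}+\tfrac{1}{p}\chi(p)^{-2}$, which is exactly the Euler factor occurring in Theorem \ref{original Kato}(2) for $\chi^{-1}$. Using $(c_{S},\z_{S}^{\gamma^{-1}})=(c_{S}^{\gamma},\z_{S})$ and \eqref{dual and exp},
\[
\chi(\vartheta(\z_{S}))=\sum_{\gamma}\chi(\gamma)(c_{S}^{\gamma},\z_{S})=\mathrm{tr}_{\Q(S)/\Q}\!\left(\Big(\sum_{\gamma}\chi(\gamma)\log_{\hat{E}}(c_{S}^{\gamma})\Big)\exp^{*}_{S}(\z_{S})\right);
\]
writing $\mathrm{tr}_{\Q(S)/\Q}=\sum_{\delta\in\Gamma_{S}}(\cdot)^{\delta}$ and reindexing the sum over $\gamma$, this equals
\[
\Big(\sum_{\gamma}\chi(\gamma)\log_{\hat{E}}(c_{S}^{\gamma})\Big)\Big(\sum_{\delta}\chi^{-1}(\delta)\exp^{*}_{S}(\z_{S}^{\delta})\Big).
\]
By Proposition \ref{dirichlet}, $E^{\vee}_{p}(\chi)\sum_{\gamma}\chi(\gamma)\log_{\hat{E}}(c_{S}^{\gamma})=\tau_{S}(\chi)$, and by Theorem \ref{original Kato}(2) applied to $\chi^{-1}$, $\sum_{\delta}\chi^{-1}(\delta)\exp^{*}_{S}(\z_{S}^{\delta})=E^{\vee}_{p}(\chi)\,L(E,\chi^{-1},1)/\Omega^{+}$. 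Substituting the latter and then using the former, the factor $E^{\vee}_{p}(\chi)$ cancels and $\chi(\vartheta(\z_{S}))=\tau_{S}(\chi)\,L(E,\chi^{-1},1)/\Omega^{+}$; in particular no division is required (and indeed $E^{\vee}_{p}(\chi)\neq0$, since $\chi$ has $p$-power order while $p\nmid|E(\mathbb{F}_{p})|$ forces $\chi(p)$ not to be a root of $X^{2}-a_{p}X+p$).

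Everything above is formal once the reciprocity law \eqref{dual and exp} and the two interpolation formulas are in hand; the only genuinely delicate point is the coefficient-ring bookkeeping in (2) — specifying the ring in which the identities of Proposition \ref{dirichlet} and Theorem \ref{original Kato}(2) are equalities, and checking that the displayed product, formed from elements of $\Q(S)\otimes_{\Q}\overline{\Q}_{p}$, reproduces the $p$-adic number $\chi(\vartheta(\z_{S}))\in\Zp[\chi]$ correctly normalized by the Gauss sum $\tau_{S}(\chi)$ under the fixed embedding $\overline{\Q}\hookrightarrow\overline{\Q}_{p}$; the $\chi$-eigenspace structure for the $\Gamma_{S}$-action makes this automatic but must be made explicit.
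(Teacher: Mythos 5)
Your proposal is correct and follows essentially the same route as the paper. The paper proves both parts by first establishing the factorization of $\vartheta(\z_S)$ in $(\Q(S)\otimes\Qp)[\Gamma_S]$ as the product of $\sum_{\delta}\log_{\hat{E}}(c_S^{\delta})\delta$ and $\sum_{\gamma}\exp^*_S(\z_S^{\gamma^{-1}})\gamma$ (their equation (5.4)), then pushing forward each factor separately for (1) and evaluating each factor at $\chi$ for (2); your proof of (2) reproduces this factorization after applying $\chi$, and your proof of (1) reorganizes the same computation by pushing the pairing forward directly (using equivariance, $\hat{E}$-additivity, Proposition 5.7 and the adjunction $\mathrm{tr}_{\Q(S\ell)/\Q(S)}\circ\exp^*_{S\ell}=\exp^*_S\circ\mathrm{Cor}_{S\ell/S}$, the last of which the paper also uses but leaves implicit in its equation (5.6)). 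The ingredients and the logic are identical, and your remark that no division by the Euler factor is required matches the cancellation the paper obtains.
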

\begin{proof}
By (\ref{dual and exp}),  we have
\begin{align}\label{thetass}
\vartheta(\z_S) &=\sum_{\gamma\in\Gamma_S}(c_S,\z_S^{\gamma^{-1}})\gamma \notag
=\sum_{\gamma\in \Gamma_S}\mathrm{tr}_{\Q(S)/\Q}\left(\log_{\hat{E}}(c_S)\exp^{*}_S(\z_S^{\gamma^{-1}})\right)\gamma \\ \notag
&=\sum_{\gamma\in \Gamma_S}\sum_{\delta\in \Gamma_S}\log_{\hat{E}}(c_S^{\delta})\exp^{*}_S(\z_S^{\delta\gamma^{-1}})\gamma
=\sum_{\gamma\in\Gamma_S}\sum_{\delta\in \Gamma_S}\log_{\hat{E}}(c_S^{\delta})\exp^{*}_S(\z_S^{\gamma^{-1}})\delta\gamma\\ 
&=\left(\sum_{\delta\in\Gamma_S}\log_{\hat{E}}(c_S^{\delta})\delta\right) \times \left(\sum_{\gamma\in\Gamma_S} \exp_{S}^{*}(\z_S^{\gamma^{-1}})\gamma\right) \ \ \text{in}\ (\Q(S)\otimes\Qp)[\Gamma_S].
\end{align}
By using Proposition \ref{norm of c_S}, we have
\begin{align}\label{css}\notag
&\pi_{S\ell/S}\left(\sum_{\delta\in \Gamma_{S\ell}}\log_{\hat{E}}(c_{S\ell}^{\delta})\delta\right)= 
\sum_{\delta\in\Gamma_S}\left(\mathrm{tr}_{\Q(S\ell)/\Q(S)}\left(\log_{\hat{E}}c_{S\ell}\right)\right)^{\delta}\delta
\\  \notag
=&-\sum_{\delta\in\Gamma_S}\log_{\hat{E}}\left(c_{S}^{\Fr_{\ell}^{-1}\delta}\right)\delta \\
=&-\sum_{\delta\in\Gamma_S}\log_{\hat{E}}\left(c_S^{\delta}\right)(\delta\Fr_{\ell}).
\end{align}
By Theorem \ref{original Kato}, we  have
\begin{equation}\label{katoss}
\pi_{S\ell/S}\left(\sum_{\gamma\in \Gamma_{S\ell}}\exp_{S\ell}^{*}(\z_{S\ell}^{\gamma^{-1}})\gamma\right) 
= \sum_{\gamma\in \Gamma_{S}}\exp_{S}^{*}(\z_S^{\gamma^{-1}})\gamma P_{\ell}(\Fr_{\ell}^{-1}).
\end{equation}
By (\ref{thetass}) (replacing $S$ by $S\ell$), (\ref{css}) and (\ref{katoss}), 
we deduce the assertion (1).
By (\ref{thetass}), Proposition \ref{dirichlet} and Theorem \ref{original Kato}, we conclude (2).
\end{proof}

We denote by $\theta_{S,p} \in \Zp[\Gamma_S]$ the image of the Mazur-Tate element $\theta_S$ under the natural projection $\Zp[G_S] \to \Zp[\Gamma_S]$.
\begin{cor}\label{theta}For a square-free positive integer $S$  relatively prime to $p$,
 we have 
 \begin{equation*}
\vartheta(\z_S) = \theta_{S,p}\in \Zp[\Gamma_S], 
\end{equation*}
\end{cor}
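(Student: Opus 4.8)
The plan is to show that both $\vartheta(\z_S)$ and $\theta_{S,p}$ are characterized by the same interpolation property, and then deduce equality by induction on the number of prime divisors of $S$, reducing at each stage to characters of full conductor.

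First I would reduce to the following situation. Both sides live in $\Zp[\Gamma_S]$, a ring that embeds into $\prod_{\chi} \overline{\Qp}$ via the characters $\chi$ of $\Gamma_S$ (since $|\Gamma_S|$ is a $p$-power, $\Zp[\Gamma_S]$ is reduced when $p$ is odd, so this map is injective after extending scalars). Hence it suffices to check $\chi(\vartheta(\z_S)) = \chi(\theta_{S,p})$ for every character $\chi$ of $\Gamma_S$. So the real task is to match the two sides character by character.

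Second, for a character $\chi$ of $\Gamma_S$ of \emph{exact conductor} $S$, I would combine the two interpolation formulas already established: Proposition \ref{mazur-tate}(2) gives $\chi(\theta_S) = \tau_S(\chi)\, L(E,\chi^{-1},1)/\Omega^{\chi(-1)}$, while the proposition just proved (part (2)) gives $\chi(\vartheta(\z_S)) = \tau_S(\chi)\, L(E,\chi^{-1},1)/\Omega^{+}$. Since $\chi$ is a character of $\Gamma_S$, the $p$-group quotient of $G_S$, it is \emph{even}, i.e. $\chi(-1)=1$ (complex conjugation has order $2$, prime to $p$, hence acts trivially on $\Q(S)$), so $\Omega^{\chi(-1)} = \Omega^{+}$ and the two values agree. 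Moreover $\chi(\theta_{S,p})=\chi(\theta_S)$ because $\chi$ factors through $\Gamma_S$. Thus $\chi(\vartheta(\z_S)) = \chi(\theta_{S,p})$ whenever $\chi$ has conductor exactly $S$.

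Third, I would handle characters of smaller conductor by induction on the number of primes dividing $S$. If $\chi$ has conductor $S' \mid S$ with $S' \neq S$, then $\chi$ factors through $\Gamma_{S'}$, and $\chi(\vartheta(\z_S))$ is computed via $\pi_{S/S'}(\vartheta(\z_S))$, while $\chi(\theta_{S,p})$ is computed via $\pi_{S/S'}(\theta_{S,p})$. By the norm relation of part (1) of the proposition just proved and by Proposition \ref{mazur-tate}(1), both projections $\pi_{S\ell/S}$ multiply the respective element by $-\Fr_\ell P_\ell(\Fr_\ell^{-1})$ (note $P_\ell(t) = 1 - a_\ell t + \epsilon(\ell)t^2$ is exactly the polynomial appearing in Proposition \ref{mazur-tate}(1), and $\pi_{S\ell/S}\theta_{S\ell,p}$ is the image of $\pi_{S\ell/S}\theta_{S\ell}$). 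Iterating down to $\Gamma_{S'}$, we get $\pi_{S/S'}(\vartheta(\z_S)) = \big(\prod_{\ell \mid S/S'} -\Fr_\ell P_\ell(\Fr_\ell^{-1})\big)\vartheta(\z_{S'})$ and the same identity with $\theta$ in place of $\vartheta$; by the induction hypothesis $\vartheta(\z_{S'}) = \theta_{S',p}$, so the projections agree, hence $\chi(\vartheta(\z_S)) = \chi(\theta_{S,p})$ for these $\chi$ as well. Since every character of $\Gamma_S$ has some conductor $S' \mid S$, all characters are covered and the two elements coincide.

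I expect the only genuine subtlety to be bookkeeping: making sure the twist by $\Fr_\ell$ and the precise shape of the Euler factor $P_\ell$ match on the nose between the two norm relations (including the role of $\epsilon(\ell)$ at primes dividing $N$, which are excluded here since $(S,N)=1$), and confirming that $\Zp[\Gamma_S]$ is detected by its characters after inverting nothing — i.e. that the map $\Zp[\Gamma_S] \to \prod_\chi \overline{\Qp}$ is injective, which holds because $p$ is odd and $\Gamma_S$ is abelian, so the group algebra over $\Qp$ is étale. Neither point is hard, but they are where a careless argument could go wrong.
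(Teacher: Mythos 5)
Your proposal is correct and follows essentially the same route as the paper: the paper's proof is exactly the (terse) assertion that Proposition \ref{mazur-tate} and the proposition preceding the corollary give $\chi(\theta_{S,p})=\chi(\vartheta(\z_S))$ for every character $\chi$ of $\Gamma_S$, from which equality follows because $\Qp[\Gamma_S]$ is semisimple so its Jacobson radical vanishes. You have simply spelled out the two ingredients the paper leaves implicit — that primitive characters are handled by the interpolation formulas together with the evenness $\chi(-1)=1$ (forced by $\Gamma_S$ being a $p$-group with $p$ odd), and that imprimitive characters reduce to the primitive case by the matching norm relations — and replaced the "intersection of maximal ideals" phrasing by the equivalent injectivity of $\Zp[\Gamma_S]\hookrightarrow\prod_\chi\overline{\Qp}$.
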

\begin{proof}
Combining the proposition above with Proposition \ref{mazur-tate}, we have 
\begin{equation*}
\chi(\theta_{S,p})=\chi(\vartheta(\z_S)) \ \ \ \text{for all the characters }\ \chi \ \text{of} \ \Gamma_S, 
\end{equation*}which shows that the element $\theta_{S,p}-\vartheta(\z_S) \in \Qp[\Gamma_{S}]$ belongs to all the  maximal ideals of $\Qp[\Gamma_{S}].$
Hence, we have $\theta_{S,p}=\vartheta(\z_S).$
\end{proof}

As in Section \ref{main chapter}, 
we put
$\mathfrak{r}_{\min}=\min_{n\ge1}\{r_{p^n}\left(H^1_{f,p}(\Q,E[p^n])\right)\}$.
\begin{cor}\label{p-vanishing wo parity}
Let $S$ be a square-free product of primes $\ell$ relatively prime to $N$ such that $E(\mathbb{F}_{\ell})[p]$ is cyclic, that is, $E(\mathbb{F}_{\ell})[p]$ is  isomorphic to $\Z/p\Z$ or $0.$
Then,
we have
\begin{equation*}
\theta_S \in I_{G_S}^{\min\{\mathfrak{r}_{\min},\ p\}}.
\end{equation*}
\end{cor}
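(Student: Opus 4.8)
The plan is to combine Corollary \ref{theta}, which identifies $\theta_{S,p}$ with the element $\vartheta(\z_S)$ attached to Kato's Euler system, with the divisibility result Corollary \ref{general divisibility}. Put $k=\min\{\mathfrak{r}_{\min},p\}$; we may assume $k\ge1$, the statement being vacuous otherwise since $\theta_S\in\Zp[G_S]$ under the standing hypotheses on $p$ (cf.\ Remark \ref{first remark}(5)). First I would reduce to the case $(S,p)=1$: if $p\mid S$, write $S=pS_0$ with $(S_0,p)=1$, so that, as $p\nmid N$, $S_0$ is again a square-free product of primes prime to $N$ with $E(\mathbb{F}_{\ell})[p]$ cyclic for each $\ell\mid S_0$; since $G_S=G_{S_0}\times G_p$ with $p\nmid|G_p|$ and, by Proposition \ref{mazur-tate}(1), the image of $\theta_S$ under $\Zp[G_S]\to\Zp[G_{S_0}]$ equals $-\Fr_p(1-a_p\Fr_p^{-1}+\epsilon(p)\Fr_p^{-2})\,\theta_{S_0}$, Lemma \ref{p-prime} reduces the claim for $S$ to the same claim for $S_0$. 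Thus I may assume $(S,p)=1$, in which case $S\in\mathscr{N}$ and the objects $c_S,\vartheta(\z_S),\theta_{S,p}$ of Section \ref{section: kato and mazur-tate} are defined.

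Next I would invoke the machinery of Section \ref{main chapter}: by Theorem \ref{original Kato}(1) the system $\{\z_{Sp^n}\}_{S\in\mathscr{N},n\ge0}$ is an Euler system in the sense of Definition \ref{def of euler system}, and the standing hypotheses on $p$ imply Assumption \ref{main assumption}. Since $E(\mathbb{F}_{\ell})[p]$ is cyclic for every $\ell\mid S$, Corollary \ref{general divisibility} yields
\begin{equation*}
\Theta_S=\sum_{\sigma\in\Gamma_S}\z_S^{\sigma^{-1}}\otimes\sigma\ \in\ H^1(\Q(S),T)\otimes_{\Zp}I_{\Gamma_S}^{k}.
\end{equation*}
I would then push this forward under the $\Zp$-linear map $f\colon H^1(\Q(S),T)\to\Zp$ given by $f(z)=(c_S,\loc_p(z))$, which takes values in $\Zp$ by $(\ref{dual and exp})$. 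The induced map $f\otimes\mathrm{id}\colon H^1(\Q(S),T)\otimes_{\Zp}\Zp[\Gamma_S]\to\Zp[\Gamma_S]$ satisfies $(f\otimes\mathrm{id})(m\otimes x)=f(m)\,x$, so it carries $H^1(\Q(S),T)\otimes I_{\Gamma_S}^{k}$ into $I_{\Gamma_S}^{k}$, while it sends $\Theta_S$ to $\vartheta(\z_S)$ by Definition \ref{m-t-k}. Hence $\vartheta(\z_S)\in I_{\Gamma_S}^{k}$, and Corollary \ref{theta} gives $\theta_{S,p}\in I_{\Gamma_S}^{k}$. Finally I would lift to $G_S$: as $\Gamma_S$ is the maximal $p$-subextension of $\Q(\zeta_S)/\Q$, we have $G_S=\Gamma_S\times H_S$ with $H_S=\Gal(\Q(\mu_S)/\Q(S))$ of order prime to $p$, and $\theta_{S,p}$ is by definition the image of $\theta_S$ in $\Zp[\Gamma_S]$; Lemma \ref{p-prime} with $K=\Gamma_S$, $H=H_S$ and $t=k$ then yields $\theta_S\in I_{G_S}^{k}$.

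I do not expect a serious obstacle here: the content lies entirely in Corollaries \ref{theta} and \ref{general divisibility}, and the remaining points---the bookkeeping reduction to $(S,p)=1$ and the elementary fact that applying a $\Zp$-linear functional to the cohomology factor of $\Theta_S$ preserves powers of the augmentation ideal and produces exactly $\vartheta(\z_S)$---are routine.
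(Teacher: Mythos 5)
Your argument is correct and follows the paper's proof exactly: reduce to the case $(S,p)=1$ via Proposition \ref{mazur-tate}(1) and Lemma \ref{p-prime}, then establish $\theta_{S,p}\in I_{\Gamma_S}^{\min\{\mathfrak{r}_{\min},p\}}$ by combining Corollaries \ref{general divisibility} and \ref{theta}, and lift back to $G_S$ with Lemma \ref{p-prime}. The only difference is that you spell out the push-forward of $\Theta_S$ under the $\Zp$-linear functional $z\mapsto(c_S,\loc_p(z))$ that turns Corollary \ref{general divisibility} into a statement about $\vartheta(\z_S)$; the paper leaves this implicit in the phrase ``which follows from Corollaries \ref{general divisibility} and \ref{theta}.''
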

\begin{proof}We first assume that $(S,p)=1$. 
By Lemma \ref{p-prime}, we are reduced  to proving  that
$\theta_{S,p} \in I_{\Gamma_S}^{\min\{\mathfrak{r}_{\min},\ p\}}$, which follows from
Corollaries \ref{general divisibility} and   \ref{theta}.

Next, we assume that $(S,p)\not=1$. 
If we put $S^{\prime}=S/p,$ then $p\nmid S^{\prime}.$
By the case above and Proposition \ref{mazur-tate}, we have
\begin{equation*}
\pi_{S/S^{\prime}}(\theta_{S})=-\Fr_{p}(1-a_p\Fr_{p}^{-1}+\Fr_{p}^{-2})\theta_{S^{\prime}} \in I_{G_{S^{\prime}}}^{\min\{\mathfrak{r}_{\min},\ p\}}.
\end{equation*}Since $p\nmid |G_p|,$  Lemma \ref{p-prime} implies that $\theta_{S} \in I_{G_{S}}^{\min\{\mathfrak{r}_{\min},\ p\}}.$ 
\end{proof}

We define $\displaystyle \Sel(\Q,E[p^\infty])= \varinjlim_{n}\Sel(\Q,E[p^n])$ and put $r_{p^\infty}= \mathrm{corank}_{\Zp}(\Sel(\Q,E[p^\infty]))$.
Since $\Sel(\Q,E[p^n]) \to \Sel(\Q,E[p^\infty])[p^n]$ is surjective (cf.\ \cite[Lemma 1.5.4]{rub}),
we have
\begin{equation}\label{selsel}
r_{p^n}(\Sel(\Q,E[p^n])) \ge r_{p^n}\left(\Sel(\Q,E[p^\infty])[p^n]\right) \ge r_{p{^\infty}}
\end{equation}for $n\ge 1$.
Since $p\nmid |E(\mathbb{F}_{p})|$, by the exact sequence
$0\to \hat{E}(\Zp) \to E(\Qp) \to E(\mathbb{F}_p) \to 0,$
 we have $E(\Qp)/p \cong \Z/p\Z$.
By Lemma \ref{r},
we have
\begin{equation}\label{stst}
r_{p^n}(H^1_{f,p}(\Q,E[p^n])) \ge r_{p^n}(\Sel(\Q,E[p^n]))-1\ \ \ \text{for}\    n \ge 1.
\end{equation}
Combining (\ref{selsel}) and (\ref{stst}), we have
$\mathfrak{r}_{\min} \ge r_{p^{\infty}}-1,$
and hence by Corollary \ref{p-vanishing wo parity} 
we have the following corollary.
\begin{cor}\label{vanishing with selmer rank minus 1}Let $S$ be a square-free product of  primes $\ell$ relatively prime to $N$ such that $E(\mathbb{F}_{\ell})[p]$ is cyclic. 
Then, when $r_{p^\infty} \ge 1,$ we have
\begin{equation*}
\theta_S \in I_{G_S}^{\min\{r_{p^{\infty}}-1, p\}}.
\end{equation*}
\end{cor}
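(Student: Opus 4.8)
The plan is to combine Corollary \ref{p-vanishing wo parity} with the inequality $\mathfrak{r}_{\min} \ge r_{p^\infty}-1$, the latter coming from a level-by-level comparison of the strict Selmer group $H^1_{f,p}(\Q,E[p^n])$ with the full Selmer group $\Sel(\Q,E[p^n])$.

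First I would fix $n \ge 1$ and use the exact sequence
\[
0 \to H^1_{f,p}(\Q,E[p^n]) \to \Sel(\Q,E[p^n]) \to E(\Qp)/p^n .
\]
Because $p$ is chosen with $p\nmid |E(\mathbb{F}_p)|$, the exact sequence $0\to \hat{E}(\Zp)\to E(\Qp)\to E(\mathbb{F}_p)\to 0$ shows $E(\Qp)/p^n\cong \Z/p^n\Z$, so $r_p(E(\Qp)/p^n)=1$; applying Lemma \ref{r} to the displayed sequence gives $r_{p^n}(\Sel(\Q,E[p^n]))\le r_{p^n}(H^1_{f,p}(\Q,E[p^n]))+1$, which is (\ref{stst}). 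Together with (\ref{selsel}) (surjectivity of $\Sel(\Q,E[p^n])\to \Sel(\Q,E[p^\infty])[p^n]$ and the definition of $r_{p^\infty}$ as a $\Zp$-corank) this yields $r_{p^n}(H^1_{f,p}(\Q,E[p^n]))\ge r_{p^\infty}-1$ for every $n\ge 1$, hence $\mathfrak{r}_{\min}\ge r_{p^\infty}-1$.

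Finally, the hypotheses on $S$ are exactly those of Corollary \ref{p-vanishing wo parity}, so that corollary gives $\theta_S\in I_{G_S}^{\min\{\mathfrak{r}_{\min},\,p\}}$. Since $r_{p^\infty}\ge 1$ forces $\mathfrak{r}_{\min}\ge r_{p^\infty}-1\ge 0$, we have $\min\{\mathfrak{r}_{\min},p\}\ge \min\{r_{p^\infty}-1,p\}$ and therefore $I_{G_S}^{\min\{\mathfrak{r}_{\min},p\}}\subseteq I_{G_S}^{\min\{r_{p^\infty}-1,p\}}$, which is the claim. (When $r_{p^\infty}=1$ the assertion is vacuous, since $I_{G_S}^0$ is the whole ring.)

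There is no real obstacle here: all the substance lies in Corollary \ref{p-vanishing wo parity} --- ultimately in Theorem \ref{keythm} and Corollary \ref{general divisibility} on divisibility of Darmon-Kolyvagin derivatives --- and in the routine Selmer-group comparisons (\ref{selsel}) and (\ref{stst}). The only conceptual point to flag is that one loses exactly one unit of vanishing compared with $r_{p^\infty}$, because Kato's classes are controlled only by the strict Selmer group (whose local condition at $p$ is trivial) rather than by $\Sel(\Q,E[p^\infty])$ itself; recovering that last unit is what the $p$-parity argument outlined in the introduction accomplishes, but it is not needed for this corollary.
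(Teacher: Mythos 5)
Your proof is correct and follows essentially the same route as the paper: derive (\ref{stst}) from Lemma \ref{r} applied to the exact sequence $0\to H^1_{f,p}(\Q,E[p^n])\to \Sel(\Q,E[p^n])\to E(\Qp)/p^n$ together with $E(\Qp)/p^n\cong\Z/p^n\Z$, combine with (\ref{selsel}) to get $\mathfrak{r}_{\min}\ge r_{p^\infty}-1$, and then invoke Corollary \ref{p-vanishing wo parity}. The remarks about the vacuous case $r_{p^\infty}=1$ and about the source of the lost unit of vanishing are accurate but not needed for the argument.
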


\subsection{Application of the $p$-parity conjecture}\label{section of parity}
First, following \cite[Chapter 1, \S 6]{m-t},
 we recall the functional equation of Mazur-Tate elements.
Let $w_N$ be the operator on $S_2(\Gamma_0(N))$ defined as $g \mapsto \frac{1}{N\tau^2}g\left(\frac{-1}{N\tau}\right)$. 
Let $f$ be  the newform corresponding to $E$. Then there exists  $\varepsilon_f \in \{\pm 1\}$ such that
$w_N(f) = -\varepsilon_f f$. 
It is known that 
\begin{equation}\label{parity of L}
\varepsilon_f = (-1)^{\mathrm{ord}_{s=1}(L(E,s))}.
\end{equation}
Let $S$ be a positive integer relatively prime to $N$.
By  \cite[Chapter 1, \S 6]{m-t-t} and (\ref{known terminology}), for an integer $a$ relatively prime to $S$, we have
\begin{equation}\label{aprime}
\left[a/S\right]^{\pm}_E=\varepsilon_f \left[a^{\prime}/S\right]^{\pm}_E,
\end{equation}where $a^{\prime}$ is an integer  satisfying $a^{\prime}aN \equiv -1 \mod S.$
 Let $\iota$ be the map $\Q[G_S] \to \Q[G_S]$ sending $\sigma \in G_S$ to $\sigma^{-1}.$
We have a functional equation of Mazur-Tate elements as follows.
\begin{prop}\label{functional equation of mt}
\begin{equation*}
\theta_S =\varepsilon_f \delta_{-N}^{-1}\iota(\theta_S).
\end{equation*}
\end{prop}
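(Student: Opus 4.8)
The plan is to deduce the functional equation directly from the definition of $\theta_S$ together with the symmetry (\ref{aprime}) of the modular symbols $[a/S]_E^{\pm}$, which itself encodes the Atkin--Lehner relation $w_N(f)=-\varepsilon_f f$ together with (\ref{parity of L}). Throughout, $S$ is coprime to $N$, so $-N$ and $N^{-1}$ make sense in $(\Z/S\Z)^{\times}$, and I will freely use the identities $\delta_x\delta_y=\delta_{xy}$ and $\delta_x^{-1}=\delta_{x^{-1}}$ in $G_S$ (where, as usual, an integer exponent is interpreted modulo $S$).

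First I would apply $\iota$ termwise to Definition \ref{def of mazur-tate}, so that
\[
\iota(\theta_S)=\sum_{a\in(\Z/S\Z)^{\times}}\left([a/S]_E^{+}+[a/S]_E^{-}\right)\delta_{a^{-1}}.
\]
For each $a$ coprime to $S$, let $a'$ be the integer (well defined modulo $S$) with $a'aN\equiv-1\bmod S$, i.e.\ $a'\equiv-a^{-1}N^{-1}\bmod S$; then (\ref{aprime}) gives $[a/S]_E^{\pm}=\varepsilon_f[a'/S]_E^{\pm}$, while $a^{-1}\equiv-a'N\bmod S$ gives $\delta_{a^{-1}}=\delta_{-N}\,\delta_{a'}$. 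Substituting both into the display above,
\[
\iota(\theta_S)=\varepsilon_f\,\delta_{-N}\sum_{a\in(\Z/S\Z)^{\times}}\left([a'/S]_E^{+}+[a'/S]_E^{-}\right)\delta_{a'}.
\]

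Finally, since $a\mapsto a'=-a^{-1}N^{-1}$ is a bijection of $(\Z/S\Z)^{\times}$ (here one uses $\gcd(N,S)=1$), the last sum is again $\theta_S$ after relabelling the dummy variable, whence $\iota(\theta_S)=\varepsilon_f\,\delta_{-N}\,\theta_S$. Multiplying on the left by $\varepsilon_f\,\delta_{-N}^{-1}$ and using $\varepsilon_f^2=1$ (the ring $\Q[G_S]$ being commutative) yields $\theta_S=\varepsilon_f\,\delta_{-N}^{-1}\,\iota(\theta_S)$, as asserted. There is no real obstacle here: the substantive input is (\ref{aprime}), quoted from \cite{m-t-t}, and the argument is pure bookkeeping with the substitution (\ref{aprime}) and the single reindexing $a\mapsto a'$; the one point requiring care is tracking the factor $\delta_{-N}$ correctly through the identity $\delta_{a^{-1}}=\delta_{-N}\,\delta_{a'}$.
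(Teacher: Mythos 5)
Your proof is correct and is essentially the same computation as the paper's: both expand $\iota(\theta_S)$, substitute via (\ref{aprime}) with $a' \equiv -(aN)^{-1} \bmod S$, and reindex over the bijection $a\mapsto a'$; you merely isolate $\iota(\theta_S)=\varepsilon_f\delta_{-N}\theta_S$ first and then rearrange, whereas the paper starts directly from $\varepsilon_f\delta_{-N}^{-1}\iota(\theta_S)$.
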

\begin{proof}We have
\begin{align*}
&\varepsilon_f \delta_{-N}^{-1}\iota(\theta_{S})\\
=&\varepsilon_f \delta_{-N}^{-1}\sum_{a\in(\Z/S\Z)^{\times}}\left(\left[\frac{a}{S}\right]^{+}_E+\left[\frac{a}{S}\right]^{-}_E\right)\delta_a^{-1}=\varepsilon_f \sum_{a\in(\Z/S\Z)^{\times}}\left(\left[\frac{a}{S}\right]^{+}_E+\left[\frac{a}{S}\right]^{-}_E\right)\delta_{(-aN)^{-1}}\\
\overset{(\mathrm{a})}{=}&\sum_{a\in(\Z/S\Z)^{\times}}\left(\left[\frac{a^{\prime}}{S}\right]^{+}_E+\left[\frac{a^{\prime}}{S}\right]^{-}_E\right)\delta_{a^{\prime}}=\theta_S,
\end{align*}where the equation (a) follows from (\ref{aprime}).
\end{proof}
For $\gamma \in G_S$, we have
\begin{equation*}
\iota(\gamma -1 ) = \gamma^{-1}-1 \equiv -\gamma^{-1}(\gamma -1) \equiv -(\gamma-1) \mod I_{G_S}^2.
\end{equation*}Then, we have $\iota=-1$ on $I_{G_S}/I_{G_S}^2$, 
and similarly $\iota=(-1)^t$ on $I_{G_S}^t/I_{G_S}^{t+1}$ for $t\ge 1$.
 
\begin{thm}\label{p-vanishing}We suppose that   $p$ does not divide $6N \cdot |E(\mathbb{F}_p)|\prod_{\ell|N}m_{\ell}$ and the Galois representation $G_{\Q}\to \mathrm{Aut}_{\Zp}(T_p(E))$ is surjective.
Let $S$ be a square-free product of good primes $\ell$  such that $E(\mathbb{F}_{\ell})[p]$ is isomorphic to $\Z/p\Z$ or $0$.
Then, we have
\begin{equation*}
\theta_{S} \in I_{G_S}^{\min\{r_{p^\infty}, p\}} \subseteq \Zp[G_S].
\end{equation*}In particular, if $p\ge r_E$ then $\theta_S\in I_{G_S}^{r_E}.$
\end{thm}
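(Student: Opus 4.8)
The plan is to improve the bound of Corollary \ref{vanishing with selmer rank minus 1}, namely $\theta_S\in I_{G_S}^{\min\{r_{p^\infty}-1,p\}}$, by exactly one in the augmentation filtration, using the functional equation of $\theta_S$ (Proposition \ref{functional equation of mt}) together with the $p$-parity theorem. The integrality $\theta_S\in\Zp[G_S]$ is already contained in Corollary \ref{p-vanishing wo parity} (the exponent there may be taken to be $0$), so only the containment in $I_{G_S}^{\min\{r_{p^\infty},p\}}$ remains. First I would dispose of the easy cases: if $r_{p^\infty}=0$ there is nothing more to prove, and if $r_{p^\infty}\geq p+1$ then $\min\{r_{p^\infty}-1,p\}=p=\min\{r_{p^\infty},p\}$, so Corollary \ref{vanishing with selmer rank minus 1} is already the assertion. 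It remains to treat the range $1\leq r_{p^\infty}\leq p$, where $\min\{r_{p^\infty},p\}=r_{p^\infty}$ and Corollary \ref{vanishing with selmer rank minus 1} gives $\theta_S\in I_{G_S}^{r_{p^\infty}-1}$.

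In this range I argue by contradiction. Suppose $\theta_S\notin I_{G_S}^{r_{p^\infty}}$, so the image $\tilde\theta_S$ of $\theta_S$ in the graded piece $I_{G_S}^{r_{p^\infty}-1}/I_{G_S}^{r_{p^\infty}}$ is nonzero. By Proposition \ref{functional equation of mt}, $\theta_S=\varepsilon_f\,\delta_{-N}^{-1}\iota(\theta_S)$. Since $\delta_{-N}^{-1}-1\in I_{G_S}$, multiplication by $\delta_{-N}^{-1}$ preserves the augmentation filtration and acts as the identity on each graded piece; and $\iota$ preserves the filtration and acts as $(-1)^{r_{p^\infty}-1}$ on $I_{G_S}^{r_{p^\infty}-1}/I_{G_S}^{r_{p^\infty}}$ (the case of exponent $\geq 1$ is recorded just before the theorem, and the case of exponent $0$ is clear since $\iota$ commutes with the augmentation). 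Passing to the graded piece therefore gives $\tilde\theta_S=\varepsilon_f(-1)^{r_{p^\infty}-1}\tilde\theta_S$. As the graded piece is a $\Zp$-module, $2\in\Zp^{\times}$ (note $p\nmid 6N$, so $p\neq 2$), and $\tilde\theta_S\neq 0$, this forces $\varepsilon_f(-1)^{r_{p^\infty}-1}=1$; combined with (\ref{parity of L}) this reads $\mathrm{ord}_{s=1}L(E,s)\equiv r_{p^\infty}-1\bmod 2$.

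On the other hand, the $p$-parity theorem (\cite{dok}, \cite{kim}, \cite{nek}) gives $r_{p^\infty}\equiv\mathrm{ord}_{s=1}L(E,s)\bmod 2$, a contradiction. Hence $\theta_S\in I_{G_S}^{r_{p^\infty}}=I_{G_S}^{\min\{r_{p^\infty},p\}}$, which is the first assertion. For the final sentence, the Kummer exact sequence $0\to E(\Q)\otimes\Qp/\Zp\to\Sel(\Q,E[p^\infty])\to\Sha[p^\infty]\to 0$ shows $r_{p^\infty}\geq r_E$; so when $p\geq r_E$ we have $\min\{r_{p^\infty},p\}\geq r_E$ and $\theta_S\in I_{G_S}^{r_E}$.

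The one point requiring care is the parity bookkeeping in the middle step: one must use that the graded pieces of the augmentation filtration are $\Zp$-modules (so that $2\tilde\theta_S=0$ forces $\tilde\theta_S=0$), and that $\theta_S$ genuinely lies in $\Zp[G_S]$ so that the functional equation may be read there; both are already in hand. Everything else is formal — the substantive input is Corollary \ref{vanishing with selmer rank minus 1}, that is, the divisibility of derivatives of Kato's Euler system.
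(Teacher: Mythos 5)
Your proof is correct and follows essentially the same route as the paper: start from Corollary \ref{vanishing with selmer rank minus 1}, reduce to the range $1\le r_{p^\infty}\le p$, and then combine the functional equation of $\theta_S$ with the $p$-parity theorem to gain the extra power of the augmentation ideal. The only difference is cosmetic — you argue by contradiction from $\tilde\theta_S\neq 0$, whereas the paper directly deduces $2\theta_S\equiv 0\bmod I_{G_S}^{r_{p^\infty}}$ and then inverts $2$; you also explicitly spell out the final sentence (via $r_{p^\infty}\ge r_E$), which the paper leaves implicit.
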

\begin{proof}By Corollary \ref{vanishing with selmer rank minus 1}, we have $\theta_{S} \in I_{G_S}^{\min\{r_{p^\infty}-1, p\}}.$ 
If $p\le r_{p^{\infty}}-1$ or $r_{p^{\infty}}=0$, then there is nothing to prove. 
Hence, we assume that $1 \le r_{p^{\infty}} \le p$, 
and then $\theta_S \in I_{G_S}^{r_{p^{\infty}}-1}$.
We note that the group $ G_S$ acts on $I_{G_{S}}^{r_{p^{\infty}}-1}/I_{G_S}^{r_{p^{\infty}}}$ trivially. 
Then, modulo $I_{G_S}^{r_{p^{\infty}}}$, we have
\begin{equation}\label{uuii}
\theta_S \equiv \varepsilon_f \delta_{-N}^{-1}\iota(\theta_S) \equiv \varepsilon_f \delta_{-N}^{-1}(-1)^{r_{p^{\infty}}-1}\theta_S\equiv \varepsilon_f (-1)^{r_{p^{\infty}}-1}\theta_S.
\end{equation}
By the $p$-parity conjecture (cf.\ \cite[Theorem 1.4]{dok}),
\begin{equation*}
(-1)^{\mathrm{ord}_{s=1}(L(E,s))} = (-1)^{r_{p^{\infty}}}.
\end{equation*}
Combining this with (\ref{parity of L}) and  (\ref{uuii}),
we have
\begin{equation*}
2\theta_S \equiv 0 \mod I_{G_S}^{r_{p^{\infty}}}.
\end{equation*}Since $2$ is assumed to be invertible in $\Zp$, we conclude that 
$\theta_S \in I_{G_S}^{r_{p^{\infty}}}$.
\end{proof}

\section{Proof of the main result}\label{relation}
\subsection{The order of vanishing}\label{the proof of the main result}
Let $R$ be a subring of $\Q$ in which the primes satisfying  at least one of the following conditions are invertible: 
\begin{enumerate}
\item $p$ divides  $6N\cdot|E(\mathbb{F}_p)|\prod_{\ell|N}m_{\ell}$,
\item the Galois representation $G_{\Q} \to \mathrm{Aut}_{\Zp}(T_p(E))$ is \textit{not} surjective,
\item $p < r_E$.
\end{enumerate}
For an integer $S$,
we denote by $I_S$ the augmentation ideal of $R[G_S]$.

\begin{thm}\label{weak vanishing thm}Let $S$ be a square-free product of good primes $\ell$ such that for each prime $p$ not invertible in $R$, the module $E(\mathbb{F}_{\ell})[p]$ is cyclic. Then, we have
\begin{equation*}
\theta_S \in I_S^{r_E}.
\end{equation*}
\end{thm}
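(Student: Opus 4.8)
The plan is to reduce the statement over the subring $R$ to a family of $\Zp$-statements and then feed in the local result proved in Section \ref{section: kato and mazur-tate}. First I would record that $\theta_S \in R[G_S]$: this is the assertion of Remark \ref{first remark} (5), coming from \cite[\S 3]{ste} together with the fact that every prime $p$ for which the mod-$p$ Galois representation fails to be surjective is invertible in $R$ (condition (ii) in the definition of $R$). With $\theta_S$ now an element of $R[G_S]$, Lemma \ref{globallocal} applies, and it tells us that $\theta_S \in I_S^{r_E}$ is equivalent to $\theta_S \in \Zp\otimes_R I_S^{r_E}$ for every prime $p$ not invertible in $R$. So it suffices to fix such a $p$ and prove the $\Zp$-coefficient statement.

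Next I would observe that, for a prime $p$ not invertible in $R$, none of the three conditions defining $R$ can hold for $p$. Unwinding the negations: $p\nmid 6N\cdot|E(\mathbb{F}_p)|\prod_{\ell|N}m_\ell$, the representation $G_\Q\to\mathrm{Aut}_{\Zp}(T_p(E))$ is surjective, and $p\ge r_E$. On the other hand, $S$ is by hypothesis a square-free product of good primes $\ell$ for which $E(\mathbb{F}_\ell)[p]$ is cyclic, i.e. isomorphic to $\Z/p\Z$ or $0$. These are exactly the hypotheses of Theorem \ref{p-vanishing}. (Note that it is immaterial whether or not $p\mid S$, since Theorem \ref{p-vanishing}, via Corollary \ref{p-vanishing wo parity}, already incorporates both cases using Proposition \ref{mazur-tate} (1) and Lemma \ref{p-prime}.)

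Then I would simply invoke Theorem \ref{p-vanishing}, which gives $\theta_S\in I_{G_S}^{\min\{r_{p^\infty},\,p\}}\subseteq\Zp[G_S]$, and in particular, because $p\ge r_E$, that $\theta_S\in I_{G_S}^{r_E}$. Since the prime $p$ was arbitrary among those not invertible in $R$, Lemma \ref{globallocal} yields $\theta_S\in I_S^{r_E}$, which is the claim. There is essentially no genuine obstacle left at this stage: all the real work has been carried out upstream (the divisibility of Darmon-Kolyvagin derivatives of Kato's Euler system in Theorem \ref{keythm}/Corollary \ref{general divisibility}, its translation into Mazur-Tate elements via Corollaries \ref{theta} and \ref{p-vanishing wo parity}, and the bridging use of the functional equation of $\theta_S$ and the $p$-parity conjecture in Theorem \ref{p-vanishing}). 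The only point that deserves a moment's care is the purely bookkeeping observation that "$p$ not invertible in $R$" forces the simultaneous failure of (i)--(iii), so that the cyclicity hypothesis imposed on $E(\mathbb{F}_\ell)[p]$ in the statement is precisely what is needed to apply Theorem \ref{p-vanishing} at each relevant $p$.
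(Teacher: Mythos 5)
Your proposal is correct and follows essentially the same route as the paper's proof: reduce via Lemma \ref{globallocal} to the $\Zp$-statement for each prime $p$ not invertible in $R$, unwind conditions (i)--(iii) to see that the hypotheses of Theorem \ref{p-vanishing} are met, and apply it together with $p\ge r_E$. You have simply spelled out the bookkeeping (including the preliminary check that $\theta_S\in R[G_S]$ and the case $p\mid S$) that the paper leaves implicit in its two-line proof.
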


\begin{proof}
For a prime $p$ not invertible in $R$, we see that $S$ satisfies the assumption of Theorem \ref{p-vanishing},
and then $\theta_S \in \Zp \otimes_R I_S^{r_E}$.
By Lemma \ref{globallocal}, we complete the proof.
\end{proof}

\begin{rem}\label{density computation}
For distinct primes $p$ and $\ell$, the module $E(\mathbb{F}_{\ell})[p]$ is  isomorphic to $0,  \Z/p\Z$ or $(\Z/p\Z)^{\oplus 2}$. 
Furthermore, $E(\mathbb{F}_{\ell})[p] \cong (\Z/p\Z)^{\oplus 2}$ if and only if $\ell$ splits completely in $\Q(E[p]).$
By Chebotarev's density theorem, if the representation $G_{\Q} \to \mathrm{Aut}(E[p])$ is surjective, then the density of such primes $\ell$  is equal to $1/|\mathrm{GL}_2(\mathbb{F}_p)|=1/(p^2-1)(p^2-p).$
Hence, if we denote by $\delta_R$ the density (if it exists)  of primes $\ell$ that satisfy the assumption of Theorem \ref{weak vanishing thm}, then
\begin{equation*}
\delta_R \ge 1-\sum_{p\notin R^{\times}}\frac{1}{(p^2-1)(p^2-p)} \ge 1-\sum_{p\ge 5}\frac{1}{(p^2-1)(p^2-p)}> 0.99.
\end{equation*}
\end{rem}
\begin{cor}\label{under additional assumption}We assume that $\prod_{\ell^{\prime}}(\ell^{\prime}-1) \in R^{\times}$, where $\ell^{\prime}$ ranges over all the split multiplicative primes.
Let $S$ be a square-free product of primes $\ell$ as in Theorem \ref{weak vanishing thm} and  split multiplicative primes.
If $b_{2,R}(S)$ denotes the number of primes $\ell$ dividing $S$ such that  $\ell-1 \in R^{\times}$ and $a_{\ell}=2,$
then
\begin{equation*}
\theta_{S} \in I_{S}^{r_E+\mathrm{sp}(S)+b_{2,R}(S)}.
\end{equation*}
\end{cor}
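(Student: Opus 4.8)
The plan is to reduce the statement to the case of a product of good primes, which is exactly Theorem \ref{weak vanishing thm}, and then to recover the extra powers of $I_S$ coming from the split multiplicative primes and from the good primes with $a_\ell=2$ by peeling those primes off one at a time via the distribution relation of Proposition \ref{mazur-tate}(1).

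First I would apply Lemma \ref{globallocal} to reduce to showing, for each prime $p$ not invertible in $R$, that $\theta_S\in I_{G_S}^{t}$ in $\Zp[G_S]$, where $t:=r_E+\mathrm{sp}(S)+b_{2,R}(S)$ and $I_{G_S}$ is the augmentation ideal of $\Zp[G_S]$. Fixing such a $p$, I would write $S=S_0S_1$, where $S_1$ is the product of the split multiplicative primes dividing $S$ together with the good primes $\ell\mid S$ satisfying $a_\ell=2$ and $\ell-1\in R^\times$, and $S_0=S/S_1$. This choice is engineered so that: $S_1$ has exactly $\mathrm{sp}(S)+b_{2,R}(S)$ prime divisors; each $\ell\mid S_1$ satisfies $P_\ell(1)=0$ (one has $a_\ell=1$ for a split multiplicative prime, and $P_\ell(t)=(1-t)^2$ for a good prime with $a_\ell=2$); and, by the standing hypothesis $\prod_{\ell'}(\ell'-1)\in R^\times$ together with the definition of $S_1$, every $\ell\mid S_1$ has $\ell-1\in R^\times$, so that $p\nmid |G_{S_1}|=\prod_{\ell\mid S_1}(\ell-1)$. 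Meanwhile $S_0$ is still a square-free product of good primes obeying the cyclicity hypothesis of Theorem \ref{weak vanishing thm}, which therefore gives $\theta_{S_0}\in I_{S_0}^{r_E}$, hence $\theta_{S_0}\in I_{G_{S_0}}^{r_E}$ after base change to $\Zp$.

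Next I would apply Proposition \ref{mazur-tate}(1) repeatedly, peeling off the prime divisors of $S_1$ one at a time, to get in $\Zp[G_{S_0}]$ an identity of the form $\pi_{S/S_0}(\theta_S)=\pm\big(\prod_{\ell\mid S_1}\Fr_\ell\,P_\ell(\Fr_\ell^{-1})\big)\theta_{S_0}$, with $\Fr_\ell\in G_{S_0}$ the Frobenius at $\ell$. Since $P_\ell(1)=0$, each $P_\ell(\Fr_\ell^{-1})$ lies in $I_{G_{S_0}}$, and multiplying by the unit $\Fr_\ell$ keeps it there; with $\mathrm{sp}(S)+b_{2,R}(S)$ such factors and $\theta_{S_0}\in I_{G_{S_0}}^{r_E}$, this shows $\pi_{S/S_0}(\theta_S)\in I_{G_{S_0}}^{t}$ (in fact a good prime with $a_\ell=2$ contributes a factor in $I_{G_{S_0}}^{2}$, giving slightly more, but this is not needed). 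To finish, I would note that $\pi_{S/S_0}\colon\Zp[G_S]\to\Zp[G_{S_0}]$ is the map induced by the projection $G_S=G_{S_0}\times G_{S_1}\to G_{S_0}$, and $p\nmid |G_{S_1}|$, so Lemma \ref{p-prime} (with $K=G_{S_0}$, $H=G_{S_1}$) upgrades $\pi_{S/S_0}(\theta_S)\in I_{G_{S_0}}^{t}$ to $\theta_S\in I_{G_S}^{t}$; letting $p$ range over the primes not invertible in $R$ and applying Lemma \ref{globallocal} again concludes the argument.

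The one genuinely delicate step, and the reason the hypotheses demand $\ell-1\in R^\times$ for the relevant primes, is the last one: Proposition \ref{mazur-tate}(1) controls only the projection $\pi_{S/S_0}(\theta_S)$, and in general a bound on a projection says nothing about the original element; it is precisely the coprimality $p\nmid |G_{S_1}|$, through Lemma \ref{p-prime}, that bridges this gap. Were one to allow split multiplicative or $a_\ell=2$ primes $\ell$ with $p\mid\ell-1$, one would instead have to work directly inside $\Zp[\Gamma_S]$ with the distribution relation in the style of Proposition \ref{trivial zeros of euler system}; the present hypotheses let us avoid that, so that everything beyond Theorem \ref{weak vanishing thm} is routine group-ring bookkeeping.
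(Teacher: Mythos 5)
Your proof is correct and follows exactly the route the paper takes: the same reduction via Lemma \ref{globallocal}, the same decomposition $S=S_0S_1$ (the paper calls $S_1$ there $S'$), the same application of Theorem \ref{weak vanishing thm} to $\theta_{S_0}$, the same use of $P_\ell(1)=0$ through Proposition \ref{mazur-tate}(1) to contribute powers of the augmentation ideal, and the same invocation of Lemma \ref{p-prime} (via $p\nmid|G_{S_1}|$) to lift the conclusion from the projection back to $\theta_S$ itself.
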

\begin{proof}
By Lemma \ref{globallocal}, it suffices to show that for each prime $p$ not invertible in $R$,
\begin{equation}\label{local refined conjecture}
\theta_{S} \in \Zp \otimes I_{S}^{r_E+\mathrm{sp}(S)+b_{2,R}(S)}.
\end{equation}We denote by $S^{\prime}$ the product of the primes $\ell$ dividing $S$ such that $\ell$ is either a split multiplicative prime, or a good prime with $a_{\ell}=2$ and $\ell-1 \in R^{\times}$.
We put $S_0 =S/S^{\prime}.$
By Proposition \ref{mazur-tate} (1),
we have
\begin{equation*}
\pi_{S/S_0}(\theta_{S})=\left(\prod_{\ell|S^{\prime}}-\Fr_{\ell}P_{\ell}(\Fr_{\ell}^{-1})\right)\theta_{S_0}.
\end{equation*}We note that for each prime $\ell |S^{\prime},$ we have $P_{\ell}(1) =0,$ and then $P_{\ell}(\Fr_{\ell}^{-1}) \in I_{\ell}.$
Hence, by Theorem \ref{weak vanishing thm}, we have $\pi_{S/S_0}(\theta_{S}) \in I_{S_0}^{r_E+\mathrm{sp}(S)+b_{2,R}(S)}.$
Since each prime divisor $\ell$ of $S^{\prime}$ satisfies $\ell-1 \in R^{\times}$,
by Lemma \ref{p-prime} we obtain (\ref{local refined conjecture}).
\end{proof}

\subsection{The leading coefficients}
 Let $S$ be as in Theorem \ref{weak vanishing thm}.
Let  $p $ be a prime not invertible in $R$ such that $p\nmid S$.
As in Section \ref{section derivatives}, we denote by $\mathscr{R}_p$ the set of good primes $\ell$
 such that $\ell \equiv 1 \mod p.$
 We then write $S=\ell_1\cdots \ell_s$, where $\ell_1,\ell_2,\ldots,\ell_n \in \mathscr{R}_{p}$, and $\ell_{n+1},\ldots,\ell_s \notin \mathscr{R}_{p}$.
We put $S_1=\ell_1\cdots\ell_n$ and $S_2=\ell_{n+1}\cdots\ell_s.$
We denote by $\tilde{\theta}_{S}^{(p)}$ the image of $\theta_S$ in $\Z/p\Z \otimes_R I_S^{r_E}/I_{S}^{r_E+1}.$
\begin{thm}\label{theorem on leading term}
 If
$\tilde{\theta}_{S}^{(p)}\not=0,$
then we have
\begin{equation*}
\Sha[p]=0,  \ \ \ \ p\nmid J_{S_1},\ \ \ \ \ p\nmid \prod_{\ell|S_2}(a_{\ell}-2). 
\end{equation*}
\end{thm}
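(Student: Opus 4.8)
The plan is to convert non-vanishing of the leading coefficient, for the fixed prime $p$, into the hypotheses of Corollary~\ref{toward leading term}, after first peeling off the Euler factors at the primes dividing $S_2$.

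Since $p \nmid \ell-1$ for every $\ell \mid S_2$, we have $\Gamma_\ell = 1$ for such $\ell$, so $\Q(S) = \Q(S_1)$ and $\Gamma_S = \Gamma_{S_1}$; write $\theta_{S,p}$ for the image of $\theta_S$ in $\Zp[\Gamma_S] = \Zp[\Gamma_{S_1}]$. By Theorem~\ref{weak vanishing thm} we know $\theta_S \in I_S^{r_E}$, so Lemma~\ref{p-prime mod p} (applied to $G_S = \Gamma_S \times H_S$, $p \nmid |H_S|$) shows that the hypothesis $\tilde\theta_S^{(p)} \neq 0$ forces $\tilde\theta_{S,p}^{(p)} \neq 0$, where $\tilde\theta_{S,p}^{(p)}$ is the image of $\theta_{S,p}$ in $\Z/p\Z \otimes I_{\Gamma_{S_1}}^{r_E}/I_{\Gamma_{S_1}}^{r_E+1}$. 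Iterating Proposition~\ref{mazur-tate}(1) along the primes dividing $S_2$ and projecting to $\Zp[\Gamma_{S_1}]$ gives
\[
\theta_{S,p} = \Big(\prod_{\ell \mid S_2} \big({-}\overline{\Fr}_\ell\, P_\ell(\overline{\Fr}_\ell^{-1})\big)\Big)\,\theta_{S_1,p},
\]
where $\overline{\Fr}_\ell$ is the image of $\Fr_\ell$ in $\Gamma_{S_1}$; since $\overline{\Fr}_\ell \equiv 1 \bmod I_{\Gamma_{S_1}}$ and $\ell$ is good, each factor is $\equiv a_\ell - 2 \bmod I_{\Gamma_{S_1}}$, and as $\theta_{S_1,p} \in I_{\Gamma_{S_1}}^{r_E}$ (Theorem~\ref{p-vanishing} for $S_1$) this yields $\tilde\theta_{S,p}^{(p)} = \big(\prod_{\ell\mid S_2}(a_\ell-2) \bmod p\big)\cdot\tilde\theta_{S_1,p}^{(p)}$. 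Hence $p \nmid \prod_{\ell\mid S_2}(a_\ell-2)$ — the third assertion — and $\tilde\theta_{S_1,p}^{(p)} \neq 0$.

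It remains to deduce $\Sha[p] = 0$ and $p \nmid J_{S_1}$ from $\tilde\theta_{S_1,p}^{(p)} \neq 0$. By Corollary~\ref{theta}, $\theta_{S_1,p} = \vartheta(\z_{S_1})$; applying the Taylor expansion of Proposition~\ref{taylor} to $\Theta_{S_1} = \sum_\gamma \z_{S_1}^{\gamma^{-1}} \otimes \gamma$ and then the pairing $x \otimes g \mapsto (c_{S_1},x)\,g$ yields
\[
\theta_{S_1,p} = \sum_{\underline k}\, (c_{S_1}, D_{\underline k}\z_{S_1})\;\iota\big((\sigma_{\ell_1}-1)^{k_1}\cdots(\sigma_{\ell_n}-1)^{k_n}\big).
\]
For every derivative $D_{\underline k}$ with $\ord(D_{\underline k}) < r_E$ one has $r_p(\Sel(\Q,E[p])) \ge r_E > \ord(D_{\underline k})$, so Theorem~\ref{s} (note $E(\Qp)/p \cong \Z/p\Z$ since $p \nmid |E(\mathbb{F}_p)|$) shows $\loc_p(D_{\underline k}\z_{S_1} \bmod p) \in H^1_f(\Q(S_1)\otimes\Qp,E[p])$, hence $(c_{S_1}, D_{\underline k}\z_{S_1}) \equiv 0 \bmod p$. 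Reducing the displayed identity modulo $p$ in $\mathbb{F}_p[\Gamma_{S_1}] = \bigotimes_i \mathbb{F}_p[\bar\pi_i]/(\bar\pi_i^{|\Gamma_{\ell_i}|})$, in which the degree-$r_E$ monomials that occur are linearly independent, the sub-maximal terms disappear and $\tilde\theta_{S_1,p}^{(p)} \neq 0$ forces $(c_{S_1}, D\z_{S_1}) \not\equiv 0 \bmod p$ for some derivative $D = D_{\underline k}$ with $\Supp(D) = S_1$ and $\ord(D) = r_E$. Since the pairing with $c_{S_1}$ factors through the image in $H^1_{/f}$, this gives $\loc_p(D\z_{S_1} \bmod p) \notin H^1_f(\Q(S_1)\otimes\Qp,E[p])$. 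Corollary~\ref{toward leading term}, applied with $S$ replaced by $S_1$, then yields $\Sha[p] = 0$ and surjectivity of $E(\Q)/p \to \oplus_{\ell \mid S_1} E(\Q_\ell)/p$; and because $p \nmid 6N\prod_{\ell\mid N}m_\ell$ makes the contribution of the $\oplus_{\ell\mid N}E(\Q_\ell)/E_0(\Q_\ell)$ factor prime to $p$ while $E(\Q_\ell)/p \cong E(\mathbb{F}_\ell)[p]$ for the good primes $\ell \mid S_1$, this surjectivity is exactly $p \nmid J_{S_1}$.

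The main difficulty is the group-ring step in the last paragraph: passing from the Taylor identity for $\theta_{S_1,p}$ to the conclusion that a derivative of order exactly $r_E$ is a $p$-unit against $c_{S_1}$. This rests on the vanishing modulo $p$ of all sub-maximal derivatives — which is why one needs $r_p(\Sel(\Q,E[p])) \ge r_E$, i.e.\ the standing hypothesis $p \ge r_E$ — together with Lemmas~\ref{order prime to p}, \ref{p-prime mod p} and \ref{p-power of augmentation ideal} to compare $\Z/p\Z \otimes I_{\Gamma_{S_1}}^{r_E}/I_{\Gamma_{S_1}}^{r_E+1}$ with the degree-$r_E$ part of $\mathbb{F}_p[\Gamma_{S_1}]$; the borderline situation where $r_E$ equals $p$, in which $I_{\Gamma_{S_1}}^{r_E}$ acquires extra $p$-torsion, requires carrying out this comparison with additional care.
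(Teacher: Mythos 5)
Your proof is correct and follows the same route as the paper: Taylor-expand $\vartheta(\z_{S_1})$, use Theorem~\ref{s} (via Lemma~\ref{r}) to show that the derivatives of order $<r_E$ pair to $0$ against $c_{S_1}$ modulo $p$, conclude from $\tilde\theta_{S_1,p}^{(p)}\neq 0$ that some order-$r_E$ derivative has nontrivial image in $H^1_{/f}$ at $p$, and feed that into Corollary~\ref{toward leading term}; the $S_2$ Euler factors are handled by the norm relation and Lemma~\ref{p-prime mod p}. The only differences from the paper are organizational rather than mathematical — you peel off the factor $\prod_{\ell\mid S_2}(a_\ell-2)$ at the outset instead of at the end, and you treat $r_E=0$ uniformly through Corollary~\ref{toward leading term} where the paper gives a separate short argument via $\theta_1=L(E,1)/\Omega^+$ and Kolyvagin--Kato — and you explicitly flag the group-ring comparison between $\Z/p\Z\otimes I_{\Gamma_{S_1}}^{r_E}/I_{\Gamma_{S_1}}^{r_E+1}$ and $\bar I^{r_E}/\bar I^{r_E+1}$ in $\mathbb F_p[\Gamma_{S_1}]$, which the paper uses implicitly.
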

\begin{rem}
By the relation between our $\theta_S$ and the original Mazur-Tate element in \cite{m-t-t}, the same theorem for the leading coefficients considered in \cite{m-t-t} also holds (cf. Section \ref{chapter of refined BSD}).
\end{rem}
\begin{proof}Let $\theta_{S,p}, \Gamma_S$ and  $I_{\Gamma_S}$ be as in Subsection \ref{subsection: kato and mazur-tate}.
First, we assume that $r_E\ge1$. 
We denote by $\tilde{\theta}_{S,p}^{(p)}$ the image of $\theta_{S,p}$ in $\Z/p\Z \otimes I_{\Gamma_S}^{r_E}/I_{\Gamma_S}^{r_E+1}.$ 
Then by Lemma \ref{p-prime mod p}, we have
\begin{equation}\label{non-vanishing of leading term}
\tilde{\theta}_{S,p}^{(p)} \not=0.
\end{equation}
By applying Proposition \ref{taylor} to $\sum_{\gamma \in \Gamma_S}\z_{S}^{\gamma^{-1}}\otimes\gamma$ and by using Corollary \ref{theta}, we have
\begin{equation*}
\theta_{S,p} = \sum_{\underline{k}=(k_1,\ldots,k_s) \in \Z_{\ge 0}^{\oplus s}}(c_S,D_{\underline{k}}\z_S)(\sigma_{\ell_1}^{-1}-1)^{k_1}\cdots (\sigma_{\ell_s}^{-1}-1)^{k_s}.
\end{equation*}
By Lemma \ref{order prime to p},
\begin{equation*}
\theta_{S,p} \equiv \sum_{k_1+\cdots + k_n \le r_E \atop k_{n+1}=\cdots=k_s=0}(c_S,D_{\underline{k}}\z_S)(\sigma_{\ell_1}^{-1}-1)^{k_1}\cdots (\sigma_{\ell_s}^{-1}-1)^{k_s} \mod I_{\Gamma_S}^{r_E+1}.
\end{equation*}
For $\underline{k}$ such that $k_1+\cdots + k_n < r_E$ and $k_{n+1}=\cdots=k_s=0$,
we have $D_{\underline{k}}=N_{S_2} D^{\prime}$, 
where $D^{\prime}=D_{\ell_1}^{(k_1)}\cdots D_{\ell_n}^{(k_n)}$.
We put $S^{\prime}=\mathrm{Cond}(D^{\prime}).$
By applying  Lemma \ref{r} to the exact sequence
\begin{equation*}
0\to H^1_{f,S^{\prime}}(\Q,E[p])\to \Sel(\Q,E[p]) \to \oplus_{\ell|S^{\prime}}E(\Q_{\ell})/p,
\end{equation*} we have  $r_E\le r_p(H^1_{f,S^{\prime}}(\Q,E[p]))+r_p(A(S^{\prime}))$. 
By applying  Theorem \ref{s} to $D^{\prime}\z_{S_1}$, we have
\begin{align*}\label{vanishing coefficient}
\loc_{p}(D\z_S \bmod p) &=\loc_p( N_{S_2}D^{\prime}\z_S \bmod p)=\prod_{\ell|S_2}P_{\ell}(\Fr_{\ell}^{-1})\loc_p(D^{\prime}\z_{S_1} \bmod p)\\
&\in H^1_{f}(\Q(S)\otimes\Qp,E[p]).
\end{align*}
Then,  we have $(c_S,D\z_S) \equiv 0 \mod p,$ and
hence 
\begin{equation*}
\theta_{S,p} \equiv \sum_{k_1+\cdots + k_n = r_E \atop k_{n+1}=\cdots=k_s=0}(c_S,D_{\underline{k}}\z_S)(\sigma_{\ell_1}^{-1}-1)^{k_1}\cdots (\sigma_{\ell_s}^{-1}-1)^{k_s} \mod p\Zp[\Gamma_S]+ I_{\Gamma_S}^{r_E+1}.
\end{equation*}
By (\ref{non-vanishing of leading term}), there exists $\underline{k}$ such that $k_1+\cdots + k_n = r_E$, $k_{n+1}=\cdots=k_s=0$  and $(c_S,D_{\underline{k}}\z_S) \not\equiv 0 \mod p.$
For this $\underline{k}$, if we let $D^{\prime}$ be as above, then 
$\loc_p(D^{\prime}\z_{S_1} \bmod p) \notin H^1_{f}(\Q(S_1)\otimes\Qp,E[p]).$
Since $\mathrm{ord}(D^{\prime})=r_E$, by applying Corollary \ref{toward leading term} to $D^{\prime}\z_{S_1},$ we have $\Sha[p]=0$ and the map $E(\Q)/p \to \oplus_{\ell|S_1}E(\Q_{\ell})/p$ is surjective.
Since $\oplus_{\ell|S_1}E(\Q_{\ell})/p\cong \oplus_{\ell|S_1}E(\mathbb{F}_{\ell})/p$ and $p\nmid \prod_{\ell|N}m_{\ell},$ 
the map 
\begin{equation*}
E(\Q) \to \left[\left(\oplus_{\ell|S_1}E(\mathbb{F}_{\ell})\right) \oplus\left( \oplus_{\ell|N}E(\Q_{\ell})/E_0(\Q_{\ell}) \right)\right]\otimes\Z/p\Z
\end{equation*} is surjective, that is,  $p\nmid J_{S_1}.$
Since $\Gamma_{S_1}$ acts on $I_{S_1}^{r_E}/I_{S_1}^{r_E+1}$ trivially and $\theta_{S_1} \in I_S^{r_E}$, we have 
 \begin{align}\label{s1s2}
\notag  \pi_{S/S_1}(\theta_S) &\equiv\left(\prod_{\ell|S_2}-P_{\ell}(1)\right)\theta_{S_1}\\
& \equiv \left(\prod_{\ell|S_2}(a_{\ell}-2)\right)\theta_{S_1} \ \ \mod I_{S_1}^{r_E+1}.
 \end{align}
Since $p\nmid |G_{\ell}|$ for $\ell|S_2$, by Lemma \ref{p-prime mod p}, the image of $ \pi_{S/S_1}(\theta_S)$ in $\Z/p\Z \otimes I_{S_1}^{r_E}/I_{S_1}^{r_E+1}$ is not zero. 
Hence, by (\ref{s1s2}), we have $p\nmid \prod_{\ell|S_2}(a_{\ell}-2).$

We assume that $r_E=0$. 
By Proposition \ref{mazur-tate}, we have
\begin{equation}\label{theta1}
\pi_{S/1}(\theta_S) = \left(\prod_{\ell|S}(a_{\ell}-2)\right)\theta_1=\left(\prod_{\ell|S}(a_{\ell}-2)\right)\frac{L(E,1)}{\Omega^{+}}\in R.
\end{equation}
Since $\pi_{S/1}(\theta_S) \not\equiv 0 \bmod  p$ and $\theta_1 \in R$, we have $\frac{L(E,1)}{\Omega^{+}}\not\equiv 0 \bmod p.$
By the work of Kolyvagin and Kato (cf.\ \cite[Theorem 3.5.11]{rub}), we have $\Sha[p]=0.$
The equation (\ref{theta1}) also implies that 
\begin{equation}\label{saigo}
p\nmid \prod_{\ell|S}(a_{\ell}-2).
\end{equation}
Since $r_E=0$ and $E(\Q)[p]=0$, we have $E(\Q)/p=0$. 
We note that $|E(\mathbb{F}_{\ell})| \equiv 2-a_{\ell}$ for $\ell \in \mathscr{R}_{p}.$ 
Then by (\ref{saigo}), we have $p\nmid \prod_{\ell|S_1}|E(\mathbb{F}_{\ell})|$, and hence $p\nmid J_{S_1}.$
\end{proof}


\section{Trivial zeros}By an elementary argument, we show that Mazur-Tate elements have trivial zeros induced by  split multiplicative primes and by good primes for which the Hasse-invariant is equal to $2$.
In this section, the Mordell-Weil rank is not involved, and we do not require  divisibility of derivatives of Euler systems or the $p$-parity conjecture.

Let $R$ be a subring of $\Q$  in which all the primes satisfying  at least one of the  conditions (i), (ii) in Subsection \ref{subsection: main result} are invertible.

\begin{thm}\label{p-adic trivial zeros}
For  a square-free integer $S>0$ and $b_2(S)$ as above Proposition \ref{trivial zeros of euler system},  
\begin{equation*}
\theta_{S} \in I_{S}^{\mathrm{sp}(S)+b_{2}(S)}.
\end{equation*}
\end{thm}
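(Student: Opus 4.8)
The plan is to prove the statement by induction on the number $a$ of primes dividing the "degenerate part" $S_1$ of $S$, where $S_1$ denotes the product of all primes $\ell \mid S$ that are either split multiplicative or good with $a_\ell = 2$ (equivalently $P_\ell(1)=0$). The key point is that $\mathrm{sp}(S)+b_2(S)$ is exactly the number of prime divisors of $S_1$, so we must show that each such prime contributes one factor in the augmentation ideal. This mirrors the argument for Proposition \ref{trivial zeros of euler system} (itself modeled on \cite[Theorem 4.2]{dar95}), but now carried out for $\theta_S$ directly over $R[G_S]$ rather than for Kato's classes over $\Zp[\Gamma_S]$; the replacement of the cohomological input by the distribution relation of Proposition \ref{mazur-tate}(1) is what makes the statement elementary.

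First I would handle the base case $a=0$, which is trivial since $I_S^0 = R[G_S]$. For the inductive step, write $S_1 = \ell_1\cdots\ell_a$ and $S' = S/S_1$. Using the canonical decomposition $G_S = G_{\ell_1}\times\cdots\times G_{\ell_a}\times G_{S'}$, I would expand $\theta_S$ in the spirit of the computation in the proof of Proposition \ref{trivial zeros of euler system}: for each $\gamma\in G_S$ write $\gamma = \gamma_{\ell_1}\cdots\gamma_{\ell_a}\gamma'$ and consider the element $\sum_{a\in(\Z/S\Z)^\times}(\cdots)_a (\delta_{a,\ell_1}-1)\cdots(\delta_{a,\ell_a}-1)\delta_{a}'$, which visibly lies in $I_{\ell_1}\cdots I_{\ell_a}[G_{S'}] \subseteq I_S^a$. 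Expanding the product of the $(\delta-1)$ factors via inclusion–exclusion over divisors $d\mid S_1$ and collapsing each partial sum into a projection $\pi_{S/dS'}(\theta_S)$, I would apply Proposition \ref{mazur-tate}(1) to rewrite $\pi_{S/dS'}(\theta_S) = \left(\prod_{\ell\mid S_1/d}-\Fr_\ell P_\ell(\Fr_\ell^{-1})\right)\theta_{dS'}$. This yields an identity
\begin{equation*}
\theta_S = (\text{an element of } I_S^a) - \sum_{d\mid S_1,\, d\neq S_1}\mu(S_1/d)\left(\prod_{\ell\mid S_1/d}-\Fr_\ell P_\ell(\Fr_\ell^{-1})\right)\theta_{dS'}.
\end{equation*}

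For each proper divisor $d$ of $S_1$ and each prime $\ell\mid S_1/d$, the defining property $P_\ell(1)=0$ gives $P_\ell(t)\in (t-1)R[t]$, so $P_\ell(\Fr_\ell^{-1})\in I_\ell \subseteq I_S$; moreover $\Fr_\ell$ lies in $G_{\ell'}$ for appropriate $\ell'$, hence acts trivially modulo the relevant augmentation powers. The number of primes dividing $S_1/d$ is $a - \nu(d)$ where $\nu(d)$ counts primes dividing $d$, and $dS'$ has exactly $\nu(d)$ primes of its degenerate part, so the induction hypothesis gives $\theta_{dS'}\in I_{dS'}^{\nu(d)}$, which pulls back to $\Zp[G_S]$-wise (via Lemma \ref{p-prime}, or directly over $R$) to $I_S^{\nu(d)}$. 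Combining, each summand lies in $I_S^{(a-\nu(d)) + \nu(d)} = I_S^a$, and so does the first term; therefore $\theta_S\in I_S^a = I_S^{\mathrm{sp}(S)+b_2(S)}$. The main obstacle I anticipate is bookkeeping: one must be careful that $\theta_S\in R[G_S]$ in the first place (which follows from hypothesis (ii) and \cite[\S3]{ste}, as noted in Remark \ref{first remark}(5)), that the projection maps $\pi_{m/n}$ and the decomposition of $G_S$ are compatible with the inclusion–exclusion indexing, and that $\theta_{dS'}\in I_{dS'}^{\nu(d)}$ genuinely implies the corresponding membership for $\theta_S$ after multiplying by the norm-type operators $N_{S_1/d}$ hidden in the $P_\ell$ factors — but since no module structure beyond the group ring is involved, each of these is a routine verification rather than a genuine difficulty.
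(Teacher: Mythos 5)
Your proposal is correct, but it takes a genuinely different — and in fact more elementary — route than the paper. The paper first applies Lemma \ref{globallocal} to reduce to a statement over $\Zp[G_S]$ for each relevant $p$, then invokes Corollary \ref{theta} (the identity $\vartheta(\z_S)=\theta_{S,p}$ built in Subsections \ref{local study of mazur-tate elements}--\ref{subsection: kato and mazur-tate} from the local points $c_S$, the dual exponential, and Kato's Euler system) together with Proposition \ref{trivial zeros of euler system}, which carries out the inclusion--exclusion argument on the Euler-system side, i.e.\ on $\Theta_S=\sum_\gamma\z_S^{\gamma^{-1}}\otimes\gamma$, using the norm/corestriction relation; finally a short extra step handles the case $p\mid S$. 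Your argument cuts out the entire detour: you perform the inclusion--exclusion directly on $\theta_S$ inside $R[G_S]$, with the distribution relation $\pi_{S\ell/S}\theta_{S\ell}=-\Fr_\ell P_\ell(\Fr_\ell^{-1})\theta_S$ of Proposition \ref{mazur-tate}(1) playing the role of the norm relation. This is logically lighter: it needs no localization, no Kato class, and no analysis of local points --- only that $\theta_S\in R[G_S]$ (Stevens plus hypothesis (ii)) and the group-ring identities. The paper's route does have the advantage of reusing Proposition \ref{trivial zeros of euler system} and the framework already set up in Section \ref{section: kato and mazur-tate}, but for this particular statement your argument is shorter and conceptually cleaner. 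Two small corrections in your writeup: the membership $\theta_{dS'}\in I_{dS'}^{\nu(d)}$ pushes into $I_S^{\nu(d)}$ simply because $\iota_{dS'/S}\colon R[G_{dS'}]\hookrightarrow R[G_S]$ is a ring map sending augmentation ideal into augmentation ideal --- Lemma \ref{p-prime} (which handles the \emph{projection}, not the inclusion) is not what is needed there; and for $\ell\mid S_1/d$ the Frobenius $\Fr_\ell$ lives in $G_{dS'}$, so $P_\ell(\Fr_\ell^{-1})$ lies in $I_{G_{dS'}}\subseteq I_S$ rather than in some ``$I_\ell$''. Neither affects the correctness of the argument.
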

\begin{rem}
In our setting, Theorem \ref{p-adic trivial zeros} implies Conjecture \ref{main conj intro} when $r_E=0$.
Although Bergunde-Gehrman \cite{b-g} has announced that they proved that $\theta_S \in I_S^{\mathrm{sp}(S)}$ in the general case, zeros coming from good primes $\ell$ with $a_{\ell}=2$ are not considered.
\end{rem}
\begin{proof}We put $a=\mathrm{sp}(S)+b_{2}(S).$
By Lemma \ref{globallocal}, it suffices to show that for each prime $p$ not invertible in $R$,
\begin{equation*}
\theta_{S} \in \Zp \otimes I_{S}^{a}.
\end{equation*}
If $p\nmid S$, then
by Lemma \ref{p-prime} and Corollary \ref{theta}, it suffices to show that 
$\theta(\z_S) \in \Zp \otimes I_{\Gamma_S}^a$,
which follows from Proposition \ref{trivial zeros of euler system}.
In the case where $p|S,$ we have
\begin{equation*}
\pi_{S/\frac{S}{p}}(\theta_S)=-\Fr_{p}P_{p}(\Fr_{p}^{-1})\theta_{S/p}.
\end{equation*}Since $|G_p| \in \Zp^{\times},$ by the case above and Lemma \ref{p-prime}, we complete the proof.
\end{proof}

\end{document}